\documentclass[noshowframe]{imsart}

\RequirePackage{amsthm,amsmath,amsfonts,amssymb,dsfont}
\RequirePackage[numbers]{natbib}
\RequirePackage[colorlinks,citecolor=blue,urlcolor=blue]{hyperref}
\RequirePackage{graphicx,float}
\RequirePackage{booktabs}
\RequirePackage{comment}

\startlocaldefs
\numberwithin{equation}{section}
\theoremstyle{plain}

\newtheorem{theorem}{Theorem}[section]
\newtheorem{lemma}[theorem]{Lemma}
\newtheorem{proposition}[theorem]{Proposition}
\newtheorem{corollary}[theorem]{Corollary}

\theoremstyle{definition}
\newtheorem{definition}[theorem]{Definition}
\newtheorem{property}[theorem]{Property}
\newtheorem{assumption}[theorem]{Assumption}
\newtheorem{remark}[theorem]{Remark}

\theoremstyle{remark}

\def \A {\mathbf{A}}
\def \a {\mathbf{a}}
\def \B {\mathbf{B}}
\def \b {\mathbf{b}}
\def \C {\mathbf{C}}
\def \D {\mathbf{D}}
\def \d {\mathbf{d}}
\def \E {\mathbf{E}}
\def \e {\mathbf{e}}
\def \F {\mathbf{F}}
\def \f {\mathbf{f}}
\def \g {\mathbf{g}}
\def \G {\mathbf{G}}
\def \g {\mathbf{g}}
\def \h {\mathbf{h}}
\def \H {\mathbf{H}}
\def \I {\mathbf{I}}
\def \i {\mathbf{i}}
\def \J {\mathbf{J}}
\def \j {\mathbf{j}}
\def \K {\mathbf{K}}
\def \k {\mathbf{k}}
\def \L {\mathbf{L}}
\def \l {\mathbf{l}}
\def \M {\mathbf{M}}
\def \m {\mathbf{m}}
\def \N {\mathbf{N}}
\def \n {\mathbf{n}}
\def \O {\mathbf{O}}
\def \o {\mathbf{o}}
\def \P {\mathbf{P}}
\def \p {\mathbf{p}}
\def \Q {\mathbf{Q}}
\def \q {\mathbf{q}}
\def \r {\mathbf{r}}
\def \R {\mathbf{R}}
\def \s {\mathbf{s}}
\def \S {\mathbf{S}}
\def \T {\mathbf{T}}
\def \t {\mathbf{t}}
\def \U {\mathbf{U}}
\def \u {\mathbf{u}}
\def \v {\mathbf{v}}
\def \V {\mathbf{V}}
\def \W {\mathbf{W}}
\def \w {\mathbf{w}}
\def \x {\mathbf{x}}
\def \X {\mathbf{X}}
\def \Y {\mathbf{Y}}
\def \y {\mathbf{y}}
\def \Z {\mathbf{Z}}
\def \z {\mathbf{z}}

\def \Acal {\mathcal{A}}
\def \Bcal {\mathcal{B}}
\def \Ccal {\mathcal{C}}
\def \Dcal {\mathcal{D}}
\def \Ecal {\mathcal{E}}
\def \Fcal {\mathcal{F}}
\def \Gcal {\mathcal{G}}
\def \Hcal {\mathcal{H}}
\def \Ical {\mathcal{I}}
\def \Jcal {\mathcal{J}}
\def \Kcal {\mathcal{K}}
\def \Lcal {\mathcal{L}}
\def \Mcal {\mathcal{M}}
\def \Ncal {\mathcal{N}}
\def \Ocal {\mathcal{O}}
\def \Pcal {\mathcal{P}}
\def \Qcal {\mathcal{Q}}
\def \Rcal {\mathcal{R}}
\def \Scal {\mathcal{S}}
\def \Tcal {\mathcal{T}}
\def \Ucal {\mathcal{U}}
\def \Vcal {\mathcal{V}}
\def \Wcal {\mathcal{W}}
\def \Xcal {\mathcal{X}}
\def \Ycal {\mathcal{Y}}
\def \Zcal {\mathcal{Z}}

\def \Cbb {\mathbb{C}}
\def \Ebb {\mathbb{E}}
\def \Fbb {\mathbb{F}}
\def \Hbb {\mathbb{H}}
\def \Ibb {\mathbb{I}}
\def \Nbb {\mathbb{N}}
\def \Pbb {\mathbb{P}}
\def \Rbb {\mathbb{R}}
\def \Sbb {\mathbb{S}}
\def \Tbb {\mathbb{T}}
\def \Vbb {\mathbb{V}}
\def \Zbb {\mathbb{Z}}

\def \drm {\mathrm{d}}
\def \erm {\mathrm{e}}
\def \irm {\mathrm{i}}
\def \Prm {\mathrm{P}}
\def \Qrm {\mathrm{Q}}

\def \bfrak {\mathfrak{b}}

\def \alphabs {\boldsymbol{\alpha}}
\def \deltabs {\boldsymbol{\delta}}
\def \gammabs {\boldsymbol{\gamma}}
\def \omegabs {\boldsymbol{\omega}}
\def \thetabs {\boldsymbol{\theta}}
\def \mubs {\boldsymbol{\mu}}
\def \xibs {\boldsymbol{\xi}}
\def \chibs {\boldsymbol{\chi}}
\def \epsilonbs {\boldsymbol{\epsilon}}

\def \Deltabs {\boldsymbol{\Delta}}
\def \Gammabs {\boldsymbol{\Gamma}}
\def \Lambdabs {\boldsymbol{\Lambda}}
\def \Omegabs {\boldsymbol{\Omega}}
\def \Pibs {\boldsymbol{\Pi}}
\def \Phibs {\boldsymbol{\Phi}}
\def \Psibs {\boldsymbol{\Psi}}
\def \Sigmabs {\boldsymbol{\Sigma}}
\def \Thetabs {\boldsymbol{\Theta}}
\def \Xibs {\boldsymbol{\Xi}}
\def \Upsilonbs {\boldsymbol{\Upsilon}}

\def \det {\mathrm{det} \ }
\def \Tr {\mathrm{Tr}\,}
\def \tr {\mathrm{tr}\,}
\def \Prob {\mathbb{P}}
\def \dist {\mathrm{dist}}
\def \card {\mathrm{card}}
\def \Res {\mathrm{Res}}
\def \Diag{\mathrm{Diag}}
\def \Vec{\mathrm{Vec}}

\DeclareMathOperator*{\dg}{dg}
\DeclareMathOperator{\Supp}{Supp}
\DeclareMathOperator{\Var}{Var}
\DeclareMathOperator*{\argmin}{argmin}
\DeclareMathOperator*{\argmax}{argmax}
\renewcommand{\Im}{\mathrm{Im}}
\renewcommand{\Re}{\mathrm{Re}}
\newcommand*\diff{\mathop{}\!\mathrm{d}}
\newcommand{\floor}[1]{\lfloor #1 \rfloor}
\renewcommand{\Im}{\mathrm{Im}}
\renewcommand{\Re}{\mathrm{Re}}
\newcommand{\supp}{\mathrm{supp}}
\newcommand{\Exp}{\mathbb{E}}
\newcommand{\rvcenter}[1]{\overset{\circ}{#1}}
\newcommand{\rvcentergroup}[1]{\overset{\circ}{\wideparen{#1}}}
\newcommand{\commentred}[1]{\textcolor{red}{\textbf{#1}}}
\newcommand{\commentblue}[1]{\textcolor{blue}{\textbf{#1}}}
\newcommand{\bs}{\boldsymbol}
\endlocaldefs

\begin{document}
\begin{frontmatter}
\title{Correlation tests and sample spectral coherence matrix in the high-dimensional regime}

\runtitle{Correlation tests and sample spectral coherence matrix in the high-dimensional regime}

\begin{aug}
\author[A]{\fnms{Philippe}~\snm{Loubaton}\ead[label=e1]{philippe.loubaton@univ-eiffel.fr}\orcid{https://orcid.org/0000-0002-5145-9319}},
\author[B]{\fnms{Alexis}~\snm{Rosuel}\ead[label=e2]{rosuelalexis1@gmail.com}\orcid{https://orcid.org/0000-0002-3038-926X}}
\and
\author[C]{\fnms{Pascal}~\snm{Vallet}\ead[label=e3]{pascal.vallet@bordeaux-inp.fr}\orcid{0000-0002-1636-0452}}
\address[A]{Laboratoire d’Informatique Gaspard Monge (CNRS, Univ. Gustave Eiffel)
5 Boulevard Descartes, 77454 Marne-la-Vallée (France) \printead[presep={,\ }]{e1}}

\address[B]{Laboratoire d’Informatique Gaspard Monge until 2021 (CNRS, Univ. Gustave Eiffel)
5 Boulevard Descartes, 77454 Marne-la-Vallée (France)\printead[presep={,\ }]{e2}}

\address[C]{Laboratoire de l’Int\'egration du Mat\'eriau au Syst\`eme (CNRS, Univ. Bordeaux, Bordeaux INP),
351 Cours de la Lib\'eration, 33400 Talence (France) \printead[presep={,\ }]{e3}}

\runauthor{P. Loubaton et al.}
\end{aug}

\begin{abstract}
   It is established that the linear spectral statistics (LSS) of the smoothed periodogram estimate of the spectral coherence matrix of a complex Gaussian high-dimensional times series $(\y_n)_{n \in \mathbb{Z}}$ with independent components satisfy at each frequency a central limit theorem (CLT) in the asymptotic regime where the sample size $N$, the dimension $M$ of the observation, and the smoothing span $B$ both converge towards $+\infty$ in such a way that $M = \Ocal(N^{\alpha})$ for $\alpha < 1$ and $\frac{M}{B} \rightarrow c$, $c \in (0,1)$. It is deduced that two recentered and renormalized versions of the LSS, one based on an average in the frequency domain and the other one based on a sum of squares also in the frequency domain, and both evaluated over a well-chosen frequency grid, also verify a central limit theorem. These two statistics are proposed to test with controlled asymptotic level the hypothesis that the components of $\y$ are independent. Numerical simulations assess the performance of the two tests. 
\end{abstract}

\begin{keyword}[class=MSC]
\kwd[Primary ]{60B20}
\kwd{62H15}
\kwd[; secondary ]{62M15}
\end{keyword}

\begin{keyword}
\kwd{High-dimensional time series, large random matrices , spectral coherence matrix}
\kwd{independence test}
\end{keyword}

\end{frontmatter}
\tableofcontents

\section{Introduction}
\subsection{The addressed problem and the results}

In this paper, we consider a zero-mean $M$-variate complex Gaussian stationary random sequence $(\y_n)_{n \in \mathbb{Z}}$, and denote 
by $(y_{1,n})_{n \in \mathbb{Z}}, \ldots, (y_{M,n})_{n \in \mathbb{Z}}$ the $M$ components of $\y$ defined by 
$\y_n = (y_{1,n}, \ldots, y_{M,n})^{T}$. We assume that the samples $\y_1, \ldots, \y_N$ are available and address the problem of testing in the frequency domain the 
hypothesis $\Hcal_0$ that the $M$ components of $\y$ are mutually independent time series 
in asymptotic regimes where $M$ and $N$ are both large
and the ratio $\frac{M}{N}$ is not supposed to be small enough to 
use conventional methods studied in the past in the low-dimensional asymptotic regime where $M$ is a fixed parameter and $N \rightarrow +\infty$. 
It is clear that $\Hcal_0$ holds if and only if the spectral density matrix $\S(\nu)$ of $\y$ is a diagonal matrix for each frequency $\nu$, or equivalently if the spectral coherence matrix $\C(\nu)$ defined by 
\begin{equation}
    \label{eq:def-C}
    \C(\nu) = \left( \mathrm{dg}(\S(\nu))\right)^{-1/2} \S(\nu)   \left( \mathrm{dg}(\S(\nu))\right)^{-1/2},
\end{equation}
verifies $\C(\nu) = \I_M$ for each $\nu \in [0,1]$, where for each $M \times M$ matrix 
$\A$, $\mathrm{dg}(\A)$ represents the diagonal matrix $\A \odot \I_M$ with $\odot$ denoting the Hadamard product (i.e., entry-wise product) and $\I_M$ is the $M$-dimensional identity matrix. 
It is thus reasonable to build test statistics depending on a relevant 
estimate of $\C(\nu)$. In particular, we take as a starting point the results presented in \cite{loubaton-rosuel-ejs-2021}, devoted to the behavior of the linear spectral statistics (LSS) of the estimator $\hat{\C}(\nu)$ of $\C(\nu)$ defined for each frequency $\nu \in [0,1]$ by 
\begin{equation}
    \label{eq:def-hatC}
    \hat{\C}(\nu) = \left( \mathrm{dg}(\hat{\S}(\nu))\right)^{-1/2} \hat{\S}(\nu)   \left( \mathrm{dg}(\hat{\S}(\nu))\right)^{-1/2},
\end{equation}
where $\hat{\S}(\nu)$ represents the frequency smoothed periodogram 
estimate of the spectral density $\S(\nu)$ of $\y$ defined by 
\begin{equation}
    \label{eq:def-hatS}
    \hat{\S}(\nu) = \frac{1}{B+1} \sum_{b=-B/2}^{B/2} \xibs_{\y}\left(\nu+\frac{b}{N}\right) \xibs_{\y}\left(\nu+\frac{b}{N}\right) ^{*},
\end{equation}
and 
\begin{equation}
    \label{eq:def-xiy}
\xibs_{\y}(\nu) = \frac{1}{\sqrt{N}} \sum_{n=1}^{N} \y_n e^{-2 i \pi (n-1) \nu},    
\end{equation}
where $B$ represents the smoothing span. In the asymptotic regime where $M = M(N) = \Ocal(N^{\alpha})$ for $\alpha \in (1/2,1)$ and $B = B(N)$ converge toward $+\infty$ in such a way that 
$c_N = \frac{M(N)}{B(N)} \rightarrow c$ where $c \in (0,1)$, \cite{loubaton-rosuel-ejs-2021} established that under $\Hcal_0$, for each frequency $\nu$, the empirical eigenvalue distribution of the $M \times M$ Hermitian matrix 
$\hat{\C}(\nu)$ converges almost surely toward the Marchenko-Pastur distribution $\mu_{MP}^{(c)}$ with parameter $c < 1$ defined by 
\[
    d\mu_{MP}^{(c)}(\lambda)=  \frac{\sqrt{(\lambda_+-\lambda)(\lambda-\lambda_-)}}{2\pi c\lambda}\mathds{1}_{\lambda\in[\lambda_-;\lambda_+]}(\lambda)\diff\lambda, \quad \lambda_\pm=(1\pm\sqrt{c})^2.
\]
Therefore, under $\Hcal_0$, for each well-chosen function $f$, the linear spectral statistics $\hat{f}_N(\nu)$ defined by
\begin{equation}
\label{eq:def-hatf}
\hat{f}_N(\nu) = \frac{1}{M} \mathrm{Tr}\left( f(\hat{\C}(\nu))\right),
\end{equation}
converges toward $\int_{\Rbb^{+}}f\diff\mu_{MP}^{(c)}$. Moreover, 
\cite{loubaton-rosuel-ejs-2021} evaluated the rate of convergence toward $0$ of the error $\hat{f}_N(\nu) - \int_{\Rbb^{+}}f\diff\mu_{MP}^{(c_N)}$. In the present paper, which can be seen as a continuation of \cite{loubaton-rosuel-ejs-2021}, we establish under $\Hcal_0$ a central limit theorem (CLT) on $\hat{f}_N(\nu)$ for each frequency $\nu$, and deduce from this two CLTs on statistics combining the LSS $\hat{f}_N(\nu)$ over well-chosen frequency grids. These results allow us to test the hypothesis $\Hcal_0$ and to evaluate analytically the asymptotic type I errors of the proposed statistics.

In order to introduce the results of this paper more precisely, we define $v_N$ by 
\begin{equation}
    \label{eq:def-vN}
    v_N = \frac{1}{B+1} \sum_{b=-B/2}^{B/2} \left( \frac{b}{N} \right)^{2},
\end{equation}
and observe that $v_N = \Ocal\left( (\frac{B}{N})^{2} \right)$. Moreover,  
if $(s_m)_{m=1, \ldots, M}$ represent the spectral densities of the scalar time series $((y_{m,n})_{n \in \mathbb{Z}})_{m=1, \ldots, M}$, we denote by $r_N(\nu)$ the term given by 
\begin{equation}
    \label{def-eq-rN}
    r_N(\nu) = \left( \frac{1}{M} \sum_{m=1}^{M} \frac{s_m'(\nu)}{s_m(\nu)} \right)^{2},
\end{equation}
where $'$ represents the differentiation operator with respect to $\nu$. Then, if $f$ is $\mathcal{C}^{\infty}$ in a neighborhood of $[\lambda_{-}, \lambda_{+}]$, we consider the corrected error term $\theta_N(f,\nu)$ defined by
\begin{equation}
    \label{eq:def-theta-f-nu}
\theta_N(f,\nu) =  \hat{f}_N(\nu) -  \int_{\Rbb^{+}}f\diff\mu_{MP}^{(c_N)}  - <D_N, f> \left(  r_N(\nu)   \; v_N \;  - \; \frac{1}{c_N} \frac{1}{B}  \right),
\end{equation}
where $D_N$ is a certain deterministic compactly supported distribution to be introduced later, carried by the support $[(1 - \sqrt{c_N})^{2}, (1 + \sqrt{c_N})^{2}]$ of the Marchenko-Pastur distribution $\mu_{MP}^{(c_N)}$ with parameter $c_N$. We first show that if
$\frac{1}{2} < \alpha < \frac{4}{5}$, there exists a variance term $\sigma_N^{2}(f)$, which only depends on the Stieltjes transform of the Marchenko-Pastur distribution $\mu_{MP}^{(c_N)}$, converging to a limit $\sigma^{2}(f)$ which, under mild additional assumptions, satisfies $\sigma^{2}(f) > 0$. Then, if $\sigma^{2}(f) > 0$, $\theta_N(f,\nu)$ verifies the central limit theorem 
\begin{equation}
\label{eq:CLT-thetaN}
\frac{B \theta_N(f,\nu)}{\sigma_N(f)} \rightarrow_{\Dcal} \Ncal(0,1),
\end{equation}
for each $\nu$. In order to test the hypothesis $\Hcal_0$ using the linear spectral statistics
$(\hat{f}_N(\nu))_{\nu \in [0,1]}$, we propose to combine the LSS $\hat{f}_N(\nu)$
on a large enough frequency grid. If $\Gcal_N$ represents the frequency grid defined by 
\begin{equation}
    \label{eq:def-Gcal}
    \Gcal_N = \{ k \frac{B+1}{N}, k=0, \ldots, K-1 \},
\end{equation}
where $K$ is defined by 
\begin{equation}
\label{eq:def-K}
K =  \left \lfloor \frac{N}{B+1} \right \rfloor,
\end{equation}
and if $\delta > 0$ verifies $ \delta < 1 - \alpha$, we consider the subset 
$\Gcal_N^{'}$ of $\Gcal_N$ defined by 
\begin{equation}
    \label{eq:def-Gcal-prime}
    \Gcal_N^{'} = \{ k \frac{B'+1}{N}, k=0, \ldots, K' - 1 \},
\end{equation}
where $B'$ and $K'$ are given by
\begin{equation}
    \label{eq:def-Bprime-Kprime}
B' = \lfloor N^{\delta} \rfloor B, \; K'= \left \lfloor \frac{N}{B'+1} \right \rfloor  = 
\frac{1}{N^{\delta}} \Ocal\left( K \right).
\end{equation}
We then establish that if 
$\frac{1}{2} < \alpha < \frac{7}{9}$, the statistics 
$\zeta_{N,1}(f)$ and $\zeta_{N,2}(f)$ defined by 
\begin{eqnarray}
\label{eq:def-statistics-zeta1-clt}
\zeta_{N,1}(f) & = & \frac{1}{\sqrt{K'}} \sum_{\nu \in \Gcal_N^{'}} B \theta_N(f,\nu), \\
\label{eq:def-statistics-zeta-clt}
\zeta_{N,2}(f) & = & \frac{1}{\sqrt{K'}} \sum_{\nu \in \Gcal_N^{'}} \left( (B \theta_N(f,\nu))^{2} - \sigma_N^{2}(f) \right), 
\end{eqnarray}
verify 
\begin{align}  
\label{eq:clt-zeta1}
& \frac{\zeta_{N,1}(f)}{\sigma_N(f)}  \rightarrow_{\Dcal} \Ncal(0,1),  \\
\label{eq:clt-zeta}
& \frac{\zeta_{N,2}(f)}{\sqrt{2} \sigma_N^{2}(f)}  \rightarrow_{\Dcal} \Ncal(0,1). 
\end{align}
Informally, this means that under $\Hcal_0$, the probability distribution of the random variables $\sum_{\nu \in \Gcal_N^{'}} B \theta_N(f,\nu)$ and 
$\sum_{\nu \in \Gcal_N^{'}} \left(B \theta_N(f,\nu)\right)^{2}$ are close to the probability distributions of the random variables $\sqrt{K'} \Ncal(0, \sigma_N^{2}(f))$ and $\sigma_N^{2}(f) K'  +   \sqrt{K'} \Ncal(0, 2 \sigma^{4}_N(f))$. We will also see (see Remark \ref{re:chi2-approximation}) that (\ref{eq:clt-zeta}) implies that the probability distribution of $\sum_{\nu \in \Gcal_N^{'}} \left(B \theta_N(f,\nu)\right)^{2}$ behaves as the probability distribution of $\sigma_N^{2}(f) \chi^{2}(K')$ where $\chi^{2}(K')$ represents a $\chi^{2}$ random variable with $K'$ degrees of freedom. The statistics $\sum_{\nu \in \Gcal_N^{'}} B \theta_N(f,\nu)$ and $\sum_{\nu \in \Gcal_N^{'}} \left(B \theta_N(f,\nu)\right)^{2}$ can thus be used to check the hypothesis $\Hcal_0$ and to control the asymptotic first-order error of the corresponding tests.

We finally mention that $\theta_N(f,\nu), \zeta_{N,1}(f)$ and $\zeta_{N,2}(f)$ depend on the term $r_N(\nu)$ defined by (\ref{def-eq-rN}), which, of course, is unknown. Under mild additional assumptions, we show that the statistics $\hat{\theta}_N(f,\nu), \hat{\zeta}_{N,1}(f)$ and $\hat{\zeta}_{N,2}(f)$ obtained by replacing $r_N(\nu)$ with the estimator $\hat{r}_N(\nu)$ 
proposed in \cite{loubaton-rosuel-ejs-2021} still verify the CLTs (\ref{eq:clt-zeta1}, \ref{eq:clt-zeta}). 

\subsection{On the literature}
\label{subsec:literature}
The problem of testing correlation among $M$ time series has a long history in the field of statistics, which has been motivated by a significant number of applications including microarray analysis in genomics \cite{almudevar2006utility}, signal detection \cite{rosuel-vallet-loubaton-mestre-ieeesp-2021}, and wireless communications \cite{Ramirez2011}, to name a few. A large number of works have addressed the case
where $M$ is much smaller than $N$, traditionally modeled by the low-dimensional asymptotic regime where $N \rightarrow +\infty$ while 
$M$ remains fixed (see e.g., \cite{loubaton-rosuel-ejs-2021} and the references therein). Testing the hypothesis $\Hcal_0$ in asymptotic regimes where $M$ and $N$ both converge toward $+\infty$ 
was mainly addressed when the components of $\y$ are temporally white. In this context, it is possible 
to build relevant test statistics from the sample covariance matrix or the sample correlation matrix $\hat{\C}_{\y}$ of $\y$ in the asymptotic regime where $\frac{M}{N} \rightarrow d$, where $0 < d < 1$. We mention \cite{jiang-aap-2004}, which studied the maximum modulus of the off-diagonal entries of $\hat{\C}_{\y}$, and established that, under $\Hcal_0$, it converges after normalization and recentering, toward a Gumbel distribution. \cite{dette-dornemann-jmva-2020} studied the behavior of the determinant of the correlation matrix, while \cite{gao-han-pan-yang-jrss-2017} proved a CLT on linear spectral statistics of $\hat{\C}_{\y}$ under $\Hcal_0$. \cite{mestre-vallet-ieeeit-2017} established independently a more general CLT valid when the components of the observations are possibly correlated signals. We mention that other papers (\cite{cai-ma-bernouilli-2013}, \cite{fan-jiang-ann-proba-2019}, \cite{morales-jimenez-et-al-stat-sinica-2021}) also considered various alternatives $\Hcal_1$. Still in the regime $\frac{M}{N} \rightarrow d$, \cite{pan-gao-yang-jasa-2014} addressed the case where the components of $\y$ share the same unknown spectral density. Under $\Hcal_0$, 
the rows of $\Y = (\y_1, \ldots, \y_N)$ are independent and identically distributed (i.i.d.), and \cite{pan-gao-yang-jasa-2014} proved that under $\Hcal_0$, the linear spectral statistics of the sample covariance matrix verify a CLT that can be used to test $\Hcal_0$. The particular context considered in \cite{pan-gao-yang-jasa-2014} also allows the use of previous works (see e.g., \cite{chang-yao-zhou-biometrika-2017}, \cite{zeng-jianfeng-qiwei-ann-stat-2019}) testing that the columns of $\Y^{T}$ are extracted from a temporally $N$-dimensional white noise sequence. To the best of our knowledge, the case where the components of $\y$ have possibly different unknown spectral densities was not addressed in previous works, except in \cite{loubaton-rosuel-ejs-2021}, \cite{rosuel-vallet-loubaton-mestre-ieeesp-2021}, and 
\cite{loubaton-rosuel-vallet-jmva-2022}, which consider 
the asymptotic regime $M = \Ocal(N^{\alpha})$ for $\alpha < 1$. These papers 
study the estimated coherence matrix $\hat{\C}(\nu)$ defined by (\ref{eq:def-hatC}) when $\frac{M}{B} \rightarrow c$, $0 < c < 1$. \cite{loubaton-rosuel-vallet-jmva-2022} considered the maximum over the frequency grid $\Gcal_N$ defined by (\ref{eq:def-Gcal}) of the modulus of the off-diagonal entries of $\hat{\C}(\nu)$, and generalized the result of 
\cite{jiang-aap-2004}. \cite{loubaton-rosuel-ejs-2021}, on which the present paper is based, proved that 
the eigenvalue distribution of $\hat{\C}(\nu)$ converges toward $\mu_{MP}^{c}$, and evaluated 
for each function $f$ the order of magnitude of the corrected error term $\psi_N(f,\nu)$ defined by 
\begin{equation}
\label{eq:def-psi-f}
\psi_N(f,\nu) = \hat{f}_N(\nu) -  \int_{\Rbb^{+}}f\diff\mu_{MP}^{(c_N)}  - <D_N, f>  r_N(\nu)   \; v_N \;  \mathds{1}_{\alpha \geq  2/3}.
\end{equation}
More precisely, the main result of \cite{loubaton-rosuel-ejs-2021} states that for each $\epsilon > 0$, 
there exists $\gamma > 0$ such that 
\begin{equation}
\label{eq:main-result-ejs-2021}
P \left( \sup_{\nu \in [0,1]} \left| \psi_N(f,\nu) \right| > N^{\epsilon} u_N \right) < e^{-N^{\gamma}},
\end{equation}
where $u_N$ is defined by 
\begin{equation}
\label{eq:def-uN}
u_N = \frac{1}{B} + \frac{\sqrt{B}}{N} + \frac{B^{3}}{N^{3}}.
\end{equation}
We notice that $u_N = \Ocal(B^{-1})$ if $1/2 < \alpha < 2/3$, $u_N = \Ocal\left(\frac{\sqrt{B}}{N} \right)$ if $2/3 \leq  \alpha < 4/5$, and $u_N = \Ocal\left(\frac{B^3}{N^3} \right)$ if $\alpha \geq 4/5$. It is therefore seen 
that the present paper improves significantly the results of \cite{loubaton-rosuel-ejs-2021} in that 
\begin{itemize}
\item we show that for $\alpha < \frac{4}{5}$, for each $\nu$, $B \theta_N(f,\nu)$ satisfies a CLT, which, in particular, implies that $\theta_N(f,\nu) = \Ocal_{P}\left(\frac{1}{B}\right)$, thus leading to a more accurate evaluation of the error $\hat{f}_N(\nu) -  \int_{\Rbb^{+}}f\diff\mu_{MP}^{(c_N)}$;
\item for $\alpha < \frac{7}{9}$, under $\Hcal_0$, we derive a CLT on the statistics $\zeta_{N,1}(f)$
  and $\zeta_{N,2}(f)$ defined by (\ref{eq:def-statistics-zeta1-clt}) and (\ref{eq:def-statistics-zeta-clt}), which allows us to evaluate the asymptotic first-order error of the corresponding tests. 
\end{itemize} 
\cite{rosuel-vallet-loubaton-mestre-ieeesp-2021} took benefit from the results of \cite{loubaton-rosuel-ejs-2021} to propose statistics that allow one to consistently test the hypothesis $\Hcal_0$ under an alternative
defined by $\y_n = \u_n + \v_n$, where the components of $\v$ are independent and $\u$ is a time series
defined as the output of a $K$-input / $M$-output unknown filter driven by a $K$-dimensional 
white noise sequence, where $K$ is an integer that remains fixed when $M$ and $N$ increase. While \cite{loubaton-rosuel-ejs-2021}, \cite{rosuel-vallet-loubaton-mestre-ieeesp-2021}, and \cite{loubaton-rosuel-vallet-jmva-2022} study frequency domain statistics, we notice that \cite{loubaton-mestre-rmta-2022} considered lag domain approaches
based on linear spectral statistics of a normalized version of the sample covariance matrix of vectors $(\y_n^{L})_{n=1, \ldots, N}$, where $\y_n^{L} = (\y_n^{T}, \y_{n+1}^{T}, \ldots, \y_{n+L-1}^{T})^{T}$ and $L = L(N)$ converges toward $+\infty$ in such a way that $\frac{ML}{N} \rightarrow \tilde{c}$, where $0 < \tilde{c} < 1$. In particular, 
\cite{loubaton-mestre-rmta-2022} established a result that can be interpreted as a lag domain version of (\ref{eq:main-result-ejs-2021}).  

We finally mention the paper \cite{deitmar-2024}, which, although not related to our testing problem, characterized, in the asymptotic regime considered in the present paper, the behavior of the empirical eigenvalue distribution of the frequency smoothed periodogram estimate (\ref{eq:def-hatS}) when the components of $\y$ are not necessarily independent. 

\subsection{General approach}
The approach developed in this paper is mainly based on the use of Bartlett's factorization (see e.g., \cite{walker-1965}, \cite{hannan-book}, \cite{brockwell-davis-book}). If for each $m$, $(\epsilon_{m,n})_{n \in \mathbb{Z}}$ represents the normalized (i.e., $\mathbb{E}|\epsilon_{m,n}|^{2} = 1$) innovation sequence of 
the time series $(y_{m,n})_{n \in \mathbb{Z}}$, and if $h_m(\nu) = \sum_{k=0}^{\infty} a_{m,k} e^{-2 i \pi k \nu}$ represents the square-integrable function with Fourier coefficients $(a_{m,k})_{k \geq 0}$ defined by 
\begin{equation}
    \label{eq:representation-causale-innovation}
    y_{m,n} = \sum_{k=0}^{+\infty} a_{m,k} \epsilon_{m,n-k},
\end{equation}
the normalized Fourier transform $\xi_{y_m}(\nu) = \frac{1}{\sqrt{N}} \sum_{n=1}^{N} y_{m,n} e^{-2 i \pi (n-1) \nu}$ of \\
$(y_{m,n})_{n=1, \ldots, N}$ can be written as $\xi_{y_m}(\nu) = h_m(\nu) \xi_{\epsilon_m}(\nu) + r_{m,\mathfrak{b}}(\nu)$, where $r_{m,\mathfrak{b}}(\nu)$ 
represents an error term which, in some sense, converges toward $0$. Bartlett's factorization 
consists in replacing $\xi_{y_m}(\nu)$ with $h_m(\nu) \xi_{\epsilon_m}(\nu)$ for each $m$ in the 
definitions (\ref{eq:def-hatS}, \ref{eq:def-hatC}) of $\hat{\S}(\nu)$ and $\hat{\C}(\nu)$, thus defining new "estimates" $\hat{\S}_{\mathfrak{b}}(\nu)$ 
and $\hat{\C}_{\mathfrak{b}}(\nu)$. We note, however, that $\hat{\S}_{\mathfrak{b}}(\nu)$ 
and $\hat{\C}_{\mathfrak{b}}(\nu)$ are "virtual" in the sense that they cannot be evaluated from 
the available observations $\y_1, \ldots, \y_N$, and rather represent useful theoretical tools.

We define the terms $\hat{f}_{N,\mathfrak{b}}(\nu)$ and $\theta_{N,\mathfrak{b}}(f,\nu)$ obtained by replacing $\hat{\C}(\nu)$ with $\hat{\C}_{\mathfrak{b}}(\nu)$. The 
main interest of Bartlett's factorization follows from the observation that, 
since the time series $(\epsilon_m)_{m \geq 1}$ are mutually independent white noise sequences, the random variables $\xi_{\epsilon_{m_1}}(\nu_1)$ and $\xi_{\epsilon_{m_2}}(\nu_2)$ are independent if $m_1 \neq m_2$ or if $\nu_2 - \nu_1$ is a nonzero integer multiple of $\frac{1}{N}$ when $m_1 = m_2$, a property that implies that the matrices $(\hat{\C}_{\mathfrak{b}}(\nu))_{\nu \in \Gcal_N}$ are mutually independent, where we recall that $\Gcal_N$ is defined by (\ref{eq:def-Gcal}). Therefore, the random variables $\left( \theta_{N,\mathfrak{b}}(f,\nu)\right)_{\nu \in \mathcal{G}_N}$ are mutually independent as well, a property that will be used in order to establish the 
CLTs on the statistics $\zeta_{N,1}(f)$ and $\zeta_{N,2}(f)$.

We first show that if
$1/2 < \alpha < \frac{4}{5}$, $\theta_{N,\mathfrak{b}}(f,\nu)$ can be written as 
\begin{equation}
    \label{eq:representation-B-thetaNb}
\theta_{N,\mathfrak{b}}(f,\nu) = w_{N,\mathfrak{b}}(f,\nu) + \epsilon_{N,\mathfrak{b}}(f,\nu),    
\end{equation}
where the random variables $(w_{N,\mathfrak{b}}(f,\nu))_{\nu \in \Gcal_N}$ are independent and identically distributed, and satisfy for each $\nu$
\begin{equation}
\label{eq:E-WN=0}
\mathbb{E}(w_{N,\mathfrak{b}}(f,\nu)) = 0,
\end{equation}
and where $\epsilon_{N,\mathfrak{b}}(f,\nu)$ is an error term verifying
\begin{equation}
\label{eq:moyenne-kappaN}
\mathbb{E}(\epsilon_{N,\mathfrak{b}}(f,\nu)) = \mathbb{E}(\theta_{N,\mathfrak{b}}(f,\nu)) = \Ocal\left(\frac{B^{4}}{N^{4}}\right) + o\left( \frac{1}{\sqrt{BN}} \right),
\end{equation}
as well as 
\begin{equation}
\label{eq:concentration-kappaNrond-4sur5}
\sup_{\nu} P\left( |\epsilon_{N,\mathfrak{b}}(f,\nu) - \mathbb{E}(\epsilon_{N,\mathfrak{b}}(f,\nu))| >   N^{\eta} \frac{1}{\sqrt{BN}} \right) < e^{-N^{\gamma}},
\end{equation}
for each $N$ large enough and for some small enough constant $\eta > 0$, where $\gamma$ is a constant depending on $\eta$ (see Theorem \ref{th:approximation-thetaNb}). Since $\alpha < \frac{4}{5}$ 
is equivalent to $\frac{B^{4}}{N^{4}} = o\left( \frac{1}{B} \right)$ and $\frac{1}{\sqrt{BN}} = o\left( \frac{1}{B}\right)$, $\epsilon_{N,\mathfrak{b}}(f,\nu)$ verifies 
\begin{equation}
\label{eq:concentration-kappaN-4sur5}
\sup_{\nu} P\left( |\epsilon_{N,\mathfrak{b}}(f,\nu)| >   \frac{1}{N^{\eta} B} \right) < e^{-N^{\gamma}},
\end{equation}
for each $N$ large enough and for some $\eta$ small enough. 
Moreover, there exists a variance term $\sigma_N^{2}(f)$, which only depends on the Stieltjes transform of the Marchenko-Pastur distribution $\mu_{MP}^{(c_N)}$, converging
to a limit $\sigma^{2}(f)$ which, under mild additional assumptions, satisfies $\sigma^{2}(f) > 0$. Then, 
if $\sigma^{2}(f) > 0$, we prove using the Stein method that $w_{N,\mathfrak{b}}(f,\nu)$ verifies 
\begin{equation}
\label{eq:convergence-WN}
\frac{B w_{N,\mathfrak{b}}(f,\nu)}{\sigma_N(f)} \rightarrow_{\Dcal} \Ncal(0,1).
\end{equation}
Equations (\ref{eq:representation-B-thetaNb}) and (\ref{eq:concentration-kappaN-4sur5}) thus imply that, for $\alpha < \frac{4}{5}$, if $\sigma^{2}(f) > 0$, for each frequency $\nu$, the following CLT
\begin{equation}
\label{eq:CLT-thetaNb}
\frac{B \theta_{N,\mathfrak{b}}(f,\nu)}{\sigma_N(f)} \rightarrow_{\Dcal} \Ncal(0,1),
\end{equation}
holds.

We then argue that
$\theta_{N,\mathfrak{b}}(f,\nu)$ and $\theta_N(f,\nu)$ are defined very similarly, and this leads to the conclusion that, for $\alpha < \frac{4}{5}$, $\theta_N(f,\nu)$ still has a representation
\begin{equation}
\label{eq:representation-B-thetaN}
\theta_N(f,\nu) = w_N(f,\nu) + \epsilon_N(f,\nu),
\end{equation}
where the terms $w_N(f,\nu)$ and $\epsilon_N(f,\nu)$ 
verify (\ref{eq:E-WN=0}), (\ref{eq:moyenne-kappaN}), (\ref{eq:concentration-kappaNrond-4sur5}), as well as (\ref{eq:concentration-kappaN-4sur5}) (see Theorem \ref{th:approximation-thetaN}). For each $\nu$, the sequence of random variables $(w_N(f,\nu))_{N \geq 1}$ has the same 
probability distribution as the sequence \\ $(w_{N,\mathfrak{b}}(f,\nu))_{N \geq 1}$ and also verifies the CLT (\ref{eq:convergence-WN}), thus leading to the 
conclusion that the CLT (\ref{eq:CLT-thetaN}) on $\frac{B \theta_N(f,\nu)}{\sigma_N(f)}$ holds when $\alpha < \frac{4}{5}$.

In order to prove the CLTs (\ref{eq:clt-zeta1}) and 
(\ref{eq:clt-zeta}) for $\alpha < \frac{7}{9}$, 
we first express $\theta_N(f,\nu)$ as 
$
\theta_N(f,\nu) = \theta_{N,\mathfrak{b}}(f,\nu) + \theta_N(f,\nu) - \theta_{N,\mathfrak{b}}(f,\nu),
$
and using the representations (\ref{eq:representation-B-thetaNb}) and (\ref{eq:representation-B-thetaN}) of 
$\theta_{N,\mathfrak{b}}(f,\nu)$ and $\theta_N(f,\nu)$, 
we obtain that if $\alpha < \frac{4}{5}$, then
\begin{equation}
\label{eq:representation-theta-bartlett}
\theta_N(f,\nu) = w_{N,\mathfrak{b}}(f,\nu) + 
 \kappa_{N,\mathfrak{b}}(f,\nu), 
\end{equation}
where $\kappa_{N,\mathfrak{b}}(f,\nu)$ is defined 
by 
\begin{equation}
\label{eq:def:kappaNb}
\kappa_{N,\mathfrak{b}}(f,\nu) = w_N(f,\nu) - w_{N,\mathfrak{b}}(f,\nu) + \epsilon_N(f,\nu).
\end{equation}
We establish that 
\begin{equation}
\label{eq:concentration-wN-wNb}
\sup_{\nu} P\left( | w_N(f,\nu) - w_{N,\mathfrak{b}}(f,\nu) | >   N^{\eta} \frac{1}{\sqrt{BN}} \right) < e^{-N^{\gamma}},
\end{equation}
holds for each $N$ large enough and for some small enough constant $\eta > 0$, where $\gamma$ is a constant depending on $\eta$. If $\alpha < \frac{7}{9}$, then 
$\frac{B^{4}}{N^{4}} = o\left( \frac{1}{\sqrt{BN}}\right)$, and, using the fact that $\epsilon_N(f,\nu)$ verifies (\ref{eq:moyenne-kappaN}) and (\ref{eq:concentration-kappaNrond-4sur5}), we obtain that 
$\epsilon_N(f,\nu)$ verifies 
\begin{equation}
\label{eq:concentration-epsilonN}
\sup_{\nu} P\left( | \epsilon_N(f,\nu) | >   N^{\eta} \frac{1}{\sqrt{BN}} \right) < e^{-N^{\gamma}}.
\end{equation}
Equation (\ref{eq:concentration-wN-wNb}) thus implies that $\kappa_{N,\mathfrak{b}}(f,\nu)$ also satisfies the concentration inequality 
\begin{equation}
\label{eq:concentration-kappaNb}
\sup_{\nu} P\left( |\kappa_{N,\mathfrak{b}}(f,\nu) | >   N^{\eta} \frac{1}{\sqrt{BN}} \right) < e^{-N^{\gamma}}.
\end{equation}
In order to establish the CLT (\ref{eq:clt-zeta1}) when $\alpha < \frac{7}{9}$, we first observe 
that if $\kappa_{N,\mathfrak{b}}(f,\nu)$ verifies the concentration 
inequality (\ref{eq:concentration-kappaNb}), then 
a straightforward evaluation leads immediately to 
\begin{equation}
\label{eq:contribution-kappaNb-clt-zeta1}   
\frac{1}{\sqrt{K'}} \sum_{\nu \in \Gcal_N'} B \kappa_{N,\mathfrak{b}}(f,\nu) = o_P(1).
\end{equation}
To prove the CLT (\ref{eq:clt-zeta1}), it is thus sufficient to establish that $\zeta_{N,1,w}(f)$ defined by 
\begin{equation}
\label{eq:def-zetaN1w}
\zeta_{N,1,w}(f) = \frac{1}{\sqrt{K'}} \sum_{\nu \in \Gcal_N'} B w_{N,\mathfrak{b}}(f,\nu),
\end{equation}
verifies the CLT $\frac{\zeta_{N,1,w}(f)}{\sigma_N(f)} \rightarrow_{\Dcal} \Ncal(0,1)$. Since the random variables 
$(B w_{N,\mathfrak{b}}(f,\nu))_{\nu \in \Gcal_N'}$ are independent and identically distributed, this is easily verified. Equation (\ref{eq:clt-zeta}) is proved similarly. 

\subsection{\texorpdfstring{Motivation of the asymptotic regime 
$M = \Ocal\left( N^{\alpha} \right)$ and $\frac{M}{B} \rightarrow c$, and discussion of the conditions $\alpha > \frac{1}{2}$, $\alpha < \frac{4}{5}$ and $\alpha < \frac{7}{9}$}{Motivation of the asymptotic regime}}
When the dimension $M$ of the observation is large and the sample size $N$ is not unlimited, the classical statistical methodologies testing $\Hcal_0$, developed in the traditional asymptotic regime where $M$ is fixed and $N \rightarrow +\infty$, may not allow us to predict the performance of the corresponding tests 
when the ratio $\frac{M}{N}$ is not small enough. Therefore, it appears relevant to address the above testing problem in asymptotic regimes where both $M$ and $N$ converge toward $+\infty$.  In this paper, we have chosen to consider the test statistics $\zeta_{1,N}(f)$ and 
$\zeta_{2,N}(f)$ that combine linear statistics of the matrices $\hat{\C}_N(\nu)$ evaluated on the discrete grid $\Gcal_N^{'}$ defined by (\ref{eq:def-Gcal-prime}). In order to evaluate the asymptotic distribution of $\zeta_{1,N}(f)$ and $\zeta_{2,N}(f)$ under $\Hcal_0$, it is of course necessary to derive asymptotic properties of $\hat{\C}_N(\nu)$ for each $\nu$. For this, it seems mandatory to choose the parameter $B$
in such a way that $\frac{B}{N} \rightarrow 0$ because it can be seen 
that, under mild additional assumptions, $\frac{B}{N} \rightarrow 0$ implies 
$\sup_{\nu} \| \dg(\hat{\S}_N(\nu)) - \S(\nu) \| \rightarrow 0$. This property allows us to approximate $\left(\dg(\hat{\S}_N(\nu))\right)^{-1/2}$ by $\left(\S(\nu)\right)^{-1/2}$, and is thus fundamental to understanding part of the properties of $\hat{\C}_N(\nu)$. If 
$\frac{M}{N}$ does not converge to $0$—say, it converges to a nonzero constant—then $\frac{B}{N} \rightarrow 0$ implies that $\hat{\C}_N(\nu)$ can be seen as an empirical covariance matrix in the ultra-high-dimensional regime, 
in the sense that $\frac{M}{B} \rightarrow +\infty$. Therefore, the techniques 
developed in this paper, based on the assumption that $M$ and $B$ are of the same order of magnitude, are no longer valid. The context where $\frac{M}{N}$ does not converge to $0$ thus has to be addressed in a completely different way, and is therefore out of the scope of the present paper (see e.g. \cite{chen2010tests, han2020test} for some examples of such studies).
In the following, we moreover focus 
on the regime $M = M(N) = \Ocal(N^{\alpha})$ where $\alpha < 1$. It is easy to check that if the smoothing span 
$B = B(N)$ defined in (\ref{eq:def-hatS}) is chosen in such a way that $\frac{B}{N} \rightarrow 0$ and 
$\frac{M}{B} \rightarrow 0$, then, under mild additional assumptions, the estimated spectral coherence matrix $\hat{\C}_N(\nu)$ defined by (\ref{eq:def-hatC}) verifies 
\begin{equation}
\label{eq:consistency-hatC}
\sup_{\nu \in [0,1]} \| \hat{\C}_N(\nu) - \I \| \rightarrow 0.
\end{equation}
It might therefore be possible to develop statistical tests comparing $\hat{\C}_N(\nu)$ to $\I$ for each $\nu$. 
In practice, if the ratio $\frac{M}{N}$ is not small enough, it may be difficult to find a value of the smoothing
span $B$ satisfying both $B << N$ and $M << B$, and for which $\hat{\C}_N(\nu)$ is close to $\I$. In this context, 
it appears more relevant to choose $B$ of the same order of magnitude as $M$, to study the behavior of 
$\hat{\C}_N(\nu)$ when $\frac{M}{B} \rightarrow c$, $c \in (0,1)$ (note that $\hat{\C}_N(\nu)$ of course does not converge
to $\I$ anymore), and to take advantage of the results to propose new testing approaches. This justifies the relevance of the asymptotic 
regime $\frac{M}{B} \rightarrow c$. We also consider $\alpha > \frac{1}{2}$ because, otherwise, $\frac{M}{N}$ would
converge quite fast to $0$, and it would be possible to find values of $B$ for which $\hat{\C}_N(\nu)$ is close to $\I$. The value $\frac{1}{2}$ could of course be replaced by another threshold, but it appears 
that assuming $\alpha > \frac{1}{2}$ allows us to simplify the exposition of the forthcoming results. We note, however, 
that addressing $\alpha < \frac{1}{2}$ would not introduce serious methodological problems.

We now discuss the condition $\alpha < \frac{4}{5}$. It is clear that, since 
the error term $\epsilon_N(f,\nu)$ in the representation (\ref{eq:representation-B-thetaN}) of $\theta_N(f,\nu)$ satisfies (\ref{eq:moyenne-kappaN}), $\epsilon_N(f,\nu)$ cannot satisfy (\ref{eq:concentration-kappaN-4sur5}) 
if $\alpha \geq \frac{4}{5}$. Therefore, even if $w_N(f,\nu)$ were shown to satisfy the CLT 
(\ref{eq:convergence-WN}) for $\alpha \geq \frac{4}{5}$, the order of magnitude 
of $\mathbb{E}(\epsilon_N(f,\nu))$ appears too large to deduce the CLT (\ref{eq:CLT-thetaN}) from the representation (\ref{eq:representation-B-thetaN}). In order to extend our results to $\alpha \geq \frac{4}{5}$, it would first be necessary 
to evaluate in closed form the $\Ocal\left( B^{4}/N^{4} \right)$ term of $\mathbb{E}\left( \theta_N(f,\nu) \right)$, and to subtract it 
from $\theta_N(f,\nu)$, thus modifying the deterministic correction of $\hat{f}_N(\nu)$. The evaluation of the $\Ocal\left( B^{4}/N^{4} \right)$ term of $\mathbb{E}\left( \theta_N(f,\nu) \right)$ requires, however, tremendous calculations
(see Appendix \ref{sec:guideline-calculation-high-order-term} where an outline of the necessary computations is provided). Moreover, as specified below, the condition $\alpha < \frac{4}{5}$ allows us to simplify the content of this paper (see Remark \ref{re:biais-variance-hats}). Therefore, addressing $\alpha \geq \frac{4}{5}$ in the present paper would not be reasonable, which explains why we prefer to leave this case for further investigation. We feel that $\alpha < 4/5$ is by itself general enough, and represents a good compromise between technical difficulty and the methodological and practical impact of the results. We also mention that evaluating the $\Ocal\left( B^{4}/N^{4} \right)$ term of $\mathbb{E}\left( \theta_N(f,\nu) \right)$ should not be sufficient to obtain a CLT on a relevant normalized and recentered version of $\hat{f}_N(\nu)$ for $\alpha$ arbitrarily close to $1$ because $\mathbb{E}\left( \theta_N(f,\nu) \right)$ also contains higher-order terms of $\frac{B}{N}$ that could become dominant with respect to $\frac{1}{B}$ after subtraction of the $\Ocal\left( B^{4}/N^{4} \right)$ term. Depending on the value of $\alpha$, some of these terms should therefore also be evaluated and subtracted from $\hat{f}_N(\nu)$ to obtain a CLT on $\hat{f}_N(\nu)$.

We now discuss the condition $\alpha < \frac{7}{9}$, which is equivalent 
to $\frac{B^{4}}{N^{4}} = o\left(\frac{1}{\sqrt{BN}}\right)$. If it does not hold, 
$\mathbb{E}(\epsilon_N(f,\nu))$ is no longer negligible with respect to $\frac{1}{\sqrt{BN}}$, 
and (\ref{eq:concentration-epsilonN}), (\ref{eq:concentration-kappaNb}), 
and (\ref{eq:contribution-kappaNb-clt-zeta1}) are no longer valid.  
In order to extend (\ref{eq:clt-zeta1}) and (\ref{eq:clt-zeta}) to $\alpha \in [\frac{7}{9}, \frac{4}{5})$, it would also be necessary to evaluate the term $\Ocal\left( B^{4}/N^{4} \right)$ of $\mathbb{E}(\theta_N(f,\nu))$ and subtract it from $\theta_N(f,\nu)$. 
The new recentered version of $\theta_N(f,\nu)$ would still have a representation 
(\ref{eq:representation-theta-bartlett}), but one in which the order of magnitude 
of the error term should be an $\Ocal\left(\frac{1}{\sqrt{BN}}\right)$ term for 
$\frac{7}{9} \leq \alpha < \frac{4}{5}$ (see Remark \ref{re:alpha-larger-7-9}). 
As explained above, the calculation of the term $\Ocal\left( B^{4}/N^{4} \right)$ of $\mathbb{E}(\theta_N(f,\nu))$ is tremendous, so we prefer to prove (\ref{eq:clt-zeta1}) and (\ref{eq:clt-zeta}) for $\alpha < \frac{7}{9}$. 

\subsection{Assumptions and notations}
\subsubsection{Assumptions}
Although the assumptions formulated on the time series $\left((y_{m,n})_{n \in \mathbb{Z}}\right)_{m \geq 1}$ are similar to those in \cite{loubaton-rosuel-ejs-2021}, 
we include them below for completeness.
\begin{assumption}
\label{eq:as-ym}
For each $m \geq 1$, $y_m = (y_{m,n})_{n \in \mathbb{Z}}$ is a zero-mean complex Gaussian time series\footnote{Every finite linear combination of the random variables $(y_{m,n})_{n \in \mathbb{Z}}$ 
is a complex Gaussian variable $x$, i.e., $\mathrm{Re}(x)$ and $\mathrm{Im}(x)$ are independent and identically distributed real Gaussian random variables.}. Moreover, if $m_1 \neq m_2$, $y_{m_1}$ and $y_{m_2}$ are independent.  
\end{assumption}

\begin{assumption}
\label{as:spectral-densities}
For each $m$, the spectral measure of $y_m$ is absolutely continuous. The corresponding 
spectral densities $(s_m)_{m \geq 1}$ verify 
\begin{equation}
    \label{eq:s-lower-bounded-from-below}
    \inf_{m \geq 1} \inf_{\nu \in [0,1]} s_m(\nu) > 0.
\end{equation}
Moreover, if $\left(\mathbb{E}(y_{m,n+k}y_{m,k}^{*})\right)_{n \in \mathbb{Z}}$ represents the autocovariance sequence of $y_m$, then we have 
\begin{equation}
    \label{eq:decroissance-rm}
    \sup_{m \geq 1} \sum_{n \in \mathbb{Z}} (1 + |n|)^{\gamma_0} \, \left| \mathbb{E}(y_{m,n+k}y_{m,k}^{*}) \right|  < +\infty, 
 \end{equation}
where $\gamma_0 > 4$.
\end{assumption}
Equation (\ref{eq:decroissance-rm}) implies that 
$s_m$ is $\mathcal{C}^{4}$, and verifies $\sup_{m \geq 1} \sup_{\nu} |s_m^{(i)}(\nu)| < +\infty$
for $i=1,2,3,4$, where $s_m^{(i)}$ represents the derivative of order $i$ of $s_m$. Moreover, 
it also follows from (\ref{eq:decroissance-rm}) that 
\begin{equation}
    \label{eq:reste-serie-rm}
    \sup_{m \geq 1} \sum_{|l| \geq n} \left| \mathbb{E}(y_{m,l+k}y_{m,k}^{*}) \right| \leq \frac{C}{n^{\gamma_0}}.
\end{equation}
It is also useful to mention that if $h_m(\nu) = \sum_{k=0}^{+\infty} a_{m,k} e^{-2 i \pi k \nu}$ is the square-integrable function defined by (\ref{eq:representation-causale-innovation}), then, by Lemma D.1 in \cite{loubaton-mestre-rmta-2022}, (\ref{eq:decroissance-rm}) implies that the $(a_{m,k})_{k \in \mathbb{N}}$ verify 
\begin{equation}
    \label{eq:decroissance-am}
    \sup_{m \geq 1} \sum_{k \in \mathbb{N}} (1 + |k|)^{\gamma} \, |a_{m,k}| < +\infty, 
 \end{equation}
for each $\gamma < \gamma_0$. Lemma D.1 in \cite{loubaton-mestre-rmta-2022} can be seen as a generalized uniform (with respect to $m$) 
Wiener-Lévy theorem. Since $\gamma_0 > 4$, $h_m$ is $\mathcal{C}^{4}$, and the $(a_{m,k})_{k \in \mathbb{N}}$ also satisfy 
\begin{equation}
    \label{eq:reste-serie-am}
    \sup_{m \geq 1} \sum_{k \geq  n} |a_{m,k}| \leq \frac{C}{n^{\gamma}},
\end{equation}
for each $\gamma < \gamma_0$. We note that $\gamma_0$ was assumed strictly larger than $3$ in \cite{loubaton-rosuel-ejs-2021}. In the present
paper, we need $\gamma_0 > 4$ to improve certain evaluations that were not needed in \cite{loubaton-rosuel-ejs-2021}. We finally recall formally the asymptotic regime considered in this paper. 
\begin{assumption}
\label{as:asymptotic-regime}
$M = M(N)$ verifies $C_1 N^{\alpha} \leq M \leq C_2 N^{\alpha}$, where $C_1$ and $C_2$ are two nonzero positive constants, and where $\frac{1}{2} < \alpha < 1$. Moreover, $B = B(N)$ is chosen in such a way that 
$c_N = \frac{M}{B} \rightarrow c$, where $c \in (0,1)$. 
\end{assumption}
We recall that in the present paper, we only consider parameters $\alpha$
verifying either $\alpha < \frac{4}{5}$ (condition required for (\ref{eq:CLT-thetaN})) 
or $\alpha < \frac{7}{9}$ (condition required for (\ref{eq:clt-zeta1}, \ref{eq:clt-zeta})).

\subsubsection{Discussion on Assumptions \ref{eq:as-ym} and (\ref{eq:decroissance-rm})}
\label{subsub:discussion-assumptions}

\paragraph{The underlying time series are assumed to be complex Gaussian distributed.} 
We first mention that in the context of 
multichannel signal processing, the $M$-variate time series $(\y_n)_{n \in \mathbb{Z}}$ with uncorrelated components that we consider in the present paper is a reasonable model for additive thermal noise due to imperfections in $M$ sensor electronics (see \cite{rosuel-vallet-loubaton-mestre-ieeesp-2021} for more details). The Gaussianity of each component 
$y_m$ of $\y$ is then a quite typical assumption, while the fact that $y_m$ is complex-valued is motivated by applications to digital communications and radar.

From a technical point of view, the Gaussian assumption is used in several places in the manuscript:
\begin{itemize}
    \item for concentration of measure results introduced in Sections 2.2, 2.4, and 2.5, and used extensively in the rest of the paper;
    \item for the stochastic representation of matrices $\hat{\C}_N(\nu)$ 
and $\tilde{\C}_N(\nu)$, which, in the non-Gaussian case, is no longer 
valid because, while the rows of matrix $\X_N(\nu)$ are mutually independent and 
verify $\mathbb{E}(\x_m(\nu)^{*} \x_m(\nu)) = \I_{B+1}$, the entries 
of each vector $\x_m(\nu)$ are not necessarily independent, so that, among other technical difficulties, the behavior of the empirical eigenvalue distribution of $\frac{\X_N(\nu) \X_N^{*}(\nu)}{B+1}$ is unclear;
   \item for various approximations related to Bartlett's factorization (Section 4) as well as for the derivation of the CLT behind the main result of our work (Sections 5 and 6).
\end{itemize}
It is of course important 
to check whether it is reasonable to expect a generalization of the results of the present paper to the non-Gaussian case, e.g., to the context where each time series $(y_{m,n})_{n \in \mathbb{Z}}$ is a linear process 
driven by a non-Gaussian innovation sequence. In the existing literature, we can mention the recent paper \cite{deitmar-2024} that studies the behavior of the empirical eigenvalue distribution of $\hat{\S}(\nu)$ when $\y$ is a Gaussian linear process whose components $(y_m)_{m=1, \ldots, M}$ are not necessarily independent time series. \cite{deitmar-2024} is based on an approximation of $\hat{\S}(\nu)$ that has some connections with the Bartlett factorization (Theorem 3.2 in \cite{deitmar-2024}). When the innovation sequence of $\y$ is non-Gaussian, using an approach based 
on the Lindeberg principle applied to the difference of the Stieltjes transform of the empirical eigenvalue distribution of $\hat{\S}(\nu)$ with the same term evaluated for a well-chosen Gaussian time series, Corollary 3.4 in \cite{deitmar-2024} proves the universality of the limit eigenvalue distribution of $\hat{\S}(\nu)$ found in the Gaussian case. In our opinion, such a result 
should still be valid for the eigenvalue distribution of $\hat{\C}_N(\nu)$ 
in the context of the present paper. However, establishing a CLT in the non-Gaussian case requires a much more precise analysis than what can be obtained using the tools in \cite{deitmar-2024}. We thus believe that 
the generalization of the results of the present paper to the non-Gaussian case
is a quite challenging topic requiring the development of new approaches.

\paragraph{On Assumption (\ref{eq:decroissance-rm}).} 
It is useful to specify at which points (\ref{eq:decroissance-rm}) appears necessary. We first mention that 
Eq. (\ref{eq:expression-1-covariance-omegabs}) requires that (\ref{eq:decroissance-rm}) holds for $\gamma_0 \geq 1$
(see the proof of Lemma A-1 in \cite{loubaton-rosuel-ejs-2021}), while (\ref{eq:representation-covariance-omegam})
holds provided that for each $m$, $s_m$ is $\Ccal^{4}$ with $\sup_{m, \nu} |s_m^{(4)}(\nu)| < +\infty$, a condition which is implied 
by (\ref{eq:decroissance-rm}). Moreover, formula (\ref{eq:expre-Trace-Phimb}) that plays a fundamental role 
to establish the properties of Bartlett's factorization based approximation of the LSS of $\hat{\C}_N(\nu)$ 
requires that $h_m$ is $\Ccal^{4}$ with $\sup_{m, \nu} |h_m^{(4)}(\nu)| < +\infty$. While we could have 
added this property to the assumptions, we have found it more appropriate to formulate Assumption (\ref{eq:decroissance-rm}) 
for $\gamma_0 > 4$, which implies (\ref{eq:decroissance-am}) for $\gamma = 4$. This is because (\ref{eq:decroissance-rm}) 
only depends on the covariance sequences $(r_m)_{m \geq 1}$, and it is thus more interpretable and easier to check in practice. 
In any case, (\ref{eq:decroissance-rm}) is of course not met when some of the time series $(y_m)_{m \geq 1}$ 
are long-range dependent processes in the sense that $s_m(\nu) \rightarrow +\infty$ as $\nu \rightarrow 0$. In such a context, the statistical inference methods based on the periodogram have quite different behavior (see e.g., \cite{beran-feng-ghosh-kulik-2013}, Chap. 5). Potential positive results on the problem addressed in the present paper would be quite different, and their derivation would certainly be challenging.

\subsubsection{Notations}
In order to simplify the notations, the asymptotic regime (\ref{as:asymptotic-regime}) will be denoted by $N \rightarrow +\infty$. 
A number of terms that are studied throughout this paper in the asymptotic regime (\ref{as:asymptotic-regime}) may depend on 
$N$, the frequency $\nu$, and sometimes a complex variable $z$. A typical example is the matrix denoted in the following 
as $\hat{\Q}_N(z,\nu) = \left( \hat{\C}_N(\nu) - z \I_M \right)^{-1}$. In order to simplify the exposition, we will very often omit 
the dependency with respect to $N, \nu$, or $z$ in the absence of ambiguity. Matrix $\hat{\Q}_N(z,\nu)$ will in particular be denoted 
$\hat{\Q}_N(z), \hat{\Q}(z)$, or $\hat{\Q}$.

If $x$ is a random variable, we denote by $x^{\circ}$ the zero-mean random variable
\begin{equation}
    \label{eq:def-xrond}
   x^{\circ} = x - \mathbb{E}(x). 
\end{equation}
A zero-mean random vector $\x$ with complex-valued entries is said to be $\Ncal_c(0,\Sigmabs)$ distributed if $\mathbb{E}(\x \x^{*}) = \Sigmabs$ and if every linear combination $y$ of the components of $\x$ is a complex Gaussian random variable (i.e., $\mathrm{Re}(y)$ and $\mathrm{Im}(y)$ are independent and identically distributed Gaussian real random variables).

If $\A$ is a matrix, $\|\A\|$ and $\|\A\|_{F}$ represent the spectral norm and the Frobenius norm of $\A$, respectively. If $\A$ is a $p \times p$ matrix, we recall that $\mathrm{dg}(\A)$ represents the diagonal 
matrix $\mathrm{dg}(\A) = \A \odot \I_p$, where $\odot$ is the Hadamard product. If $(a_i)_{i=1, \ldots, p}$ 
represent real or complex numbers, we also denote by $\mathrm{dg}\left((a_i)_{i=1, \ldots, p}\right)$ the $p \times p$ diagonal matrix with diagonal entries $(a_i)_{i=1, \ldots, p}$. If $\A$ is a Hermitian $p \times p$ matrix, the eigenvalues of $\A$ are denoted $(\lambda_k(\A))_{k=1, \ldots, p}$, and are arranged in decreasing order. If $\A$ and $\B$ are two Hermitian matrices, $\A \geq \B$ means that $\A - \B$ is a positive matrix. $\A^{T}$, $\bar{\A}$, and $\A^{*}$ represent the transpose, the conjugate, and the conjugate transpose of $\A$, respectively. Finally, $(\e_i)_{i=1, \ldots, M}$ and $(\f_j)_{j=1, \ldots, B+1}$ represent the canonical bases of $\mathbb{C}^{M}$ and $\mathbb{C}^{B+1}$, respectively.

$\mathcal{C}^{p}$ is the set of all (possibly complex-valued) functions defined on $\mathbb{R}$ whose first $p$ derivatives exist and are continuous. $\mathcal{C}_{c}^{p}$
is the subset of all compactly supported functions of $\mathcal{C}^{p}$.

In the following, if $z = x + i y \in \mathbb{C}$, we define the differential operators 
$\frac{\partial}{\partial z}$ and $\frac{\partial}{\partial \bar{z}}$ as 
$
\frac{\partial}{\partial z} = \frac{\partial}{\partial x} - i \frac{\partial}{\partial y}, \, \frac{\partial}{\partial  \bar{z}} = \frac{\partial}{\partial x} + i \frac{\partial}{\partial y}.
$
It is clear that a real- or complex-valued function $\tilde{h}(\x,\y)$ defined on $\mathbb{R}^{2N}$ can be considered as a function $h(\z, \z^{*})$, where $z = \x + i \y$. In the following, if $h$ is $\mathcal{C}^{1}$, i.e., if 
$\tilde{h}$ is itself $\mathcal{C}^1$, we denote by $\nabla h$ the vector 
\begin{equation}
    \label{eq:def-gradient-h-z-zbar}
 \nabla h(\z, \z^{*})  = \left( \begin{array}{c}  \frac{\partial h}{\partial z} \\ \frac{\partial h}{\partial  \bar{z}} \end{array} \right).
\end{equation}

A nice constant is a positive constant that does not depend on $N, M, B$, the index $m$ of the time series, the frequency $\nu$, or the complex variable $z$ of the various functions defined on subsets of $\mathbb{C}$ that are considered in this paper. In the following, $C$ is a generic notation for nice constants. Its value may change from one line to the other. In the following, when we write that a term $t_{m,N}(\nu)$ verifies $t_{m,N}(\nu) = \Ocal(a_N)$ (resp. $t_{m,N}(\nu) = o(a_N)$) for some sequence of positive real numbers 
$(a_N)_{N \geq 1}$, we mean that there exists a nice constant $C$ such that $|t_{m,N}(\nu)| \leq C a_N$ (resp. $|t_{m,N}(\nu)| \leq C b_N$,
where $\frac{b_N}{a_N} \rightarrow 0$). A nice polynomial is a polynomial whose degree and coefficients are nice constants. If $z \in \mathbb{C}^{+}$, $C(z)$ represents a generic notation 
for terms such as $P_1(|z|) P_2\left(\frac{1}{\mathrm{Im}z}\right)$, where $P_1$ and $P_2$ are nice polynomials, and its value may change from one line to the other. It is easily seen that for each $z \in \mathbb{C}^{+}$, we have $C_1(z) + C_2(z) \leq C_3(z)$ and $\left(C_1(z)\right)^{x} \leq C_2(z)$ if $x \in \mathbb{R}^{+}$. If $(t_{m,N}(z,\nu))_{N \geq 1}$ is a sequence of functions of the complex variable $z$ defined on $\mathbb{C}^{+}$ and depending
on $m$ and $\nu$, and if 
$(a_N)_{N \geq 1}$ is a sequence of positive real numbers, the notation $t_{m,N}(z,\nu) = \Ocal_z(a_N)$ (resp. 
$t_{m,N}(z,\nu) = o_z(a_N)$) means that
\begin{equation}
    \label{eq:def-Oz-aN}
    |t_{m,N}(z,\nu)| \leq C(z) \, a_N \; (\mathrm{resp.} \; |t_{m,N}(z,\nu)| \leq C(z) \, b_N),
\end{equation}
for each $z \in \mathbb{C}^{+}$, where $(b_N)_{N \geq 1}$ is a sequence of positive real numbers such that 
$\lim_{N \rightarrow +\infty} \frac{b_N}{a_N} = 0$.

If $\mu$ is a positive finite measure, the Stieltjes transform $s_{\mu}$ of $\mu$ is the function defined on $\mathbb{C} \setminus \mathrm{Supp}(\mu)$ by 
\begin{equation}
    \label{eq:def-stieltjes}
    s_{\mu}(z) = \int \frac{d \mu(\lambda)}{\lambda - z}.
\end{equation}
$s_{\mu}$ verifies $|s_{\mu}(z)| \leq \frac{\mu(\mathbb{R})}{\mathrm{Im}z}$ 
and $\mathrm{Im}(s_{\mu}(z)) > 0$ for each $z \in \mathbb{C}^{+}$. If $\mu$ is carried by $\mathbb{R}^{+}$, we also have $\mathrm{Im}(z s_{\mu}(z)) > 0$ on $\mathbb{C}^{+}$, and if $a > 0$, the function $- \frac{1}{z(1 + a s_{\mu}(z))}$ coincides with the Stieltjes transform of a probability measure carried by $\mathbb{R}^{+}$.

If $\A$ is a Hermitian $p \times p$ matrix, the empirical eigenvalue distribution of 
$\A$ is the probability measure $\mu = \frac{1}{p} \sum_{k=1}^{p} \delta_{\lambda_k(\A)}$. 
It is clear that the Stieltjes transform $s_{\mu}$ of $\mu$ is given by 
$
s_{\mu}(z) = \frac{1}{p} \Tr \Q_{\A}(z),
$
where $\Q_{\A}$ represents the resolvent of $\A$ defined as the $p \times p$
matrix-valued function defined by 
\begin{equation}
\label{eq:def-resolvente}
\Q_{\A}(z) = \left( \A - z \I \right)^{-1}.
\end{equation}
We mention that $\Q_{\A}(z)$ verifies $\| \Q_{\A}(z) \| \leq \frac{1}{\mathrm{Im}z}$ 
for $z \in \mathbb{C}^{+}$ as well as the resolvent identity
\begin{equation}
\label{eq:resolvent-identity}
\A \Q_{\A}(z) = \Q_{\A}(z) \A = \I + z \Q_{\A}(z).
\end{equation}

\subsection{Overview of the paper}
In Section \ref{sec:tools}, we present some useful tools that were also used in \cite{loubaton-rosuel-ejs-2021} and provide some new properties that are needed in the context of the present paper. We introduce in Subsection \ref{subsec:domination} the concept of stochastic domination, recall in Subsection \ref{subsec:properties-wishart} some properties of large Wishart matrices in the asymptotic regime defined by Assumption \ref{as:asymptotic-regime} as well as the Nash-Poincaré inequality and the integration by parts formula. Subsection 
\ref{subsubsec:helffer-sjostrand} is devoted to the Helffer-Sjöstrand formula,
while Subsection \ref{subsec:gaussian-concentration} presents the classical Gaussian concentration inequality and some useful adaptations to functions that are not Lipschitz on the whole space. Section \ref{subsec:hanson-wright}
is devoted to the Hanson-Wright inequality and its stochastic domination counterpart. In Section \ref{sec:background}, we provide a review of the main results derived in \cite{loubaton-rosuel-ejs-2021}. In Subsection \ref{subsec:background-stochastic-representation}, we recall that matrix $\hat{\C}_N(\nu)$ can be interpreted as a Wishart matrix, up to an error term whose order of magnitude is provided. 
In Subsection \ref{subsec:location-eigenvalues-f-compactly-supported}, we also recall that the eigenvalues of $\hat{\C}_N(\nu)$ are, with high probability, located in a neighborhood of the support of the 
Marchenko-Pastur distribution, and take advantage of this property to show that it is sufficient to 
establish the results of the present paper when the test function $f$ is compactly supported. Finally, we provide in Subsection \ref{subsec:background-study-hatphi} useful properties of the LSS of 
$\hat{\C}_N(\nu)$, and improve the accuracy of some of the evaluations of \cite{loubaton-rosuel-ejs-2021}.
Section \ref{sec:bartlett} is devoted to the properties of 
$\hat{\C}_{N,\mathfrak{b}}(\nu)$ and $\hat{f}_{N,\mathfrak{b}}(\nu)$,
the Bartlett's factorization based versions of 
$\hat{\C}_N(\nu)$ and $\hat{f}_N(\nu)$. In Subsection \ref{subsec:representations-omega}, 
we compare vector $(\xi_{y_m}(\nu - B/2N), \ldots, \xi_{y_m}(\nu + B/2N))$ and its Bartlett's factorization, and derive results that will be useful to prove that the concentration inequality (\ref{eq:concentration-wN-wNb}) holds. 
Subsection \ref{subsec:approximation-sup-lss} provides useful properties of $\hat{f}_{N,\mathfrak{b}}(\nu)$. Section \ref{sec:clt-theta-given-frequency} establishes the CLT on $B \theta_{N,\mathfrak{b}}(f,\nu)$.  
Subsection \ref{subsec:simplification-hatphib} proves the representation (\ref{eq:representation-B-thetaNb}) of $\theta_{N,\mathfrak{b}}(f,\nu)$, and contains the most technical results of the paper, while Subsection \ref{subsec:CLT-W} establishes that $\frac{B w_{N,\mathfrak{b}}(f,\nu)}{\sigma_N(f)} \rightarrow \Ncal(0,1)$ using the Stein method. Based on the results of Subsection \ref{subsec:approximation-sup-lss}, we deduce in 
Subsection \ref{subsec:CLT-B-theta} that $\theta_N(f,\nu)$ verifies (\ref{eq:representation-B-thetaN}) and satisfies the CLT (\ref{eq:CLT-thetaN}). Section \ref{sec:clt-zeta} establishes the CLTs (\ref{eq:clt-zeta1}) and (\ref{eq:clt-zeta}). In Subsection \ref{subsec:proof-representation-bartlett-theta}, we prove the representation 
(\ref{eq:representation-theta-bartlett}) of $\theta_N(f,\nu)$, and deduce from this in 
Subsection \ref{subsec:study-zeta} the properties of the statistics $\zeta_{N,1}(f)$ and $\zeta_{N,2}(f)$. 
The properties of the statistics $\hat{\theta}_N(f,\nu)$, $\hat{\zeta}_{N,1}(f)$, and $\hat{\zeta}_{N,2}(f)$ obtained by replacing $r_N(\nu)$ with the estimator $\hat{r}_N(\nu)$ 
proposed in \cite{loubaton-rosuel-ejs-2021} are addressed in Subsection \ref{subsec:estimation-rN}. We provide in Section \ref{sec:power} a brief power analysis, and establish that under certain alternatives, the proposed tests are consistent under mild extra assumptions. We finally present in Section \ref{sec:simulations} 
numerical experiments that assess the performance of the test statistics  
$\hat{\zeta}_{N,1}(f)$ and $\hat{\zeta}_{N,2}(f)$ in various scenarios. 
The proofs of a number of technical results are provided in the Appendix.

\section{Useful tools}
\label{sec:tools}
\subsection{Stochastic domination}
\label{subsec:domination}
We recall the concept of stochastic domination, adapted from \cite{erdos-knowles-yau-ihp-2013},  and introduced in \cite{loubaton-rosuel-ejs-2021} in order to manage in a  convenient way the various Gaussian exponential concentration inequalities that are used in this paper. We also add an extra definition and a new Lemma. 
\begin{definition}
\label{def:stochastic-domination}
We consider $X = (X^{(N)}(u), u \in U^{(N)}, N \in \mathbb{N})$  a family of non negative
random variables, where $U^{(N)}$ is a set that may depend on $N$. If $a = (a_N)_{N \in \mathbb{N}}$
is a sequence of positive real numbers, the family $X$ is said to be stochastically dominated 
by $a$, denoted $X \prec a$, sometimes $X^{(N)} \prec a_N$ or 
$X^{(N)}(u)  \prec a_N$, if for each $\epsilon > 0$, there exists  $\gamma > 0$ depending only on $\epsilon$ such that 
\begin{equation}
    \label{eq:definition-stochastic-domination}
    \sup_{u \in U^{(N)}} P\left( X^{(N)}(u) > a_N N^{\epsilon}\right) \leq e^{-N^{\gamma}},
\end{equation}
for each $N$ large enough. 
\end{definition}
 \begin{definition}
If $X = (X^{(N)}(u), u \in U^{(N)}, N \in \mathbb{N})$ is a family of possibly complex-valued random variables, we will say that $X^{(N)} = \Ocal_{\prec}(a_N)$ if $|X^{(N)}| \prec a_N$, and that 
$X^{(N)} = o_{\prec}(a_N)$ if there exists a sequence $(b_N)_{N \in \mathbb{N}}$ 
such that $X^{(N)} = \Ocal_{\prec}(b_N)$ and $\frac{b_N}{a_N}  \leq \frac{1}{N^{\delta}}$ for some $\delta > 0$ for each $N$ large enough.
\end{definition}

\begin{definition}
If $X = (X^{(N)}(z,u): N \geq 1, z \in \mathbb{C}^{+}, u \in U^{(N)})$ is a family of possibly complex-valued random variables such that for each $u \in U^{(N)}$, $z \rightarrow X^{(N)}(z,u)$ is defined on $\mathbb{C}^{+}$ and if $(a_N)_{N \geq 1}$ is a sequence of positive real numbers, we say that 
$X^{(N)}(z) = \mathcal{O}_{\prec,z}(a_N)$ if there exists 2 nice polynomials $P_1$ and $P_2$ and a family $Y = (Y^{(N)}(u), N \geq 1, u \in U^{N})$ of positive random variables verifying 
$Y^{(N)} \prec a_N$ and 
\begin{equation}
    \label{eq:def-Ozprec}
    |X^{(N)}(z,u)| \leq P_1(|z|) \, P_2\left(\frac{1}{\Im z}\right) \, Y^{(N)}(u),
\end{equation}
for each $(z,u) \in \mathbb{C}^{+} \times U^{(N)}$. Moreover, we say that $X^{(N)}(z) = o_{\prec,z}(a_N)$ if the family $Y$ verifies $Y^{(N)}(u)  = o_{\prec}(a_N)$.
\end{definition}

In the context of this paper, the families of random variables under investigation will 
frequently depend on $m = 1, \ldots, M=M(N)$, $M(N) \leq N$ and on the frequency $\nu \in [0,1]$, 
i.e. $X = (X^{(N)}_{m}(\nu), \nu \in [0,1], m=1, \ldots, M(N))$. In order to simplify the 
notations, the stochastic domination $X = \Ocal_{\prec}(a_N)$ will be sometimes denoted 
as $X_m(\nu) = \Ocal_{\prec}(a_N)$, or even $X_m  = \Ocal_{\prec}(a_N)$. \\

We also recall the following properties of the stochastic domination relationship
(see \cite{loubaton-rosuel-ejs-2021}). 
\begin{property}
  \label{pr:properties-stochastic-domination}
  \begin{itemize}
      \item (i) If $X_1^{(N)} \prec a_{1,N}$ and  $X_2^{(N)} \prec a_{2,N}$, then 
    $X_1^{(N)} + X_2^{(N)} \prec  a_{1,N} + a_{2,N}$ and 
    $X_1^{(N)} X_2^{(N)} \prec  a_{1,N} a_{2,N}$,
    \item (ii) $X^{(N)} \prec a_N$ if and only if $X^{(N)} \prec N^{\epsilon} a_N$ for each 
    $\epsilon > 0$.
    \end{itemize}
\end{property}
We finally mention the following useful property. 
\begin{lemma}
\label{le:domination-moyenne}
If the set $U^{(N)}$ is given by $U^{(N)} = \{ 1,2, \ldots, M(N) \} \times V^{(N)}$ for a certain set $V^{(N)}$ and for $M(N) \leq N$,  then if $X^{(N)} = \Ocal_{\prec}(a_N)$ (respectively 
$X^{(N)} = o_{\prec}(a_N)$), the family $Y^{(N)}(v), v \in V^{(N)}$ defined by 
$$
Y^{(N)}(v) = \frac{1}{M(N)} \sum_{m=1}^{M(N)} X^{(N)}_m(v),
$$
verifies $Y^{(N)} = \Ocal_{\prec}(a_N)$ (resp. $Y^{(N)} = o_{\prec}(a_N)$). 
\end{lemma}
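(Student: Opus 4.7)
My plan is to reduce the claim to a union bound over the index $m$. Fix $\epsilon > 0$. Since $X^{(N)} = \Ocal_{\prec}(a_N)$, Definition \ref{def:stochastic-domination} provides $\gamma > 0$ depending only on $\epsilon$ such that
\begin{equation*}
\sup_{(m,v) \in \{1,\ldots,M(N)\}\times V^{(N)}} P\bigl( |X^{(N)}_m(v)| > a_N N^{\epsilon}\bigr) \leq e^{-N^{\gamma}},
\end{equation*}
for each $N$ large enough. The key deterministic observation is that on the event $\{\max_{m=1,\ldots,M(N)} |X^{(N)}_m(v)| \leq a_N N^{\epsilon}\}$, the triangle inequality gives $|Y^{(N)}(v)| \leq a_N N^{\epsilon}$ as well. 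Hence for each $v \in V^{(N)}$, a union bound yields
\begin{equation*}
P\bigl(|Y^{(N)}(v)| > a_N N^{\epsilon}\bigr) \leq \sum_{m=1}^{M(N)} P\bigl(|X^{(N)}_m(v)| > a_N N^{\epsilon}\bigr) \leq M(N)\, e^{-N^{\gamma}}.
\end{equation*}

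It remains to absorb the prefactor $M(N)$ into the exponential. Since $M(N) \leq N$ by assumption, we have $M(N) \, e^{-N^{\gamma}} \leq e^{\log N - N^{\gamma}}$, and for any fixed $\gamma' \in (0,\gamma)$ the inequality $\log N \leq N^{\gamma} - N^{\gamma'}$ holds for all $N$ large enough. Consequently,
\begin{equation*}
\sup_{v \in V^{(N)}} P\bigl(|Y^{(N)}(v)| > a_N N^{\epsilon}\bigr) \leq e^{-N^{\gamma'}},
\end{equation*}
for $N$ large, which is exactly the stochastic domination $Y^{(N)} \prec a_N$, i.e.\ $Y^{(N)} = \Ocal_{\prec}(a_N)$.

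For the refinement $X^{(N)} = o_{\prec}(a_N)$, by definition there exists a sequence $(b_N)$ with $X^{(N)} = \Ocal_{\prec}(b_N)$ and $b_N/a_N \leq N^{-\delta}$ for some $\delta > 0$ and $N$ large. Applying the argument of the previous paragraph to $b_N$ gives $Y^{(N)} = \Ocal_{\prec}(b_N)$, and the same $\delta$ then witnesses $Y^{(N)} = o_{\prec}(a_N)$. I do not expect any serious obstacle here: the entire content of the lemma is the elementary fact that $M(N) \leq N$ allows a polynomial prefactor to be swallowed by the stretched-exponential tail, together with the trivial bound of the average by its maximum. The only small point to be careful about is choosing $\gamma'$ strictly less than $\gamma$ to guarantee $\log N \leq N^{\gamma} - N^{\gamma'}$ eventually, which is immediate.
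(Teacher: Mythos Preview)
Your proof is correct and follows essentially the same approach as the paper: a union bound over $m=1,\ldots,M(N)$ together with the observation that the polynomial prefactor $M(N)\leq N$ is absorbed by the stretched-exponential tail. The paper writes the final estimate as $M e^{-N^{\gamma}} \leq e^{-N^{\gamma/2}}$ for $N$ large, which is exactly your argument with the specific choice $\gamma'=\gamma/2$.
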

\begin{proof} 
We just proof the Lemma under the condition that $X^{(N)} = \Ocal_{\prec}(a_N)$, because 
$X^{(N)} = o_{\prec}(a_N)$ means that  $X^{(N)} = \Ocal_{\prec}(b_N)$ 
where $\frac{b_N}{a_N} \leq \frac{1}{N^{\delta}}$; it is thus sufficient to replace $a_N$ by $b_N$ in the following proof. We remark $P(|Y^{(N)}(v)| > N^{\epsilon} a_N) = P(|\sum_{m=1}^{M} X^{(N)}_m(v)|> M N^{\epsilon} a_N)$
verifies 
\begin{align*}
    P(|Y^{(N)}(v)| > N^{\epsilon} a_N) 
    &\leq P(\sum_{m=1}^{M} |X^{(N)}_m(v)|> M N^{\epsilon} a_N) 
    \\
    &\leq \sum_{m=1}^{M} P(|X^{(N)}_m(v)|>  N^{\epsilon} a_N).
\end{align*}
As $\sup_{m,v} P(|X^{(N)}_m(v)|>  N^{\epsilon} a_N) \leq e^{-N^{\gamma}}$ 
for some $\gamma > 0$, we obtain that $\sup_{v} P(|Y^{(N)}(v)| > N^{\epsilon} a_N) \leq M e^{-N^{\gamma}} \leq e^{-N^{\gamma/2}}$ for each $N$ large enough.
\end{proof}
In order to simplify the presentation of the next results, we finally introduce the following definitions. 
\begin{definition}
\label{def:exponentially-high-small-probability}
If $\Lambda = \left((\Lambda_N(u))_{u \in U^{(N)}}\right)_{N \geq 1}$ is a family of events, $\Lambda$ is said to hold with 
exponentially high (resp. small) probability if 
there exists $\gamma > 0$ such that $\inf_{u \in U^{(N)}} P(\Lambda_N(u)) \geq 1 - e^{-N^{\gamma}}$ 
(resp.  $\sup_{u \in U^{(N)}} P(\Lambda_N(u)) \leq  e^{-N^{\gamma}}$) for each $N$ large enough. 
\end{definition}
\subsection{Properties of the eigenvalues and of the resolvent of large Wishart matrices.}
\label{subsec:properties-wishart}
We first recall (see e.g. \cite{loubaton-rosuel-ejs-2021}) the following result. 
\begin{proposition}
\label{prop:Lambda-holds-with-high-proba}
If $\X = \X_N(u), u \in U^{(N)}$ is a family of 
$M \times (B+1)$ matrix with i.i.d. $\Ncal_c(0,1)$ distributed entries, and if $M=M(N)$ and $B = B(N)$ satisfy Assumption 
\ref{as:asymptotic-regime}, then, for each $\epsilon > 0$, the family of events $\Lambda_{N,\epsilon}(u)$ defined by 
\begin{equation}
\label{eq:def-Lambda-N-epsilon}
\Lambda_{N,\epsilon}(u) = \left\{ \lambda_k\left( \frac{\X_{N}(u) \X^{*}_{N}(u)}{B+1} \right) \in 
\mathrm{Supp} \mu_{MP}^{c} + \varepsilon, 
k=1, \ldots, M \right\},
\end{equation}
holds with exponential high probability. 
\end{proposition}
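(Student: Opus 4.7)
The plan is to reduce the statement to classical extreme-singular-value concentration for rectangular complex Gaussian matrices, and then translate exponential decay in $B$ into exponential decay in $N^{\gamma}$ using Assumption \ref{as:asymptotic-regime}. First, I would observe that for every $u \in U^{(N)}$ the matrix $\X_N(u)$ has the same marginal distribution, namely i.i.d.\ $\Ncal_c(0,1)$ entries on an $M \times (B+1)$ array. Consequently $P(\Lambda_{N,\epsilon}(u))$ does not depend on $u$, so the infimum over $u$ in Definition \ref{def:exponentially-high-small-probability} is trivial, and it suffices to bound, for a single matrix $\X_N$, the probability that some eigenvalue of $\frac{1}{B+1}\X_N \X_N^{*}$ lies outside a small neighborhood of $[\lambda_-,\lambda_+]$.

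Second, I would invoke the classical Davidson--Szarek-type bounds on the extreme singular values of a complex Gaussian matrix. These follow from the Gaussian Lipschitz concentration inequality of Subsection \ref{subsec:gaussian-concentration}, applied to the $1$-Lipschitz maps $\X \mapsto s_{\max}(\X)/\sqrt{B+1}$ and $\X \mapsto s_{\min}(\X)/\sqrt{B+1}$, combined with Gordon's bounds
\[
\mathbb{E}\bigl[s_{\max}(\X_N)\bigr]/\sqrt{B+1} \leq 1 + \sqrt{c_N}, \qquad
\mathbb{E}\bigl[s_{\min}(\X_N)\bigr]/\sqrt{B+1} \geq 1 - \sqrt{c_N}.
\]
They yield, for every $t>0$,
\[
P\!\left( s_{\max}(\X_N)/\sqrt{B+1} \geq 1 + \sqrt{c_N} + t \right) + P\!\left( s_{\min}(\X_N)/\sqrt{B+1} \leq 1 - \sqrt{c_N} - t \right) \leq 2\,e^{-(B+1)t^{2}/2}.
\]

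Third, since $c_N \to c$ and the endpoints $\lambda_\pm = (1 \pm \sqrt{c})^{2}$ depend continuously on $c$, for every fixed $\epsilon > 0$ I can pick $t = t(\epsilon) > 0$ such that $[(1-\sqrt{c_N}-t)^{2},(1+\sqrt{c_N}+t)^{2}] \subset \mathrm{Supp}(\mu_{MP}^{c}) + \epsilon$ for every $N$ large enough. On the complement of the two bad events above, all eigenvalues of $\frac{1}{B+1}\X_N\X_N^{*}$ fall inside this interval, hence inside $\mathrm{Supp}(\mu_{MP}^{c}) + \epsilon$. Finally, Assumption \ref{as:asymptotic-regime} gives $B(N) \geq C_1' N^{\alpha}$ with $\alpha > 1/2$, so the bound $2\,e^{-(B+1)t(\epsilon)^{2}/2}$ is at most $e^{-N^{\gamma}}$ for any $\gamma < \alpha$ and $N$ large enough, which is exactly the required exponentially high probability.

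I do not expect any serious technical obstacle: the only slightly delicate point is the lower bound on $\mathbb{E}\,s_{\min}(\X_N)$, which is Gordon's non-trivial minimax inequality for the smallest singular value of a rectangular Gaussian matrix and is what ensures sharp concentration at the \emph{lower} edge of the Marchenko--Pastur support. Everything else reduces to $1$-Lipschitz Gaussian concentration combined with the polynomial growth of $B$ in $N$.
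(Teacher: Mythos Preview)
The paper does not actually prove this proposition; it simply recalls it from \cite{loubaton-rosuel-ejs-2021}, so there is no proof in the present paper to compare against. Your argument is a standard and correct route to the result: reduce to a single matrix by identical marginals, control the extreme singular values via Gaussian Lipschitz concentration around Gordon-type expectation bounds, and convert $e^{-cB}$ into $e^{-N^{\gamma}}$ using $B \asymp N^{\alpha}$.

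One small caution worth flagging: Gordon's minimax inequality and the Davidson--Szarek bounds in their classical form are stated for \emph{real} standard Gaussian matrices, whereas here $\X_N$ is complex $\Ncal_c(0,1)$. The Lipschitz-concentration half of your argument transfers immediately (writing $\X$ in terms of its real and imaginary parts, as the paper itself does after Remark \ref{re:evaluation-sigmaN}), but the sharp expectation bounds $\mathbb{E}\,s_{\max} \le \sqrt{B+1}+\sqrt{M}$ and $\mathbb{E}\,s_{\min} \ge \sqrt{B+1}-\sqrt{M}$ need a separate justification in the complex case. This is not a genuine obstacle---the complex analogues are known and in fact tighter---but you should either cite a complex-Gaussian version or argue it directly rather than invoking Gordon's real inequality verbatim.
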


We now introduce the Nash-Poincaré inequality and the integration by parts formula, two Gaussian tools 
which were first used in conjunction in \cite{pastur-2005} to analyse the asymptotic behaviour of large Gaussian random matrices. We also refer to \cite{pastur-shcherbina-2011} for an exhaustive reference. 
\begin{proposition}
    \label{prop:nash-poincare-ipp-iid}
    If $\Z$ is a $N$--dimensional vector $\Ncal_c(0,\sigma^{2} \I_N)$ distributed, and if $h(\Z, \Z^{*})$ is a 
    $\mathcal{C}^1$ function with polynomialy bounded first derivatives, then, we have
    \begin{equation}
        \label{eq:nash-poincare-iid}
        \mathrm{Var}\left( h(\Z, \Z^{*}) \right) \leq \sigma^{2} \, \mathbb{E} \left( \| \nabla h \|^{2} \right),
    \end{equation}
    and 
    \begin{equation}
        \label{eq:ipp-iid}
        \mathbb{E}\left( \Z_{ij} h(\Z,\Z^{*}) \right) = \sigma^{2} \mathbb{E} \left( \frac{\partial h}{\partial \bar{\Z}_{ij}} \right).
    \end{equation}
\end{proposition}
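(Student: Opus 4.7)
The plan is to reduce both statements to their real-Gaussian counterparts by decomposing $\Z = \X + i \Y$, where $\X$ and $\Y$ are independent $\Ncal(0,(\sigma^{2}/2) \I_N)$ real Gaussian vectors; this is well-defined since every component $Z_{ij}$ of $\Z$ is a complex Gaussian with $\mathbb{E}|Z_{ij}|^{2} = \sigma^{2}$ whose real and imaginary parts are i.i.d. real Gaussians with variance $\sigma^{2}/2$. Viewing the function $h(\Z,\Z^{*})$ as a function $\tilde{h}(\X,\Y)$ on $\Rbb^{2N}$, the Wirtinger operators of the paper yield the bookkeeping identities $\partial h / \partial \bar{Z}_{ij} = \partial \tilde{h}/\partial X_{ij} + i \partial \tilde{h}/\partial Y_{ij}$ and a corresponding expression of $\|\nabla h\|^{2}$ in terms of the real partial derivatives of $\tilde{h}$.

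For the integration by parts formula, I would first apply the real-valued Stein identity, namely $\mathbb{E}(W g(W)) = \tau^{2} \mathbb{E}(g'(W))$ for a one-dimensional real Gaussian $W$ of variance $\tau^{2}$, to the variables $X_{ij}$ and $Y_{ij}$ (each of variance $\sigma^{2}/2$) conditionally on the remaining coordinates, after justifying the use of Fubini's theorem through the polynomial bound on the first derivatives of $h$. This produces $\mathbb{E}(X_{ij}\, h) = (\sigma^{2}/2)\, \mathbb{E}(\partial \tilde{h}/\partial X_{ij})$ and $\mathbb{E}(Y_{ij}\, h) = (\sigma^{2}/2)\, \mathbb{E}(\partial \tilde{h}/\partial Y_{ij})$. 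Adding the first identity to $i$ times the second and recognizing the resulting combination as $\partial h / \partial \bar{Z}_{ij}$ via the paper's definition gives the stated formula.

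For the Nash–Poincaré inequality, I would invoke the classical Gaussian Poincaré inequality on $\Rbb^{2N}$: if $G \sim \Ncal(0,\tau^{2}\I_{2N})$ and $\tilde{h} \in \Ccal^{1}$ with polynomially bounded derivatives, then $\Var(\tilde h(G)) \leq \tau^{2}\, \mathbb{E}\|\nabla \tilde h(G)\|^{2}$. This fact may either be taken as a black box from \cite{pastur-shcherbina-2011} or derived in a few lines via the Ornstein–Uhlenbeck semigroup and the identity $\tilde h - \mathbb{E}\tilde h = -\int_{0}^{\infty} L\, P_t \tilde h\, dt$ combined with the commutation $\nabla P_t = e^{-t} P_t \nabla$. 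Applied to $\tilde h(\X,\Y)$ with $\tau^{2} = \sigma^{2}/2$, this yields a bound in terms of $\sum_{i,j} \mathbb{E}\big((\partial_{X_{ij}}\tilde h)^{2} + (\partial_{Y_{ij}}\tilde h)^{2}\big)$, which is rewritten as $\mathbb{E}\|\nabla h\|^{2}$ using the conversion rules between Wirtinger and real derivatives induced by the paper's definition of $\nabla h$.

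The main obstacle is pure bookkeeping rather than analysis: the paper's Wirtinger operators do not carry the customary factor $1/2$, and the real and imaginary parts of $\Z$ have variance $\sigma^{2}/2$ rather than $\sigma^{2}$, so one must carefully track these factors of two in the conversions to recover the clean constants $\sigma^{2}$ on the right-hand sides. No real obstruction arises in the analysis itself since the polynomial growth assumption on $\nabla h$ makes all Gaussian integrals and the use of Fubini's theorem automatic.
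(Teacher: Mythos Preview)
The paper does not prove this proposition: it is stated as a standard tool from the random matrix literature, with references to \cite{pastur-2005} and \cite{pastur-shcherbina-2011}, and is then used as a black box throughout. Your reduction to real Gaussians via $\Z = \X + i\Y$ and the real Stein identity together with the real Poincar\'e inequality is exactly the standard route by which these facts are derived in those references, so your approach is correct and there is nothing to compare.

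One caution on the bookkeeping you flag. With the paper's nonstandard Wirtinger operators (no factor $1/2$), one has $\partial \bar z/\partial \bar z = 2$, so testing \eqref{eq:ipp-iid} on $h=\bar Z_{ij}$ gives $\sigma^{2}$ on the left and $2\sigma^{2}$ on the right; carrying out your derivation honestly yields $\mathbb{E}(Z_{ij}h) = (\sigma^{2}/2)\,\mathbb{E}(\partial h/\partial \bar Z_{ij})$ with these conventions, and similarly the Poincar\'e constant comes out as $\sigma^{2}/4$ rather than $\sigma^{2}$. This is an internal inconsistency between the paper's definition of $\partial/\partial \bar z$ and the constants displayed in the proposition, not a flaw in your argument. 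In practice the proposition is only ever applied up to $\Ocal_z(1)$ factors (inside Nash--Poincar\'e bounds or in identities where the constant is tracked separately), so this does not affect the downstream results; but you will not be able to ``recover the clean constants $\sigma^{2}$'' exactly as stated unless you silently restore the conventional $1/2$ in the Wirtinger operators.
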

Proposition \ref{prop:nash-poincare-ipp-iid} can be used to analyse the asymptotic behaviour (the asymptotic regime is defined by 
Assumption \ref{as:asymptotic-regime}) of the expectation $\mathbb{E}(\Q_N(z))$ of the resolvent 
of the Wishart matrix $\frac{\X_N \X_N^{*}}{B+1}$ where $\X_N$ is a $M \times (B+1)$ random matrix with 
i.i.d. $\Ncal_c(0,1)$ entries. We first notice that the properties of the probability distribution of 
$\X_N$ implies that $\mathbb{E}(\Q_N(z)) = \beta_N(z) \I_M$ where $\beta_N(z) = \mathbb{E}(\Q_{m,m}(z))$ for each $m$.
Using Proposition \ref{prop:nash-poincare-ipp-iid} in the case $\Z = \mathrm{Vec}(\X_N)$, it can be shown that 
$\beta_N(z)$ can be written as 
\begin{equation}
    \label{eq:expre-beta-t}
    \beta_N(z) = t_N(z) + \epsilon_N(z),
\end{equation}
where $t_N$ represents the Stieltjes transform of the Marcenko-Pastur distribution $\mu_{MP}^{(c_N)}$ 
and where the error term $\epsilon_N$ verifies $\epsilon_N(z) = \Ocal_z\left( \frac{1}{B^{2}}\right)$. We recall that 
$t_N(z)$ satisfies the equation 
\begin{equation}
    \label{eq:equation-MP-tN}
    t_N(z) = \frac{1}{-z + \frac{1}{1 + c_N t_N(z)}},
\end{equation}
for each $z \in \mathbb{C} \setminus \mathrm{Supp}(\mu_{MP}^{(c_N)})$. If $\tilde{t}_N(z)$ is defined by 
\begin{equation}
    \label{eq:equation-MP-tilde-tN}
    \tilde{t}_N(z) = - \frac{1}{z(1+c_N t_N(z))},
\end{equation}
it is also useful to rewrite (\ref{eq:equation-MP-tN}) as 
\begin{equation}
    \label{eq:equation-MP-tN-tilde-tN}
    t_N(z) = - \frac{1}{z(1+ \tilde{t}_N(z))}.
\end{equation}
It is well known that $\tilde{t}_N$ is the Stieltjes transform of the probability measure 
$c_N \mu_{MP}^{(c_N)} + (1 - c_N) \delta_0$. Among other properties of $t_N$ and $\tilde{t}_N$, we mention 
that $c \left|z t_N(z) \tilde{t}_N(z)\right|^{2} < 1$ if $z \in \mathbb{C}^{+}$, and that 
\begin{equation}
    \label{eq:upper-bound-1-czttilde-carre}
    \frac{1}{1 - c \left|z t_N(z) \tilde{t}_N(z)\right|^{2}} \leq \frac{C(C^{2} + |z|^{2})}{(\mathrm{Im}z)^{4}},
\end{equation}
for each $z \in \mathbb{C}^{+}$ (see e.g. Lemma 1.1 in \cite{loubaton-jotp-2016}). Moreover, the function $w_N(z)$ defined by $w_N(z) = \frac{1}{z t_N(z) \tilde{t}_N(z)}$ verifies $\psi_N(w_N(z)) = z$ for each $z$, where 
$\psi_N(w) = \frac{(w+1)(w+c_N)}{w}$. Moreover, 
if $\mathcal{C}$ is a negatively oriented simple contour enclosing $\mathrm{Supp}(\mu_{MP}^{(c_N)})$,  
then, $w_N(\mathcal{C})$ is a negatively oriented simple contour enclosing $[-\sqrt{c}_N, \sqrt{c}_N]$. \\

We finally notice that $\Q_N^{'}(z) = \Q_N^{2}(z)$, that $\mathbb{E}(\Q_N^{2}(z)) = \beta_N^{'}(z) \I_M$
and that $\epsilon_N^{'}(z) = \beta_N^{'}(z) - t_N^{'}(z)$ verifies $\epsilon_N^{'}(z) = \Ocal_z\left( \frac{1}{B^{2}}\right)$, where $'$ represents the
differentiation operator w.r.t. $z$ in this context.

\subsection{The Helffer-Sjöstrand formula}
\label{subsubsec:helffer-sjostrand}
We briefly recall the Helffer-Sjöstrand formula, which, as mentioned in \cite{loubaton-rosuel-ejs-2021}, can be seen as an alternative to the Stieltjes inversion formula. If $f$ is a $\Ccal^{k+1}$ compactly supported function defined on $\mathbb{R}$, it allows to recover $\int f d\mu$ in terms of the Stieltjes transform $s_{\mu}(z)$ where $\mu$ is a finite positive  measure. For this, we consider the following extension $\Phi_k(f)$ of $f$ defined by 
$$
\Phi_k(f)(x+iy) = \sum_{l=0}^{k} \frac{(iy)^{l}}{l!} f^{(l)}(x) \, \rho(y),
$$
where $\rho$ is $\Ccal^{\infty}$, compactly supported, and takes the value 1 in a neighbourhood 
of $0$. If $\bar{\partial}$ is the differential operator $\bar{\partial} = \partial_x + i \partial_y$, then $\Phi_k(f)$ verifies 
\begin{equation}
    \label{eq:derivee-extension-f}
 \bar{\partial} \Phi_k(f)(x+iy) = \frac{(iy)^{k}}{k!} f^{(k+1)}(x),   
\end{equation}
when $y$ is located in the neighbourhood of $0$ on which $\rho(y) = 1$. This implies that if 
$q(z)$ is a function defined on $\mathbb{C}^{+}$ verifying $|q(z)| \leq 
P_1(|z|) P_2\left(\frac{1}{\mathrm{Im}z}\right)$ with $\mathrm{deg}(P_2) \leq k$, then 
$ \bar{\partial} \Phi_k(f)(z) q(z)$ is well defined and bounded on $\mathbb{C}^{+}$. 
Therefore, the integral 
\begin{equation}
    \label{eq:definition-integrale-helffer-sjostrand}
\int_{\mathbb{C}^{+}} \bar{\partial}\Phi_k(f)(z) \, q(z) \, \diff x \, \diff y
\end{equation}
is well defined. We notice that if the support of $f$ is included into the interval $[a_1, a_2]$, and if we assume without restriction that $\rho$ is supported by $[-1, 1]$, then, 
\begin{equation}
  \label{eq:integrale-Cplus-D}
\int_{\mathbb{C}^{+}} \bar{\partial}\Phi_k(f)(z) \, q(z) \, \diff x \, \diff y = \int_{\Dcal} \bar{\partial}\Phi_k(f)(z) \, q(z) \, \diff x \, \diff y,
\end{equation}
where $\Dcal = [a_1, a_2] \times [0,1]$. It is also useful to point out that if $k$ is large enough, and that 
$(q_N(z,u), z \in \mathbb{C}^{+}, u \in U^{(N)}, N \geq 1)$ is a deterministic family of functions defined on $\mathbb{C}^{+}$ verifying
$q_N(z,u) = \Ocal_{z}(a_N)$ (resp. $q_N(z,u) =  o_{z}(a_N)$), then
\begin{equation}
    \label{eq:transfert-O-z-helffer-sjostrand}
\int_{\mathbb{C}^{+}} \bar{\partial}\Phi_k(f)(z) \, q_N(z,u) \, \diff x \, \diff y = \Ocal(a_N) \; (\mathrm{resp.} \; o(a_N)).  
\end{equation}
If the family $(q_N(z,u), z \in \mathbb{C}^{+}, u \in U^{(N)}, N \geq 1)$ is random and that $q_N(z,u) = \Ocal_{\prec,z}(a_N)$ (resp. $q_N(z,u) \prec o_{\prec,z}(a_N)$), then
\begin{equation}
    \label{eq:transfert-O-prec-z-helffer-sjostrand}
\int_{\mathbb{C}^{+}} \bar{\partial}\Phi_k(f)(z) \, q_N(z,u) \, \diff x \, \diff y = \Ocal_{\prec}(a_N) \; (\mathrm{resp.} \; o_{\prec}(a_N))
\end{equation}
The Helffer-Sjöstrand formula states that if $f$ is a compactly supported $\Ccal^{(k+1)}$ 
function defined on $\mathbb{R}$ and if $\mu$ is a finite positive measure, then, for each $1 \leq l \leq k$, we have 
\begin{equation}
    \label{eq:definition-helffer-sjostrand}
    \int f d\mu  = \frac{1}{\pi} \Re \int_{\mathbb{C}^{+}} \bar{\partial}\Phi_l(f)(z) \, s_{\mu}(z) \, \diff x \, \diff y.
\end{equation}
The right-hand side of (\ref{eq:definition-helffer-sjostrand}) is well defined because 
$|\s_{\mu}(z)| \leq \frac{1}{\mathrm{Im}(z)}$ for $z \in \mathbb{C}^{+}$. We also notice that the Helffer-Sjöstrand formula is still valid
if the measure $\mu$ is replaced by a compactly supported distribution. We refer the reader to e.g. Subsection 9.1 in \cite{loubaton-jotp-2016}. 
\subsection{Gaussian concentration inequalities}
\label{subsec:gaussian-concentration}
We consider a family $(\X_N)_{N \geq 1}$ of $N$-dimensional $\mathcal{N}(0,\I)$-distributed random vectors. It is well known that if $h_N$ is a Lipschitz function from 
$\mathbb{R}^{N}$ to $\mathbb{R}$ with Lipschitz constant $\sigma_N$, then the 
following standard Gaussian concentration inequality holds: 
\begin{equation}
    \label{eq:standard-gaussian-concentration}
 P\left( |h_N(\X_N) - \mathbb{E}(h_N(\X_N))| \geq  t \right) \leq C_1 \exp-\left[C_2\left(\frac{t}{\sigma_N}\right)^{2} \right],
\end{equation}
for some universal positive constants $C_1$ and $C_2$. In the following, 
we will need to adapt the inequality (\ref{eq:standard-gaussian-concentration})
to the case where $h_N$ is $\sigma_N$-Lipschitz on the set $\X_N(A_N)$ 
where $A_N$ is an event verifying 
\begin{equation}
    \label{eq:probability-A-complementaire}
    P\left( A_N^{c}\right) \leq e^{-N^{\gamma}},
\end{equation}
for each $N$ large enough for some constant $\gamma > 0$. Then, we have the following refinement of
(\ref{eq:standard-gaussian-concentration}) established in the Appendix \ref{sec:proof-lemma-concentration}. 
\begin{lemma}
    \label{le:conditional-concentration}
    We consider a sequence $(h_N)_{N \geq 1}$ of real valued functions 
    defined on $\mathbb{R}^{N}$ and satisfying the following properties:
    \begin{itemize}
        \item 
    \begin{equation}
        \label{eq:second-moments-f-X}
        \sup_{N \geq 1} \mathbb{E}\left( (h_N(\X_N))^{2} \right) < +\infty,
    \end{equation}
    \item
   $h_N$ is  for each $N$
    $\sigma_N$-Lipschitz on the set $\X_N(A_N)$ where 
    $(A_N)_{N \geq 1}$ are events satisfying (\ref{eq:probability-A-complementaire})
    and where $\sigma_N \geq C \frac{1}{N^{a}}$ for some $a > 0$.
    \end{itemize}
    Then, the following concentration inequality holds:
  \begin{equation}
    \label{eq:conditional-gaussian-concentration}
 P\left( |h_N(\X_N) - \mathbb{E}(h_N(\X_N))| \geq t \right) \leq C_1' \exp-\left[C_2' (t/\sigma_N)^{2}\right]  + e^{-N^{\gamma}},
\end{equation}  
for some constants $C_1'$ and $C_2'$ for each $N$ large enough. 
\end{lemma}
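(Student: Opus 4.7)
The plan is to replace $h_N$ by a globally $\sigma_N$-Lipschitz extension $\tilde h_N$, apply the standard Gaussian concentration inequality (\ref{eq:standard-gaussian-concentration}) to $\tilde h_N$, and show that $\mathbb{E}(\tilde h_N(\X_N))$ and $\mathbb{E}(h_N(\X_N))$ differ by a super-polynomially small quantity, so that the two means can be interchanged up to the exponentially small term $e^{-N^{\gamma}}$ already present on the right-hand side of (\ref{eq:conditional-gaussian-concentration}).

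First I would take the McShane--Whitney extension
\begin{equation*}
\tilde h_N(x) = \inf_{y \in \X_N(A_N)} \bigl[ h_N(y) + \sigma_N \|x - y\| \bigr],
\end{equation*}
which is well defined since $A_N$ is non-empty for large $N$, is $\sigma_N$-Lipschitz on $\Rbb^N$, and coincides with $h_N$ on $\X_N(A_N)$. Applying (\ref{eq:standard-gaussian-concentration}) to $\tilde h_N$ gives
\begin{equation*}
P\bigl( |\tilde h_N(\X_N) - \mathbb{E}(\tilde h_N(\X_N))| \geq s \bigr) \leq C_1 \exp\!\bigl[ -C_2 (s/\sigma_N)^2 \bigr].
\end{equation*}
To return to $h_N$, I would split $P(|h_N(\X_N) - \mathbb{E}(h_N(\X_N))| \geq t) \leq P(A_N^c) + P(|h_N(\X_N) - \mathbb{E}(h_N(\X_N))| \geq t,\, A_N)$, use $h_N(\X_N) = \tilde h_N(\X_N)$ on $A_N$ to bound the second term by $P(|\tilde h_N(\X_N) - \mathbb{E}(\tilde h_N(\X_N))| \geq t - |\Delta_N|)$ with $\Delta_N := \mathbb{E}(\tilde h_N(\X_N)) - \mathbb{E}(h_N(\X_N))$, and invoke the concentration bound just obtained. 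The conclusion (\ref{eq:conditional-gaussian-concentration}) then follows once we establish that $|\Delta_N|$ is super-polynomially small compared to $\sigma_N$.

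The hard part is this control of $|\Delta_N|$: since $\tilde h_N = h_N$ on $\X_N(A_N)$, Cauchy--Schwarz yields
\begin{equation*}
|\Delta_N| = \bigl|\mathbb{E}\bigl[(\tilde h_N(\X_N) - h_N(\X_N)) \mathds{1}_{A_N^c}\bigr]\bigr| \leq \sqrt{\mathbb{E}\bigl((\tilde h_N - h_N)^2(\X_N)\bigr)} \cdot \sqrt{P(A_N^c)},
\end{equation*}
so an a priori bound on $\mathbb{E}(\tilde h_N(\X_N)^2)$ is needed, which is not directly provided by the hypotheses. I would obtain it by first bounding any median $m$ of $\tilde h_N(\X_N)$: for any constant $K > 0$,
\begin{equation*}
P(|\tilde h_N(\X_N)| \geq K) \leq P(|h_N(\X_N)| \geq K,\, A_N) + P(A_N^c) \leq \frac{C}{K^2} + e^{-N^\gamma},
\end{equation*}
by Markov's inequality and (\ref{eq:second-moments-f-X}); choosing $K$ large enough but constant makes the right-hand side $< 1/2$ and forces $|m| \leq K = \Ocal(1)$. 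The Gaussian Poincar\'e inequality gives $\Var(\tilde h_N(\X_N)) \leq \sigma_N^2$, and integrating the Gaussian tail bound yields $|\mathbb{E}(\tilde h_N(\X_N)) - m| \leq C \sigma_N$, so that $\mathbb{E}(\tilde h_N(\X_N)^2) \leq C(1 + \sigma_N^2)$. Combined with (\ref{eq:second-moments-f-X}), this gives $|\Delta_N| \leq C(1+\sigma_N) e^{-N^\gamma/2}$, which by the assumption $\sigma_N \geq C N^{-a}$ is $o(\sigma_N N^{-L})$ for any $L > 0$. Consequently, for $t \geq 2|\Delta_N|$ the concentration bound holds with constants $C_1' = C_1$ and $C_2' = C_2/4$, while for smaller $t$ the desired inequality is trivially absorbed into the $e^{-N^{\gamma}}$ term after slightly shrinking $\gamma$.
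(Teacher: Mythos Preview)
Your proof is correct and takes a genuinely different route from the paper's. The paper does not construct a global Lipschitz extension; instead it conditions on $A_N$, works in the restricted probability space $(A_N,\Fcal\cap A_N,P_{A_N})$, and invokes results of Louart--Couillet to get concentration of $h_N(\X_N\!\mid\! A_N)$ around its conditional mean $\mathbb{E}(h_N(\X_N\!\mid\! A_N))$. It then controls $\kappa_N=\mathbb{E}(h_N(\X_N))-\mathbb{E}(h_N(\X_N\!\mid\! A_N))$ directly from the identity $\mathbb{E}(h_N(\X_N))=P(A_N)\,\mathbb{E}(h_N(\X_N\!\mid\! A_N))+\mathbb{E}(h_N(\X_N)\mathds{1}_{A_N^{c}})$ together with (\ref{eq:second-moments-f-X}) and Cauchy--Schwarz, yielding $|\kappa_N|\le C e^{-N^{\gamma}/2}$ without any auxiliary second-moment estimate. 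The final passage from $t-|\kappa_N|$ to $t$ is handled via the elementary inequality $\min(1,C_1 e^{-C_2((t-|\kappa_N|)/\sigma_N)^2})\le \max(C_1,e^{C_2(\kappa_N/\sigma_N)^2})\,e^{-C_2(t/2\sigma_N)^2}$.

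Your approach is more self-contained: the McShane--Whitney extension plus the standard concentration inequality replace the black-box reference, at the price of the extra median argument needed to bound $\mathbb{E}(\tilde h_N(\X_N)^2)$ (which the paper avoids because the conditional mean is automatically controlled by (\ref{eq:second-moments-f-X})). One small wording issue: in your last sentence, the case $t<2|\Delta_N|$ is not really ``absorbed into the $e^{-N^{\gamma}}$ term'' --- rather, since $(t/\sigma_N)^2$ is then exponentially small, $C_1'\exp[-C_2'(t/\sigma_N)^2]\ge C_1'/2\ge 1$ as soon as $C_1'\ge 2$, so the bound is trivial on that range.
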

 We notice that, in terms of stochastic domination, (\ref{eq:conditional-gaussian-concentration}), under the hypotheses formulated in Lemma \ref{le:conditional-concentration}, $h_N(\X_N)$ verifies
\begin{equation}
\label{eq:stochastic-domination-conditional-concentration}
|h_N(\X_N) - \mathbb{E}(h_N(\X_N)| \prec \sigma_N.
\end{equation}
We now consider $\X = \X_N(u), u \in U^{(N)}, N \geq 1$ a family of 
$\Ncal(0,\I_N)$ random vectors, and $(A_N(u), u \in U^{(N)}, N \geq 1)$
a family of events verifying 
\begin{equation}
    \label{eq:event-exponentially-probability}
    \sup_{u \in U^{(N)}} P\left((A_N(u))^{c}\right) \leq e^{-N^{\gamma}},
\end{equation}
for each $N$ large enough. Then, provided $h_N$ is $\sigma_N$--Lipschitz on 
$\X_N(u)(A_N(u))$ for each $u$, the family of random variables 
$h_N(\X_N(u)) - \mathbb{E}(h_N(\X_N(u))), u \in U^{(N)}, N \geq 1$
verifies 
\begin{equation}
\label{eq:family-stochastic-domination-conditional-concentration}
h_N(\X_N(u)) - \mathbb{E}(h_N(\X_N(u))) = \Ocal_{\prec}(\sigma_N).
\end{equation}
\begin{remark}
\label{re:evaluation-sigmaN}
If $h_N$ is Lipschitz on $\X_N(A_N)$, it is of course useful to be able to evaluate one of its Lipschitz constant. When $h_N$ is a $\Ccal^{1}$ function, and when $\X_N(A_N)$ is convex, we claim that 
if $\sigma_N$ is defined by 
\begin{equation}
\label{eq:evaluation-sigmaN}
\sigma_N = \sup_{x \in \X_N(A_N)} \| (\nabla h_N)(x)\|,
\end{equation}
then, we have 
\begin{equation}
\label{eq:sigmaN-lipschitz-constant}
|h_N(x) - h_N(y)| \leq \sigma_N \| x - y \|,
\end{equation}
for each pair $(x,y)$ of elements of $\X_N(A_N)$. To justify (\ref{eq:sigmaN-lipschitz-constant}), we remark that there exists $z \in [x,y]$
for which $h_N(x) - h_N(y) = (x-y)^{T} (\nabla h_N)(z)$. As  $\X_N(A_N)$ is convex, 
$z \in \X_N(A_N)$, which, in turn, leads to (\ref{eq:sigmaN-lipschitz-constant}).
\end{remark}
We remark that (\ref{eq:standard-gaussian-concentration}), Lemma \ref{le:conditional-concentration} and (\ref{eq:stochastic-domination-conditional-concentration}) and  (\ref{eq:family-stochastic-domination-conditional-concentration}) are still 
valid when vectors $\X_N$ (or $\X_N(u), u \in U^{(N)}$) are $N$--dimensional $\Ncal_c(0, \I)$. In this context, $h_N$ is a real-valued function $h_N(\X_N, \X_N^*)$
depending on the entries of $\X_N$ and $\X_N^*$. $h_N(\X_N, \X_N^*)$ can of course be written 
as $\tilde{h}_N(\sqrt{2} \mathrm{Re}(\X_N), \sqrt{2} \mathrm{Im}(\X_N))$ for some real valued-function 
$\tilde{h}_N$ defined on $\mathbb{R}^{2N}$. As $(\sqrt{2} \mathrm{Re}(\X_N), \sqrt{2} \mathrm{Im}(\X_N))$ is a $\Ncal(0, \I_{2N})$ random 
vector, $h_N(\X_N, \X_N^*) = \tilde{h}_N(\sqrt{2} \mathrm{Re}(\X_N), \sqrt{2} \mathrm{Im}(\X_N))$ verifies (\ref{eq:standard-gaussian-concentration}) and Lemma \ref{le:conditional-concentration}. Finally, if $h_N$ is complex-valued, writing $h_N(\X_N,\X_N^{*}) = \mathrm{Re}(h_N(\X_N,\X_N^{*})) +
i \mathrm{Im}(h_N(\X_N,\X_N^{*}))$ also leads to the conclusion that (\ref{eq:standard-gaussian-concentration}) and Lemma \ref{le:conditional-concentration} are valid. \\

In the following, if $\X_N(u), u \in U^{(N)}$ is a family of $\Ncal_{c}(0, \I_N)$ random vectors, we will frequently consider family of functions $h_N(\X_N(u),\X_N^{*}(u)), u \in U^{(N)}$ given by 
\begin{equation}
    \label{eq:exemple-fN-helffer-sjostrand}
h_N(\X_N(u),\X_N^{*}(u)) = \frac{1}{\pi} \mathrm{Re} \int_{\Dcal} \bar{\partial}  \Phi_k(f)(z) \, q_N(z,\X_N(u),\X_N^{*}(u)) \, \diff x \diff y,
\end{equation}
where $k$ is a large enough integer, where $f$ is a $\Ccal^{\infty}$ compactly supported function with support included in 
an interval $[a_1, a_2]$, $\Dcal = [a_1, a_2] \times [0,1]$ and where $q_N$ satisfies certain properties. Then, the following Lemma allows to evaluate the stochastic domination order of 
the family $h_N(\X_N(u),\X_N^{*}(u)), u \in U^{(N)}$. 
\begin{lemma}
  \label{le:concentration-integrale-helffer-sjostrand}
We assume that $q_N$ verifies 
\begin{itemize}
    \item For each $\x_N \in \mathbb{C}^{N}$, $z \rightarrow q_N(z,\x_N,\x_N^{*})$ is defined on $\mathbb{C}^{+}$.
    \item $(\x_N, \x_N^{*}) \rightarrow  q_N(z,\x_N,\x_N^{*})$ is $\Ccal^{1}$. 
    \item $q_N(z,\X_N(u),\X_N^{*}(u))  \leq P_1(|z|) P_2\left(\frac{1}{\mathrm{Im}z}\right)$ for some nice 
    polynomials $P_1$ and $P_2$ for each $u \in U^{(N)}$ and on an event $A_N(u)$
    verifying $$\sup_{u \in U^{(N)}} P(A_N(u)^{c}) \leq e^{-N^{\gamma}}$$ for some nice constant $\gamma > 0$.
    \item There exists  nice polynomials $P_1$ and $P_2$ such that $$\mathbb{E}|q_N(z,\X_N(u),\X_N^{*}(u))|^{2} \leq P_1(|z|) P_2\left(\frac{1}{\mathrm{Im}z}\right),$$ for each $z \in \mathbb{C}^{+}$ on the event $A_N(u)$ for each $u$.
    \item 
    \begin{equation}
      \label{eq:condition-gradient-q}
      \| \nabla q_N(z,\X_N(u),\X_N^{*}(u)) \|^{2} \leq  C(z) \,  \sigma^{2}_N,
  \end{equation}
  for each $z \in \mathbb{C}^{+}$ where $C(z) = P_1(|z|) P_2\left(\frac{1}{\mathrm{Im}z}\right)$ for some nice polynomials $P_1$ and $P_2$, and on the event $A_N(u)$ for each $u$.
 \end{itemize}
  We also assume that 
  $\X_N(u)(A_N(u))$ is a convex subset of $\mathbb{C}^{N}$ \footnote{in the sense that $\mathrm{Re}(\X_N(u)(\A_N)), \mathrm{Im}(\X_N(u)(\A_N))$ is a convex subset of $\mathbb{R}^{2N}$} for each $u$. Then, we have
  \begin{equation}
      \label{eq:stochastic-domination-integrale-helffer-sjostrand}
      \left| h_N(\X_N(u),\X_N^{*}(u)) - \mathbb{E}\left( h_N(\X_N(u),\X_N^{*}(u)) \right) \right|  \prec \sigma_N.
  \end{equation}
\end{lemma}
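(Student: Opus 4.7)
The plan is to apply Lemma \ref{le:conditional-concentration} (in its complex Gaussian version, as discussed just after Remark \ref{re:evaluation-sigmaN}) to the real-valued functional $h_N$. This requires two ingredients: a uniform bound on $\mathbb{E}|h_N(\X_N(u),\X_N^*(u))|^2$, and a Lipschitz estimate on $\X_N(u)(A_N(u))$ with a Lipschitz constant of order $\sigma_N$. The conclusion (\ref{eq:stochastic-domination-integrale-helffer-sjostrand}) then follows from the stochastic domination formulation (\ref{eq:family-stochastic-domination-conditional-concentration}).

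For the Lipschitz estimate, I would use Remark \ref{re:evaluation-sigmaN} since $\X_N(u)(A_N(u))$ is assumed convex. The $\Ccal^1$ regularity of $h_N$ in $(\x,\x^*)$ follows from the $\Ccal^1$ regularity of $q_N$ together with differentiation under the integral: the integrand and its derivatives are dominated by $|\bar{\partial}\Phi_k(f)(z)|\,C(z)$, which, by (\ref{eq:derivee-extension-f}) and the fact that $\bar{\partial}\Phi_k(f)(z) = \Ocal(y^k)$ near the real axis, is integrable on $\Dcal$ provided $k$ is large enough. One then obtains
\begin{equation*}
\nabla h_N(\x,\x^*) \;=\; \frac{1}{\pi}\,\mathrm{Re}\int_{\Dcal} \bar{\partial}\Phi_k(f)(z)\,\nabla q_N(z,\x,\x^*)\,\diff x\,\diff y,
\end{equation*}
and applying (\ref{eq:condition-gradient-q}) on the event $A_N(u)$ gives
\begin{equation*}
\|\nabla h_N(\X_N(u),\X_N^*(u))\| \;\leq\; \frac{\sigma_N}{\pi}\int_{\Dcal}|\bar{\partial}\Phi_k(f)(z)|\sqrt{C(z)}\,\diff x\,\diff y \;\leq\; C\,\sigma_N,
\end{equation*}
for $k$ sufficiently large. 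By Remark \ref{re:evaluation-sigmaN}, this is a valid Lipschitz constant for $h_N$ on the convex set $\X_N(u)(A_N(u))$.

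For the second-moment bound, I would use Cauchy--Schwarz with the positive weight $|\bar{\partial}\Phi_k(f)(z)|$, whose total mass $C_f := \int_{\Dcal}|\bar{\partial}\Phi_k(f)(z)|\,\diff x\,\diff y$ is finite for $k$ large enough, to obtain
\begin{equation*}
|h_N|^2 \;\leq\; \frac{C_f}{\pi^2}\int_{\Dcal}|\bar{\partial}\Phi_k(f)(z)|\,|q_N(z,\X_N(u),\X_N^*(u))|^2\,\diff x\,\diff y.
\end{equation*}
Taking expectations and invoking the fourth assumption on $q_N$, together with integrability of $|\bar{\partial}\Phi_k(f)(z)|P_1(|z|)P_2(1/\Im z)$ on $\Dcal$ for $k$ chosen larger than $\deg P_2$, yields $\sup_{u}\mathbb{E}|h_N|^2 \leq C$. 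If the fourth assumption is to be read strictly on $A_N(u)$, I would decompose $\mathbb{E}|h_N|^2 = \mathbb{E}(|h_N|^2\mathbf{1}_{A_N(u)}) + \mathbb{E}(|h_N|^2\mathbf{1}_{A_N(u)^c})$ and control the second term via the exponentially small probability of $A_N(u)^c$ and the polynomial (in $N$) a priori growth of $|h_N|^2$ coming from the pointwise deterministic bound (and in any case $h_N$ may be redefined off $A_N(u)$ to be bounded, since the concentration of $h_N$ is insensitive to modifications on a set of exponentially small probability).

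With both hypotheses checked, Lemma \ref{le:conditional-concentration} applied to the real part (and separately to the imaginary part, which here is identically zero) gives $|h_N - \mathbb{E}(h_N)| \prec \sigma_N$ uniformly in $u$, which is exactly (\ref{eq:stochastic-domination-integrale-helffer-sjostrand}). No conceptual obstacle is expected; the only real care is in choosing $k$ large enough so that every integral encountered (both for $\nabla h_N$ and for $\mathbb{E}|h_N|^2$) converges, namely $k$ larger than the maximal degree in $1/\Im z$ of all the $C(z)$'s appearing in the hypotheses. The potential nuisance is the truncation step if the second-moment hypothesis on $q_N$ only holds on $A_N(u)$, but this is a routine argument exploiting the exponential smallness of $P(A_N(u)^c)$.
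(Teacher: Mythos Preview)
Your proposal is correct and follows essentially the same approach as the paper: bound $\|\nabla h_N\|$ on $A_N(u)$ via the gradient hypothesis on $q_N$, bound $\mathbb{E}|h_N|^2$ via Cauchy--Schwarz and the second-moment hypothesis, then invoke Remark \ref{re:evaluation-sigmaN} and Lemma \ref{le:conditional-concentration}. The only cosmetic difference is that the paper bounds $\|\nabla h_N\|^2$ via Cauchy--Schwarz on the compact domain $\Dcal$ (giving $\|\nabla h_N\|^2 \leq C\int_{\Dcal}|\bar\partial\Phi_k(f)|^2\|\nabla q_N\|^2$), whereas you use the triangle inequality directly on $\|\nabla h_N\|$; both are equivalent here.
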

\begin{proof}
We first evaluate $\|\nabla h_N(\X_N(u),\X_N^{*}(u)) \|^{2}$, and remark
 that, as $\Dcal$
is compact, the Schwartz inequality implies that 
\begin{equation}
\label{eq:majoration-norme-gradient-theta2}
\|\nabla h_N(\X_N(u),\X_N^{*}(u)) \|^{2}  \leq C \, T^{2}, 
\end{equation}
where $T$ is the term defined by 
\begin{equation}
\label{eq:def-T-concentration}
T = \left( \int_{\mathcal D} \, \left| \bar{\partial}  \Phi_k(f)(z) \right|^{2}\,
\| \nabla q_N(z,\X_N(u),\X_N^{*}(u)) \|^{2} \, \diff x \diff y \right)^{1/2}
\end{equation}
(\ref{eq:condition-gradient-q}) implies that $T^{2} \leq C \sigma_N^{2}$ on the event 
$A_N(u)$. Using again the Schwartz inequality, we also verify immediately that 
$\mathbb{E}|h_N(\X_N(u),\X_N^{*}(u))|^{2} \leq C$ for some nice constant $C$. Therefore, Remark \ref{re:evaluation-sigmaN}, 
Lemma \ref{le:conditional-concentration}, and its stochastic domination 
counterpart (\ref{eq:family-stochastic-domination-conditional-concentration}) lead to (\ref{eq:stochastic-domination-integrale-helffer-sjostrand}). 
\end{proof}

\subsection{Hanson-Wright inequality}
\label{subsec:hanson-wright}
This well known inequality allows to control deviations of a quadratic form from its expectation. We recall the inequality for a
$\Ncal_c(0,\I_N)$ vector $\x_N = (x_1, \ldots, x_N)^{T}$. If $\A_N$ is a $N \times N$ deterministic matrix, then
\begin{equation}
  \label{eq:hanson-wright-1}
  \mathbb{P}\left( |\x_N^{*} \A_N \x_N - \Tr \A_N| > t \right) \leq 2 \exp -C \min\left( \frac{t}{\|\A_N\|}, \frac{t^{2}}{\|\A_N\|_F^{2}} \right).
\end{equation}
In order to formulate (\ref{eq:hanson-wright-1}) in the stochastic domination framework, we notice that, as $\|\A_N\| \leq \| \A_N \|_F$,
(\ref{eq:hanson-wright-1}) implies that
\begin{equation}
  \label{eq:hanson-wright-2}
  \mathbb{P}\left( |\x_N^{*} \A_N \x_N - \Tr \A_N| > t \right) \leq 2 \exp -C \min\left( \frac{t}{\|\A_N\|_F}, \frac{t^{2}}{\|\A_N\|_F^{2}} \right).
\end{equation}
In the stochastic domination framework, this leads to
\begin{equation}
  \label{eq:hanson-wright-stochastic-domination}
  |\x_N^{*} \A_N \x_N - \Tr \A_N| \prec \| \A_N \|_F.
\end{equation}
We finally add the following useful properties. If $M = M(N)$ is a sequence of integers satisfying $\frac{M(N)}{N} = \Ocal(1)$, and $(\x_{m,N})_{m=1, \ldots, M}$ and $(\A_{m,N})_{m=1, \ldots, M}$ 
are respectively families of $\Ncal_c(0,\I_N)$ vectors and of deterministic
matrices verifying $\sup_{m=1, \ldots, M} \|\A_{m,N}\|_F \leq \kappa_N$,
then, the union bound leads immediately to 
$$
\max_{m=1,\ldots, M} \left|  \x_{m,N}^{*} \A_{m,N} \x_{m,N} - \Tr \A_{m,N} \right| \prec  \kappa_N.
$$
In other words, if we denote by  $\D_N$ the diagonal $M \times M$ matrix given by
$$
\D_N = \dg \left( \x_{m,N}^{*} \A_{m,N} \x_{m,N} - \Tr \A_{m,N}, m=1, \ldots, M \right),
$$
then, we have
\begin{equation}
  \label{eq:sup-hanson-wright-domination-stochastique}
    \| \D_N \| \prec \kappa_N.
    \end{equation}
The moments of $\| \D_N \|$ can also be evaluated. In particular, 
for each $\epsilon > 0$, we have
\begin{equation}
  \label{eq:moyenne-sup-hanson-wright}
  \mathbb{E}\left( \sup_{m=1, \ldots, M}  |\x_{m,N}^{*} \A_{m,N} \x_{m,N} - \Tr \A_{m,N}|^{k} \right) = \Ocal(N^{\epsilon}  (\kappa_N)^{k}),
\end{equation}
for each integer $k$, or equivalently, 
\begin{equation}
  \label{eq:moyenne-sup-hanson-wright-D}
  \mathbb{E}\left( \| \D_N \|^{k} \right) = \Ocal(N^{\epsilon} (\kappa_N)^{k}).
\end{equation}
The proof of (\ref{eq:moyenne-sup-hanson-wright-D}) is provided in the Appendix.

\section{Background on the asymptotic behaviour of the LSS of the estimated spectral coherence matrix}
\label{sec:background}
In this section, we first recall some useful results derived in \cite{loubaton-rosuel-ejs-2021}. For this, we introduce
the modified estimated spectral coherence matrix $\tilde{\C}_N(\nu)$ defined by
\begin{equation}
  \label{eq:def-tildeC}
\tilde{\C}_N(\nu) = \left(\D_N(\nu)\right)^{-1/2} \, \hat{\S}_N(\nu)  \left(\D_N(\nu)\right)^{-1/2},
\end{equation}
where $\D_N(\nu)$ represents the diagonal $M \times M$ matrix
\begin{equation}
  \label{eq:def-DN}
  \D_N(\nu) =  \dg \left( s_m(\nu), m=1, \ldots, M \right).
\end{equation}
If we denote by
$\hat{s}_m(\nu)$ the diagonal entry $\left(\hat{\S}_N(\nu)\right)_{m,m}$ of $\hat{\S}_N(\nu)$, and by
$\hat{\D}_N(\nu)$ the diagonal matrix
\begin{equation}
  \label{eq:def-hat-DN}
  \hat{\D}_N(\nu) =  \dg \left( \hat{s}_m(\nu), m=1, \ldots, M \right),
\end{equation}
it is seen that $\tilde{\C}_N(\nu)$
is obtained from $\hat{\C}_N(\nu)$ by replacing matrix $\hat{\D}_N(\nu)$ by the deterministic matrix $\D_N(\nu)$. 
Matrix $\tilde{\C}_N(\nu)$ is of course simpler to analyze than $\hat{\C}_N(\nu)$, and appears useful to study
 $\hat{\C}_N(\nu)$.
\subsection{\texorpdfstring{Stochastic representation of $\tilde{\C}(\nu)$ and $\hat{\C}(\nu)$}{Stochastic representation}}
\label{subsec:background-stochastic-representation}
The approach developed in \cite{loubaton-rosuel-ejs-2021} is based on the observation that it exists 
a $M \times (B+1)$ matrix $\X_N(\nu)$ with i.i.d. $\mathcal{N}_c(0,1)$ entries and error matrices $\tilde{\Deltabs{}}_N(\nu)$ 
and $\Deltabs_N(\nu)$ such that 
\begin{eqnarray}
\label{eq:representation-tildeC}
\tilde{\C}_N(\nu) & = & \frac{\X_N(\nu) \X_N(\nu)^{*}}{B+1} + \tilde{\Deltabs}_N(\nu), \\
\label{eq:representation-hatC}
\hat{\C}_N(\nu)  & = & \frac{\X_N(\nu) \X_N(\nu)^{*}}{B+1} + \Deltabs_N(\nu),
\end{eqnarray}
where the families $\| \tilde{\Deltabs}_N(\nu) \|, N \geq 1, \nu \in [0,1]$ and  $\| \Deltabs_N(\nu) \|, N \geq 1, \nu \in [0,1]$ verify 
\begin{eqnarray}
\label{eq:domination-tildeDelta}
\| \tilde{\Deltabs} \| \prec \frac{B}{N}, \\
\label{eq:domination-Delta-1}
\| \Deltabs \| \prec \frac{B}{N} + \frac{1}{\sqrt{B}}, \\
\label{eq:domination-hatC-tildeC}
\| \Deltabs - \tilde{\Deltabs} \| = \| \hat{\C} - \tilde{\C}\| \prec \left( \frac{B}{N} \right)^{2} +  \frac{1}{\sqrt{B}}. 
\end{eqnarray}
We refer the reader to Theorem 1.1 in \cite{loubaton-rosuel-ejs-2021}.
As we need similar results in the context of the Bartlett's factorization, we provide a sketch of proof of (\ref{eq:domination-tildeDelta}),  (\ref{eq:domination-Delta-1})
and (\ref{eq:domination-hatC-tildeC}) in order to convince the reader that no further proof will be necessary to establish the new related results. (\ref{eq:domination-tildeDelta}) is based on the observation that for each $m=1, \ldots, M$, 
the covariance matrix of the $(B+1)$--dimensional random row vector $\omegabs_m(\nu)$ defined by 
\begin{equation}
    \label{eq:def-omegam}
  \omegabs_m(\nu) = \left( \xi_{y_m}(\nu-\frac{B}{2N}), \ldots, \xi_{y_m}(\nu+\frac{B}{2N})\right),
\end{equation}
is given by 
\begin{equation}
\label{eq:expression-1-covariance-omegabs}
\mathbb{E}(\omegabs_m(\nu)^{*} \omegabs_m(\nu)) = \mathrm{\Diag}\left( s_m(\nu + b/N), b= -B/2, \ldots, B/2\right) + \Upsilonbs_{m}(\nu),
\end{equation}
where the entries of $\Upsilonbs_{m}(\nu)$ are $\Ocal\left(\frac{1}{N}\right)$ terms. 
Expanding $s_m(\nu + b/N)$ around $\nu$ up to the fourth order (we recall that $s_m$ is $\mathcal{C}^{4}$ because $\gamma_0$ defined by (\ref{eq:decroissance-rm}) 
verifies $\gamma_0 > 4$) and using the symmetry w.r.t. $\nu$ of the set  $\{ \nu + \frac{b}{N}, b=-B/2, \ldots, \frac{B}{2} \}$ lead immediately to 
\begin{equation}
    \label{eq:representation-covariance-omegam}
    \mathbb{E}(\omegabs_m(\nu)^{*} \omegabs_m(\nu)) = s_m(\nu) \left( I + \Phibs_m(\nu) \right),
\end{equation}
where $\Phibs_m(\nu)$ is a $(B+1) \times (B+1)$ Hermitian matrix verifying 
\begin{eqnarray}
\label{eq:bound-norm-Phim}
\sup_{N \geq 1, \nu \in [0,1]} \| \Phibs_m(\nu) \| & = & \mathcal{O}\left( \frac{B}{N} \right), \\ 
\label{eq:expre-Trace-Phim}
\frac{1}{B+1} \Tr \Phibs_m(\nu) & = & \frac{1}{2} \frac{s_m^{''}(\nu)}{s_m(\nu)} v_N + \Ocal\left( \left(\frac{B}{N}\right)^{4} + \frac{1}{N} \right), \\
\label{eq:bound-trace-Phim}
\sup_{N \geq 1, \nu \in [0,1]} \left| \frac{1}{B+1} \Tr \Phibs_m(\nu) \right| & =  & \mathcal{O}\left( \left(\frac{B}{N}\right)^{2} \right).
\end{eqnarray}
(\ref{eq:bound-trace-Phim}) holds because $\frac{1}{N} = o\left( \frac{B^{2}}{N^{2}} \right)$ for $\alpha > \frac{1}{2}$.
We also notice that in \cite{loubaton-rosuel-ejs-2021}, the error term in Eq. (\ref{eq:expre-Trace-Phim}) was $\Ocal\left( \left(\frac{B}{N}\right)^{3} + \frac{1}{N} \right)$ because $s_m$ was assumed $\mathcal{C}^{3}$ so that $s_m(\nu+b/N)$ was expanded up to the third order around $\nu$. Therefore, $\omegabs_m(\nu)$ can be represented as 
\begin{equation}
    \label{eq:representation-omegam}
    \omegabs_m(\nu) = \sqrt{s_m(\nu)} \x_m(\nu) (I + \Phibs_m(\nu))^{\frac{1}{2}} = \sqrt{s_m(\nu)} \x_m(\nu) (I + \Psibs_m(\nu)),
\end{equation}
 where $\Psibs_m(\nu)$ verifies 
\begin{eqnarray}
\label{eq:bound-norm-Psim}
\sup_{N \geq 1, \nu \in [0,1]} \| \Psibs_m(\nu) \| & = & \mathcal{O}\left( \frac{B}{N} \right), \\ 
\label{eq:bound-trace-Psim}
\sup_{N \geq 1, \nu \in [0,1]} \left| \frac{1}{B+1} \Tr \Psibs_m(\nu) \right| & = & \mathcal{O}\left( \left(\frac{B}{N}\right)^{2} \right),
\end{eqnarray}
and where $\x_m(\nu)$ is $\mathcal{N}_c(0, \I_{B+1})$ distributed and where $\x_{m_1}(\nu)$ and $\x_{m_2}(\nu)$ are independent if $m_1 \neq m_2$. The representation (\ref{eq:representation-omegam}) 
where $\Phibs_m(\nu),\Psibs_m(\nu)$ verify (\ref{eq:bound-norm-Phim}) to (\ref{eq:bound-trace-Phim}) and (\ref{eq:bound-norm-Psim}), (\ref{eq:bound-trace-Psim}) is the key tool 
of the approach developed in \cite{loubaton-rosuel-ejs-2021}, and allows in particular to 
prove (\ref{eq:representation-tildeC}) and (\ref{eq:representation-hatC}).
If $\X_N(\nu)$ and $\Gammabs_N(\nu)$ represent the $M \times (B+1)$ matrices with rows $(\x_m(\nu))_{m=1, \ldots, M}$ and $(\x_m(\nu) \Psibs_m(\nu))_{m=1, \ldots, M}$, then the $M \times (B+1)$ matrix $\Sigmabs_N(\nu)$ defined by 
\begin{equation}
    \label{eq:def-Sigma}
    \Sigmabs_N(\nu) = \left( \zeta_{\y}(\nu - \frac{B}{2N}), \ldots, \zeta_{\y}(\nu + \frac{B}{2N}) \right),
\end{equation}
can be written as
\begin{equation}
    \label{eq:expre-Sigma}
  \Sigmabs_N(\nu) = \mathrm{dg}(\sqrt{s_m(\nu)}, m=1, \ldots, M) \, \left( \X_N(\nu) + \Gammabs_N(\nu) \right),   
\end{equation}
while the estimate $\hat{\S}_N(\nu)$ is given by
\begin{equation}
\label{eq:def-hatS-bis}
 \hat{\S}_N(\nu) = \frac{\Sigmabs_N(\nu) (\Sigmabs_N(\nu))^{*}}{B+1}.  
\end{equation}
Therefore, $\tilde{\C}_N(\nu)$ can be represented as 
\begin{equation}
    \label{eq:expre-tildeC}
 \tilde{\C}_N(\nu) = \frac{(\X_N(\nu) + \Gammabs_N(\nu)) (\X_N(\nu) + \Gammabs_N(\nu))^{*}}{B+1} = \frac{\X_N(\nu) (\X_N(\nu))^{*}}{B+1} 
+ \tilde{\Deltabs}_N(\nu),
\end{equation}
where $\tilde{\Deltabs}_N(\nu) = \frac{\X_N(\nu) (\Gammabs_N(\nu))^{*} + \Gammabs_N(\nu) (\X_N(\nu))^{*} + \Gammabs_N(\nu) (\Gammabs_N(\nu))^{*}}{B+1}$. 
(\ref{eq:domination-tildeDelta}) then follows from the following Lemma (see Proposition 1.1 in \cite{loubaton-rosuel-ejs-2021}). 
\begin{lemma}
\label{le:domination-stochastique-Gamma}
The family $(\| \frac{\Gammabs_N(\nu)}{\sqrt{B+1}} \|, N \geq 1, \nu \in [0,1])$ verifies 
\begin{equation}
    \label{eq:domination-stochastique-Gamma}
\| \frac{\Gammabs_N(\nu)}{\sqrt{B+1}} \| \prec \frac{B}{N}.    
\end{equation}
\end{lemma}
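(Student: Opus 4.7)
My approach will be to bound $\|\Gammabs_N(\nu)\|/\sqrt{B+1}$ via an $\varepsilon$-net reduction on the two unit spheres, exploiting that each scalar bilinear form $\w^{*} \Gammabs_N(\nu) \v$ is a sum of independent complex Gaussians whose variance is controlled by the a priori estimate $\sup_{m,\nu,N}\|\Psibs_m(\nu)\| = \Ocal(B/N)$ from \eqref{eq:bound-norm-Psim}. Concretely, since the rows $\x_m(\nu)$ of $\X_N(\nu)$ are independent $\Ncal_c(0,\I_{B+1})$, for fixed unit vectors $\w\in\mathbb{C}^{M}$ and $\v\in\mathbb{C}^{B+1}$ the scalar
\[
\w^{*}\Gammabs_N(\nu)\v \;=\; \sum_{m=1}^{M}\bar w_m\,\x_m(\nu)\,\Psibs_m(\nu)\,\v
\]
is centered complex Gaussian of variance $\sum_m |w_m|^{2}\|\Psibs_m(\nu)\v\|^{2} \leq \max_m \|\Psibs_m(\nu)\|^{2} = \Ocal((B/N)^{2})$, uniformly in $\nu$, $\w$, $\v$, and $N$. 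The standard complex-Gaussian tail bound then gives $P(|\w^{*}\Gammabs_N(\nu)\v|>t) \leq 2\exp(-c\,t^{2}(N/B)^{2})$ for some nice constant $c>0$.

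Next I will fix $(1/4)$-nets $\mathcal{N}_M, \mathcal{N}_{B+1}$ of the unit spheres of $\mathbb{C}^{M}, \mathbb{C}^{B+1}$ with cardinalities at most $9^{2M}$ and $9^{2(B+1)}$ (from the standard volumetric estimate on real spheres of dimensions $2M$ and $2(B+1)$), and use the standard operator-norm/net estimate $\|\Gammabs_N(\nu)\| \leq 2\max_{(\w,\v)\in\mathcal{N}_M\times\mathcal{N}_{B+1}} |\w^{*}\Gammabs_N(\nu)\v|$. A union bound with the choice $t = N^{\varepsilon}\sqrt{B+1}\cdot B/N$ then yields
\[
P\!\Bigl(\tfrac{\|\Gammabs_N(\nu)\|}{\sqrt{B+1}} > N^{\varepsilon}\tfrac{B}{N}\Bigr) \;\leq\; 2\cdot 9^{2(M+B+1)}\exp\!\bigl(-c\,N^{2\varepsilon}(B+1)\bigr).
\]
Under Assumption \ref{as:asymptotic-regime} one has $M+B = \Ocal(N^{\alpha})$ with $\alpha<1$, whereas $N^{2\varepsilon}(B+1)$ is of order $N^{\alpha+2\varepsilon}$, so the right-hand side is at most $\exp(-N^{\gamma})$ for some nice $\gamma>0$ and all $N$ large enough. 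Since all constants are uniform in $\nu\in[0,1]$, Definition \ref{def:stochastic-domination} delivers \eqref{eq:domination-stochastique-Gamma}.

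The main obstacle is obtaining the correct rate $B/N$ rather than a spurious $\sqrt{M}\,B/N$: a Frobenius bound $\|\Gammabs_N\|\leq\|\Gammabs_N\|_F$ is too crude because $\mathbb{E}\|\Gammabs_N\|_F^{2} = \sum_m \Tr(\Psibs_m\Psibs_m^{*}) = \Ocal(M B^{3}/N^{2})$, so one must genuinely exploit the independence across rows. The $\varepsilon$-net sidesteps this by reducing to a per-direction Gaussian whose variance does not scale with $M$, at the modest cost of a combinatorial factor $\exp(\Ocal(M+B))$ which is easily absorbed by the Gaussian exponent $N^{2\varepsilon}(B+1)$. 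An alternative would be a noncommutative Khintchine estimate applied to the Gaussian series representation $\Gammabs_N = \sum_{m,b} X_{m,b}\,\e_m(\f_b^{T}\Psibs_m)$, combined with the Gaussian Lipschitz concentration of Subsection \ref{subsec:gaussian-concentration} (the Lipschitz constant of the map $(\x_m)_m \mapsto \|\Gammabs_N\|$ being bounded by $\max_m \|\Psibs_m\| = \Ocal(B/N)$), but the $\varepsilon$-net route is more elementary and self-contained.
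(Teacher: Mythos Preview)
Your proof is correct and takes a genuinely different route from the paper's. The paper (following \cite{loubaton-rosuel-ejs-2021}) works with the Gram matrix $\Z_N=\Gammabs_N\Gammabs_N^{*}/(B+1)$: it applies an $\varepsilon$-net on a single sphere to obtain the concentration inequality \eqref{eq:concentration-normZ-EZ} for $\|\Z_N-\mathbb{E}(\Z_N)\|$ (which at the net level requires a sub-exponential tail for the \emph{quadratic} form $\w^{*}\Z_N\w$, hence a Hanson--Wright type ingredient), and then separately computes $\mathbb{E}(\Z_N)=\dg\bigl(\tfrac{1}{B+1}\Tr(\Psibs_m\Psibs_m^{*})\bigr)=\Ocal((B/N)^{2})$ to close the argument. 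You instead net both spheres and work directly with the \emph{bilinear} form $\w^{*}\Gammabs_N\v=\sum_m\bar w_m\,\x_m(\Psibs_m\v)$, which, being a linear functional of the independent Gaussian rows, is an exact scalar complex Gaussian of variance $\leq\max_m\|\Psibs_m\|^{2}$; this yields a pure Gaussian tail with no quadratic-form machinery and no separate expectation step. Your route is thus more elementary and self-contained. The paper's detour through $\Z_N$ does buy something, however: the same concentration inequality \eqref{eq:concentration-normZ-EZ} is reused to derive the moment bound \eqref{eq:E-norm-Gamma-over-B}, which is needed later (e.g.\ for \eqref{eq:E-norm-X-over-B}); your tail bound could of course also produce \eqref{eq:E-norm-Gamma-over-B} by integration, but this is not explicit in your write-up.
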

The proof in \cite{loubaton-rosuel-ejs-2021} is based on the following concentration inequality obtained using the epsilon net argument: if 
$\Z_N$ represents $\Z_N = \frac{\Gammabs_N(\nu) (\Gammabs_N(\nu))^{*}}{B+1}$, for each $\epsilon$ small enough, we have
\begin{equation}
    \label{eq:concentration-normZ-EZ}
    \Prob \left[ \|\Z_N - \mathbb{E}(\Z_N) \| >  t_N \right] \le  C_0 \exp\left\{-CB \frac{t_N}{(B/N)^2} + 2M\log\frac{1}{\epsilon}\right\},
\end{equation}
for each $t_N \geq \left(\frac{B}{N}\right)^{2}$, where $C_0$ and $C$ are universal constants. (\ref{eq:concentration-normZ-EZ}) clearly implies that $\|\Z_N - \mathbb{E}(\Z_N) \| \prec \left(\frac{B}{N}\right)^{2}$. Moreover, it is easily checked that $E(\Z_N)$ is the diagonal matrix with diagonal entries $\frac{1}{B+1} \mathrm{Tr}( \Psibs_m \Psibs_m^{*}) = \mathcal{O}\left( \frac{B}{N}\right)^{2}$ for $m=1, \ldots, M$. Therefore, $\| \mathbb{E}(\Z_N) \| = \mathcal{O}\left( \frac{B}{N}\right)^{2}$, which, in turn, leads to the conclusion that $\| \Z_N \| \prec \left(\frac{B}{N}\right)^{2}$, and that the family $\left( \left \| \frac{\Gammabs_N(\nu) }{\sqrt{B+1}} \| \right| \right)_{N \geq 1, \nu \in [0,1]}$ verifies (\ref{eq:domination-stochastique-Gamma}).
As the family  $\left( \left \| \frac{\X_N(\nu) }{\sqrt{B+1}} \| \right| \right)_{N \geq 1, \nu \in [0,1]}$ is stochastically dominated by $1$ (see Proposition \ref{prop:Lambda-holds-with-high-proba}), we eventually conclude that (\ref{eq:domination-tildeDelta}) holds. \\

We also mention that \cite{loubaton-rosuel-ejs-2021} (see Lemma 10) took benefit of the concentration
inequality (\ref{eq:concentration-normZ-EZ}) to establish  that
\begin{equation}
  \label{eq:E-norm-Gamma-over-B}
  \mathbb{E} \left\| \frac{\Gammabs_N(\nu)}{\sqrt{B+1}} \right \|^{k} = \Ocal\left( \left(\frac{B}{N}\right)^{k} \right).
\end{equation}
As the moments of $\| \frac{\X_N(\nu)}{\sqrt{B+1}} \|$ are $\Ocal(1)$ terms, (\ref{eq:E-norm-Gamma-over-B}) 
and the Schwartz inequality imply that 
\begin{equation}
\label{eq:E-norm-X-over-B}
\mathbb{E} \left\| \tilde{\Deltabs}_N(\nu) \right \|^{k} = \Ocal\left( \left(\frac{B}{N}\right)^{k} \right),
\end{equation}
 (\ref{eq:domination-Delta-1}) and (\ref{eq:domination-hatC-tildeC}) follow from the observation that 
 $$
 \Deltabs(\nu) = \Thetabs(\nu) + \tilde{\Deltabs}(\nu),
 $$
 where $\Thetabs(\nu) = \hat{\C}(\nu) - \tilde{\C}(\nu)$ can be written as
 \begin{align}
\label{equation:decomposition_Theta}
    \Thetabs(\nu) = (\hat{\D}(\nu)^{-\frac{1}{2}}-\D(\nu)^{-\frac{1}{2}})\hat{\S}(\nu)\hat{\D}(\nu)^{-\frac{1}{2}} + \D(\nu)^{-\frac{1}{2}}\hat{\S}(\nu)(\hat{\D}(\nu)^{-\frac{1}{2}}-\D(\nu)^{-\frac{1}{2}}).
\end{align}
It is proved in
\cite{loubaton-rosuel-ejs-2021} that 
$\|\hat{\S}\| \prec 1$, $|\hat{s}_m| + \frac{1}{|\hat{s}_m|} \prec 1$  and
\begin{equation}
  \label{eq:stoxhastic-domination-order-hats-s}
  |\hat{s}_m - s_m| \prec \frac{1}{\sqrt{B}} + \left( \frac{B}{N} \right)^{2}.
\end{equation}
 The properties of the family $\left(\hat{s}_m(\nu)\right)_{m=1, \ldots, M, \nu \in [0,1]}$ imply immediately that $\| \hat{\D}^{-1}\| \prec 1$, $\| \hat{\D} - \D \| \prec \frac{1}{\sqrt{B}} + \left( \frac{B}{N} \right)^{2}$, $ \| \hat{\D}^{-1/2}-\D^{-1/2}\| \prec  \frac{1}{\sqrt{B}} + \left(\frac{B}{N} \right)^{2}$
and that (\ref{eq:domination-hatC-tildeC}) holds. This evaluation and (\ref{eq:domination-tildeDelta}) eventually lead to (\ref{eq:domination-Delta-1}). \\

\begin{remark}
  \label{re:biais-variance-hats}
  We notice that the term $\frac{1}{\sqrt{B}}$ in (\ref{eq:stoxhastic-domination-order-hats-s}) is due to $\hat{s}_m - \mathbb{E}(\hat{s}_m)$, while $\left(\frac{B}{N} \right)^{2} $ is the order of magnitude of the bias $\mathbb{E}(\hat{s}_m) - s_m$, which, 
due to the formula
\begin{equation}
\label{eq:expre-hats}
\hat{s}_m =  \frac{\| \omegabs_m \|^{2}}{B+1} = s_m \, \frac{\x_m^{*}(\I + \Phibs_m)\x_m}{B+1},
\end{equation}
is given by 
\begin{equation}
\label{eq:expre-bias-hats}
\mathbb{E}(\hat{s}_m) - s_m =  s_m \, \frac{1}{M}\Tr \frac{\Phibs_m}{B+1} = \Ocal\left( \frac{B}{N} \right)^{2},
\end{equation}
(see Eq. (\ref{eq:bound-trace-Phim})). The condition $\alpha < \frac{4}{5}$ is equivalent to 
$\left(\frac{B}{N}\right)^{2} = o\left(\frac{1}{\sqrt{B}}\right)$. Therefore, if $\alpha < \frac{4}{5}$, 
the bias $\mathbb{E}(\hat{s}_m) - s_m$ is negligible w.r.t. $\hat{s}_m - \mathbb{E}(\hat{s}_m)$. We thus have
\begin{equation}
\label{eq:domination-hatD-D-alpha-inferieur-4-5}
\| \hat{\D} - \D \| \prec \frac{1}{\sqrt{B}},
\end{equation}
as well as $ \| \hat{\D}^{-1/2}-\D^{-1/2}\| \prec \frac{1}{\sqrt{B}}$ and 
\begin{equation}
    \label{eq:domination-hatC-tildeC-2}
 \| \hat{\C} - \tilde{\C}\|  \prec \frac{1}{\sqrt{B}},   
\end{equation}
for $\alpha < \frac{4}{5}$. When $\alpha > \frac{4}{5}$, the bias term $\mathbb{E}(\hat{s}_m) - s_m$ is dominant, and some evaluations of the present paper need to adapted. 
We also mention that the expression (\ref{eq:expre-hats}) of $\hat{s}_m$ allows to 
use (\ref{eq:moyenne-sup-hanson-wright-D}) to deduce that 
\begin{equation}
\label{eq:E-norm-hatD-rond}
\mathbb{E}\left( \| \hat{\D} - \mathbb{E}(\hat{\D}) \|^{k} \right) = \Ocal\left( B^{-k/2 + \epsilon}\right),
\end{equation}
for each $\epsilon > 0$. When 
$\alpha < \frac{4}{5}$, we also have 
\begin{equation}
\label{eq:E-norm-hatD-D}
\mathbb{E}\left( \| \hat{\D} - \D) \|^{k} \right) = \Ocal\left( B^{-k/2 + \epsilon}\right),
\end{equation}
for each $\epsilon > 0$. It will be shown in the following that $\mathbb{E}(\| \hat{\C} - \tilde{\C} \|^{k}) = \Ocal\left(B^{-k/2 + \epsilon}\right)$ for $\alpha < \frac{4}{5}$.
\end{remark}
\subsection{\texorpdfstring{Location of the eigenvalues of $\tilde{\C}$ and  $\hat{\C}$}{Location of the eigenvalues}}
\label{subsec:location-eigenvalues-f-compactly-supported}
As matrices  $\tilde{\C}$ and  $\hat{\C}$ are close from a Wishart matrix, it is natural to 
expect that their eigenvalues behave as those of $\frac{\X \X^*}{B+1}$ with exponentially high probability. This point is 
addressed in \cite{loubaton-rosuel-ejs-2021}.  We introduce
the events $\Lambda^{\tilde{\C}}_{N,\epsilon}(\nu)$ and $\Lambda^{\hat{\C}}_{N,\epsilon}(\nu)$ defined by
\begin{eqnarray}
\label{eq:def-Lambda_C_tilde}
\Lambda^{\tilde{\C}}_{N,\epsilon}(\nu)  & = & \{\lambda_k(\tilde{\C}_N(\nu))\subset\Supp\mu_{MP}^{(c)}+\epsilon, k=1, \ldots, M \}, \\
\Lambda^{\hat{\C}}_{N,\epsilon}(\nu)  & = & \{ \lambda_k(\hat{\C}_N(\nu))\subset\Supp\mu_{MP}^{(c)}+\epsilon,  k=1, \ldots, M \}
\label{eqdef-Lambda_C_hat}.
\end{eqnarray}
Then, the following result is proved in \cite{loubaton-rosuel-ejs-2021}.
\begin{proposition}
\label{prop:localisation-eigenvalues-hatC-tildeC}
For each $\epsilon > 0$, the two collections of of events 
\begin{align*}
    \left\{\Lambda^{\tilde{\C}}_{N,\epsilon}(\nu): N \geq 1, \nu \in [0,1]\right\},    
\end{align*}
and 
\begin{align*}
    \left\{\Lambda^{\hat{\C}}_{N,\epsilon}(\nu): N \geq 1, \nu \in [0,1]\right\},     
\end{align*}
hold with exponential high probability.
\end{proposition}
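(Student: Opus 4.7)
The plan is to combine the stochastic representations (\ref{eq:representation-tildeC}) and (\ref{eq:representation-hatC}) of $\tilde{\C}_N(\nu)$ and $\hat{\C}_N(\nu)$ as perturbations of the Wishart matrix $\frac{\X_N(\nu)\X_N^*(\nu)}{B+1}$ with the eigenvalue location for Wishart matrices provided by Proposition \ref{prop:Lambda-holds-with-high-proba}, and to control the perturbation uniformly in $\nu$ via Weyl's inequality together with the stochastic domination bounds (\ref{eq:domination-tildeDelta}) and (\ref{eq:domination-Delta-1}).

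First, I would fix $\epsilon > 0$ and introduce the intersection event
\begin{equation*}
A_N^{\tilde{\C}}(\nu) = \Lambda_{N,\epsilon/2}(\nu) \cap \left\{ \|\tilde{\Deltabs}_N(\nu)\| \leq \epsilon/2 \right\},
\end{equation*}
where $\Lambda_{N,\epsilon/2}(\nu)$ is the event defined in (\ref{eq:def-Lambda-N-epsilon}). By Weyl's inequality applied to the decomposition (\ref{eq:representation-tildeC}), we have $|\lambda_k(\tilde{\C}_N(\nu)) - \lambda_k(\frac{\X_N\X_N^*}{B+1})| \leq \|\tilde{\Deltabs}_N(\nu)\|$ for each $k$, hence on $A_N^{\tilde{\C}}(\nu)$ the eigenvalues of $\tilde{\C}_N(\nu)$ all lie in $\Supp\mu_{MP}^{(c)}+\epsilon$, i.e. $A_N^{\tilde{\C}}(\nu) \subset \Lambda^{\tilde{\C}}_{N,\epsilon}(\nu)$. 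It therefore suffices to show $\sup_\nu P\bigl((A_N^{\tilde{\C}}(\nu))^c\bigr) \leq e^{-N^{\gamma}}$ for some $\gamma > 0$.

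The first summand in $(A_N^{\tilde{\C}}(\nu))^c$ is already handled by Proposition \ref{prop:Lambda-holds-with-high-proba}, which yields $\sup_\nu P(\Lambda_{N,\epsilon/2}(\nu)^c) \leq e^{-N^{\gamma_1}}$. For the second summand, the key point is that under Assumption \ref{as:asymptotic-regime} one has $B/N = \Ocal(N^{\alpha-1})$ with $\alpha - 1 < 0$, so the dominating sequence in (\ref{eq:domination-tildeDelta}) decays polynomially. Choosing $\epsilon' \in (0,1-\alpha)$ one obtains $(B/N) N^{\epsilon'} < \epsilon/2$ for $N$ large enough; applying the definition of stochastic domination to $\|\tilde{\Deltabs}_N(\nu)\| \prec B/N$ with this $\epsilon'$ gives $\sup_\nu P\bigl(\|\tilde{\Deltabs}_N(\nu)\| > \epsilon/2\bigr) \leq e^{-N^{\gamma_2}}$. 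A union bound then yields the claim for $\tilde{\C}_N(\nu)$. For $\hat{\C}_N(\nu)$, the argument is identical, using decomposition (\ref{eq:representation-hatC}) and the bound (\ref{eq:domination-Delta-1}): both $B/N$ and $1/\sqrt{B}$ decay polynomially (in fact $1/\sqrt{B} = \Ocal(N^{-\alpha/2})$), so the same polynomial-to-constant conversion applies.

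There is no serious obstacle here: the work has already been done in establishing (\ref{eq:domination-tildeDelta}), (\ref{eq:domination-Delta-1}), and Proposition \ref{prop:Lambda-holds-with-high-proba}. The only point that warrants attention is the conversion step from "stochastic domination by a polynomially vanishing sequence" to "exponentially small probability of exceeding a fixed positive constant", and this is precisely where the assumption $\alpha < 1$ (ensuring polynomial decay of both $B/N$ and $1/\sqrt{B}$) is used. Note also that the uniformity over $\nu \in [0,1]$ is automatic, since stochastic domination as in Definition \ref{def:stochastic-domination} is already formulated with a supremum over the index set $U^{(N)}$, which we take here to be $[0,1]$.
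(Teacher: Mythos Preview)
Your proposal is correct and follows the natural argument: Weyl's inequality combined with the Wishart eigenvalue localization (Proposition \ref{prop:Lambda-holds-with-high-proba}) and the stochastic domination bounds on the perturbations. The paper itself does not give a proof of this proposition here but simply cites \cite{loubaton-rosuel-ejs-2021}; your argument is precisely the standard route that underlies that reference.
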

We now claim that Proposition \ref{prop:localisation-eigenvalues-hatC-tildeC} implies 
that in order to establish a CLT on 
$B  \theta_N(f,\nu)$ and on the statistics $\zeta_{N,1}(f)$ and $\zeta_{N,2}(f)$ defined by (\ref{eq:def-statistics-zeta1-clt}) and (\ref{eq:def-statistics-zeta-clt}) for each function $f$ defined on $\mathbb{R}^{+}$, $\mathcal{C}^{\infty}$ in a neighbourhood 
of $[\lambda_{-}, \lambda_{+}]$, it is sufficient to prove the CLT when $f$ is  compactly supported.  More precisely, we consider $\epsilon>0$ and define $\chi: \Rbb\to\Rbb$ as a $\mathcal{C}^{\infty}$ function such that:
\begin{equation}   
\label{eq:definition_chi}
\chi(\lambda) = 
     \begin{cases}
       1 &\quad\text{if }  \lambda \in \Supp\mu_{MP}^{(c)}+\epsilon \\
       0 &\quad\text{if }\lambda \notin \Supp\mu_{MP}^{(c)}+2\epsilon
     \end{cases}.
\end{equation}  
If $f$ is $\mathcal{C}^{\infty}$ in a neighborhood of $[\lambda_{-}, \lambda_{+}]$, we define the compactly supported function $\bar{f}$ given by $\bar{f}= f \times\chi$, which, of course, verifies $f = \bar{f}$ on $\Supp\mu_{MP}^{(c)}+\epsilon$. Then, the following result holds. 
\begin{proposition}
\label{prop:phi-compactly-supported}
The families $(B \theta_N(f,\nu))_{\nu \in [0,1]}$ and $(B \theta_N(\bar{f},\nu))_{\nu \in [0,1]}$ verify
\begin{equation}
\label{eq:phi-compactly-supported}
B \left(  \theta_N(f,\nu) - \theta_N(\bar{f},\nu) \right) = o_{\prec}\left( \frac{1}{N^{a}}\right),
\end{equation}
and 
\begin{equation}
\label{eq:phi-compactly-supported-2}
 \left( B  \theta_N(f,\nu) \right)^{2} - \left( B \theta_N(\bar{f},\nu) \right)^{2} = o_{\prec}\left( \frac{1}{N^{a}}\right),
\end{equation}
for each $a > 0$. 
\end{proposition}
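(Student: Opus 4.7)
The plan is to exploit directly the eigenvalue localization given by Proposition \ref{prop:localisation-eigenvalues-hatC-tildeC} together with the observation that $f - \bar{f} = f(1-\chi)$ vanishes identically on $\Supp \mu_{MP}^{(c)} + \epsilon$. Writing out the definition (\ref{eq:def-theta-f-nu}), one has
\begin{equation*}
\theta_N(f,\nu) - \theta_N(\bar{f},\nu) = \frac{1}{M}\Tr\bigl((f-\bar{f})(\hat{\C}_N(\nu))\bigr) - \int (f-\bar{f})\, d\mu_{MP}^{(c_N)} - \langle D_N, f-\bar{f}\rangle \left( r_N(\nu)v_N - \tfrac{1}{c_N B}\right),
\end{equation*}
so the proof decomposes into three pieces.

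For the deterministic pieces, since $c_N \to c$, the support $[(1-\sqrt{c_N})^2,(1+\sqrt{c_N})^2]$ of $\mu_{MP}^{(c_N)}$ is contained in $\Supp\mu_{MP}^{(c)} + \epsilon$ for all $N$ sufficiently large; the distribution $D_N$, which is by construction carried by this same support, is likewise concentrated where $f-\bar{f}$ vanishes. Hence for $N$ large enough,
\begin{equation*}
\int (f-\bar{f})\, d\mu_{MP}^{(c_N)} = 0, \qquad \langle D_N, f-\bar{f}\rangle = 0,
\end{equation*}
and these two terms contribute exactly $0$ (not merely $o_{\prec}$).

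For the random piece, I would invoke Proposition \ref{prop:localisation-eigenvalues-hatC-tildeC}: the family of events $\Lambda^{\hat{\C}}_{N,\epsilon}(\nu) = \{\lambda_k(\hat{\C}_N(\nu)) \in \Supp\mu_{MP}^{(c)} + \epsilon, \ k=1,\dots,M\}$ holds with exponentially high probability uniformly in $\nu \in [0,1]$. On $\Lambda^{\hat{\C}}_{N,\epsilon}(\nu)$, every eigenvalue of $\hat{\C}_N(\nu)$ sits in a set where $f$ and $\bar{f}$ coincide, so by the functional calculus $f(\hat{\C}_N(\nu)) = \bar{f}(\hat{\C}_N(\nu))$ and therefore $\Tr\bigl((f-\bar{f})(\hat{\C}_N(\nu))\bigr) = 0$. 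Combining the three observations, on an event of exponentially high probability and uniformly in $\nu$,
\begin{equation*}
\theta_N(f,\nu) - \theta_N(\bar{f},\nu) = 0.
\end{equation*}

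Consequently $B(\theta_N(f,\nu)-\theta_N(\bar{f},\nu))$ vanishes on an event whose complement has probability at most $e^{-N^{\gamma}}$ for some $\gamma>0$ uniformly in $\nu$. For any $a>0$ and any $\epsilon'>0$,
\begin{equation*}
\sup_{\nu}\, P\left( \bigl|B(\theta_N(f,\nu)-\theta_N(\bar{f},\nu))\bigr| > N^{-a+\epsilon'}\right) \leq e^{-N^{\gamma}},
\end{equation*}
which is exactly (\ref{eq:phi-compactly-supported}) in the form $o_{\prec}(N^{-a})$. Identity (\ref{eq:phi-compactly-supported-2}) then follows at once by factoring $(B\theta_N(f,\nu))^2 - (B\theta_N(\bar{f},\nu))^2$ and noting that on the very same event the factor $B(\theta_N(f,\nu)-\theta_N(\bar{f},\nu))$ vanishes. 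There is essentially no analytical obstacle here; the only subtlety, and the point one must check carefully, is that the eigenvalue localization event is uniform in $\nu$, so that the exponential bound extends to a legitimate $o_{\prec}$ statement according to Definition \ref{def:stochastic-domination}.
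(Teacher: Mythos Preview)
Your proof is correct and follows essentially the same approach as the paper: both arguments use the eigenvalue localization (Proposition~\ref{prop:localisation-eigenvalues-hatC-tildeC}) to show that the difference $\theta_N(f,\nu)-\theta_N(\bar{f},\nu)$ vanishes identically on an event of exponentially high probability, after noting that the deterministic terms $\int (f-\bar f)\,d\mu_{MP}^{(c_N)}$ and $\langle D_N,f-\bar f\rangle$ are zero for $N$ large. The paper writes this via the indicator decomposition $\hat f_N(\nu)=\hat f_N(\nu)\mathds{1}_{\Lambda}+\hat f_N(\nu)\mathds{1}_{\Lambda^c}$ and then bounds the $\Lambda^c$ piece, while you phrase it directly as ``the difference is zero on $\Lambda$''; these are the same argument.
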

\begin{proof}
We just establish (\ref{eq:phi-compactly-supported}). We first evaluate the left-hand side of (\ref{eq:phi-compactly-supported}). For this, we express $ \theta_N(f,\nu)$ as 
\begin{eqnarray*}
     \theta_N(f,\nu) & = & 
     \hat{f}_N(\nu) \mathds{1}_{\Lambda^{\hat{\C}}_{N,\epsilon}(\nu)} - \int f(\lambda) d \mu^{(c_N)}_{MP}   \\
 &    - &  <D_N,f> \left( r_N(\nu) v_N - (c_N B)^{-1} \right) + \hat{f}_N(\nu) \mathds{1}_{\left(\Lambda^{\hat{\C}}_{N,\epsilon}(\nu)\right)^{c}}.
\end{eqnarray*}
As $c_N \rightarrow c$, for $N$ large enough, $\Supp\mu_{MP}^{(c_N)}$ is contained in $\Supp\mu_{MP}^{(c)}+\epsilon$.  As $f = \bar{f}$ on $\Supp\mu_{MP}^{(c)}+\epsilon$, 
$f$ also coincides with $\bar{f}$ on $\Supp\mu_{MP}^{(c_N)}$ for $N$ large enough. 
Therefore, the equalities $\int f d\mu_{MP}^{(c_N)} = \int \bar{f} d\mu_{MP}^{(c_N)}$ and $<D_N, f > = < D_N, \bar{f}>$ hold. 
Moreover, it is clear that $ \hat{f}_N(\nu) \mathds{1}_{\Lambda^{\hat{\C}}_{N,\epsilon}(\nu)} =  \hat{\bar{f}}_N(\nu) \mathds{1}_{\Lambda^{\hat{\C}}_{N,\epsilon}(\nu)}$. Therefore, $\theta_N(f,\nu) - \theta_N(\bar{f},\nu)$ is given by 
\begin{equation}
    \label{eq:expre-thetaf-thetafbar}
    \theta_N(f,\nu) - \theta_N(\bar{f},\nu) = \hat{f}_N(\nu) \mathds{1}_{\left(\Lambda^{\hat{\C}}_{N,\epsilon}(\nu)\right)^{c}} -
    \hat{\bar{f}}_N(\nu) \mathds{1}_{\left(\Lambda^{\hat{\C}}_{N,\epsilon}(\nu)\right)^{c}}. 
\end{equation}
It thus remains to check that $B \hat{f}_N(\nu) \mathds{1}_{\left(\Lambda^{\hat{\C}}_{N,\epsilon}(\nu)\right)^{c}} $ and
 $B \hat{\bar{f}}_N(\nu) \mathds{1}_{\left(\Lambda^{\hat{\C}}_{N,\epsilon}(\nu)\right)^{c}} $ are  $o_{\prec}\left( \frac{1}{N^{a}}\right)$ 
terms for each $a > 0$. We just check this for the former term. We remark that for each $\delta > 0$, it holds that 
$$
\sup_{\nu} P\left(  B | \hat{f}_N(\nu)| \mathds{1}_{\left(\Lambda^{\hat{\C}}_{N,\epsilon}(\nu)\right)^{c}} >  \frac{N^{\delta}}{N^{a}} \right)
\leq \sup_{\nu} P\left( \left(\Lambda^{\hat{\C}}_{N,\epsilon}(\nu)\right)^{c} \right).
$$
Proposition \ref{prop:localisation-eigenvalues-hatC-tildeC} implies that 
$$
\sup_{\nu}  \mathbb{P}\left(\left( \Lambda^{\hat{\C}}_{N,\epsilon}(\nu)\right)^{c} \right) < e^{-N^{\gamma}},
$$
for each $N$ large enough, thus leading to the conclusion that $$B \hat{f}_N(\nu) \mathds{1}_{\left(\Lambda^{\hat{\C}}_{N,\epsilon}(\nu)\right)^{c}} = o_{\prec}\left( \frac{1}{N^{a}}\right)$$ for each $a$. This completes the proof of Proposition \ref{prop:phi-compactly-supported}. 
\end{proof}
In the following, we therefore only consider functions $f$ defined on  
$\mathbb{R}^{+}$, $\mathcal{C}^\infty$ in a neighbourhood of $[\lambda_{-}, \lambda_{+}]$, and that vanish outside a neighborhood of $[\lambda_{-}, \lambda_{+}]$.
From now on, the set $\mathcal{D}$ that appears in the forthcoming Helffer-Sjöstrand formulas (see Eq. (\ref{eq:integrale-Cplus-D})) represents $[a_1, a_2] \times [0,1]$ where
$[a_1, a_2] = [\lambda_{-} - \epsilon, \lambda_{+} + \epsilon]$ where $\epsilon > 0$ is small enough. 

\subsection{\texorpdfstring{Behaviour of the LSS $\hat{f}_N(\nu)$}{Behaviour of the LSS}}
\label{subsec:background-study-hatphi}
In order to evaluate the behaviour of $\psi_N(\nu) = \hat{f}_N(\nu) - \int f d\mu^{(c_N)}_{MP} - \\ <D_N,f> r_N(\nu) v_N \mathds{1}_{\alpha \geq 2/3}$, we recall that \cite{loubaton-rosuel-ejs-2021}
studies each term of the decomposition 
\begin{align}
    \nonumber 
    \frac{1}{M} \mathrm{Tr}\left( f(\hat{\C}(\nu))\right) -  \int_{\Rbb^{+}}f\diff\mu_{MP}^{(c_N)}  = \frac{1}{M} \mathrm{Tr}\left( f(\hat{\C}(\nu))\right) - \frac{1}{M} \mathrm{Tr}\left( f(\tilde{\C}(\nu))\right) + \\ 
    \nonumber \frac{1}{M} \mathrm{Tr}\left( f(\tilde{\C}(\nu))\right) - \mathbb{E} \left[ \frac{1}{M} \mathrm{Tr}\left( f(\tilde{\C}(\nu))\right) \right] + \\
    \nonumber \mathbb{E} \left[ \frac{1}{M} \mathrm{Tr}\left( f(\tilde{\C}(\nu))\right) 
    - \frac{1}{M} \mathrm{Tr}\left( f(\frac{\X(\nu)\X^*(\nu)}{B+1}) \right) \right] + \\ 
    \label{eq:fundamental-decomposition}
    \mathbb{E} \left[  \frac{1}{M} \mathrm{Tr}\left( f(\frac{\X(\nu)\X^*(\nu)}{B+1}) \right) \right] - \int_{\Rbb^{+}}f\diff\mu_{MP}^{(c_N)}.
\end{align}
We denote by $(T_i)_{i=1, \ldots, 4}$ the four terms of the right-hand side of (\ref{eq:fundamental-decomposition}).
\cite{loubaton-rosuel-ejs-2021} first established that $T_2 = \Ocal_{\prec}(B^{-1})$ using the standard Gaussian concentration inequality (\ref{eq:standard-gaussian-concentration}). The study of $T_1, T_3, T_4$ is based on the Helffer-Sjöstrand formula and on an evaluation of $\frac{1}{M} \Tr \left( \hat{\Q}_N(z) - \tilde{\Q}_N(z)  \right)$, 
$\mathbb{E}\left( \frac{1}{M} \Tr \left( \tilde{\Q}_N(z) - \Q_N(z) \right) \right)$
and $\mathbb{E}\left( \frac{1}{M} \Tr \left(  \Q_N(z) - t_N(z) \right)  \right)$, where 
$\hat{\Q}_N(z)$, $\tilde{\Q}_N(z)$, $\Q_N(z)$ represent the resolvents of matrices $\hat{\C}_N$, 
$\tilde{\C}_N$, $\frac{\X_N \X_N^{*}}{B+1}$ respectively, while we recall that $t_N(z)$ is the Stieltjes transform of the Marcenko-Pastur distribution $\mu_{MP}^{(c_N)}$.
\cite{loubaton-rosuel-ejs-2021} proved that 
$$
\frac{1}{M} \Tr \left( \hat{\Q}_N(z) - \tilde{\Q}_N(z)  \right) - \tilde{p}_N(z) \,  \tilde{r}_N(\nu) \, v_N \, \mathds{1}_{\alpha \geq 2/3} = \Ocal_{\prec,z}(u_N),
$$
where $\tilde{p}_N(z)$ and $\tilde{r}_N(\nu)$ are defined by
\begin{align}
\label{eq:def-tildep}
\tilde{p}_N(z) = & (z t_N(z))' =   \frac{(zt_N(z)\tilde{t}_N(z))^{2}}{1 - c (zt_N(z)\tilde{t}_N(z))^{2}}, \\
\label{eq:def-tilder}
\tilde{r}_N(\nu) = & \frac{1}{2M} \sum_{m=1}^{M} \frac{s^{''}_m(\nu)}{s_m(\nu)},
\end{align}
and where we recall that $u_N$ is defined by (\ref{eq:def-uN}). Moreover, Proposition 1.3 in \cite{loubaton-rosuel-ejs-2021} as well as (\cite{these-alexis}, Chapter 2) imply that $\mathbb{E}\left( \frac{1}{M} \Tr \left( \tilde{\Q}_N(z) - \Q_N(z) \right)\right)$ verifies 
\begin{align}
    \label{eq:Etrace-tildeQ-Q-precise-1}
 &\mathbb{E}\left( \frac{1}{M} \Tr \left( \tilde{\Q}_N(z) - \Q_N(z) \right) \right) 
 = 
 \notag\\
 &\qquad 
 p_N(z)  \left( \frac{1}{B+1} \Tr \left( \frac{1}{M} \sum_{m=1}^{M}  \Phibs_m \right)^{2} \right)  
 \notag\\
 &\qquad 
 - \tilde{p}_N(z) \left( \frac{1}{M} \sum_{m=1}^{M} \frac{1}{B+1} \Tr \Phibs_m \right) +  \mathcal{O}_z\left( \left( \frac{B}{N}\right)^{3} + \frac{1}{N} \right),  
\end{align}
which further gives
\begin{align}
 &\mathbb{E}\left( \frac{1}{M} \Tr \left( \tilde{\Q}_N(z) - \Q_N(z) \right) \right) 
 = 
 \\
 &\qquad 
p_N(z) r_N(\nu)  v_N - \tilde{p}_N(z) \tilde{r}_N(\nu) v_N  + 
  \mathcal{O}_z\left( \left( \frac{B}{N}\right)^{3} +  \frac{1}{N} \right), 
    \label{eq:Etrace-tildeQ-Q-precise-2}
\end{align}
where $p_N(z)$ represents the Stieltjes transform of the distribution $D_N$ introduced in (\ref{eq:def-theta-f-nu}) and is given by
\begin{equation}
  \label{eq:def-p}
  p_N(z)  =  -c_N \frac{(zt_N(z)\tilde{t}_N(z))^{3}}{1 - c (zt_N(z)\tilde{t}_N(z))^{2}}.
\end{equation}  
Finally, it holds that
\begin{equation}
    \label{eq:E-Q-t}
\mathbb{E}\left( \frac{1}{M} \Tr \left(  \Q_N(z) - t_N(z) \right)  \right) = \mathcal{O}_z\left(\frac{1}{B^{2}}\right).  
\end{equation}
Putting all the pieces together, \cite{loubaton-rosuel-ejs-2021} deduced that
$$
\hat{f}_N(\nu) - \int f  d\mu^{(c_N)}_{MP} - <D_N,f> r_N(\nu) v_N \mathds{1}_{\alpha \geq 2/3} = \Ocal_{\prec}(u_N).
$$
We however mention that (\ref{eq:Etrace-tildeQ-Q-precise-1}) and (\ref{eq:Etrace-tildeQ-Q-precise-2}) can be improved. More precisely, using the approach developed in (\cite{these-alexis}, Chapter 2), it appears 
possible to establish that
\begin{align}
    \label{eq:Etrace-tildeQ-Q-precise-1-improved}
 \mathbb{E}\left( \frac{1}{M} \Tr \left( \tilde{\Q}_N(z) - \Q_N(z) \right) \right) = & 
 p_N(z)  \left( \frac{1}{B+1} \Tr \left( \frac{1}{M} \sum_{m=1}^{M}  \Phibs_m \right)^{2} \right)   \\ 
 \notag 
 & 
- \tilde{p}_N(z) \left( \frac{1}{M} \sum_{m=1}^{M} \frac{1}{B+1} \Tr \Phibs_m \right) \\
\notag
& 
+ \mathcal{O}_z\left( \left( \frac{B}{N}\right)^{4} + \frac{1}{N} \right).
 \end{align}
Moreover, as a consequence of (\ref{eq:expre-Trace-Phim}), 
the equality
\begin{equation}
    \label{eq:moyenne-trace-Phim}
  \frac{1}{M} \sum_{m=1}^{M} \frac{1}{B+1} \Tr \Phibs_m = \tilde{r}_N(\nu) v_N  + 
  \mathcal{O}\left( \left( \frac{B}{N}\right)^{4} +  \frac{1}{N} \right),   
\end{equation}
holds, while it can be shown that 
\begin{equation}
    \label{eq:trace-sum-phim-carre}
  \frac{1}{B+1} \Tr \left( \frac{1}{M} \sum_{m=1}^{M}  \Phibs_m \right)^{2} =  r_N(\nu)  v_N  + 
  \mathcal{O}\left( \left( \frac{B}{N}\right)^{4} +  \frac{1}{N} \right).   
\end{equation}
Therefore, we eventually obtain that
\begin{align}
 &\mathbb{E}\left( \frac{1}{M} \Tr \left( \tilde{\Q}_N(z) - \Q_N(z) \right) \right)   =
 \notag\\
 &\qquad p_N(z) r_N(\nu)  v_N - \tilde{p}_N(z) \tilde{r}_N(\nu) v_N  + 
  \mathcal{O}_z\left( \left( \frac{B}{N}\right)^{4} +  \frac{1}{N} \right).
  \label{eq:Etrace-tildeQ-Q-precise-2-improved}
\end{align}
Replacing $\mathcal{O}_z\left( \left( \frac{B}{N}\right)^{3}\right)$ by $\mathcal{O}_z\left( \left( \frac{B}{N}\right)^{4}\right)$ is useful in the context of the present paper because it allows to establish 
that 
\begin{equation}
\label{eq:expansion-expectration-Btheta}
\mathbb{E}\left(  \theta_N(f,\nu) \right) = \Ocal\left( (B^{4}/N^{4}) \right) + o\left( \frac{1}{\sqrt{NB}}\right),
\end{equation}
for $\alpha < \frac{4}{5}$, while (\ref{eq:Etrace-tildeQ-Q-precise-1}) and (\ref{eq:Etrace-tildeQ-Q-precise-2}) would only lead to 
\begin{equation}
\label{eq:expansion-expectration-Btheta-weak}
\mathbb{E}\left(  \theta_N(f,\nu) \right) = \Ocal\left( (B^{3}/N^{3}) \right) +  o\left( \frac{1}{\sqrt{NB}}\right).
\end{equation}
Based on (\ref{eq:expansion-expectration-Btheta-weak}), the convergence in distribution of $B \theta_N(f,\nu)$ towards a zero mean Gaussian random variable would then depend on 
the condition $B^{4}/N^{3} \rightarrow 0$, i.e. $\alpha < 3/4$, a more restrictive assumption. 
A brief justification of (\ref{eq:Etrace-tildeQ-Q-precise-1-improved}, \ref{eq:Etrace-tildeQ-Q-precise-2-improved}) is provided in the Appendix \ref{sec:improvement-bias}.  
\section{Bartlett's factorization}
\label{sec:bartlett}
The Bartlett's factorization (see e.g. \cite{walker-1965}, \cite{hannan-book} Chap. 5, Theorem 1, p. 248, \cite{brockwell-davis-book}, Theorem 10.3.1 p. 346) consists in writing $\xi_{y_m}(\nu)$ as 
\begin{equation}
\label{eq:walker}
\xi_{y_m}(\nu) = h_m(\nu) \xi_{\epsilon_m}(\nu) + r_{m,\mathfrak{b}}(\nu)
\end{equation}
where the reminder $r_{m,\mathfrak{b}}(\nu)$ is supposed to represent an error term converging towards $0$ in an appropriate sense. In the following, we denote  $\omegabs_{m,\mathfrak{b}}(\nu)$, $\omegabs_{m,\epsilon}(\nu)$, and $\omegabs_{m,r}(\nu)$ the $(B+1)$--dimensional vectors defined by
\begin{align*}
\omegabs_{m,\mathfrak{b}}(\nu)  =  \left(h_m(\nu - \frac{B}{2N}) \xi_{\epsilon_m}(\nu - \frac{B}{2N}),\ldots, h_m(\nu + \frac{B}{2N}) \xi_{\epsilon_m}(\nu + \frac{B}{2N})\right), \\
\omegabs_{m,\epsilon}(\nu)  =  \left( \xi_{\epsilon_m}(\nu-\frac{B}{2N}), \ldots, \xi_{\epsilon_m}(\nu+\frac{B}{2N})\right), \\  
\omegabs_{m,r}(\nu)  =  \left( r_{m,\mathfrak{b}}(\nu-\frac{B}{2N}), \ldots, r_{m,\mathfrak{b}}(\nu+\frac{B}{2N})\right).
\end{align*}
It is clear that vector $\omegabs_m(\nu)$ defined by (\ref{eq:def-omegam}) can be written as
\begin{equation}
\label{eq:expre-omegabs-barlett}
\omegabs_m(\nu) = \omegabs_{m,\mathfrak{b}}(\nu) + \omegabs_{m,r}(\nu).
\end{equation}
As recalled in Subsection \ref{subsec:background-stochastic-representation}, the representation 
(\ref{eq:representation-omegam}) of $\omegabs_m(\nu)$ is the key tool 
to derive the results in \cite{loubaton-rosuel-ejs-2021}. In Section \ref{subsec:representations-omega}, 
we derive similar representations of vectors $ \omegabs_{m,\mathfrak{b}}(\nu)$
and $\omegabs_{m,r}(\nu)$, which will allow to obtain alternative 
representations of $\omegabs_m(\nu)$, $\Sigmabs_N(\nu)$, $\tilde{\C}_N(\nu)$
and $\hat{\C}_N(\nu)$. 

\subsection{\texorpdfstring{Properties of vectors $\omegabs_{m,\mathfrak{b}}(\nu)$ and 
$\omegabs_{m,r}(\nu)$, and alternative representation of vector $\omegabs_{m}(\nu)$}{Some properties and alternative representations}}

\label{subsec:representations-omega}
In this paragraph, we derive properties of $\omegabs_{m,\mathfrak{b}}(\nu)$ and 
$\omegabs_{m,r}(\nu)$ that will be used to:
\begin{itemize}
\item evaluate the properties of $\theta_{N,\mathfrak{b}}(f,\nu)$ and establish the CLT on  
$B \theta_{N,\mathfrak{b}}(f,\nu)$
\item obtain a new representation of vector $\omegabs_{m}(\nu)$ that will allow to make a
connection between $\theta_{N,\mathfrak{b}}(f,\nu)$ and $\theta_{N}(f,\nu)$ and to establish 
the representation (\ref{eq:representation-theta-bartlett}) where the error term 
$\kappa_{N,\mathfrak{b}}(f,\nu)$ verifies the concentration inequality
(\ref{eq:concentration-kappaNb}) if $\alpha < \frac{7}{9}$.
\end{itemize}
We first state the following obvious, but important property of vector $\omegabs_{m,\mathfrak{b}}(\nu)$. 
\begin{proposition}
    \label{prop:independence-bartlett-calGN}
For each $m=1, \ldots, M$, the vectors $(\omegabs_{m,\mathfrak{b}}(\nu))_{\nu \in \Gcal_N}$
are mutually independent.
\end{proposition}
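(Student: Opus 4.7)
The plan is to reduce the claim to the standard fact that, for a complex Gaussian white noise sequence $(\epsilon_{m,n})_{n=1,\ldots,N}$, the DFT values $(\xi_{\epsilon_m}(j/N))_{j=0,\ldots,N-1}$ form an i.i.d.\ $\Ncal_c(0,1)$ family (being a unitary image of an i.i.d.\ $\Ncal_c(0,1)$ vector), combined with the periodicity identity $\xi_{\epsilon_m}(\nu+1)=\xi_{\epsilon_m}(\nu)$. Since $h_m$ is deterministic, the vector $\omegabs_{m,\mathfrak{b}}(k(B+1)/N)$ is a diagonal, deterministic transformation of
\begin{equation*}
\bigl(\xi_{\epsilon_m}((k(B+1)+b)/N)\bigr)_{b=-B/2,\ldots,B/2},
\end{equation*}
so it suffices to prove that, for distinct $k_1,k_2\in\{0,\ldots,K-1\}$ and arbitrary $b_1,b_2\in\{-B/2,\ldots,B/2\}$, the integers $k_1(B+1)+b_1$ and $k_2(B+1)+b_2$ are distinct modulo $N$.

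The elementary step is to bound $d:=(k_1-k_2)(B+1)+(b_1-b_2)$. On the one hand, $|(k_1-k_2)(B+1)|\geq B+1$ while $|b_1-b_2|\leq B$, so $|d|\geq 1$. On the other hand, $|(k_1-k_2)(B+1)|\leq (K-1)(B+1)$, and the definition $K=\lfloor N/(B+1)\rfloor$ yields $K(B+1)\leq N$, hence $|d|\leq (K-1)(B+1)+B\leq N-1$. Thus $d$ is a nonzero integer with $|d|<N$, so it is not a multiple of $N$, and the corresponding indices $(k_1(B+1)+b_1)\bmod N$ and $(k_2(B+1)+b_2)\bmod N$ are distinct.

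Consequently, for each $k\in\{0,\ldots,K-1\}$ the set $\mathcal{J}_k:=\{(k(B+1)+b)\bmod N:\ b=-B/2,\ldots,B/2\}$ is a subset of $\{0,\ldots,N-1\}$, the sets $\mathcal{J}_{k_1},\mathcal{J}_{k_2}$ are disjoint for $k_1\neq k_2$, and by periodicity each entry of $\omegabs_{m,\mathfrak{b}}(k(B+1)/N)$ is a deterministic multiple of some $\xi_{\epsilon_m}(j/N)$ with $j\in\mathcal{J}_k$. Since $\{\xi_{\epsilon_m}(j/N):j=0,\ldots,N-1\}$ is i.i.d., the blocks indexed by $k$ are disjoint independent sub-collections, hence mutually (not just pairwise) independent; applying deterministic diagonal maps to each block preserves this joint independence, and the proposition follows. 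No real obstacle is anticipated: the only subtlety is the bookkeeping with the period-$1$ identification, which is controlled exactly by the inequality $K(B+1)\leq N$ built into the definition of $K$.
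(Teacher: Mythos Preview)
Your proof is correct and follows essentially the same approach as the paper's: both reduce to the independence of the DFT values $\xi_{\epsilon_m}(j/N)$ and check that the frequency windows associated with distinct elements of $\Gcal_N$ are disjoint. Your version is more explicit in verifying the disjointness modulo $N$ (via the bound $1\le |d|\le N-1$) and in passing from pairwise to mutual independence through the disjoint-blocks argument, whereas the paper simply asserts the window disjointness as ``clear'' and leaves the joint independence implicit.
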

\begin{proof}
If we consider 2 different frequencies $\nu_1$ and $\nu_2$ that belong to $\Gcal_N$, it is clear that the two sets of frequencies $\Acal_i = \{ \nu_i - \frac{B}{2N}, \ldots, \nu_i + \frac{B}{2N} \}$, $i=1,2$ are disjoints. As $(\epsilon_{m,n})_{n \in \mathbb{Z}}$ is a Gaussian white noise sequence, the Gaussian random vectors $\xi_{\epsilon_m}(\nu_1)$ and $\xi_{\epsilon_m}(\nu_2)$ are mutually independent. This, of course, implies that 
the vectors $\omegabs_{m,\mathfrak{b}}(\nu_1)$ and $\omegabs_{m,\mathfrak{b}}(\nu_2)$ share this property. 
\end{proof}
In order to establish the other properties of $\omegabs_{m,\mathfrak{b}}(\nu)$, we remark that 
\begin{align}
\omegabs_{m,\mathfrak{b}}(\nu) & = h_m(\nu) \omegabs_{m,\epsilon}(\nu) \\
\notag 
& + \omegabs_{m, \epsilon}(\nu) \, \dg \left( h_m(\nu - B/2N) - h_m(\nu), \ldots,  h_m(\nu + B/2N) - h_m(\nu) \right)  \notag \\
     & = h_m(\nu) \left( \omegabs_{m,\epsilon}(\nu) + \omegabs_{m, \epsilon}(\nu) \, \Psibs_{m,\mathfrak{b}}(\nu) \right) \notag \\
\label{eq:representation-omegaw}
     & = \sqrt{s_m(\nu)} \x_{m,\mathfrak{b}}(\nu)  \left( I + \Psibs_{m,\mathfrak{b}}(\nu) \right),
\end{align}
where $\x_{m,\mathfrak{b}}(\nu)$ is defined by 
\begin{equation}
    \label{eq:def-xmb}
\x_{m,\mathfrak{b}}(\nu) =  \frac{h_m(\nu)}{\sqrt{s_m(\nu)}} \,  \omegabs_{m,\epsilon}(\nu),    
\end{equation}
and where $\Psibs_{m,\mathfrak{b}}(\nu)$ represents the diagonal $(B+1) \times (B+1)$ matrix
$$
\Psibs_{m,\mathfrak{b}}(\nu) =  \dg \left( \frac{h_m(\nu - B/2N) - h_m(\nu)}{h_m(\nu)}, \ldots,  \frac{h_m(\nu + B/2N) - h_m(\nu)}{h_m(\nu)} \right).
$$
As $(\epsilon_{m,n})_{n \in \mathbb{Z}}$ is an uncorrelated sequence, the covariance matrix of the row vector $ \omegabs_{m,\epsilon}(\nu)$
is equal to $\I_{B+1}$, and vector $\x_{m,\mathfrak{b}}(\nu)$ shares this property.  $\x_{m,\mathfrak{b}}(\nu)$ is thus
$\Ncal_{c}(0, \I_{B+1})$ distributed. 
Expending $h_m(\nu + b/N)$ around $\nu$ up to the second order leads immediately to 
\begin{equation}
\label{eq:proprietes-Psimw}
\| \Psibs_{m,\mathfrak{b}} \| = \mathcal{O}\left(\frac{B}{N}\right), \, \frac{1}{B+1} \mathrm{Tr} \Psibs_{m,\mathfrak{b}} = \mathcal{O}\left(\left(\frac{B}{N}\right)^{2}\right),
\end{equation}
(\ref{eq:representation-omegaw}) implies that $\omegabs_m$ can be written as 
$$
\omegabs_m = \sqrt{s_m} \left( \x_{m,\mathfrak{b}} + \x_{m,\mathfrak{b}} \Psibs_{m,\mathfrak{b}} + \frac{\omegabs_{m,r}}{\sqrt{s_m}} \right).
$$
This suggests to express $\frac{\omegabs_{m,r}}{\sqrt{s_m}}$ as
\begin{equation}
\label{eq:decompostion-omegamr}
\frac{\omegabs_{m,r}}{\sqrt{s_m}} = \frac{\omegabs_{m,r}}{\sqrt{s_m}} | \mathrm{sp}(\x_{m,\mathfrak{b}}) + 
 \frac{\omegabs_{m,r}}{\sqrt{s_m}} | (\mathrm{sp}(\x_{m,\mathfrak{b}}))^{\perp},
\end{equation}
where, if  $A$ represents a Hilbert subspace of the set of all square integrable random variables, the symbol $|\, A$ represents the orthogonal projection operator on $A$ while $\mathrm{sp}(\x_{m,\mathfrak{b}})$ is the 
space generated by the components of $\x_{m,\mathfrak{b}}$. We write the two terms at the right-hand side of 
(\ref{eq:decompostion-omegamr}) as 
\begin{eqnarray}
 \label{eq:def-Psi1mr}
 \frac{\omegabs_{m,r}}{\sqrt{s_m}} | \mathrm{sp}(\x_{m,\mathfrak{b}}) & = & \x_{m,\mathfrak{b}} \, \Psibs_{m,r}^{1}, \\
 \label{eq:def-Psi2mr}
 \frac{\omegabs_{m,r}}{\sqrt{s_m}} | (\mathrm{sp}(\x_{m,\mathfrak{b}}))^{\perp} & = & \x_{m,r} \, \Psibs_{m,r}^{2},
\end{eqnarray}
where $\x_{m,r}$ is $\mathcal{N}_c(0, \I)$ distributed, independent from $\x_{m,\mathfrak{b}}$, 
and $\Psibs_{m,r}^{1}$ and  $\Psibs_{m,r}^{2}$ are $(B+1) \times (B+1)$ matrices. In Appendix \ref{sec:proof-properties-Psi1r-Psi2r}, we establish the following properties of $\Psibs_{m,r}^{1}$ and $\Psibs_{m,r}^{2}$.
\begin{proposition}
\label{prop:properties-Psi1r-Psi2r}
$\Psibs_{m,r}^{1}$ and  $\Psibs_{m,r}^{2}$ verify:
\begin{equation}
 \label{eq:norme-Psi2r}
 \| \Psibs_{m,r}^{2} \| = \Ocal\left( \left(\frac{B}{N}\right)^{1/2}\right),
\end{equation}
and 
\begin{equation}
     \label{eq:norme-Psi1mr}
   \| \Psibs_{m,r}^{1} \| \leq C \, \frac{B}{N}, \; \left|\frac{1}{B+1} \Tr  \Psibs_{m,r}^{1} \right| \leq C \, \frac{1}{N}, 
 \end{equation}
 for some nice constant $C$
\end{proposition}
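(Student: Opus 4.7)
The plan is to identify $\Psibs_{m,r}^{1}$ and $(\Psibs_{m,r}^{2})^{*} \Psibs_{m,r}^{2}$ as the regression coefficient and residual covariance arising from the $L^{2}$-projection defining (\ref{eq:def-Psi1mr})--(\ref{eq:def-Psi2mr}), and then to control them through the explicit representation of the Bartlett remainder $r_{m,\mathfrak{b}}(\nu)$ as a linear functional of ``boundary'' innovations. Since $\mathbb{E}(\x_{m,\mathfrak{b}}^{*}\x_{m,\mathfrak{b}}) = \I_{B+1}$ and $\x_{m,\mathfrak{b}}(\nu) = \frac{h_m(\nu)}{\sqrt{s_m(\nu)}}\,\omegabs_{m,\epsilon}(\nu)$, the projection formulas immediately give
\begin{equation*}
\Psibs_{m,r}^{1} = \mathbb{E}\left(\x_{m,\mathfrak{b}}^{*}\,\omegabs_{m,r}/\sqrt{s_m(\nu)}\right) = \frac{h_m(\nu)^{*}}{s_m(\nu)}\,\mathbb{E}(\omegabs_{m,\epsilon}^{*}\omegabs_{m,r}),
\end{equation*}
together with the Schur-complement-type identity
\begin{equation*}
(\Psibs_{m,r}^{2})^{*}\Psibs_{m,r}^{2} = \frac{1}{s_m(\nu)}\mathbb{E}(\omegabs_{m,r}^{*}\omegabs_{m,r}) - (\Psibs_{m,r}^{1})^{*}\Psibs_{m,r}^{1}.
\end{equation*}

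Next, I would establish the boundary representation of $r_{m,\mathfrak{b}}$. Substituting $y_{m,n} = \sum_{k\geq 0} a_{m,k}\epsilon_{m,n-k}$ into $\xi_{y_m}(\nu)$ and re-indexing by $l = n-k$ yields
\begin{equation*}
r_{m,\mathfrak{b}}(\nu) = \frac{1}{\sqrt{N}} \sum_{k\geq 1} a_{m,k} e^{-2i\pi k\nu} \left(\sum_{l=1-k}^{0} \,-\, \sum_{l=N-k+1}^{N}\right) \epsilon_{m,l} e^{-2i\pi(l-1)\nu},
\end{equation*}
which exhibits $r_{m,\mathfrak{b}}$ as a linear combination of the boundary innovations $\epsilon_{m,l}$ with $l\leq 0$ or $l\geq N+1$, weighted by truncated tails of the Fourier expansion of $h_m$.

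The third step is the entrywise estimation of the two covariance matrices. For the cross-covariance, only the right-boundary block of $r_{m,\mathfrak{b}}$ shares indices with $\xi_{\epsilon_m}$, so
\begin{equation*}
\mathbb{E}(\xi_{\epsilon_m}(\nu_1)^{*} r_{m,\mathfrak{b}}(\nu_2)) = -\frac{1}{N}\sum_{k\geq 1} a_{m,k} e^{-2i\pi k \nu_2} \sum_{n=\max(1,N-k+1)}^{N} e^{2i\pi(n-1)(\nu_1-\nu_2)},
\end{equation*}
whose modulus is at most $\frac{1}{N}\sum_{k\geq 1} k|a_{m,k}|$, and therefore $\Ocal(1/N)$ uniformly in $m,\nu_1,\nu_2$ by (\ref{eq:decroissance-am}) with $\gamma=1$. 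For the diagonal of $\mathbb{E}(\omegabs_{m,r}^{*}\omegabs_{m,r})$, splitting the left and right boundary contributions and using the whiteness of $(\epsilon_{m,l})$ gives
\begin{equation*}
\mathbb{E}|r_{m,\mathfrak{b}}(\nu)|^{2} \;\leq\; \frac{1}{N}\sum_{j\geq 0} \Bigl(\sum_{k\geq j+1}|a_{m,k}|\Bigr)^{2} \;+\; (\text{right-boundary analogue}) \;\leq\; \frac{C}{N},
\end{equation*}
uniformly in $m,\nu$, via the tail bound (\ref{eq:reste-serie-am}) applied with $\gamma>\tfrac{1}{2}$.

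The conclusion is then routine. The $(B+1)^{2}$ entries of $\Psibs_{m,r}^{1}$ are all $\Ocal(1/N)$ (using $s_m$ bounded below by Assumption \ref{as:spectral-densities}), hence $\|\Psibs_{m,r}^{1}\|\leq\|\Psibs_{m,r}^{1}\|_{F}\leq C(B+1)/N$, and summing its $B+1$ diagonal entries gives $|\tfrac{1}{B+1}\Tr\Psibs_{m,r}^{1}|\leq C/N$. For the second factor, positive semi-definiteness of the right-hand side implies $(\Psibs_{m,r}^{2})^{*}\Psibs_{m,r}^{2} \leq s_m(\nu)^{-1}\mathbb{E}(\omegabs_{m,r}^{*}\omegabs_{m,r})$, so
\begin{equation*}
\|\Psibs_{m,r}^{2}\|^{2} \;\leq\; \frac{1}{s_m(\nu)}\,\Tr\,\mathbb{E}(\omegabs_{m,r}^{*}\omegabs_{m,r}) \;=\; \frac{1}{s_m(\nu)}\sum_{b=-B/2}^{B/2}\mathbb{E}|r_{m,\mathfrak{b}}(\nu+b/N)|^{2} \;=\; \Ocal(B/N),
\end{equation*}
yielding the claimed $\|\Psibs_{m,r}^{2}\|=\Ocal((B/N)^{1/2})$. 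The main technical obstacle is the careful derivation of the boundary representation of $r_{m,\mathfrak{b}}$ combined with the uniform-in-$m$ tail bounds on $(a_{m,k})$; these follow from Assumption (\ref{eq:decroissance-rm}) with $\gamma_0>4$ together with the generalized uniform Wiener--L\'evy lemma of \cite{loubaton-mestre-rmta-2022} (in fact any $\gamma_0>1$ would suffice for this proposition alone).
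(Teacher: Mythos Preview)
Your proof is correct and follows essentially the same route as the paper: identify $\Psibs_{m,r}^{1}$ as the regression coefficient $\mathbb{E}(\x_{m,\mathfrak{b}}^{*}\omegabs_{m,r}/\sqrt{s_m})$, write the Bartlett remainder via boundary innovations, and extract entrywise $\Ocal(1/N)$ bounds from the tail decay of $(a_{m,k})$. The only notable difference is that for $\|\Psibs_{m,r}^{2}\|$ the paper first bounds \emph{all} entries of $\Omegabs_{m,r}=\mathbb{E}(\omegabs_{m,r}^{*}\omegabs_{m,r})$ by $\Ocal(1/N)$ and then uses the spectral-norm consequence $\|\Omegabs_{m,r}\|=\Ocal(B/N)$, whereas you more economically control only the diagonal entries and pass through the trace; both are valid. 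Two minor imprecisions in your sketch: your displayed formula for $r_{m,\mathfrak{b}}$ is exact only for $k\leq N-1$ (the case $k\geq N$ has a different shape but contributes $\Ocal(N^{-\gamma})$ and is harmless), and the ``right-boundary'' innovations are really $\epsilon_{m,l}$ with $l\in[N-k+1,N]$, not $l\geq N+1$---which is precisely why they correlate with $\xi_{\epsilon_m}$ as you correctly exploit in the next step.
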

We thus obtain that $\omegabs_m$ can be represented as
\begin{eqnarray}
\label{eq:representation-omegam-bartlett}
\omegabs_m & = & \sqrt{s_m} \left( \x_{m,\mathfrak{b}}(\I + \Psibs_{m,\mathfrak{b}} + \Psibs^{1}_{m,r}) + \x_{m,r}   \Psibs_{m,r}^{2} \right) \\
           & = & \sqrt{s_m} (\x_{m,\mathfrak{b}}, \x_{m,r}) \left( \begin{array}{c} \I + \Psibs_{m,\mathfrak{b}} + \Psibs^{1}_{m,r} \\ \Psibs_{m,r}^{2} \end{array} \right),
\end{eqnarray}
where $\| \Psibs_{m,\mathfrak{b}} \| = \mathcal{O}\left( \frac{B}{N} \right)$, 
$\| \Psibs^{1}_{m,r} \| = \mathcal{O}\left( \frac{B}{N} \right)$, 
$\| \Psibs^{2}_{m,r} \| = \mathcal{O}\left( \sqrt{\frac{B}{N}} \right)$, and where $(\x_{m,\mathfrak{b}}, \x_{m,r})$ is $\mathcal{N}_c(0, \I_{2(B+1)})$ distributed. Moreover,  if $m_1 \neq m_2$,  
 $(\x_{m_1,\mathfrak{b}}, \x_{m_1,r})$ and  $(\x_{m_2,\mathfrak{b}}, \x_{m_2,r})$ are mutually independent. We mention that, altough the Bartlett's factorization is a well-known tool, the representation (\ref{eq:representation-omegam-bartlett}) with $\Psibs_{m,\mathfrak{b}}, \Psibs^{1}_{m,r}, \Psibs^{2}_{m,r}$ verifying (\ref{eq:proprietes-Psimw}, \ref{eq:norme-Psi1mr}, \ref{eq:norme-Psi2r}) of vector $\omegabs_m$ seems to be new in the Gaussian case. \\
 
 We note that (\ref{eq:representation-omegam-bartlett}) allows one to obtain an alternative representation of $\tilde{\C}$ (see (\ref{eq:expre-tildeC})). More precisely, we denote by $\X_{\mathfrak{b}}$ the Gaussian $M \times (B+1)$ i.i.d. matrix  with rows $(\x_{m,\mathfrak{b}})_{m=1, \ldots, M}$, and by $\Gammabs_{\mathfrak{b}}$, 
 $\Gammabs^{1}_{r}, \Gammabs^{2}_{r}$ the $M \times (B+1)$ matrices with rows $(\x_{m,\mathfrak{b}} \Psibs_{m,\mathfrak{b}})_{m=1, \ldots, M}$, $(\x_{m,\mathfrak{b}} \Psibs^{1}_{m,r})_{m=1, \ldots, M}$, 
 $( \x_{m,r}   \Psibs_{m,r}^{2})_{m=1, \ldots, M}$ respectively. Then, the comparison between (\ref{eq:representation-omegam}) and (\ref{eq:representation-omegam-bartlett}) implies that $\X + \Gammabs = \X_{\mathfrak{b}} + \Gammabs_{\mathfrak{b}} + \Gammabs^{1}_r + \Gammabs^{2}_r$, and that matrix $\tilde{\C}$ can be written as 
 \begin{equation}
     \label{eq:expre-tildeC-bartlett}
 \tilde{\C} = \frac{ (\X_{\mathfrak{b}} + \Gammabs_{\mathfrak{b}} +  \Gammabs^{1}_{r} + \Gammabs^{2}_{r})  (\X_{\mathfrak{b}} + \Gammabs_{\mathfrak{b}} +  \Gammabs^{1}_{r} + \Gammabs^{2}_{r})^{*}}{B+1}.
 \end{equation}
The spectral norms of matrices $(\Psibs_{m,\mathfrak{b}})_{m=1, \ldots, M}$ 
and $(\Psibs_{m,r}^{1})_{m=1, \ldots, M}$ are $\mathcal{O}\left(\frac{B}{N}\right)$ terms. Adapting the proof of Proposition 1.1 in \cite{loubaton-rosuel-ejs-2021}, it is easily seen
 that matrices $\frac{ \Gammabs_{\mathfrak{b}}  \Gammabs_{\mathfrak{b}}^{*}}{B+1}$ and $\frac{ \Gammabs^{1}_{r}  (\Gammabs^{1}_{r})^{*}}{B+1}$
 satisfy the concentration inequality (\ref{eq:concentration-normZ-EZ}). Therefore, 
 the result presented in Lemma \ref{le:domination-stochastique-Gamma} can 
 be extended as follows:
 \begin{equation}
     \label{eq:domination-stochastique-Gammab-Gamma1r}
     \| \frac{\Gammabs_{\mathfrak{b}}}{\sqrt{B+1}} \| \prec \frac{B}{N}, \; \| \frac{\Gammabs^{1}_{r}}{\sqrt{B+1}} \| \prec \frac{B}{N}.
 \end{equation}
 Moreover, $ \frac{\Gammabs_{\mathfrak{b}}}{\sqrt{B+1}}$ and $ \frac{\Gammabs^{1}_{r}}{\sqrt{B+1}}$ satisfy (\ref{eq:E-norm-Gamma-over-B}), i.e. 
 \begin{equation}
   \label{eq:E-norm-Gammab-Gamma1r}
   \mathbb{E} \left\| \frac{\Gammabs_{\mathfrak{b}}}{\sqrt{B+1}} \right\|^{k} = \Ocal\left(\left(\frac{B}{N}\right)^{k}\right), \;  \mathbb{E}\left\| \frac{\Gammabs^{1}_{r}}{\sqrt{B+1}} \right\|^{k} = \Ocal\left(\left(\frac{B}{N}\right)^{k}\right).
   \end{equation}
 The matrix $\frac{ \Gammabs^{2}_{r}  (\Gammabs^{2}_{r})^{*}}{B+1}$ also satisfies a concentration inequality similar to
 (\ref{eq:concentration-normZ-EZ}), but in which the term $\left(\frac{B}{N}\right)^{2}$ has to be replaced
 by $\frac{B}{N}$ because $\| \Psibs_{m,r}^{2} \| = \mathcal{O}\left(\left(\frac{B}{N}\right)^{1/2}\right)$. Therefore, $ \frac{\Gammabs^{2}_{r}}{\sqrt{B+1}}$ verifies 
\begin{equation}
     \label{eq:domination-stochastique-Gamma2r}
     \| \frac{\Gammabs^{2}_{r}}{\sqrt{B+1}} \| \prec \left(\frac{B}{N}\right)^{1/2},
\end{equation}
as well as
\begin{equation}
   \label{eq:E-norm-Gamma2r}
    \mathbb{E}\left\| \frac{\Gammabs^{2}_{r}}{\sqrt{B+1}} \right\|^{k} = \Ocal\left(\left(\frac{B}{N}\right)^{k/2}\right).
\end{equation}

\subsection{\texorpdfstring{Bartlett's factorization based approximation of the LSS of $\hat{\C}_N(\nu)$}{Bartlett's factorization}}
\label{subsec:approximation-sup-lss}

If $f$ is a compactly supported function, $\mathcal{C}^{\infty}$ in a neighbourhood 
of $[\lambda_{-}, \lambda_{+}]$, the Bartlett's factorization based approximation of the LSS $\hat{f}(\nu)$, 
denoted $\hat{f}_{\mathfrak{b}}(\nu)$, is defined in the same way than  $\hat{f}(\nu)$
by replacing $\xi_{y_m}(\nu)$ by its Bartlett's factorization $h_m(\nu) \xi_{\epsilon_m}(\nu)$ for each $m$ and each $\nu$. More precisely, 
we denote by $\Sigmabs_{\mathfrak{b}}(\nu)$ the $M \times (B+1)$ matrix defined 
by 
\begin{equation}
    \label{eq:def-Sigmab}
 \Sigmabs_{\mathfrak{b}}(\nu) = \left( \begin{array}{c} \omega_{1,\mathfrak{b}}(\nu) \\ \vdots \\ \omega_{M,\mathfrak{b}}(\nu) \end{array} \right), 
\end{equation}
which, by (\ref{eq:representation-omegaw}), can also be written as 
\begin{equation}
    \label{eq:expre-Sigmab}
  \Sigmabs_{\mathfrak{b}}(\nu) = (\D(\nu))^{1/2} \, \left( \X_{\mathfrak{b}}(\nu) + 
  \Gammabs_{\mathfrak{b}}(\nu)\right).
\end{equation}
Proposition \ref{prop:independence-bartlett-calGN} leads to the obvious corollary. 
\begin{corollary}
\label{coro:independance-XNb-GammaNb}
If $\nu_1$ and $\nu_2$ are 2 different frequencies of $\Gcal_N$, then, the entries of 
$\left(\X_{\mathfrak{b}}(\nu_1), \Gammabs_{\mathfrak{b}}(\nu_1)\right)$ are independent 
from those of $\left(\X_{\mathfrak{b}}(\nu_2), \Gammabs_{\mathfrak{b}}(\nu_2)\right)$.  
\end{corollary}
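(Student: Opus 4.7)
The plan is to combine Proposition \ref{prop:independence-bartlett-calGN} with Assumption \ref{eq:as-ym} and the deterministic nature of the matrices $\Psibs_{m,\mathfrak{b}}(\nu)$. First I would observe that by construction the $m$-th row of $\X_{\mathfrak{b}}(\nu)$ is $\x_{m,\mathfrak{b}}(\nu)$, and that by definition of $\Gammabs_{\mathfrak{b}}(\nu)$ the $m$-th row of $\Gammabs_{\mathfrak{b}}(\nu)$ equals $\x_{m,\mathfrak{b}}(\nu)\,\Psibs_{m,\mathfrak{b}}(\nu)$, a deterministic linear transform of the $m$-th row of $\X_{\mathfrak{b}}(\nu)$ since $\Psibs_{m,\mathfrak{b}}(\nu)$ is a deterministic diagonal matrix. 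Consequently, the $\sigma$-algebra generated by the entries of $(\X_{\mathfrak{b}}(\nu),\Gammabs_{\mathfrak{b}}(\nu))$ is contained in the $\sigma$-algebra $\Fcal(\nu) = \sigma\bigl(\x_{m,\mathfrak{b}}(\nu),\; m=1,\ldots,M\bigr)$, so it suffices to establish that $\Fcal(\nu_1)$ and $\Fcal(\nu_2)$ are independent when $\nu_1\neq\nu_2$ both belong to $\Gcal_N$.

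Next I would use the definition $\x_{m,\mathfrak{b}}(\nu) = \frac{h_m(\nu)}{\sqrt{s_m(\nu)}}\,\omegabs_{m,\epsilon}(\nu)$, which shows that $\x_{m,\mathfrak{b}}(\nu)$ is a deterministic (scalar) multiple of $\omegabs_{m,\epsilon}(\nu)$, itself a deterministic invertible linear function of $\omegabs_{m,\mathfrak{b}}(\nu)$ (multiplication by the diagonal matrix with entries $h_m(\nu+b/N)$, all nonzero under (\ref{eq:s-lower-bounded-from-below})). Hence $\Fcal(\nu)\subset\sigma(\omegabs_{m,\mathfrak{b}}(\nu),\;m=1,\ldots,M)$. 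For fixed $m$, Proposition \ref{prop:independence-bartlett-calGN} yields that $\omegabs_{m,\mathfrak{b}}(\nu_1)$ and $\omegabs_{m,\mathfrak{b}}(\nu_2)$ are independent. Across different indices $m_1\neq m_2$, Assumption \ref{eq:as-ym} states that the time series $y_{m_1}$ and $y_{m_2}$ are independent, which transfers to their innovation sequences $\epsilon_{m_1}$ and $\epsilon_{m_2}$, hence to the vectors $\omegabs_{m_1,\mathfrak{b}}(\cdot)$ and $\omegabs_{m_2,\mathfrak{b}}(\cdot)$. Combining the two independence statements gives that the joint family $\{\omegabs_{m,\mathfrak{b}}(\nu_1): m=1,\ldots,M\}$ is independent from $\{\omegabs_{m,\mathfrak{b}}(\nu_2): m=1,\ldots,M\}$, hence $\Fcal(\nu_1)$ and $\Fcal(\nu_2)$ are independent, which is what was needed.

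No step is really an obstacle; the corollary is essentially a bookkeeping consequence of Proposition \ref{prop:independence-bartlett-calGN} together with the cross-index independence in Assumption \ref{eq:as-ym}, and the only point that deserves care is to emphasize that $\Psibs_{m,\mathfrak{b}}(\nu)$ and the scalar factor $h_m(\nu)/\sqrt{s_m(\nu)}$ are deterministic, so that rows of $\Gammabs_{\mathfrak{b}}(\nu)$ bring no additional randomness beyond that already encoded in $\X_{\mathfrak{b}}(\nu)$.
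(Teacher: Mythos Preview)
Your proof is correct and follows exactly the route the paper intends: the paper states only that ``Proposition \ref{prop:independence-bartlett-calGN} leads to the obvious corollary,'' and your argument spells out the obvious details---that $\Gammabs_{\mathfrak{b}}(\nu)$ is a deterministic linear function of $\X_{\mathfrak{b}}(\nu)$, that $\X_{\mathfrak{b}}(\nu)$ is determined by the $\omegabs_{m,\mathfrak{b}}(\nu)$, and that cross-$m$ independence is supplied by Assumption \ref{eq:as-ym}.
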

The Bartlett's factorization based estimate $\hat{\S}_{\mathfrak{b}}(\nu)$ of $\S(\nu)$ is 
given by 
\begin{equation}
    \label{eq:def-HatSb}
 \hat{\S}_{\mathfrak{b}}(\nu) = \frac{  \Sigmabs_{\mathfrak{b}}(\nu)  \Sigmabs_{\mathfrak{b}}(\nu)^{*}}{B+1}.  
\end{equation}
We denote by $\hat{\D}_{\mathfrak{b}}(\nu)$ the diagonal matrix with diagonal entries $\hat{s}_{m,\mathfrak{b}}(\nu) = \left(\hat{\S}_{\mathfrak{b}}(\nu)\right)_{m,m}$ given by 
\begin{equation}
\label{eq:expre-hatsb}
\hat{s}_{m,\mathfrak{b}} = \frac{\| \omegabs_{m,\mathfrak{b}} \|^{2}}{B+1} = s_m \, \frac{\x_{m,\mathfrak{b}}^{*}(\I + \Phibs_{m,\mathfrak{b}}) \x_{m,\mathfrak{b}}}{B+1},
\end{equation}
 where matrix $\Phibs_{m,\mathfrak{b}}$ is the diagonal matrix defined by 
 \begin{equation}
     \label{eq:def-Phimb}
 \Phibs_{m,\mathfrak{b}} = (\I + \Psibs_{m,\mathfrak{b}})  (\I + \Psibs_{m,\mathfrak{b}})^{*} - \I = \Psibs_{m,\mathfrak{b}} + \Psibs_{m,\mathfrak{b}}^{*} +  \Psibs_{m,\mathfrak{b}} \Psibs_{m,\mathfrak{b}}^{*}.  
 \end{equation}
 $\Phibs_{m,\mathfrak{b}}$ can be interpreted as the analogue of matrix $\Phibs_m$
 defined by (\ref{eq:representation-omegam}) because the covariance matrix of $\omegabs_{m,\mathfrak{b}}$ is given by 
 $$
 \mathbb{E}\left(\omegabs_{m,\mathfrak{b}}^{*} \omegabs_{m,\mathfrak{b}} \right) = 
 s_m(\nu) \left( \I + \Phibs_{m,\mathfrak{b}} \right).
 $$
 In particular, using $s_m(\nu) = |h_m(\nu)|^{2}$, it is easily seen that 
 \begin{eqnarray}
\label{eq:bound-norm-Phimb}
\sup_{N \geq 1, \nu \in [0,1]} \| \Phibs_{m,\mathfrak{b}}(\nu) \| & = & \mathcal{O}\left( \frac{B}{N} \right), \\ 
\label{eq:expre-Trace-Phimb}
\frac{1}{B+1} \Tr \Phibs_{m,\mathfrak{b}}(\nu) & = & \frac{1}{2} \frac{s_m^{''}(\nu)}{s_m(\nu)} v_N + \Ocal\left( \left(\frac{B}{N}\right)^{4} \right), \\
\label{eq:bound-trace-Phimb}
\sup_{N \geq 1, \nu \in [0,1]} \left| \frac{1}{B+1} \Tr \Phibs_{m,\mathfrak{b}}(\nu) \right| & = &  \mathcal{O}\left( \left(\frac{B}{N}\right)^{2} \right). 
\end{eqnarray}
We notice in particular that
\begin{equation}
\label{eq:difference-Tr-Phimb-Phim}
\frac{1}{B+1} \Tr \Phibs_{m,\mathfrak{b}}(\nu) - \frac{1}{B+1} \Tr \Phibs_{m}(\nu) = \Ocal\left( \left(\frac{B}{N}\right)^{4} + \frac{1}{N} \right).  
\end{equation}
The Bartlett's factorization based estimate $\hat{\C}_{\mathfrak{b}}(\nu)$ of $\C(\nu)$ is defined 
by 
\begin{equation}
    \label{eq:def-hatCb}
\hat{\C}_{\mathfrak{b}}(\nu) = \left( \hat{\D}_{\mathfrak{b}}(\nu) \right)^{-1/2} \, \hat{\S}_{\mathfrak{b}}(\nu) \, \left( \hat{\D}_{\mathfrak{b}}(\nu) \right)^{-1/2},   
\end{equation}
and the LSS  Bartlett's factorization based estimate $\hat{f}_{N,\mathfrak{b}}(\nu)$ 
is given by
\begin{equation}
    \label{eq:def-hatphib}
 \hat{f}_{N,\mathfrak{b}}(\nu) = \frac{1}{M} \mathrm{Tr} f\left(\hat{\C}_{N,\mathfrak{b}}(\nu) \right) =  \frac{1}{M} \sum_{m=1}^{M} f\left(\lambda_m(\hat{\C}_{N,\mathfrak{b}}(\nu)\right),   
\end{equation}
while we define $\theta_{N,\mathfrak{b}}(f,\nu)$ by 
\begin{equation}
\label{eq:def-thetab}
\theta_{N,\mathfrak{b}}(f,\nu) = \hat{f}_{N,\mathfrak{b}}(\nu) - \int f \, d\mu_{MP}^{c_N} - 
<D_N, f> \left( r_N(\nu) \, v_N - \frac{1}{c_N B} \right).
\end{equation}
The properties of $\hat{f}(\nu)$ derived in \cite{loubaton-rosuel-ejs-2021} extend immediately to 
$\hat{f}_{\mathfrak{b}}(\nu)$ because the results in \cite{loubaton-rosuel-ejs-2021} 
are based on the representation (\ref{eq:representation-omegam}) of $\omegabs_{m}(\nu)$ where
$\Phibs_m(\nu)$ and $\Psibs_m(\nu)$ verify (\ref{eq:bound-norm-Phim}) to (\ref{eq:bound-trace-Phim}) and (\ref{eq:bound-norm-Psim}), (\ref{eq:bound-trace-Psim}) respectively.  (\ref{eq:representation-omegam}) is now replaced by the representation (\ref{eq:representation-omegaw}) and the matrices $\Phibs_{m,\mathfrak{b}}(\nu)$ and 
$\Psibs_{m,\mathfrak{b}}(\nu)$ are now diagonal and verify (\ref{eq:bound-norm-Phimb}) to (\ref{eq:bound-trace-Phimb}) and (\ref{eq:proprietes-Psimw}) respectively, while matrix $\Gammabs_{\mathfrak{b}}(\nu)$ introduced above verifies (\ref{eq:domination-stochastique-Gammab-Gamma1r}). In particular, the approach of \cite{loubaton-rosuel-ejs-2021} can be adapted to establish the analog of the stochastic representations (\ref{eq:representation-tildeC}) and (\ref{eq:representation-hatC}), i.e., if 
$\tilde{\C}_{\mathfrak{b}}(\nu)$ is defined by 
\begin{equation}
    \label{eq:def-tildeCb}
 \tilde{\C}_{\mathfrak{b}}(\nu) = \left( \D(\nu) \right)^{-1/2} \, \hat{\S}_{\mathfrak{b}}(\nu) \, 
 \left( \D(\nu) \right)^{-1/2} = \frac{(\X_{\mathfrak{b}}(\nu) + 
  \Gammabs_{\mathfrak{b}}(\nu))\X_{\mathfrak{b}}(\nu) + 
  \Gammabs_{\mathfrak{b}}(\nu))^{*}}{B+1},
\end{equation}
then, we have 
\begin{eqnarray}
\label{eq:representation-tildeCb}
\tilde{\C}_{N,\mathfrak{b}}(\nu) & = & \frac{\X_{N,\mathfrak{b}}(\nu) \X_{N,\mathfrak{b}}(\nu)^{*}}{B+1} + \tilde{\Deltabs}_{N,\mathfrak{b}}(\nu), \\
\label{eq:representation-hatCb}
\hat{\C}_{N,\mathfrak{b}}(\nu)  & = & \frac{\X_{N,\mathfrak{b}}(\nu) \X_{N,\mathfrak{b}}(\nu)^{*}}{B+1} + \Deltabs_{N,\mathfrak{b}}(\nu).
\end{eqnarray}
The families $\| \tilde{\Deltabs}_{N,\mathfrak{b}}(\nu) \|, N \geq 1, \nu \in [0,1]$ and  $\| \Deltabs_{N,\mathfrak{b}}(\nu) \|, N \geq 1, \nu \in [0,1]$ verify 
\begin{eqnarray}
\label{eq:domination-tildeDeltab}
\| \tilde{\Deltabs}_{\mathfrak{b}} \| \prec \frac{B}{N}, \\
\label{eq:domination-Deltab-1}
\| \Deltabs_{\mathfrak{b}} \| \prec \frac{B}{N} + \frac{1}{\sqrt{B}}.
\end{eqnarray}
Moreover, matrix $\Deltabs_{\mathfrak{b}}$ is given by 
\begin{equation}
    \label{eq:def-Deltabsb}
 \Deltabs_{\mathfrak{b}} =   \tilde{\Deltabs}_{\mathfrak{b}} + \Thetabs_{\mathfrak{b}}  ,
\end{equation}
where $\Thetabs_{\mathfrak{b}} = \hat{\C}_{\mathfrak{b}} - \tilde{\C}_{\mathfrak{b}}$ can be written as 
 \begin{align}
\label{equation:decomposition_Thetab}
    \Thetabs_{\mathfrak{b}} = (\hat{\D}_{\mathfrak{b}}^{-1/2}-\D_{\mathfrak{b}}^{-1/2})\hat{\S}_{\mathfrak{b}} \, \hat{\D}_{\mathfrak{b}}^{-1/2} + \D_{\mathfrak{b}}^{-1/2}\, \hat{\S}_{\mathfrak{b}}(\hat{\D}_{\mathfrak{b}}^{-1/2}-\D_{\mathfrak{b}}^{-1/2}),
\end{align}
and verifies 
\begin{equation}
\label{eq:domination-hatCb-tildeCb}
\| \Thetabs_{\mathfrak{b}} \| =  \| \hat{\C}_{\mathfrak{b}} - \tilde{\C}_{\mathfrak{b}} \| \prec \frac{1}{\sqrt{B}} + 
\left( \frac{B}{N} \right)^{2},
\end{equation}
because $|\hat{s}_{m,\mathfrak{b}} - s_m| \prec \frac{1}{\sqrt{B}} + \left( \frac{B}{N} \right)^{2}$. 
We recall that $\alpha < 4/5$ implies that $\left( \frac{B}{N} \right)^{2} = o(B^{-1/2})$. 
In particular, matrix $\hat{\D}_{\mathfrak{b}}$ verifies (\ref{eq:domination-hatD-D-alpha-inferieur-4-5}) and (\ref{eq:E-norm-hatD-D}), i.e. 
\begin{eqnarray}
\label{eq:domination-hatDb-D-alpha-inferieur-4-5}
\| \hat{\D}_{\mathfrak{b}} - \D \| & \prec & \frac{1}{\sqrt{B}}, \\
\label{eq:E-norm-hatDb-D}
\mathbb{E}\left( \| \hat{\D}_{\mathfrak{b}} - \D) \|^{k} \right) & = & \Ocal\left(B^{-k/2 + \epsilon}\right),
\end{eqnarray}
for each $\epsilon > 0$. Moreover,  $\| \hat{\C}_{\mathfrak{b}} - \tilde{\C}_{\mathfrak{b}} \| \prec \frac{1}{\sqrt{B}}$, and it will be shown in the following that $\mathbb{E}(\| \hat{\C}_{\mathfrak{b}} - \tilde{\C}_{\mathfrak{b}} \|^{k}) = \Ocal\left(B^{-k/2 + \epsilon}\right)$.
We finally notice that the results presented in section \ref{subsec:background-study-hatphi} still hold in the context of $\hat{f}_{\mathfrak{b}}(\nu)$. In particular, if 
we denote by $\tilde{\Q}_{N,\mathfrak{b}}(z)$ and $\Q_{N,\mathfrak{b}}(z)$ the resolvents of matrices 
$\tilde{\C}_{N,\mathfrak{b}}$ and $\frac{\X_{N,\mathfrak{b}} \X_{N,\mathfrak{b}}^{*}}{B+1}$ respectively, 
then, following (\cite{these-alexis}, Chap. 2) and the Appendix \ref{sec:improvement-bias}, it can be shown that 
\begin{align}
    \label{eq:Etrace-tildeQb-Qb}
 \mathbb{E}\left( \frac{1}{M} \Tr \left( \tilde{\Q}_{N,\mathfrak{b}}(z) - \Q_{N,\mathfrak{b}}(z) \right) \right) = & 
 p_N(z)  \left( \frac{1}{B+1} \Tr \left( \frac{1}{M} \sum_{m=1}^{M}  \Phibs_{m,\mathfrak{b}} \right)^{2} \right) \\ 
 \notag 
 & 
 - \tilde{p}_N(z)  \left( \frac{1}{M} \sum_{m=1}^{M} \frac{1}{B+1} \Tr \Phibs_{m,\mathfrak{b}} \right) \\ \notag 
 & 
 +  \mathcal{O}_z\left( \left( \frac{B}{N}\right)^{4} + \frac{1}{B^{2}} \right) ,  
 \end{align}
where we recall that $p_N$ and $\tilde{p}_N$ are defined by (\ref{eq:def-p}) and (\ref{eq:def-tildep}).
 A simple calculation shows that 
  \begin{equation}
\label{eq:behaviour-trace-sum-Phimb-carre}
  \left( \frac{1}{B+1} \Tr \left( \frac{1}{M} \sum_{m=1}^{M}  \Phibs_{m,\mathfrak{b}} \right)^{2} \right) = \left( \frac{1}{M} \sum_{m=1}^{M} \frac{s_m'}{s_m} \right)^{2} v_N + \mathcal{O}\left( \left( \frac{B}{N}\right)^{4} \right) ,
 \end{equation}
 and 
 \begin{equation}
\label{eq:behaviour-moyenne-trace-Phimb}
 \frac{1}{M} \sum_{m=1}^{M} \frac{1}{B+1} \Tr \Phibs_{m,\mathfrak{b}} =  \left( \frac{1}{2M} \sum_{m=1}^{M} \frac{s^{''}_m}{s_m} \right) v_N  + 
  \mathcal{O}_z\left( \left( \frac{B}{N}\right)^{4}  \right) .
\end{equation}
This leads immediately to 
\begin{align}
    \label{eq:Etrace-tildeQb-Qb-precise}
 \mathbb{E}\left( \frac{1}{M} \Tr \left( \tilde{\Q}_{N,\mathfrak{b}}(z) - \Q_{N,\mathfrak{b}}(z) \right) \right)  =  &  
 p_N(z) r_N(\nu) v_N  - \tilde{p}_N(z) \tilde{r}_N(\nu) v_N \\ \notag 
 & 
 +  \mathcal{O}_z\left( \left( \frac{B}{N}\right)^{4} + \frac{1}{B^{2}} \right) .
 \end{align}
We finally remark that, as the equality $ \mathbb{E}\left( \frac{1}{M} \Tr \Q_{N,\mathfrak{b}}(z)\right) = \mathbb{E}\left( \frac{1}{M} \Tr \Q_{N}(z)\right)$ of course holds, (\ref{eq:Etrace-tildeQ-Q-precise-2-improved}) and (\ref{eq:Etrace-tildeQb-Qb}) imply that 
\begin{align}
\label{eq:egalite-biais-barlett}  
  \mathbb{E}\left( \frac{1}{M} \Tr \left( \tilde{\Q}_{N}(z) - \tilde{\Q}_{N,\mathfrak{b}}(z) \right) \right) = & 
  \mathcal{O}_z\left( \left( \frac{B}{N}\right)^{4} + \frac{1}{N} + \frac{1}{B^{2}} \right) \\
   = & \mathcal{O}_z\left( \left( \frac{B}{N}\right)^{4} + \frac{1}{N}  \right) ,
\end{align}
because $\frac{1}{B^{2}} = o\left( \frac{1}{N}\right)$ (recall that $\alpha > \frac{1}{2}$). 

\section{\texorpdfstring{CLT for $B \theta_{N,\mathfrak{b}}(f,\nu)$ at a given frequency}{CLT}}
\label{sec:clt-theta-given-frequency}

We first establish in Subsection \ref{subsec:simplification-hatphib} that if $\alpha < \frac{4}{5}$, the representation (\ref{eq:representation-B-thetaNb}) holds. In Subsection 
\ref{subsec:CLT-W}, we prove that $\frac{B w_{N,\mathfrak{b}}(f,\nu)}{\sigma_N(f)} \rightarrow_{\Dcal} \Ncal(0,1)$ for some variance term $\sigma_N^{2}(f)$ that is expressed as a double contour integral, 
and deduce from the representation (\ref{eq:representation-B-thetaNb}) that 
$\frac{B \theta_{N,\mathfrak{b}}(f,\nu)}{\sigma_N(f)} \rightarrow_{\Dcal} \Ncal(0,1)$. 
We then argue that  the similarity of the models defining vectors $(\omegabs_m)_{m=1, \ldots, M}$ and $(\omegabs_{m,\mathfrak{b}})_{m=1, \ldots, M}$ leads to the conclusion that $B \theta_N(f,\nu)$ has the same properties than $B \theta_{N,\mathfrak{b}}(f,\nu)$, i.e. (\ref{eq:representation-B-thetaN}) holds and $\frac{B \theta_{N}(f,\nu)}{\sigma_N(f)} \rightarrow_{\Dcal} \Ncal(0,1)$. 
\subsection{Proof of representation (\ref{eq:representation-B-thetaNb})}
\label{subsec:simplification-hatphib}
In order to derive (\ref{eq:representation-B-thetaNb}), we approximate 
$\theta_{N,\mathfrak{b}}(f,\nu)$ by a simpler expression which will represent 
the term $w_{N,\mathfrak{b}}(f,\nu)$. In order to introduce the corresponding result, 
we first remark that (\ref{eq:expre-hatsb}) implies that matrix $ (\hat{\D}_{\mathfrak{b}}(\nu) - \D(\nu)) (\D(\nu))^{-1}$ can be written as
\begin{equation}
\label{eq:decomposition-Dx-I}
(\hat{\D}_{\mathfrak{b}}(\nu) - \D(\nu)) (\D(\nu))^{-1} = \D_{\x_{\mathfrak{b}}(\nu)} - \I + \D_{2,\mathfrak{b}}(\nu) +  \D_{3,\mathfrak{b}}(\nu),
\end{equation}
where $\D_{\x_{\mathfrak{b}}(\nu)}$, $\D_{2,\mathfrak{b}}(\nu)$ and  $\D_{3,\mathfrak{b}}(\nu)$ are the three diagonal matrices defined by 
\begin{eqnarray}
   \label{eq:def-Dx}
\D_{\x_{\mathfrak{b}}(\nu)} & = & \mathrm{dg}\left( \frac{1}{B+1} \| \x_{m,\mathfrak{b}}(\nu)\|^{2}, m=1, \ldots, M \right), \\
\label{eq:def-D2b}
\D_{2,\mathfrak{b}}(\nu) & = & \dg \left( \frac{\x_{m,\mathfrak{b}}(\nu) \Phibs_{m,\mathfrak{b}}(\nu) \x_{m,\mathfrak{b}}(\nu)^{*}}{B+1} - \frac{1}{B+1} \Tr \Phibs_{m,\mathfrak{b}}(\nu)\right)_{m=1, \ldots, M},  \\
\label{eq:def-D3b}
\D_{3,\mathfrak{b}}(\nu) & = & \dg \left( \frac{1}{B+1} \Tr \Phibs_{m,\mathfrak{b}}(\nu), m=1, \ldots, M \right).
\end{eqnarray}
These matrices are easily seen to verify 
\begin{align}
\label{eq:Dx-I-carre-mm}
\mathbb{E} \left((\D_{\x_{\mathfrak{b}}} - \I)_{m,m}^{2} \right)  = &
\frac{1}{B+1}, \\
\label{eq:trace-Dx-I-carre}
\mathbb{E}\left( \frac{1}{M} \Tr (\D_{\x_{\mathfrak{b}}} - \I)^{2} \right)  = &
\frac{1}{B+1}, \\
\label{eq:D2-carre-mm}
\mathbb{E}\left( (\D_{2,\mathfrak{b}})_{m,m}^{2} \right)  = & 
\Ocal\left( \frac{B}{N^{2}}\right), \\
\label{eq:trace-D2-carre}
\mathbb{E}\left( \frac{1}{M} \Tr \D_{2,\mathfrak{b}}^{2} \right)  = & 
\Ocal\left( \frac{B}{N^{2}}\right),
\end{align}
as well as
\begin{eqnarray}
\label{eq:domination-Dxb-I}
\| \D_{\x_{\mathfrak{b}}} - \I \| & \prec & \frac{1}{\sqrt{B}}, \\
\label{eq:domination-D2b}
\| \D_{2,\mathfrak{b}} \| & \prec & \frac{\sqrt{B}}{N}, \\
\label{eq:E-norm-Dxb-I}
\mathbb{E}\left( \|  \D_{\x_{\mathfrak{b}}} - \I \|^{k} \right) & = & \Ocal\left(B^{-k/2 + \epsilon}\right), \\
\label{eq:E-norm-D2b}
\mathbb{E}\left( \|  \D_{2,\mathfrak{b}} \|^{k} \right) & = & \Ocal\left(\left(\frac{\sqrt{B}}{N}\right)^{k/2 + \epsilon}\right), \\
\label{eq:norm-D3b}
\| \D_{3,\mathfrak{b}} \| = \Ocal\left(\frac{B^{2}}{N^{2}}\right),
\end{eqnarray}
for each $\epsilon > 0$ (see (\ref{eq:sup-hanson-wright-domination-stochastique}), (\ref{eq:moyenne-sup-hanson-wright-D}), (\ref{eq:bound-trace-Phimb})). 
In this Subsection, we establish the following theorem. 
\begin{theorem}
\label{th:approximation-thetaNb}
If $\alpha < \frac{4}{5}$, $\theta_{N,\mathfrak{b}}(f,\nu)$ can be written as 
\begin{equation}
    \label{eq:approximation-thetaNb}
\theta_{N,\mathfrak{b}}(f,\nu) = w_{N,\mathfrak{b}}(f,\nu) + \epsilon_{N,\mathfrak{b}}(f,\nu) ,   
\end{equation}
where $w_{N,\mathfrak{b}}(f,\nu)$ is defined by 
\begin{align}
&w_{N,\mathfrak{b}}(f,\nu) =  
\notag\\
&\qquad\qquad\frac{1}{\pi} \mathrm{Re} \int_{\Dcal} \bar{\partial}  \Phi_k(f)(z) \,  
\Bigl( 
\frac{1}{M} \Tr \Q_{N,\mathfrak{b}}(z,\nu) \D_{\x_{\mathfrak{b}}(\nu)} 
\notag\\
&\qquad\qquad\qquad\qquad +z \frac{1}{M} \Tr \Q^{2}_{N,\mathfrak{b}}(z,\nu) (\D_{\x_{\mathfrak{b}}(\nu)} - \I) 
\Bigr)^{\circ} \, \diff x \diff y,
\label{eq:expre-wNb}
\end{align}
and where the family $(\epsilon_{N,\mathfrak{b}}(f,\nu), \nu \in [0,1])$ verifies 
\begin{eqnarray}
    \label{eq:domination-stochastique-erreur-approximation-clt-4/5}
 \left(\epsilon_{N,\mathfrak{b}}(f,\nu)\right)^{\circ} & = & o_{\prec}\left( \frac{1}{\sqrt{NB}}\right), \\
  \label{eq:E-erreur-approximation-clt-4/5}
  \mathbb{E} \left(\epsilon_{N,\mathfrak{b}}(f,\nu)\right) & = & \mathbb{E}\left( \theta_{N,\mathfrak{b}}(f,\nu) \right) =   \Ocal\left( \left( \frac{B}{N} \right)^{4} \right) +  o\left( \frac{1}{\sqrt{NB}}\right).
\end{eqnarray}
Moreover, the random variables $(w_{N,\mathfrak{b}}(f,\nu))_{\nu \in \Gcal_N}$ 
are independent and identically distributed.
\end{theorem}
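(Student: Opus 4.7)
The plan is to apply the Helffer--Sjöstrand formula (\ref{eq:definition-helffer-sjostrand}) to the (compactly supported, by Proposition \ref{prop:phi-compactly-supported}) function $f$, so that
\begin{equation*}
\hat{f}_{N,\mathfrak{b}}(\nu) - \int f\,d\mu_{MP}^{(c_N)} = \frac{1}{\pi}\mathrm{Re}\int_{\Dcal}\bar{\partial}\Phi_k(f)(z)\bigl(\tfrac{1}{M}\Tr\hat{\Q}_{N,\mathfrak{b}}(z,\nu) - t_N(z)\bigr)\diff x\,\diff y,
\end{equation*}
and to decompose the integrand as
\begin{equation*}
\tfrac{1}{M}\Tr\hat{\Q}_{N,\mathfrak{b}}(z) - t_N(z) = \tfrac{1}{M}\Tr\bigl(\hat{\Q}_{N,\mathfrak{b}}(z) - \Q_{N,\mathfrak{b}}(z)\bigr) + \bigl(\tfrac{1}{M}\Tr \Q_{N,\mathfrak{b}}(z) - t_N(z)\bigr),
\end{equation*}
where $\Q_{N,\mathfrak{b}}(z) = (\X_{N,\mathfrak{b}}\X_{N,\mathfrak{b}}^{*}/(B+1) - z\I)^{-1}$ is the resolvent of the unperturbed Wishart matrix. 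The mean of the second piece is $\Ocal_z(1/B^{2}) = o_z(1/\sqrt{NB})$ by (\ref{eq:E-Q-t}), while its centered part $(\tfrac{1}{M}\Tr\Q_{N,\mathfrak{b}}(z))^\circ$ is of order $1/B$ and will be absorbed into $w_{N,\mathfrak{b}}$.

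The heart of the proof is the perturbative analysis of $\tfrac{1}{M}\Tr(\hat{\Q}_{N,\mathfrak{b}} - \Q_{N,\mathfrak{b}})$. Writing $\hat{\C}_{N,\mathfrak{b}} = (\hat{\D}_{N,\mathfrak{b}}\D^{-1})^{-1/2}\tilde{\C}_{N,\mathfrak{b}}(\hat{\D}_{N,\mathfrak{b}}\D^{-1})^{-1/2}$ with $\tilde{\C}_{N,\mathfrak{b}} = \X_{N,\mathfrak{b}}\X_{N,\mathfrak{b}}^{*}/(B+1) + \tilde{\Deltabs}_{N,\mathfrak{b}}$, I would Taylor-expand the two square roots via $\E = (\D_{\x_{\mathfrak{b}}}-\I) + \D_{2,\mathfrak{b}} + \D_{3,\mathfrak{b}}$ from (\ref{eq:decomposition-Dx-I}), isolating the leading stochastic piece
\begin{equation*}
\hat{\C}_{N,\mathfrak{b}} - \tfrac{\X_{N,\mathfrak{b}}\X_{N,\mathfrak{b}}^{*}}{B+1} = -\tfrac{1}{2}\bigl((\D_{\x_{\mathfrak{b}}}-\I)\tfrac{\X_{N,\mathfrak{b}}\X_{N,\mathfrak{b}}^{*}}{B+1} + \tfrac{\X_{N,\mathfrak{b}}\X_{N,\mathfrak{b}}^{*}}{B+1}(\D_{\x_{\mathfrak{b}}}-\I)\bigr) + \mathbf{R},
\end{equation*}
where $\mathbf{R}$ gathers the contributions of $\tilde{\Deltabs}_{N,\mathfrak{b}}$, $\D_{2,\mathfrak{b}}$, $\D_{3,\mathfrak{b}}$, and all quadratic or higher products in $\E$. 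Plugging the anticommutator into the first-order resolvent identity, cycling the trace, and using $\tfrac{\X_{N,\mathfrak{b}}\X_{N,\mathfrak{b}}^{*}}{B+1}\Q_{N,\mathfrak{b}}^{2}(z) = \Q_{N,\mathfrak{b}}(z)+z\Q_{N,\mathfrak{b}}^{2}(z)$ produces $\tfrac{1}{M}\Tr[(\D_{\x_{\mathfrak{b}}}-\I)\Q_{N,\mathfrak{b}}(z)] + z\tfrac{1}{M}\Tr[(\D_{\x_{\mathfrak{b}}}-\I)\Q_{N,\mathfrak{b}}^{2}(z)]$. Adding the Wishart-Stieltjes piece $\tfrac{1}{M}\Tr\Q_{N,\mathfrak{b}}(z)$ and centering yields exactly the integrand of (\ref{eq:expre-wNb}), thanks to the algebraic identity $\tfrac{1}{M}\Tr\Q_{N,\mathfrak{b}} + \tfrac{1}{M}\Tr[(\D_{\x_{\mathfrak{b}}}-\I)\Q_{N,\mathfrak{b}}] = \tfrac{1}{M}\Tr[\D_{\x_{\mathfrak{b}}}\Q_{N,\mathfrak{b}}]$.

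The remaining contributions constitute $\epsilon_{N,\mathfrak{b}}$. Using the moment bounds (\ref{eq:E-norm-Dxb-I})--(\ref{eq:norm-D3b}), (\ref{eq:E-norm-Gammab-Gamma1r})--(\ref{eq:E-norm-Gamma2r}), Property \ref{pr:properties-stochastic-domination}, and the Schwartz inequality on the localization event $\Lambda^{\hat{\C}}_{N,\epsilon}(\nu)$, the contribution of $\mathbf{R}$ and of the second-order resolvent correction $\Q_{N,\mathfrak{b}}(\hat{\C}_{N,\mathfrak{b}}-\tfrac{\X_{N,\mathfrak{b}}\X_{N,\mathfrak{b}}^{*}}{B+1})\Q_{N,\mathfrak{b}}(\hat{\C}_{N,\mathfrak{b}}-\tfrac{\X_{N,\mathfrak{b}}\X_{N,\mathfrak{b}}^{*}}{B+1})\hat{\Q}_{N,\mathfrak{b}}$ aggregate, after centering, to $o_{\prec,z}(1/\sqrt{NB})$; the hypothesis $\alpha<\tfrac{4}{5}$ is used exactly here, since it is equivalent to $(B/N)^{2} = o(1/\sqrt{B})$, making harmless all quadratic terms in $\E$ and all cross products between $\D_{\x_{\mathfrak{b}}}-\I$ and the $\Ocal(B/N)$-pieces $\tilde{\Deltabs}_{N,\mathfrak{b}}$, $\D_{2,\mathfrak{b}}$, $\D_{3,\mathfrak{b}}$. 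Lemma \ref{le:concentration-integrale-helffer-sjostrand}, after a gradient computation on $\Lambda^{\hat{\C}}_{N,\epsilon}(\nu)$, then transfers the bound through the Helffer--Sjöstrand integral and yields (\ref{eq:domination-stochastique-erreur-approximation-clt-4/5}). The mean estimate (\ref{eq:E-erreur-approximation-clt-4/5}) follows from $\mathbb{E}[w_{N,\mathfrak{b}}(f,\nu)]=0$ (by construction), the refined bias identity (\ref{eq:Etrace-tildeQb-Qb-precise}), and a parallel computation of $\mathbb{E}[\tfrac{1}{M}\Tr(\hat{\Q}_{N,\mathfrak{b}}-\tilde{\Q}_{N,\mathfrak{b}})]$ contributing $\tilde p_N(z)\tilde r_N v_N - p_N(z)/(c_N B)$ at leading order, where the $p_N(z)/(c_N B)$ term originates from the quadratic-in-$(\D_{\x_{\mathfrak{b}}}-\I)$ expectation whose diagonal entries equal $1/(B+1)$ by (\ref{eq:Dx-I-carre-mm}); the resulting deterministic pieces are precisely cancelled by the subtraction $\langle D_N,f\rangle(r_N v_N - 1/(c_N B))$ in (\ref{eq:def-thetab}). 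Independence of $(w_{N,\mathfrak{b}}(f,\nu))_{\nu\in\Gcal_N}$ is immediate from Corollary \ref{coro:independance-XNb-GammaNb} since $w_{N,\mathfrak{b}}(f,\nu)$ is a measurable function of $\X_{N,\mathfrak{b}}(\nu)$ alone, and the common law follows from (\ref{eq:def-xmb}) because the unimodular scalar $h_m(\nu)/\sqrt{s_m(\nu)}$ leaves the $\Ncal_{c}(0,\I_{B+1})$ law of $\omegabs_{m,\epsilon}(\nu)$ invariant, so that $\X_{N,\mathfrak{b}}(\nu)$ has i.i.d.\ $\Ncal_c(0,1)$ entries for every $\nu$. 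The main obstacle is the simultaneous bookkeeping---in expectation \emph{and} in exponential concentration---of every cross-term in the expansion, where the refined $\Ocal((B/N)^{4})$ improvement of Appendix \ref{sec:improvement-bias} is indispensable to bring the expected residue below $o(1/\sqrt{NB})$.
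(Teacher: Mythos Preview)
Your proposal is correct and follows essentially the same strategy as the paper, though with a somewhat different organization of the perturbation expansion. The paper routes the analysis through the intermediate resolvent $\tilde{\Q}_{N,\mathfrak{b}}$ via the three-way split $\hat{\Q}_{N,\mathfrak{b}}-\tilde{\Q}_{N,\mathfrak{b}}$, $\tilde{\Q}_{N,\mathfrak{b}}-\Q_{N,\mathfrak{b}}$, $\Q_{N,\mathfrak{b}}-t_N$, which cleanly separates the normalization perturbation (coming from $\hat{\D}_{\mathfrak{b}}^{-1/2}$) from the covariance perturbation $\tilde{\Deltabs}_{N,\mathfrak{b}}$; the latter is then handled in one stroke (Proposition~\ref{prop:behaviour-theta2b-f}) by the already-available bias identity (\ref{eq:Etrace-tildeQb-Qb-precise}) plus a direct Gaussian concentration giving $\Ocal_{\prec}(1/N)$. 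Your direct expansion of $\hat{\C}_{N,\mathfrak{b}}-\X_{N,\mathfrak{b}}\X_{N,\mathfrak{b}}^*/(B+1)$ is equivalent modulo this regrouping.

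One point where the paper is more explicit than your sketch: the Taylor expansion of $(\hat{\D}_{\mathfrak{b}}\D^{-1})^{-1/2}$ is carried out to \emph{third} order (see (\ref{eq:third-order-expansion-hatDb-D}) and the remainder $\hat{\F}_{\mathfrak{b}}$), not just first. This matters because the quadratic-in-$\E$ pieces have mean of order $1/B$, not $o(1/\sqrt{NB})$, and must be computed exactly via integration by parts (Step~2 in Appendix~\ref{subsec:needed-results}) before being combined with the second-order resolvent corrections from Step~3 (the $\delta_{12,\mathfrak{b}}$ terms); only the \emph{sum} of all these contributions reproduces $-p_N(z)/(c_N(B+1))$, as in (\ref{eq:conclusion-biais}). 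Your attribution of this term solely to ``the quadratic-in-$(\D_{\x_{\mathfrak{b}}}-\I)$ expectation'' is therefore slightly oversimplified, though the final answer is correct.
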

\begin{proof}
That $(w_{N,\mathfrak{b}}(f,\nu))_{\nu \in \Gcal_N}$ is a i.i.d. sequence is an obvious 
consequence of Corollary \ref{coro:independance-XNb-GammaNb}. In order to establish the 
rest of Theorem \ref{th:approximation-thetaNb}, we study the behaviour of the function $\gamma_N(z,f)$ defined by
\begin{equation}
  \label{eq:def-gamma-f}
  \gamma_{N,\mathfrak{b}}(z) = \frac{1}{M} \Tr \hat{\Q}_{N,\mathfrak{b}}(z) - t_N(z)- p_N(z) \,  \left( r_N(\nu) v_N - (c_N B)^{-1} \right),
\end{equation}
because the Helffer-Sjöstrand formula leads to
\begin{equation}
  \label{eq:HS-formula-thetaNb-f}
\theta_{N,\mathfrak{b}}(f,\nu) =  \frac{1}{\pi} \mathrm{Re} \int_{\Dcal} \bar{\partial}  \Phi_k(f)(z) \, \gamma_{N,\mathfrak{b}}(z) \, \diff x \diff y  . 
\end{equation}
In the following, we express $\gamma_{N,\mathfrak{b}}(z,f)$ as
\begin{equation}
  \label{eq:decomposition-gammab}
 \gamma_{N,\mathfrak{b}}(z) = \gamma_{1,N,\mathfrak{b}}(z) + \gamma_{2,N,\mathfrak{b}}(z) +  \frac{1}{M} \Tr \Q_{N,\mathfrak{b}}(z) - t_N(z),
\end{equation}
where $\gamma_{1,N,\mathfrak{b}}(z)$ and $\gamma_{{2,N,\mathfrak{b}}}(z)$ are defined by
\begin{align}
  \label{eq:def-gamma1b}
  \gamma_{1,N,\mathfrak{b}}(z) = & \frac{1}{M} \Tr \hat{\Q}_{N,\mathfrak{b}}(z) - \frac{1}{M} \Tr \tilde{\Q}_{N,\mathfrak{b}}(z) \\ \notag 
  & 
 - \left( \tilde{p}_N(z) \tilde{r}_N(\nu) v_N  - \frac{p_N(z)}{c_N} \frac{1}{B} \right) \\
  \label{eq:def-gamma2b}
  \gamma_{2,N,\mathfrak{b}}(z,f) = & \frac{1}{M} \Tr \tilde{\Q}_{N,\mathfrak{b}}(z) - \frac{1}{M} \Tr \Q_{N,\mathfrak{b}}(z) \\ \notag 
  & 
 - \left(p_N(z) r_N(\nu) v_N -  \tilde{p}_N(z) \tilde{r}_N(\nu) v_N \right), 
\end{align}
Therefore, $\theta_{N,\mathfrak{b}}(f,\nu)$ can be written as
\begin{equation}
  \label{eq:decomposition-thetaN-f}
  \theta_{N,\mathfrak{b}}(f,\nu) = \theta_{1,N,\mathfrak{b}}(f,\nu) + \theta_{2,N,\mathfrak{b}}(f,\nu) + \frac{1}{M} \Tr f \left( \frac{\X_{\mathfrak{b}} \X_{\mathfrak{b}}^{*}}{B+1} \right) - \int f \, d\mu_{MP}^{(c_N)},
\end{equation}
where $(\theta_{i,N,\mathfrak{b}}(f,\nu))_{i=1,2}$ are defined by
\begin{equation}
  \label{eq:HS-formula-thetaiNb-f}
\theta_{i,N,\mathfrak{b}}(f,\nu) =  \frac{1}{\pi} \mathrm{Re} \int_{\Dcal} \bar{\partial}  \Phi_k(f)(z) \, \gamma_{i,N,\mathfrak{b}}(z) \, \diff x \diff y.   
\end{equation}
In the following, we study the 3 terms of the decomposition (\ref{eq:decomposition-thetaN-f}). 
\\

The behaviour of the third term
of (\ref{eq:decomposition-thetaN-f})  is well known because it is well established
that $\mathbb{E} \left(  \frac{1}{M} \Tr \Q_{N,\mathfrak{b}}(z) - t_N(z) \right) = \Ocal_z(B^{-2}) = o_z\left( \frac{1}{\sqrt{BN}} \right)$, a result which implies 
that $\mathbb{E} \left( \frac{1}{M} \Tr f \left( \frac{\X_{\mathfrak{b}} \X_{\mathfrak{b}}^{*}}{B+1} \right) \right)- \int f \, d\mu_{MP}^{(c_N)} =  \Ocal(B^{-2}) = o\left( \frac{1}{\sqrt{BN}} \right)$. Therefore, we also have 
\begin{align}
  \label{eq:behaviour-Qb-t}
  \frac{1}{M} \Tr \Q_{N,\mathfrak{b}}(z) - t_N(z) =  & \frac{1}{M} \Tr \Q^{\circ}_{N,\mathfrak{b}}(z) + o_z\left( \frac{1}{\sqrt{BN}} \right), \\
  \label{eq:behaviour-trfXX-intf}
  \frac{1}{M} \Tr f \left( \frac{\X_{\mathfrak{b}} \X_{\mathfrak{b}}^{*}}{B+1} \right) - \int f \, d\mu_{MP}^{(c_N)}  = & \frac{1}{\pi} \mathrm{Re} \int_{\Dcal} \bar{\partial}  \Phi_k(f)(z) \, 
  \frac{1}{M} \Tr \Q^{\circ}_{N,\mathfrak{b}}(z) \, \diff x \diff y \\ \notag 
  & 
  +  o\left( \frac{1}{\sqrt{BN}} \right).
\end{align}
The behaviour of $\theta_{2,N,\mathfrak{b}}(f)$ is given by the following Proposition 
established in the Paragraph \ref{subsubsec:study-theta2}. 
\begin{proposition}
  \label{prop:behaviour-theta2b-f}
   $\theta_{2,N,\mathfrak{b}}(f)$ verifies
  \begin{equation}
    \label{eq:behaviour-E-theta2b-f}
    \mathbb{E} (\theta_{2,N,\mathfrak{b}}(f)) = \Ocal\left( \frac{B^{4}}{N^{4}} + \frac{1}{B^{2}} \right)  = \Ocal\left( \frac{B^{4}}{N^{4}} \right) +  o \left( \frac{1}{\sqrt{BN}} \right),
    \end{equation}
  and
  \begin{equation}
    \label{eq:behaviour-theta2b-f-recentered}
    \left(\theta_{2,N,\mathfrak{b}}(f) \right)^{\circ} = \Ocal_{\prec}\left( \frac{1}{N} \right) =  o_{\prec} \left( \frac{1}{\sqrt{BN}} \right).
  \end{equation}
\end{proposition}
In order to characterize $\theta_{1,N,\mathfrak{b}}(f)$, we introduce the term 
$\bar{\theta}_{1,N,\mathfrak{b}}(f)$ given by 
\begin{equation}
    \label{eq:def-bar-theta1b}
\bar{\theta}_{1,N,\mathfrak{b}}(f) =  \frac{1}{\pi} \mathrm{Re} \int_{\Dcal} \bar{\partial}  \Phi_k(f,z) \, \left( \frac{1}{M} \Tr (\Q_{N,\mathfrak{b}}(z) + z \Q^{2}_{N,\mathfrak{b}}(z))(\D_{\x_{\mathfrak{b}}} - \I) \right) \, \diff x \diff y.   
\end{equation}
Then, we have following result, proved in Paragraph \ref{subsubsec:simplification-hatphib-recentre} and in Appendix \ref{sec:details-proof-theorem2}.
\begin{theorem}
\label{th:simplification-theta1b}
For $\alpha < \frac{4}{5}$, $\theta_{1,N,\mathfrak{b}}(f)$ verifies
\begin{equation}
    \label{eq:simplification-hatphib-recentre}
\theta_{1,N,\mathfrak{b}}(f) - \mathbb{E}\left( \theta_{1,N,\mathfrak{b}}(f) \right) = 
\bar{\theta}_{1,N,\mathfrak{b}}(f) - \mathbb{E} \left( \bar{\theta}_{1,N,\mathfrak{b}}(f) \right) + o_{\prec}\left( \frac{1}{\sqrt{BN}}\right),
\end{equation}
as well as 
\begin{equation}
\label{eq:behaviour-E-theta1b}
    \mathbb{E}\left( \theta_{1,N,\mathfrak{b}}(f) \right) =  \Ocal\left( \left(\frac{B}{N}\right)^{4} \right) + o\left( \frac{1}{\sqrt{BN}}\right).
\end{equation}
\end{theorem}
We finally gather the results of Proposition \ref{prop:behaviour-theta2b-f} and Theorem  \ref{th:simplification-theta1b}, and complete the proof of Theorem  \ref{th:approximation-thetaNb}. For this, we take the decomposition (\ref{eq:decomposition-thetaN-f}) as a starting
point. 
  We add (\ref{eq:simplification-hatphib-recentre}) and (\ref{eq:behaviour-E-theta1b}), and remark that
  the $o_{\prec}\left( \frac{1}{\sqrt{BN}}\right)$ term at the right-hand side of (\ref{eq:simplification-hatphib-recentre}) is a zero mean term.
  We thus obtain
  that $ \theta_{1,N,\mathfrak{b}}(f)$ can be written as 
  \begin{align*}
  \theta_{1,N,\mathfrak{b}}(f) = & \frac{1}{\pi} \mathrm{Re} \int_{\Dcal} \bar{\partial}  \Phi_k(f)(z) \,   \left(   \frac{1}{M} \Tr (\Q_{N,\mathfrak{b}}(z) + z  \Q^{2}_{N,\mathfrak{b}}(z)) \, (\D_{\x_{\mathfrak{b}}} - \I) \right)^{\circ}  \diff x \diff y  \\ 
  & 
  + \epsilon_{1,N,\mathfrak{b}}(f),
  \end{align*}
where $\epsilon_{1,N,\mathfrak{b}}(f,\nu)$ verifies $\left(\epsilon_{1,N,\mathfrak{b}}(f,\nu)\right)^{\circ} = o_{\prec}\left( \frac{1}{\sqrt{BN}}\right)$
  and $$\mathbb{E}(\epsilon_{1,N,\mathfrak{b}}(,\nu)) = \mathbb{E}(\theta_{1,N,\mathfrak{b}}(f)) =   \Ocal\left( \left(\frac{B}{N}\right)^{4} \right) + o\left( \frac{1}{\sqrt{BN}}\right).$$  
Proposition \ref{prop:behaviour-theta2b-f} implies that
 $\theta_{2,N,\mathfrak{b}}(f) = \epsilon_{2,N,\mathfrak{b}}(f,\nu)$ where $\epsilon_{2,N,\mathfrak{b}}(f)$ verifies $\left(\epsilon_{2,N,\mathfrak{b}}(f)\right)^{\circ} = o_{\prec}\left( \frac{1}{\sqrt{BN}}\right)$
  and $$\mathbb{E}(\epsilon_{2,N,\mathfrak{b}}(f)) = \mathbb{E}(\theta_{2,N,\mathfrak{b}}(f)) =   \Ocal\left( \left(\frac{B}{N}\right)^{4} \right) + o\left( \frac{1}{\sqrt{BN}}\right).$$ We finally observe that (\ref{eq:behaviour-Qb-t}) implies that 
\begin{align*}
   &\frac{1}{M} \mathrm{Tr} f \left( \frac{\X_{\mathfrak{b}} \X_{\mathfrak{b}}^{*}}{B+1} \right) - \int f \, d\mu_{MP}^{(c_N)} = 
   \\
   &\qquad\frac{1}{\pi} \mathrm{Re} \int_{\Dcal} \bar{\partial}  \Phi_k(f)(z) \,  \frac{1}{M} \mathrm{Tr}  \Q_{N,\mathfrak{b}}^{\circ}(z) \, \diff x \diff y  + \epsilon_{3,N,\mathfrak{b}}(f) ,
\end{align*}
   where $$\epsilon_{3,N,\mathfrak{b}}(f) = \mathbb{E} \left(  \frac{1}{M} \mathrm{Tr} f \left( \frac{\X_{\mathfrak{b}}\X_{\mathfrak{b}}^{*}}{B+1} \right)\right) - \int f \, d \mu^{c_N}_{MP} =  o\left( \frac{1}{\sqrt{BN}}\right).$$ This, in turn, implies that $\theta_{N,\mathfrak{b}}(f)$
   verifies (\ref{eq:approximation-thetaNb}) where $ \epsilon_{N,\mathfrak{b}}(f)$ is defined by
   $ \epsilon_{N,\mathfrak{b}}(f) = \sum_{i=1}^{3}  \epsilon_{i,N,\mathfrak{b}}(f)$. 
\end{proof}

\subsubsection{Proof of Proposition \ref{prop:behaviour-theta2b-f}}
\label{subsubsec:study-theta2}
\begin{proof}
  (\ref{eq:behaviour-E-theta2b-f}) is an immediate consequence of (\ref{eq:Etrace-tildeQb-Qb-precise}) and of the
  Helffer-Sjöstrand formula. In order to
  establish (\ref{eq:behaviour-theta2b-f-recentered}), we apply Lemma \ref{le:concentration-integrale-helffer-sjostrand} for $U^{(N)} = [0,1]$ which thus coincides with the set of all frequencies $\nu$. 
  For $u=\nu$, $\X_N(u)$ corresponds to the $M(B+1)$--dimensional vector $\mathrm{vec}(\X_{N,\mathfrak{b}}(\nu))$, $h_N(\X_N(u), \X_N(u)^{*})$ and $q_N(z,\X_N(u), \X_N(u)^{*})$ are $\theta_{N,\mathfrak{b}}(f,\nu)$ and $\gamma_{2,N,\mathfrak{b}}(z,f,\nu)$  respectively. We omit to mention from now on that the previous terms depend on the frequency $\nu$, except when we introduce the relevant family of events $A_{N}(\nu)$. We now evaluate $\| \nabla  \gamma_{2,N,\mathfrak{b}}(z,f) \|^{2}$. $\nabla \gamma_{2,N,\mathfrak{b}}(z,f)$ is given by
$$
\nabla \gamma_{2,N,\mathfrak{b}}(z,f) = 
\nabla \frac{1}{M} \Tr \left(\tilde{\Q}_{N,\mathfrak{b}}(z) - \Q_{N,\mathfrak{b}}(z)\right),
$$
We express $ \frac{1}{M} \Tr \left(\tilde{\Q}_{N,\mathfrak{b}}(z) - \Q_{N,\mathfrak{b}}(z)\right)$ as 
\begin{align*}
& -\frac{1}{M} \Tr \tilde{\Q}_{N,\mathfrak{b}}(z) \left( \tilde{\C}_{N,\mathfrak{b}} - \frac{\X_{N,\mathfrak{b}} \X_{N,\mathfrak{b}}^{*}}{B+1} \right) \Q_{N,\mathfrak{b}}(z) = \\
& -\frac{1}{M} \Tr \tilde{\Q}_{N,\mathfrak{b}}(z) \left( \frac{(\X_{N,\mathfrak{b}} + \Gammabs_{N,\mathfrak{b}}) (\X_{N,\mathfrak{b}} + \Gammabs_{N,\mathfrak{b}})^{*}}{B+1} - \frac{\X_{N,\mathfrak{b}} \X_{N,\mathfrak{b}}^{*}}{B+1} \right) \Q_{N,\mathfrak{b}}(z) = \\
& -\frac{1}{M} \Tr \tilde{\Q}_{N,\mathfrak{b}}(z) \left( \frac{\X_{N,\mathfrak{b}}  \Gammabs_{N,\mathfrak{b}}^{*} + \Gammabs_{N,\mathfrak{b}} \X_{N,\mathfrak{b}}^{*} +  \Gammabs_{N,\mathfrak{b}}  \Gammabs_{N,\mathfrak{b}}^{*}}{B+1} \right) \Q_{N,\mathfrak{b}}(z).
\end{align*}
Using the resolvent identity (\ref{eq:resolvent-identity}) for matrices $\frac{\X_{N,\mathfrak{b}} \X_{N,\mathfrak{b}}^{*}}{B+1}$ and $\tilde{\C}_{N,\mathfrak{b}}$ as well as
$\| \Psibs_{m,\mathfrak{b}} \| = \Ocal\left(\frac{B}{N}\right)$, a straightforward calculation implies that 
\begin{equation}
\label{eq:majoration-norme-gradient-gamma-2-z}
 \| \nabla \gamma_{2,\mathfrak{b}}(z,f) \|^{2} \leq 
 \frac{C(z)}{B^{2}} \left( T_1 + T_2 \right),
\end{equation}
where $T_1 = \frac{1}{M} \Tr \tilde{\Deltabs}_{N,\mathfrak{b}} \tilde{\Deltabs}_{N,\mathfrak{b}}^{*}$ and 
$$
T_2 = \frac{1}{M} \Tr \frac{\Gammabs_{N,\mathfrak{b}} \Gammabs_{N,\mathfrak{b}}^{*}}{B+1} + C \frac{B^{2}}{N^{2}}  \frac{1}{M} \Tr \tilde{\C}_{N,\mathfrak{b}}.
$$
For each $\delta > 0$, we denote by $A_{N,\delta}(\nu)$ the event defined by 
\begin{equation}
    \label{eq:def-event-A}
A_{N,\delta}(\nu) = \{ \| \frac{\X_{\mathfrak{b}}(\nu)}{\sqrt{B+1}} \| \leq 3,  \| \frac{\Gammabs_{\mathfrak{b}}(\nu)}{\sqrt{B+1}} \| \leq N^{\delta} \frac{B}{N} \}.
\end{equation}
Proposition \ref{prop:Lambda-holds-with-high-proba} and $ \| \frac{\Gammabs_{\mathfrak{b}}}{\sqrt{B+1}} \| \prec \frac{B}{N}$ (see (\ref{eq:domination-stochastique-Gammab-Gamma1r})) imply that there exits $\gamma > 0$
for which $\sup_{\nu} P(A_{N,\delta}(\nu)) \leq e^{-N^{\gamma}}$ for each $N$ large enough. Moreover, on 
the set  $A_{N,\delta}$, matrix $\tilde{\Deltabs}_{N, \mathfrak{b}}$ 
verifies $\| \tilde{\Deltabs}_{N, \mathfrak{b}} \| \leq C N^{\delta} \frac{B}{N}$
while $\tilde{\C}_{N, \mathfrak{b}}$ satisfies $\| \tilde{\C}_{N, \mathfrak{b}} \| \leq C$. We deduce immediately from (\ref{eq:majoration-norme-gradient-gamma-2-z}) that on the event $A_{N,\delta}$, 
we have 
$$
 \| \nabla \gamma_{2,\mathfrak{b}}(z,f) \|^{2} \leq N^{2 \delta} \frac{C(z)}{N^{2}}.
$$
Moreover, $\gamma_{2,\mathfrak{b}}(z,f)$ is clearly a $\Ocal_{z}(1)$ term, while 
the set $\X_{N,\mathfrak{b}}(A_{N,\delta})$ is convex. Therefore, Lemma \ref{le:concentration-integrale-helffer-sjostrand} leads to the conclusion that 
for each $\delta > 0$, the family $((\theta_{2,N,\mathfrak{b}}(f,\nu))^{\circ}, \nu \in [0,1]$ 
verifies $(\theta_{2,N,\mathfrak{b}}(f,\nu))^{\circ} = \Ocal_{\prec}\left( \frac{N^{\delta}}{N}\right)$. (\ref{eq:behaviour-theta2b-f-recentered}) thus follows from Property \ref{pr:properties-stochastic-domination}, item (ii). 
\end{proof}

\subsubsection{Proof of Theorem \ref{th:simplification-theta1b}}
\label{subsubsec:simplification-hatphib-recentre}
\begin{proof}
In order to establish Theorem \ref{th:simplification-theta1b}, we remark that it is sufficient to prove that 
\begin{align}
&\frac{1}{\pi} \mathrm{Re} \int_{\Dcal} \bar{\partial}  \Phi_k(f)(z) \,   
\Bigl(  
\frac{1}{M} \Tr (\hat{\Q}_{N,\mathfrak{b}}(z) - \tilde{\Q}_{N,\mathfrak{b}}(z) ) - 
  \frac{1}{M} \Tr (\Q_{N,\mathfrak{b}}(z) 
  \notag\\
 &\qquad\qquad\qquad\qquad + z  \Q^{2}_{N,\mathfrak{b}}(z)) \, (\D_{\x_{\mathfrak{b}}} - \I) \Bigr)^{\circ}  \diff x \diff y 
  = o_{\prec}\left( \frac{1}{\sqrt{BN}}\right),
  \label{eq:approximation-tr-hatQ-tildeQ}
\end{align}
and 
\begin{align}
    &\mathbb{E} \left(  \frac{1}{M} \Tr \left( \hat{\Q}_{N,\mathfrak{b}}(z) - \tilde{\Q}_{N,\mathfrak{b}}(z)\right) \right) = 
    \notag\\
    &\qquad\tilde{p}_N(z) \tilde{r}_N(\nu) v_N  - p_N(z) \, (c_N B)^{-1}  + \Ocal_z\left( \left(\frac{B}{N}\right)^{4} \right)
    + o_{z}\left( \frac{1}{\sqrt{BN}}\right).
    \label{eq:approximation-E-tr-hatQ-tildeQ}
\end{align}
We just explain the general approach of the proof, and provide the details of the proof of (\ref{eq:approximation-tr-hatQ-tildeQ}) and (\ref{eq:approximation-E-tr-hatQ-tildeQ}) in Appendix \ref{sec:details-proof-theorem2}. 
We express $ \hat{\Q}_{N,\mathfrak{b}}(z) - \tilde{\Q}_{N,\mathfrak{b}}(z)$ as
\begin{equation}
  \label{eq:expre-hatQ-tildeQ}
\hat{\Q}_{N,\mathfrak{b}}(z) - \tilde{\Q}_{N,\mathfrak{b}}(z) =  - \hat{\Q}_{\mathfrak{b}} (\hat{\C}_{\mathfrak{b}} - \tilde{\C}_{\mathfrak{b}}) \tilde{\Q}_{\mathfrak{b}}, 
\end{equation}
and deduce that 
$$
\frac{1}{M} \Tr \left( \hat{\Q}_{\mathfrak{b}} - \tilde{\Q}_{\mathfrak{b}}\right) = - \frac{1}{M} \Tr \left( \hat{\Q}_{\mathfrak{b}} (\hat{\C}_{\mathfrak{b}} - \tilde{\C}_{\mathfrak{b}}) \tilde{\Q}_{\mathfrak{b}}\right),
$$
We express $\hat{\C}_{\mathfrak{b}} - \tilde{\C}_{\mathfrak{b}}$ using that $\hat{\C}_{\mathfrak{b}} =  \hat{\D}_{\mathfrak{b}}^{-1/2} \D^{1/2} \tilde{\C}_{\mathfrak{b}} \D^{1/2} \hat{\D}_{\mathfrak{b}}^{-1/2} $. We  expand for each $m$  $\frac{s_m^{1/2}}{\hat{s}_{m,\mathfrak{b}}^{1/2}}$ around $s_m$ up to the third order, and obtain that
$ \hat{\D}_{\mathfrak{b}}^{-1/2} \D^{1/2}$ can be written as 
\begin{equation}
\label{eq:third-order-expansion-hatDb-D}
\hat{\D}_{\mathfrak{b}}^{-1/2} \D^{1/2}  =  \I - \frac{1}{2} \left( \hat{\D}_{\mathfrak{b}} - \D \right) \D^{-1} + \frac{3}{8} \, \left((\hat{\D}_{\mathfrak{b}} - \D ) \D^{-1}\right)^{2} + \hat{\F}_{\mathfrak{b}},
\end{equation} 
where $\hat{\F}_{\mathfrak{b}}$ is the diagonal matrix with diagonal entries 
$$
\left(\hat{\F}_{\mathfrak{b}}\right)_{m,m} = - \frac{5}{16} \, s_m^{1/2} \,  \frac{(\hat{s}_{m,\mathfrak{b}} - s_m)^{3}}{\hat{\theta}_m^{7/2}},
$$
where $\hat{\theta}_m$ is located between $s_m$ and $\hat{s}_{m,\mathfrak{b}}$. 
This allows to express $\hat{\C}_{\mathfrak{b}} - \tilde{\C}_{\mathfrak{b}}$ as 
 \begin{align}
 \label{eq:decomposition-hatCb-tildeCb}
  \hat{\C}_{\mathfrak{b}} - \tilde{\C}_{\mathfrak{b}}  = & -\frac{1}{2} (\hat{\D}_{\mathfrak{b}} - \D) \D^{-1} \tilde{\C}_{\mathfrak{b}} -\frac{1}{2} \tilde{\C}_{\mathfrak{b}} (\hat{\D}_{\mathfrak{b}} - \D) \D^{-1} + \\
  \notag
    & \frac{3}{8} \left((\hat{\D}_{\mathfrak{b}} - \D ) \D^{-1}\right)^{2} \tilde{\C}_{\mathfrak{b}} +  \frac{3}{8} \tilde{\C}_{\mathfrak{b}} \left((\hat{\D}_{\mathfrak{b}} - \D ) \D^{-1}\right)^{2} + \\
    \notag
    & \frac{1}{4}  (\hat{\D}_{\mathfrak{b}} - \D) \D^{-1} \tilde{\C}_{\mathfrak{b}}   (\hat{\D}_{\mathfrak{b}} - \D) \D^{-1} + \Upsilonbs_1,
 \end{align}
where $\Upsilonbs_1$ represents the corresponding error term. In order to establish 
(\ref{eq:approximation-tr-hatQ-tildeQ}) and (\ref{eq:approximation-E-tr-hatQ-tildeQ}), 
we study the contribution of the various terms at the right-hand side of (\ref{eq:decomposition-hatCb-tildeCb}) to the left-hand sides of (\ref{eq:approximation-tr-hatQ-tildeQ}) and (\ref{eq:approximation-E-tr-hatQ-tildeQ}). The proof can be divided in 3 steps:
\begin{itemize}
\item Step 1: study of the contribution of $\Upsilonbs_1$
\item Step 2: study of the contribution of  the 3 quadratic terms in (\ref{eq:decomposition-hatCb-tildeCb}) (i.e. the third, fourth, and fifth term
at the right-hand side of (\ref{eq:decomposition-hatCb-tildeCb}))
\item Step 3: study of the contribution of the two linear terms of 
the right-hand side of (\ref{eq:decomposition-hatCb-tildeCb})
\end{itemize}
In Appendix \ref{subsec:needed-results}, we present an overview of the proofs  of  Step 1, Step 2 and Step 3, and provide the details in Appendices \ref{subsec:proof-step1}, \ref{subsec:proof-step2}, and \ref{subsec:proof-step3}. 
\end{proof} 

\begin{remark}
In order to  illustrate formula (\ref{eq:approximation-E-tr-hatQ-tildeQ}), we consider the case when $\alpha < 2/3$ (i.e.  $\left( \frac{B}{N} \right)^{2} = o(B^{-1})$)
and  $f(\lambda)  = \log \lambda$.
In this context, $\hat{f}_{\mathfrak{b}}$ is given by
$$
\hat{f}_{\mathfrak{b}} = \frac{1}{M} \log \det \left( \hat{\D}_{\mathfrak{b}}^{-1/2} \D_{\mathfrak{b}}^{1/2} \tilde{\C}_{\mathfrak{b}} \D_{\mathfrak{b}}^{1/2}  
\hat{\D}_{\mathfrak{b}}^{-1/2} \right),
$$
and $\tilde{f}_{\mathfrak{b}} = \frac{1}{M} \log \det \tilde{\C}_{\mathfrak{b}}$. Therefore, 
$\hat{f}_{\mathfrak{b}} - \tilde{f}_{\mathfrak{b}}$ is given by
$$
\hat{f}_{\mathfrak{b}} - \tilde{f}_{\mathfrak{b}} = \frac{1}{M} \sum_{m=1}^{M} \log s_m - \log \hat{s}_{m,\mathfrak{b}},
$$
Expending $\log \hat{s}_{m,\mathfrak{b}}$ around $s_m$, it is easy to check that, for $\alpha < 2/3$,  $\mathbb{E}(\hat{f}_{\mathfrak{b}} - \tilde{f}_{\mathfrak{b}}) = \frac{1}{2 (B+1)} + o(B^{-1})$. Using the Helffer-Sjöstrand formula, we get from (\ref{eq:approximation-E-tr-hatQ-tildeQ}) that
$\mathbb{E}(\hat{f}_{\mathfrak{b}} - \tilde{f}_{\mathfrak{b}}) = -\frac{1}{c B} <D,f> + o(B^{-1})$, 
and verify that  $<D,f> = -\frac{c}{2}$. For this, 
if $[a_1, a_2]$ is an interval containing  $[(1-\sqrt{c})^{2}, (1+\sqrt{c})^{2}]$ with $a_1 > 0$, we first use the Stieltjes 
inversion formula
$$
<D,f> = \lim_{\epsilon \rightarrow 0} \frac{1}{\pi} \int_{a_1}^{a_2} \log\lambda \; \Im p(\lambda + i \epsilon) d\lambda ,
$$
or equivalently
\begin{equation}
\label{eq:integrale-contour}
<D,f> = \lim_{\epsilon \rightarrow 0} \frac{1}{2 i \pi} \int_{(\partial \mathcal{R}_{\epsilon})_{-}} \log z \; p(z) dz,
\end{equation}
where $(\partial \mathcal{R}_{\epsilon})_{-}$ is the negatively oriented contour defined by
$$
(\partial \mathcal{R}_{\epsilon})_ = \{ \lambda \pm i \epsilon, a_1 \leq \lambda \leq a_2 \} \cup \{ a_1 + i y, -\epsilon \leq y \leq \epsilon \} \cup  \{ a_2 + i y, -\epsilon \leq y \leq \epsilon \}.
$$
The right-hand side of  (\ref{eq:integrale-contour}) does not depend on $\epsilon$. Therefore, for each $\epsilon > 0$, we have 
$$
<D,f> = \frac{1}{2 i \pi} \int_{(\partial \mathcal{R}_{\epsilon})_{-}} \log z \; p(z) dz.
$$
In order to evaluate directly $<D,f>$, we observe that the properties of 
$w$ defined in Section \ref{subsec:properties-wishart} imply that 
$$
<D,f> = -c \, \frac{1}{2 i \pi} \int_{(\mathcal{C})_{-}} \frac{\log \psi(w)}{w^{3}} \, dw,
$$
where $(\mathcal{C})_{-}$ is a negatively oriented simple contour enclosing $[-\sqrt{c}, \sqrt{c}]$. Integrating by parts, 
we obtain that 
$$
<D,f> = -c \, \frac{1}{2} \, \frac{1}{2 i \pi} \int_{(\mathcal{C})_{-}} \frac{\psi'(w)}{\psi(w)} \frac{1}{w^{2}} dw = - \frac{c}{2} ,
$$
as expected
\end{remark}
\
\subsection{\texorpdfstring{CLT on $B w_{N,\mathfrak{b}}(f,\nu)$}{CLT}}
\label{subsec:CLT-W}
Theorem \ref{th:approximation-thetaNb} shows that in order to prove a 
CLT on $B \theta_{N,\mathfrak{b}}(f,\nu)$, it is sufficient to do the same job for the 
zero-mean random variable $W_{N,\mathfrak{b}}(f,\nu)$ defined by 
\begin{align}
  W_N(f,\nu) 
  & = B w_{N,\mathfrak{b}}(f, \nu) 
  \notag\\ 
  & = \frac{1}{\pi \, c_N} \mathrm{Re} \int_{\Dcal} \bar{\partial}  \Phi_k(f)(z) \, 
  \Bigl( 
  \Tr \Q_{N,\mathfrak{b}}(z) \D_{\x_{\mathfrak{b}}} 
  \notag\\
  &\qquad\qquad\qquad+ z \,  \Tr  \Q^{2}_{N,\mathfrak{b}}(z) (\D_{\x_{\mathfrak{b}}} - \I) \Bigr)^{\circ} \, \diff x \diff y  ,
  \label{eq:def-WNfnu}
\end{align}
In this section, we thus establish that $W_N(f,\nu)$ verifies a CLT. For this, we first introduce some useful notations. All along this section, the function $\bar{\partial}  \Phi_k(f)(z)$ is denoted by $h(z)$. 
We also introduce the function $s_N(z)$ defined by 
\begin{equation}
    \label{eq:def-sN}
    s_N(z) = \frac{\sqrt{c_N} (z t_N(z) \tilde{t}_N(z))^{2}}{1 - c_N (z t_N(z) \tilde{t}_N(z))^{2}}.
\end{equation}
and consider the function $\omega_N(z_1,z_2)$ given by 
\begin{equation}
    \label{eq:def-thetaz1z2}
    \omega_N(z_1,z_2) = s_N(z_1) s_N(z_2)  \left( \frac{1}{(1 - c_N z_1 t(z_1)\tilde{t}(z_1)z_2 t(z_2)\tilde{t}(z_2))^{2}} - 1 \right).
\end{equation}
As $ c_N |z t_N(z) \tilde{t}_N(z)|^{2} < 1$ when $z \in \mathbb{C}^{+}$ (see Subsection \ref{subsec:properties-wishart}), it is clear that $s_N$ and 
$\omega_N$ are holomorphic on $\mathbb{C}^{+}$ and  $\left(\mathbb{C}^{+}\right)^{2}$ respectively. When $N \rightarrow +\infty$, $c_N$, $t_N(z)$, $\tilde{t}_N(z)$ converge towards $c, t(z), \tilde{t}(z)$ respectively. Therefore, if $s(z)$ and 
$\omega(z_1,z_2)$ are defined as $s_N(z)$ and $\omega_N(z_1,z_2)$ by replacing  $c_N$, $t_N(z)$, $\tilde{t}_N(z)$ by their limits, we have of course $s_N(z) \rightarrow s(z)$ for $z \in \mathbb{C}^{+}$ and 
$\omega_N(z_1,z_2) \rightarrow \omega(z_1,z_2)$ for $z_1,z_2 \in \mathbb{C}^{+}$ when $N \rightarrow +\infty$.
Lemma 9.2 in \cite{loubaton-jotp-2016}  implies that for each integer $l \geq 1$, the function $z \rightarrow s(z) (\sqrt{c} z t(z) \tilde{t}(z))^{l}$ coincides with the Stieltjes transform of a distribution carried by the interval $[(1-\sqrt{c})^{2}, (1+\sqrt{c})^{2}]$ that is denoted by $D_l$. We first state the following Lemma proved in the Appendix \ref{sec:proof-le-positivite-sigmaN}. 
\begin{lemma}
  \label{le:positivite-sigmaN}
  We define $\sigma^{2}_N(f)$ by the double integral
\begin{small}  
\begin{equation}
\label{eq:sigma2}
  \sigma_N^2(f) =
  \frac{1}{4 \pi^2 \, c_N^{2}}
  \iint_{\Dcal \times \Dcal}
 g(z_1, z_2) \drm z_1 \drm z_2,
\end{equation} 
\end{small}
where $\drm z$ stands for $\diff x \diff y$ and where $g(z_1,z_2)$ is given by 
\begin{align*}
g(z_1,z_2) = & h(z_1) h(z_2) \omega_N(z_1,z_2)
  + h(z_1) \overline{h(z_2)} \omega_N(z_1,\bar{z_2}) \\ 
  & 
  + \overline{h(z_1)} h(z_2) \omega_N(\bar{z_1},z_2)
  + \overline{h(z_1)} \overline{h(z_2)} \omega_N(\bar{z_1},\bar{z_2}).
\end{align*}
Then, $\sigma^{2}_N(f)$ converges towards the
term $\sigma^{2}(f)$ defined by (\ref{eq:sigma2}) when $\omega_N$ is replaced by $\omega$.  
Moreover, $\sigma^{2}(f) \geq 0$ and if the test function $f$ verifies 
\begin{equation}
    \label{eq:condition-sigma-positif}
    <D_l, f> \neq 0,
\end{equation}
for some integer $l \geq 1$, then, $\sigma^{2}(f)$
verifies $\sigma^{2}(f)> 0$, and for each $N$ large enough, $\sigma_N^{2}(f) > \frac{\sigma^{2}(f)}{2} > 0$. 
\end{lemma}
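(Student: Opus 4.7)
The plan is to exhibit the series representation
\[
\sigma_N^2(f) = \frac{1}{c_N^2} \sum_{l \geq 1} (l+1)\,\langle D_{l,N}, f\rangle^2,
\]
where $D_{l,N}$ is the compactly supported distribution whose Stieltjes transform is $\phi_{l,N}(z) := s_N(z)\bigl(\sqrt{c_N}\,z t_N(z)\tilde t_N(z)\bigr)^l$, obtained by applying Lemma 9.2 of \cite{loubaton-jotp-2016} at the finite parameter $c_N$. From this closed form, the limit identity $\sigma^2(f) = c^{-2}\sum_{l \geq 1}(l+1)\langle D_l,f\rangle^2$, nonnegativity, the strict positivity under (\ref{eq:condition-sigma-positif}), and the eventual lower bound $\sigma_N^2(f) > \sigma^2(f)/2$ all follow at once.

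First I would use the strict inequality $c_N|z t_N(z)\tilde t_N(z)|^2<1$ on $\mathbb{C}^+$ recalled in Subsection \ref{subsec:properties-wishart} to apply the geometric expansion $(1-w)^{-2}-1 = \sum_{l \geq 1}(l+1)w^l$ with $w = c_N z_1 t_N(z_1)\tilde t_N(z_1)\, z_2 t_N(z_2)\tilde t_N(z_2)$, yielding
\[
\omega_N(z_1,z_2) = \sum_{l \geq 1}(l+1)\,\phi_{l,N}(z_1)\,\phi_{l,N}(z_2).
\]
Plugging this into $g(z_1,z_2)$ and using the conjugation identity $\phi_{l,N}(\bar z)=\overline{\phi_{l,N}(z)}$ (which holds since $s_N$, $t_N$, $\tilde t_N$ are Stieltjes transforms of real measures), the four terms of $g$ collapse by bilinear factorization into
\[
g(z_1,z_2) = \sum_{l \geq 1}(l+1)\,A_{l,N}(z_1)\,A_{l,N}(z_2), \qquad A_{l,N}(z) := 2\,\mathrm{Re}\bigl[h(z)\phi_{l,N}(z)\bigr],
\]
a real, rank-one-per-$l$ structure. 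The Helffer-Sj\"ostrand formula (\ref{eq:definition-helffer-sjostrand}) applied to $D_{l,N}$ identifies $\int_{\mathcal{D}} A_{l,N}(z)\,dx\,dy = 2\pi \langle D_{l,N},f\rangle$, and substitution into the definition of $\sigma_N^2(f)$ produces the announced series. The convergence $\sigma_N^2(f) \to \sigma^2(f)$ then follows termwise from $c_N\to c$ and $\langle D_{l,N},f\rangle\to \langle D_l,f\rangle$, passed through the sum by dominated convergence.

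The only technical obstacle is the simultaneous justification of Fubini and of the termwise limit, both of which require a summable bound on $(l+1)\,|A_{l,N}(z_1)|\,|A_{l,N}(z_2)|$, uniform in $N$, over $\mathcal{D}\times\mathcal{D}$. I would combine the global estimate (\ref{eq:upper-bound-1-czttilde-carre}) on $(1-c_N|z t_N\tilde t_N|^2)^{-1}$ with the fact that $h(z)=\bar\partial\Phi_k(f)(z)$ vanishes like $y^k$ as $\mathrm{Im}\,z\to 0$: for $k$ chosen large enough, the $y^k$ prefactor absorbs all polynomial blow-up in $1/\mathrm{Im}\,z$ coming from $|s_N(z)|$ and from the rate $|\sqrt{c_N}z t_N(z)\tilde t_N(z)|$ approaching $1$ near the support of $\mu_{MP}^{(c_N)}$, producing geometric decay in $l$ that is integrable and uniform in $N$. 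Once this dominating bound is in hand, the interchanges and the conclusions of the lemma are routine.
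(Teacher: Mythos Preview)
Your proposal is correct and follows essentially the same route as the paper: expand $\omega_N$ via the geometric series $(1-w)^{-2}-1=\sum_{l\ge1}(l+1)w^l$, justify the interchange of sum and integral using the decay $|h(z)|\le C(\Im z)^k$ combined with the quantitative bound $c_N|zt_N\tilde t_N|^2\le 1-C_1(\Im z)^4$ (which is exactly how the paper proceeds, via (\ref{eq:upper-bound-1-czttilde-carre})), and then recognize each integrated term as a Helffer--Sj\"ostrand pairing $\langle D_l,f\rangle$. Your intermediate factorization $g(z_1,z_2)=\sum_l(l+1)A_{l,N}(z_1)A_{l,N}(z_2)$ with $A_{l,N}=2\mathrm{Re}[h\phi_{l,N}]$ is a slightly cleaner packaging than the paper's (which keeps the four cross-terms separate and collects them after integration into $u_l^2+\bar u_l^2+2|u_l|^2=4(\mathrm{Re}\,u_l)^2$), and working first at level $N$ before passing to the limit is a natural choice that the paper does not make explicit; but these are presentational differences only.
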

We now prove the following result.  
\begin{theorem}
\label{th:CLT-W}
We assume that condition (\ref{eq:condition-sigma-positif}) holds. Then, the sequence of random variables 
$(\frac{W_N(f,\nu)}{\sigma_N(f)})_{N \geq 1}$ verifies 
\begin{equation}
    \label{eq:clt-WN-normalise}
\frac{W_N(f,\nu)}{\sigma_N(f)} \rightarrow_{\Dcal} \Ncal(0,1).    
\end{equation}
Moreover, $\mathbb{E}(W_N(f,\nu))^{2}$ and $\mathbb{E}(W_N(f,\nu))^{4}$ verify 
\begin{eqnarray}
\label{eq:convergence-EWN2}
\mathbb{E}(W_N(f,\nu))^{2} - \sigma_N^{2}(f) & = & \Ocal\left(\frac{1}{B}\right), \\
\label{eq:convergence-EWN4}
\mathbb{E}(W_N(f,\nu))^{4} - 3 \sigma_N^{4}(f) & = & \Ocal\left(\frac{1}{B}\right).
\end{eqnarray}
\end{theorem}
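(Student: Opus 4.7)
The plan is to apply Stein's method directly to $W_N(f,\nu)$, viewed as a smooth functional of the i.i.d.\ complex Gaussian matrix $\X_{N,\mathfrak{b}}(\nu)$, and to exploit the Helffer--Sj\"ostrand representation (\ref{eq:def-WNfnu}) together with the Gaussian integration by parts formula (\ref{eq:ipp-iid}) and the Nash--Poincar\'e inequality (\ref{eq:nash-poincare-iid}). Throughout, the goal is to trade each entry $(\X_{\mathfrak{b}})_{i,j}$ appearing in $W_N$ for a derivative with respect to $(\overline{\X_{\mathfrak{b}}})_{i,j}$ of the remaining factor, controlling at each step the extra resolvent factors via $\|\Q_{N,\mathfrak{b}}(z)\| \leq (\Im z)^{-1}$.

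\textbf{Step 1 (variance).} Inserting the H--S formula into $\mathbb{E}[W_N^2]$ produces a double integral over $(z_1,z_2)\in\Dcal^2$ of $h(z_1)h(z_2)$ times $\mathrm{Cov}(T(z_1),T(z_2))$, where $T(z)=\Tr \Q_{N,\mathfrak{b}}(z)\D_{\x_{\mathfrak{b}}} + z\Tr \Q_{N,\mathfrak{b}}^2(z)(\D_{\x_{\mathfrak{b}}}-\I)$. Apply Gaussian IPP in each entry of $\X_\mathfrak{b}$: the derivatives of $\Q_{N,\mathfrak{b}}$ are evaluated via the resolvent identity, while the diagonal entries of $\D_{\x_\mathfrak{b}}$ are quadratic forms in $\x_{m,\mathfrak{b}}$ whose derivatives are explicit. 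Combining with the deterministic equivalents recalled in Subsection \ref{subsec:properties-wishart} ($\mathbb{E}[\Q_{N,\mathfrak{b}}(z)] \approx t_N(z)\I$ and $\mathbb{E}[\Q_{N,\mathfrak{b}}^2(z)] \approx t_N'(z)\I$) and summing the resulting geometric series in the contraction factor $c_N\, z_1 t_N(z_1)\tilde{t}_N(z_1)\, z_2 t_N(z_2)\tilde{t}_N(z_2)$ produces exactly the kernel $\omega_N(z_1,z_2)$ of (\ref{eq:def-thetaz1z2}), yielding $\sigma_N^2(f)$ of Lemma \ref{le:positivite-sigmaN} as the leading term, with remainders of order $\Ocal(1/B)$ inherited from the bounds on $\mathbb{E}[\tfrac1M \Tr \Q_{N,\mathfrak{b}}^\circ]$ and (\ref{eq:upper-bound-1-czttilde-carre}). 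This gives (\ref{eq:convergence-EWN2}).

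\textbf{Step 2 (CLT by Stein's method).} For $\phi \in \Ccal^\infty_c$, apply Gaussian IPP to $\mathbb{E}[W_N\, \phi(W_N/\sigma_N)]$. Each IPP produces either a main term that, after the same resummation as in Step 1, contributes $\sigma_N^2\,\mathbb{E}[\phi'(W_N/\sigma_N)]$ (the Stein term matching the Gaussian characterization $\mathbb{E}[Z\phi(Z)] = \sigma^2\,\mathbb{E}[\phi'(Z)]$) or a remainder in which further derivatives of $\Q_{N,\mathfrak{b}}(z)$ and of the quadratic entries of $\D_{\x_\mathfrak{b}}$ arise. Using $\|\Q_{N,\mathfrak{b}}(z)\| \leq (\Im z)^{-1}$, the concentration (\ref{eq:domination-Dxb-I}), the moment bound (\ref{eq:E-norm-Dxb-I}), Hanson--Wright via (\ref{eq:hanson-wright-stochastic-domination}) and the Schwarz inequality, each such remainder is uniformly $\Ocal_z(B^{-1/2})$, so its H--S integral against $\bar\partial\Phi_k(f)(z)$ on the bounded domain $\Dcal$ is $o(1)$. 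The Stein lemma then yields (\ref{eq:clt-WN-normalise}).

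\textbf{Step 3 (fourth moment).} Apply the same IPP mechanism to $\mathbb{E}[W_N^4] = \mathbb{E}[W_N \cdot W_N^3]$. The three possible pairings of the distinguished $W_N$ factor with one of the other three produce, by the Step 1 computation applied in parallel, a leading contribution $3\,\sigma_N^4(f)$ (Wick-type counting). The non-paired contributions correspond to third-order cumulant-like terms, which by one additional IPP gain an extra factor $1/B$ and are therefore $\Ocal(1/B)$. This gives (\ref{eq:convergence-EWN4}).

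The main obstacle is the bookkeeping in Step 2: differentiating $\Q_{N,\mathfrak{b}}(z)$ with respect to $(\X_\mathfrak{b})_{i,j}$ creates rank-one products of resolvents and vectors $\x_{m,\mathfrak{b}}$ that proliferate with each IPP, and their aggregated contributions must be uniformly controlled in $z\in\Dcal$ via the $C(z) = P_1(|z|)P_2(1/\Im z)$ machinery so that integration against $\bar\partial\Phi_k(f)(z)$ converges absolutely and produces a genuinely vanishing Stein remainder. The regime assumption $\alpha<4/5$ enters here precisely to ensure that the contributions of $\D_{2,\mathfrak{b}}$ and $\D_{3,\mathfrak{b}}$ in the decomposition (\ref{eq:decomposition-Dx-I}) are negligible with respect to $\D_{\x_\mathfrak{b}}-\I$ after Stein's IPP bookkeeping.
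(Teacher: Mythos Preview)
Your approach is essentially the same as the paper's: both rely on Gaussian integration by parts applied to $W_N$ as a functional of the i.i.d.\ matrix $\X_{N,\mathfrak{b}}$, in the spirit of Stein's method. The paper packages the IPP computation once and for all into a single ``master equation'' (Proposition~\ref{prop:master_eq}),
\[
\mathbb{E}[W_N\,\phi(W_N)] \;=\; \sigma_N^2\,\mathbb{E}[\phi'(W_N)] \;+\; \Delta_N,\qquad |\Delta_N|\leq \mathbb{E}\bigl[Y_N(|\phi(W_N)|+|\phi'(W_N)|)\bigr],\quad \mathbb{E}[Y_N^2]\leq C/B^2,
\]
and then specializes to $\phi(w)=e^{iuw}$ (yielding an ODE for the characteristic function, hence (\ref{eq:clt-WN-normalise})), $\phi(w)=w$ (yielding (\ref{eq:convergence-EWN2})), and $\phi(w)=w^3$ (yielding (\ref{eq:convergence-EWN4})). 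Your three-step organization does the same IPP work three times instead of once; the paper's route is tidier but not conceptually different. Two remarks: first, the paper's master equation gives a remainder of order $1/B$, not $B^{-1/2}$ as you state in Step~2---this sharper bound is exactly what is needed for (\ref{eq:convergence-EWN2}) and (\ref{eq:convergence-EWN4}), and the paper also needs the a~priori moment control (\ref{eq:WN-finite-moment}) (proved separately by an inductive Nash--Poincar\'e argument) to apply the master equation to the unbounded choices $\phi(w)=w,\,w^3$. Second, your final paragraph is a misconception: the condition $\alpha<4/5$ plays no role in Theorem~\ref{th:CLT-W}, since $W_N$ is defined purely through $\Q_{N,\mathfrak{b}}$ and $\D_{\x_{\mathfrak{b}}}$ (see (\ref{eq:def-WNfnu})) and the matrices $\D_{2,\mathfrak{b}},\D_{3,\mathfrak{b}}$ never enter. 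That condition was used earlier, in Theorem~\ref{th:approximation-thetaNb}, to pass from $\theta_{N,\mathfrak{b}}$ to $w_{N,\mathfrak{b}}$.
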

The proof of this result is a consequence of the following Proposition established in the Appendix \ref{sec:proof-prop:master_eq} using the Stein method. For ease of reading, we denote $W_N  = W_N(f,\nu)$  and $\sigma_N = \sigma_N(f)$.
\begin{proposition}
  \label{prop:master_eq}
  Let $\phi$ be a $\Ccal^{1}$ function defined on $\mathbb{R}$ such that
  \begin{align}
  \label{eq:master-equation-condition-phiW}
    \Ebb[|\phi'(W_N)|^{2}] = \Ocal(1), \;  \Ebb[|\phi(W_N)|^{2}] = \Ocal(1).
  \end{align}
  Then
  \begin{align}
  \label{eq:master-equation}
    \Ebb\left[W_N \phi(W_N)\right] = \sigma^{2}_N \Ebb\left[\phi'(W_N)\right] + \Delta_N,
  \end{align}
  where
  \begin{align}
  \label{eq:master-equation-propriete-Delta}
    \left|\Delta_N \right|
    \leq
   \Ebb\Biggl[Y_N \Bigl(|\phi(W_N)| + |\phi'(W_N)|\Bigr)\Biggr], 
  \end{align}
  for some positive random variable $Y_N$ which does not depend on $\phi$, and such that
  $\Ebb[Y_N^{2}] \leq \frac{C}{B^{2}}$ for some nice constant $C$.
\end{proposition}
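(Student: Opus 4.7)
The plan is to use Gaussian integration by parts (IBP) with respect to the entries of the complex Gaussian matrix $\X_{\mathfrak{b}}$, combined with the Helffer--Sjöstrand representation (\ref{eq:def-WNfnu}) of $W_N$. First I would write
\begin{equation*}
\mathbb{E}[W_N\phi(W_N)] = \frac{1}{\pi c_N}\,\mathrm{Re}\int_{\Dcal} h(z)\,\mathbb{E}\bigl[V_N(z)^{\circ}\phi(W_N)\bigr]\,dx\,dy,
\end{equation*}
with $V_N(z) = \Tr \Q_{N,\mathfrak{b}}(z)\D_{\x_{\mathfrak{b}}} + z\Tr \Q_{N,\mathfrak{b}}^{2}(z)(\D_{\x_{\mathfrak{b}}}-\I)$, and then, writing $(\D_{\x_{\mathfrak{b}}})_{m,m} = D_m = \|\x_{m,\mathfrak{b}}\|^{2}/(B+1)$, isolate the row-$m$ contribution $(D_m - 1)$. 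Using the identity $\mathbb{E}[\|\x_{m,\mathfrak{b}}\|^{2} G] = (B+1)\mathbb{E}[G] + \sum_i \mathbb{E}[X_{m,i}\partial_{X_{m,i}} G]$ (a direct consequence of Proposition \ref{prop:nash-poincare-ipp-iid}), IBP transfers each factor $D_m - 1$ onto $\Q$, $\Q^{2}$ and $\phi(W_N)$.

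The main term arises from the chain rule $\partial_{X_{m,i}}\phi(W_N) = \phi'(W_N)\,\partial_{X_{m,i}}W_N$. After a second Helffer--Sjöstrand expansion of $W_N$ inside $\partial_{X_{m,i}}W_N$, the summation over $(m,i)$ produces a double $z$-integral against a deterministic kernel $K_N(z_1,z_2)$. Using resolvent identities, the Marchenko--Pastur fixed point equation (\ref{eq:equation-MP-tN}) together with its derivative, and the concentration $\frac{1}{M}\Tr \Q_{N,\mathfrak{b}}(z) - t_N(z) = \Ocal_{\prec,z}(B^{-1})$, I would identify $K_N(z_1,z_2)$ with $\omega_N(z_1,z_2)/c_N^{2}$ up to error terms of order $1/B$, where $\omega_N$ is defined in (\ref{eq:def-thetaz1z2}). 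Combined with the symmetrization $z\mapsto\bar z$ hidden in the outer $\mathrm{Re}$, the resulting double contour integral is exactly $\sigma_N^{2}(f)\,\mathbb{E}[\phi'(W_N)]$ by Lemma \ref{le:positivite-sigmaN}.

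All remaining IBP-generated terms form $\Delta_N$ and split into two families. The first consists of terms in which $\partial_{X_{m,i}}$ acts on $\Q$ or $\Q^{2}$ rather than on $\phi(W_N)$; these do not carry a $\phi'$ factor but are multiplied by $\phi(W_N)$, and are controlled using the deterministic bound $\|\Q_{N,\mathfrak{b}}^{k}(z)\| \le (\Im z)^{-k}$, the moment estimates (\ref{eq:E-norm-Dxb-I})--(\ref{eq:E-norm-D2b}) on $\|\D_{\x_{\mathfrak{b}}}-\I\|$ and $\|\D_{2,\mathfrak{b}}\|$, the bound (\ref{eq:norm-D3b}) on $\|\D_{3,\mathfrak{b}}\|$, and the decay (\ref{eq:derivee-extension-f}) of $\bar\partial\Phi_k(f)(z)$ which absorbs the negative powers of $\Im z$. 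By Cauchy--Schwarz, each such contribution can be bounded by $\mathbb{E}[Y_N^{(1)}|\phi(W_N)|]$ with $\mathbb{E}(Y_N^{(1)})^{2} \le C/B^{2}$. The second family consists of terms where $\partial_{X_{m,i}}$ reaches $\phi$ but the resolvent derivative has generated an additional $\D_{\x_{\mathfrak{b}}}-\I$ factor; these yield a bound of the form $\mathbb{E}[Y_N^{(2)}|\phi'(W_N)|]$ with $\mathbb{E}(Y_N^{(2)})^{2}\le C/B^{2}$ by the same mechanism. Setting $Y_N = Y_N^{(1)}+Y_N^{(2)}$ (note that this $Y_N$ is purely spectral, independent of $\phi$) yields the envelope (\ref{eq:master-equation-propriete-Delta}).

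The main obstacle will be the combinatorial bookkeeping of the IBP-generated terms and the algebraic verification that the deterministic kernel extracted from the Stein term coincides exactly with $\omega_N/c_N^{2}$ rather than with a different function of $t_N,\tilde t_N$. This matching rests on repeated use of the Marchenko--Pastur equation and of the derived identities (\ref{eq:equation-MP-tilde-tN})--(\ref{eq:equation-MP-tN-tilde-tN}), together with the uniform bound (\ref{eq:upper-bound-1-czttilde-carre}) on $1/(1-c_N|zt_N\tilde t_N|^{2})$ which guarantees integrability of the kernel on $\Dcal\times\Dcal$. A secondary delicate point is the substitution of random quantities such as $\frac{1}{M}\Tr\Q_{N,\mathfrak{b}}(z)$ by their deterministic equivalents $t_N(z)$ inside $\phi$-dependent factors: this is handled by combining the $L^{2}$ hypotheses (\ref{eq:master-equation-condition-phiW}) with the Gaussian concentration transported through the Helffer--Sjöstrand formula via Lemma \ref{le:concentration-integrale-helffer-sjostrand}, producing an additional $o(1/B)$ contribution to $\Delta_N$ that fits within the required envelope.
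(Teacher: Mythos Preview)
Your proposal is correct and follows essentially the same Stein-method / Gaussian IBP strategy as the paper. Two organizational points worth noting: (i) the paper treats the two traces in $V_N(z)$ separately and finds that the $z\,\Tr\Q_{\mathfrak b}^{2}(\D_{\x_{\mathfrak b}}-\I)$ part contributes only to the error $\Delta_N$ (its IBP output, called $\vartheta(z_1,z_2)$, vanishes at leading order), so the variance $\sigma_N^{2}$ comes entirely from $\Tr\Q_{\mathfrak b}\D_{\x_{\mathfrak b}}$; (ii) the IBP on $\Tr\Q_{\mathfrak b}\D_{\x_{\mathfrak b}}\,\phi(W_N)$ produces a self-referencing term $\mathbb{E}[\Tr\Q_{\mathfrak b}^{\circ}\,\phi(W_N)]$ which the paper handles by a second round of IBP and an algebraic resolution (their ``2nd order expansion''), after which the kernel is matched to $\omega_N$ through a short lemma collecting the approximations $\beta(z)\approx t(z)$, $\frac{1}{1-z(1+\beta)}\approx \frac{t}{1+ct}$, etc. Your sketch would uncover both features upon execution, but it is useful to know in advance that the $\Q^{2}$-trace is pure error and that one resolution step is needed before the kernel can be read off.
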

Before giving the proof of Theorem \ref{th:CLT-W}, we first claim that 
for each integer $k$, 
\begin{equation}
    \label{eq:WN-finite-moment}
\mathbb{E}|W_N|^{k} \leq C_k,    
\end{equation}
where $C_k$ is a constant that only depends on $k$. The proof is provided in the Appendix 
\ref{sec:proof-eq:WN-finite-moment}. In order to obtain (\ref{eq:clt-WN-normalise}), we apply Proposition \ref{prop:master_eq} to the function $\phi_u(w) = e^{iuw}$ for $u \in \mathbb{R}$, which, of course, verifies (\ref{eq:master-equation-condition-phiW}). We denote 
by $\Delta_N(u)$ the error term at the right-hand side of (\ref{eq:master-equation}). It is clear that $\mathbb{E}(\phi_u(W_N)) = \psi_{W_N}(u)$
where $\psi_{W_N}$ represents the characteristic function of the random variable $W_N$. Moreover, 
(\ref{eq:WN-finite-moment}) for $k=1$ implies that 
$\mathbb{E}(iW_N e^{iuW_N}) = \psi^{'}_{W_N}(u)$ where $'$ stands for the differentiation operator w.r.t. $u$ in this context. Therefore, (\ref{eq:master-equation}) leads to
$$
\psi^{'}_{W_N}(u) = -u \sigma_N^{2}  \psi_{W_N}(u) + i \Delta_N(u).
$$
Solving this equation, we obtain that $\psi_{W_N}(u)$ is given by 
$$
\psi_{W_N}(u) = e^{-\sigma_N^{2} u^{2}/2} + e^{-\sigma_N^{2} u^{2}/2} \int_{0}^{u} i \Delta_N(s) e^{\sigma_N^{2} s^{2}/2} ds.
$$
(\ref{eq:master-equation-propriete-Delta}) implies that $|\Delta_N(u)| \leq C \frac{1 + |u|}{B}$. Thus, the second term 
of the righthanside of the above equation is a $\Ocal(B^{-1})$ term, and 
$\psi_{W_N}(u) - e^{-\sigma_N^{2} u^{2}/2} \rightarrow 0$ for each $u$. As  $\sigma_N^{2}(f)$ is bounded away from zero (see Lemma \ref{le:positivite-sigmaN}),
this leads to (\ref{eq:clt-WN-normalise}). In order to justify (\ref{eq:convergence-EWN2}), we apply Proposition \ref{prop:master_eq} to the function $\phi(w) = w e^{iuw}$. (\ref{eq:WN-finite-moment}) for $k=2$ leads to the conclusion that this function verifies (\ref{eq:master-equation-condition-phiW}), so that the master equation (\ref{eq:master-equation}) holds for each $u$. Taking $u=0$ in (\ref{eq:master-equation}) and using the evaluation $\Delta_N(0) = \Ocal\left(\frac{1}{B}\right)$ leads to (\ref{eq:convergence-EWN2}). To obtain (\ref{eq:convergence-EWN4}), it is sufficient to apply  Proposition \ref{prop:master_eq} to the function $\phi(w) = w^{3} e^{iuw}$, 
and to set $u=0$ in the corresponding master equation (\ref{eq:master-equation}). \\

We finally deduce from Theorem \ref{th:approximation-thetaNb} and Theorem \ref{th:CLT-W} that 
if function $f$ verifies condition (\ref{eq:condition-sigma-positif}), $B \theta_{N,\mathfrak{b}}(f,\nu)$ verifies 
\begin{equation}
    \label{eq:CLT-B-thetaNb}
 \frac{B \theta_{N,\mathfrak{b}}(f,\nu)}{\sigma_N(f)} \rightarrow \Ncal(0,1)   ,
\end{equation}
for $\alpha < \frac{4}{5}$. 

\subsection{\texorpdfstring{CLT on $B \theta_N(f,\nu)$}{CLT}}
\label{subsec:CLT-B-theta}

The models defining vectors $(\omega_{m,\mathfrak{b}})_{m=1, \ldots, M}$ and 
$(\omega_{m})_{m=1, \ldots, M}$ are very similar  (see 
Eqs. (\ref{eq:representation-omegam}) and (\ref{eq:representation-omegaw})). In particular,  matrices 
$(\Phibs_{m})_{m=1, \ldots, M}$ verify (\ref{eq:bound-norm-Phim}, \ref{eq:expre-Trace-Phim}, \ref{eq:moyenne-trace-Phim},\ref{eq:trace-sum-phim-carre}), quite similar 
to  (\ref{eq:bound-norm-Phimb}, \ref{eq:expre-Trace-Phimb}, \ref{eq:behaviour-moyenne-trace-Phimb}, \ref{eq:behaviour-trace-sum-Phimb-carre}) satisfied 
by the matrices $(\Phibs_{m,\mathfrak{b}})_{m=1, \ldots, M}$. 
This immediately implies that $\theta_N(f,\nu)$ verifies Theorem \ref{th:approximation-thetaNb}. More precisely, if $\D_{\x(\nu)}$ represents the diagonal $M \times M$ matrix with diagonal entries $\left( \frac{\| \x_m(\nu) \|^{2}}{B+1}\right)_{m=1, \ldots, M}$, 
we have the following result which establishes the representation (\ref{eq:representation-B-thetaN}) of $\theta_N(f,\nu)$. 
\begin{theorem}
\label{th:approximation-thetaN}
$\theta_{N}(f,\nu)$ can be written as 
\begin{equation}
    \label{eq:approximation-thetaN}
\theta_{N}(f,\nu) = w_N(f,\nu) + \epsilon_N(f,\nu),
\end{equation}
where $w_N(f,\nu)$ is defined by 
\begin{align}
w_N(f,\nu) &=  
\frac{1}{\pi} \mathrm{Re} \int_{\Dcal} \bar{\partial}  \Phi_k(f)(z) \,  
\Bigr( \frac{1}{M} \Tr \Q_{N}(z) \D_{\x(\nu)} 
\notag\\
&\qquad\qquad+ z \frac{1}{M} \Tr \Q^{2}_{N}(z) (\D_{\x(\nu)} - \I) \Bigr)^{\circ} \, \diff x \diff y, 
\label{eq:expre-wN}
\end{align}
and where the family $(\epsilon_{N}(f,\nu), \nu \in [0,1])$ verifies 
(\ref{eq:domination-stochastique-erreur-approximation-clt-4/5}) and 
(\ref{eq:E-erreur-approximation-clt-4/5}).
\end{theorem}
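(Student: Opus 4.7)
The plan is to adapt the proof of Theorem \ref{th:approximation-thetaNb} almost verbatim, the key observation being that the argument for the Bartlett-factorized model relied only on two structural inputs: the stochastic representation (\ref{eq:representation-omegaw}) of $\omegabs_{m,\mathfrak{b}}$ together with the bounds (\ref{eq:bound-norm-Phimb})--(\ref{eq:bound-trace-Phimb}) on $\Phibs_{m,\mathfrak{b}}$, and the expansions (\ref{eq:behaviour-trace-sum-Phimb-carre})--(\ref{eq:behaviour-moyenne-trace-Phimb}). Precisely the same inputs are available for $\omegabs_m$ via the representation (\ref{eq:representation-omegam}) together with (\ref{eq:bound-norm-Phim})--(\ref{eq:bound-trace-Phim}), (\ref{eq:moyenne-trace-Phim}), and (\ref{eq:trace-sum-phim-carre}). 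The only structural difference is that $\Phibs_{m,\mathfrak{b}}$ is diagonal while $\Phibs_m$ is merely Hermitian, but no step in Theorem \ref{th:approximation-thetaNb} exploits diagonality; only the spectral norm and normalized trace of $\Phibs_m$ enter the estimates through the Hanson--Wright inequality and the Helffer--Sjöstrand contour bounds.

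First I would introduce the decomposition of $\theta_N(f,\nu)$ mirroring (\ref{eq:decomposition-thetaN-f}), namely
\begin{equation*}
\theta_N(f,\nu) \;=\; \theta_{1,N}(f,\nu) + \theta_{2,N}(f,\nu) + \frac{1}{M}\Tr f\!\left(\frac{\X \X^*}{B+1}\right) - \int f \, d\mu_{MP}^{(c_N)},
\end{equation*}
where $\theta_{1,N}(f,\nu)$ and $\theta_{2,N}(f,\nu)$ are defined through the Helffer--Sjöstrand formula applied to the analogs $\gamma_{1,N}(z)$ and $\gamma_{2,N}(z)$ of (\ref{eq:def-gamma1b}) and (\ref{eq:def-gamma2b}), obtained by replacing $(\hat{\Q}_{N,\mathfrak{b}},\tilde{\Q}_{N,\mathfrak{b}},\Q_{N,\mathfrak{b}})$ by $(\hat{\Q}_N,\tilde{\Q}_N,\Q_N)$. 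The third term is handled identically using the classical bound $\mathbb{E}(\frac{1}{M}\Tr \Q_N(z)) - t_N(z) = \Ocal_z(B^{-2})$. For $\theta_{2,N}(f,\nu)$, I would invoke (\ref{eq:Etrace-tildeQ-Q-precise-2-improved}) to obtain the analog of (\ref{eq:behaviour-E-theta2b-f}) with error $\Ocal(B^4/N^4) + o(1/\sqrt{BN})$, and then reproduce the gradient computation of Subsection \ref{subsubsec:study-theta2}, using (\ref{eq:domination-tildeDelta}) and Lemma \ref{le:concentration-integrale-helffer-sjostrand} on the event $\{\|\X/\sqrt{B+1}\| \le 3, \|\Gammabs/\sqrt{B+1}\| \le N^\delta B/N\}$, which holds with exponentially high probability thanks to (\ref{eq:domination-stochastique-Gamma}). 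This yields $(\theta_{2,N}(f,\nu))^\circ = o_\prec(1/\sqrt{BN})$.

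For the crucial $\theta_{1,N}(f,\nu)$ term I would follow the three-step strategy of Theorem \ref{th:simplification-theta1b}. Writing $\hat{\C} - \tilde{\C} = -\hat{\Q}^{-1}(\hat{\C} - \tilde{\C})\tilde{\Q}^{-1}\cdots$ and expanding $\hat{\D}^{-1/2}\D^{1/2}$ as in (\ref{eq:third-order-expansion-hatDb-D}), the decomposition (\ref{eq:decomposition-hatCb-tildeCb}) transfers verbatim, the diagonal matrices $\D_{\x_{\mathfrak{b}}},\D_{2,\mathfrak{b}},\D_{3,\mathfrak{b}}$ being replaced by their counterparts $\D_{\x}$, $\D_2 = \dg(\x_m^*\Phibs_m \x_m/(B+1) - \frac{1}{B+1}\Tr\Phibs_m)_m$ and $\D_3 = \dg(\frac{1}{B+1}\Tr\Phibs_m)_m$. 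Because $\Phibs_m$ satisfies (\ref{eq:bound-norm-Phim})--(\ref{eq:bound-trace-Phim}), the Hanson--Wright bounds (\ref{eq:sup-hanson-wright-domination-stochastique}) and (\ref{eq:moyenne-sup-hanson-wright-D}) yield exactly the same estimates (\ref{eq:domination-Dxb-I})--(\ref{eq:norm-D3b}) for $\D_{\x},\D_2,\D_3$. The contributions of $\Upsilonbs_1$ (Step 1), the quadratic terms (Step 2) and the linear terms (Step 3) are then controlled by the same chain of arguments, with the refined bias formula (\ref{eq:Etrace-tildeQ-Q-precise-2-improved}) producing the $\Ocal((B/N)^4)$ remainder in the expectation. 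The centered part of $\theta_{1,N}$ then matches $\bar{\theta}_{1,N}(f) - \mathbb{E}\bar{\theta}_{1,N}(f)$ up to $o_\prec(1/\sqrt{BN})$, where $\bar{\theta}_{1,N}(f)$ is defined as in (\ref{eq:def-bar-theta1b}) with $\D_{\x_{\mathfrak{b}}}$ replaced by $\D_{\x}$. Collecting the three pieces produces (\ref{eq:approximation-thetaN}) with $w_N(f,\nu)$ as in (\ref{eq:expre-wN}) and an error term $\epsilon_N(f,\nu)$ satisfying (\ref{eq:domination-stochastique-erreur-approximation-clt-4/5}) and (\ref{eq:E-erreur-approximation-clt-4/5}).

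The main obstacle I anticipate is of a purely bookkeeping nature: in the Bartlett case the diagonal structure of $\Phibs_{m,\mathfrak{b}}$ made certain cross-terms involving $\D_{\x_{\mathfrak{b}}}$ and $\x_{m,\mathfrak{b}}^*\Phibs_{m,\mathfrak{b}}\x_{m,\mathfrak{b}}$ manifestly diagonal, whereas here $\x_m^*\Phibs_m \x_m$ is a genuine quadratic form. However, all that is needed at each occurrence is control on either $\|\Phibs_m\|$ or $\frac{1}{B+1}\Tr\Phibs_m^k$ through Hanson--Wright, and since these satisfy the same orders of magnitude as in the Bartlett case, no new analytic input is required. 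In particular no independence across frequencies of $\omegabs_m(\nu)$ is invoked anywhere in the proof of Theorem \ref{th:approximation-thetaNb} (that property was used only afterwards, to assert that $(w_{N,\mathfrak{b}}(f,\nu))_{\nu \in \Gcal_N}$ is i.i.d.), so nothing is lost in the non-Bartlett setting of Theorem \ref{th:approximation-thetaN}.
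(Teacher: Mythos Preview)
Your proposal is correct and follows precisely the approach the paper takes: the paper's own argument for Theorem \ref{th:approximation-thetaN} is a terse appeal to the fact that the representation (\ref{eq:representation-omegam}) together with the bounds (\ref{eq:bound-norm-Phim}, \ref{eq:expre-Trace-Phim}, \ref{eq:moyenne-trace-Phim}, \ref{eq:trace-sum-phim-carre}) for $\Phibs_m$ play exactly the same role as (\ref{eq:representation-omegaw}) and (\ref{eq:bound-norm-Phimb}, \ref{eq:expre-Trace-Phimb}, \ref{eq:behaviour-moyenne-trace-Phimb}, \ref{eq:behaviour-trace-sum-Phimb-carre}) for $\Phibs_{m,\mathfrak{b}}$, so the proof of Theorem \ref{th:approximation-thetaNb} transfers verbatim. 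Your write-up spells out this transfer in more detail---including the correct observation that diagonality of $\Phibs_{m,\mathfrak{b}}$ is never used and that the frequency-independence property is only needed later for the i.i.d.\ structure of $(w_{N,\mathfrak{b}}(f,\nu))_{\nu\in\Gcal_N}$---but the underlying idea is identical.
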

It is clear that for each $\nu$, the sequences of random variables 
$(w_{N,\mathfrak{b}}(f,\nu))_{N \geq 1}$ and $(w_N(f,\nu))_{N \geq 1}$ share the same probability distribution. Therefore, Theorem \ref{th:CLT-W} and representation (\ref{eq:approximation-thetaN}) imply that 
\begin{equation}
    \label{eq:CLT-B-thetaN}
 \frac{B \theta_N(f,\nu)}{\sigma_N(f)} \rightarrow \Ncal(0,1)   ,
\end{equation}
for $\alpha < \frac{4}{5}$ provided function $f$ verifies condition (\ref{eq:condition-sigma-positif}). 
\begin{remark}
\label{re:sigma2-independent-from-nu}
It is important to notice that the asymptotic variance $\sigma^{2}_N(f)$ does not depend on the spectral densities $(s_m(\nu))_{m=1, \ldots, M}$. Therefore, $\sigma_N^{2}(f)$ also
coincides with the asymptotic variance of $\theta_{N}(f,\nu)$ when for each $m$, $(y_{m,n})_{n \in \mathbb{Z}}$ is an uncorrelated sequence, i.e. when the spectral density of $(y_{m,n})_{n \in \mathbb{Z}}$ reduces to a constant denoted $s_m$. To elaborate from this, we assume that this assumption holds, and denote by $\D_{N,iid}$ the constant diagonal matrix $\D_{N,iid} = \mathrm{diag}\left( s_m, m=1, \ldots, M \right)$. In this context, 
it is clear that for each $m$, the components of the $(B+1)$--dimensional vector $\omegabs_{m}(\nu)$ 
given by (\ref{eq:def-omegam}) are i.i.d. $\Ncal_c(0,s_m)$ distributed random variables. 
Matrix $\Phibs_m(\nu)$ defined by (\ref{eq:representation-covariance-omegam}) is thus reduced to $0$ for each $m$, and we deduce from this that matrix $\Sigmabs_N(\nu)$ given by (\ref{eq:def-Sigma}) is equal to 
$$
\Sigmabs_N(\nu) = \D_{N,iid}^{1/2} \X_N(\nu).
$$
The estimated spectral coherence matrix $\hat{\C}_N(\nu)$ is thus equal to 
$$
\hat{\C}_N(\nu) = \hat{\D}^{-1/2}_{N,iid}(\nu)  \D_{N,iid}^{1/2} \, \frac{\X_N(\nu)  \X_N^{*}(\nu)}{B+1}  \, \hat{\D}^{-1/2}_{N,iid}(\nu)  \D_{N,iid}^{1/2},
$$
where $\hat{\D}_{N,iid}(\nu) = \mathrm{dg}\left ( \frac{\Sigmabs_N(\nu)\Sigmabs_N^{*}(\nu)}{B+1}\right)$. As the diagonal terms of $\hat{\C}_N(\nu)$ are equal to $1$, the diagonal matrix $ \hat{\D}^{-\frac{1}{2}}_{N,iid}(\nu)  \D_{N,iid}^{\frac{1}{2}}$ coincides with $\left(\mathrm{dg}\left(  \frac{\X_N(\nu)  \X_N^{*}(\nu)}{B+1} \right)\right)^{-\frac{1}{2}} $, so that $\hat{\C}_N(\nu)$
can be written as 
$$
\hat{\C}_N(\nu) = \left(\mathrm{dg}\left(  \frac{\X_N(\nu)  \X_N^{*}(\nu)}{B+1} \right)\right)^{-\frac{1}{2}} \, 
 \frac{\X_N(\nu)  \X_N^{*}(\nu)}{B+1} \, \left(\mathrm{dg}\left(  \frac{\X_N(\nu)  \X_N^{*}(\nu)}{B+1} \right)\right)^{-\frac{1}{2}}.
$$
We also remark that, as $r_N(\nu)$ is identically $0$, $\theta_{N}(f,\nu)$ is reduced to 
$$
\theta_{N,iid}(f,\nu) =  \hat{f}_N(\nu) -  \int_{\Rbb^{+}}f\diff\mu_{MP}^{(c_N)}  + <D_N, f> \,  \frac{1}{c_N} \frac{1}{B}.  
$$
$\hat{\C}_N(\nu)$ thus coincides with the sample autocorrelation matrix $\hat{\C}$ build from a $M \times (B+1)$ Gaussian
random matrix $\X$ with i.i.d. $\Ncal_c(0,1)$ entries, while $\theta_N(f,\nu)$ coincides with 
the recentered LSS  $\theta_{N,iid}(f,\nu)$ of the eigenvalues of $\hat{\C}$ given by
$$
\theta_{N,iid}(f) = \frac{1}{M} \Tr f(\hat{\C}) - \int_{\Rbb^{+}}f\diff\mu_{MP}^{(c_N)}  + <D_N, f> \,  \frac{1}{c_N} \frac{1}{B}  .
$$
This discussion implies that, whatever the spectral densities $(s_m(\nu))_{m=1, \ldots,M}$, $\sigma^{2}_N(f)$  coincides with the asymptotic 
variance of the recentered LSS $ \theta_{N,iid}(f,\nu)$ of the sample autocorrelation 
matrix $\hat{\C}$ build from a $M \times (B+1)$ random matrix with i.i.d. standard complex Gaussian entries. We finally notice that the CLT on $\theta_{N,iid}(f)$ precisely 
coincides with the results presented in \cite{gao-han-pan-yang-jrss-2017} and \cite{mestre-vallet-ieeeit-2017}. These central limit theorems are not formulated as in the present paper 
because the test functions $f$ considered in \cite{gao-han-pan-yang-jrss-2017} and \cite{mestre-vallet-ieeeit-2017} are supposed analytic in a neighbourhood of $[(1 - \sqrt{c})^{2}, (1 + \sqrt{c})^{2}]$. 
\end{remark}
\begin{remark}
\label{re:clt-alpha-inferieur-2tiers}
When the components of $\y$ are not reduced to i.i.d. sequences, it is also interesting to notice that when $\alpha < \frac{2}{3}$, the term $r_N(\nu) \, v_N$ is asymptotically negligible w.r.t. $ \frac{1}{c_N} \frac{1}{B}$. The CLT (\ref{eq:CLT-B-thetaN}) on the linear spectral statistics of $\hat{\C}_N(\nu)$ is thus exactly the same as if all the components of $\y$ were i.i.d. sequences. In other words, if $\alpha < \frac{2}{3}$, the presence of 
error matrix $\Gammabs_N(\nu)$ in the expression (\ref{eq:def-Sigma}) of $\Sigmabs_N(\nu)$ has no impact on the 
CLT on the LSS of $\hat{\C}_N(\nu)$. We however mention that, in practice, for finite values of 
$M$ and $N$, even if $\alpha < \frac{2}{3}$, a better fit between the distribution of 
$\frac{B \theta_N(f,\nu)}{\sigma_N(f)}$ and the Gaussian standard distribution is observed when the term 
$r_N(\nu) v_N$ is taken into account in the recentering term of $\hat{f}_N(\nu)$. We refer the reader to the 
Section \ref{sec:simulations} for more details.
\end{remark}

\section{\texorpdfstring{CLT for the statistics $\zeta_{N,1}(f)$ and  $\zeta_{N,2}(f)$ when $\alpha < \frac{7}{9}$}{CLT}}
\label{sec:clt-zeta}
In this section, we assume that $\alpha < \frac{7}{9}$, and establish the CLTs
 (\ref{eq:clt-zeta1}) and (\ref{eq:clt-zeta}) verified by the statistics  $\zeta_{N,1}(f)$ and  $\zeta_{N,2}(f)$ defined by (\ref{eq:def-statistics-zeta1-clt}) and  (\ref{eq:def-statistics-zeta-clt}). For this, we first establish the representation 
(\ref{eq:representation-theta-bartlett}) of $\theta_{N}(f,\nu)$

\subsection{\texorpdfstring{Proof of representation (\ref{eq:representation-theta-bartlett}) of $\theta_N(f,\nu)$}{Proof of representation}}
\label{subsec:proof-representation-bartlett-theta}
In order to justify (\ref{eq:representation-theta-bartlett}), we express 
$\theta_N(f,\nu)$ as 
\begin{equation}
    \label{eq:equation-triviale}
    \theta_N(f,\nu) = \theta_{N,\mathfrak{b}}(f,\nu) + \theta_N(f,\nu) - \theta_{N,\mathfrak{b}}(f,\nu). 
\end{equation}
Theorems \ref{th:approximation-thetaNb} and \ref{th:approximation-thetaN} imply that
$$
\theta_N(f,\nu) - \theta_{N,\mathfrak{b}}(f,\nu) = w_N(f,\nu) - w_{N,\mathfrak{b}}(f,\nu) + \epsilon_N(f,\nu) - \epsilon_{N,\mathfrak{b}}(f,\nu),
$$
so that, using again Theorem \ref{th:approximation-thetaNb}, Eq. (\ref{eq:equation-triviale}) can be rewritten as 
\begin{equation}
\label{eq:expre-theta-thetabar}
\theta_N(f,\nu) = w_{N,\mathfrak{b}}(f,\nu)  + w_N(f,\nu) - w_{N,\mathfrak{b}}(f,\nu) + \epsilon_N(f,\nu).
\end{equation}
In order to establish (\ref{eq:representation-theta-bartlett}), it thus remains to justify that the error term $\kappa_{N,\mathfrak{b}}(f,\nu)$ given by 
\begin{equation}
\label{eq:expre-kappab-clt}
\kappa_{N,\mathfrak{b}}(f,\nu) = w_N(f,\nu) - w_{N,\mathfrak{b}}(f,\nu) 
 + \epsilon_N(f,\nu),
\end{equation}
verifies $\kappa_{N,\mathfrak{b}}(f,\nu) = \Ocal_{\prec}\left( \frac{1}{\sqrt{BN}}\right)$
if $\frac{1}{2} < \alpha < \frac{7}{9}$. 
 $\alpha < \frac{7}{9}$ is equivalent to $\frac{B^{4}}{N^{4}} = o\left( \frac{1}{\sqrt{BN}}\right)$. Therefore, 
as $\epsilon_N(f,\nu)$ verifies (\ref{eq:domination-stochastique-erreur-approximation-clt-4/5})
and  (\ref{eq:E-erreur-approximation-clt-4/5}), we have just to prove the following Proposition. 
 \begin{proposition}
 \label{th:hatphi-hatphib}
 The family of random variables $( w_N(f,\nu) - w_{N,\mathfrak{b}}(f,\nu)$ , $N \geq 1, \nu \in [0,1])$ verifies
 \begin{equation}
     \label{eq:domination-stochastique-hatphi-hatphib}
    \left( w_N(f,\nu) - w_{N,\mathfrak{b}}(f,\nu) \right) =  O_{\prec}\left( \frac{1}{\sqrt{BN}} \right). \\
 \end{equation}
 \end{proposition}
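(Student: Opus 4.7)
The plan is to apply Gaussian concentration to the difference $w_N(f,\nu) - w_{N,\mathfrak{b}}(f,\nu)$ viewed as a functional of the underlying innovation sequences, exploiting that this difference is automatically centered (both $w_N$ and $w_{N,\mathfrak{b}}$ have zero mean by construction) and that the two statistics depend on a common Gaussian input in nearly the same way.

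First, using the Helffer--Sjöstrand formula (\ref{eq:definition-helffer-sjostrand}) together with the definitions (\ref{eq:expre-wN}) and (\ref{eq:expre-wNb}), I would rewrite
\[
w_N(f,\nu) - w_{N,\mathfrak{b}}(f,\nu) = \frac{1}{\pi}\mathrm{Re}\int_{\Dcal}\bar{\partial}\Phi_k(f)(z)\,\bigl(G_N(z,\nu)\bigr)^{\circ}\,\mathrm{d}x\,\mathrm{d}y,
\]
where
\[
G_N(z,\nu) = \frac{1}{M}\Tr\bigl(\Q_N(z)\D_{\x(\nu)} - \Q_{N,\mathfrak{b}}(z)\D_{\x_{\mathfrak{b}}(\nu)}\bigr) + \frac{z}{M}\Tr\bigl(\Q_N^2(z)(\D_{\x(\nu)}-\I) - \Q_{N,\mathfrak{b}}^2(z)(\D_{\x_{\mathfrak{b}}(\nu)}-\I)\bigr).
\]
Parametrizing everything in terms of the jointly Gaussian vectors $((\x_{m,\mathfrak{b}}(\nu),\x_{m,r}(\nu)))_{m=1,\dots,M}$ introduced in (\ref{eq:representation-omegam-bartlett}), the matrix $\X_{\mathfrak{b}}$ is an explicit linear function of these inputs, while $\X$ can be recovered via the identity $\X + \Gammabs = \X_{\mathfrak{b}} + \Gammabs_{\mathfrak{b}} + \Gammabs^1_r + \Gammabs^2_r$, whose error matrices satisfy the bounds in (\ref{eq:domination-stochastique-Gammab-Gamma1r}) and (\ref{eq:domination-stochastique-Gamma2r}).

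Next, I would apply Lemma \ref{le:concentration-integrale-helffer-sjostrand} to $q_N(z) = G_N(z,\nu)$, working on the high-probability event $A_{N,\delta}(\nu)$ defined as in (\ref{eq:def-event-A}) but augmented with $\|\Gammabs^1_r\|/\sqrt{B+1}\le N^\delta B/N$ and $\|\Gammabs^2_r\|/\sqrt{B+1}\le N^\delta(B/N)^{1/2}$, so that $\|\Deltabs\|,\|\Deltabs_{\mathfrak{b}}\|$ and $\|\hat{\D}-\D\|,\|\hat{\D}_{\mathfrak{b}}-\D\|$ are all under control. The bounded-moment condition on $q_N$ follows routinely from $\|\Q\|\le 1/\Im z$ and $\|\D_{\x}\|\prec 1$. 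The crucial estimate is
\[
\|\nabla G_N(z,\nu)\|^2 \;\le\; \frac{C(z)}{BN},
\]
which, once established, yields via (\ref{eq:stochastic-domination-integrale-helffer-sjostrand}) that $(w_N - w_{N,\mathfrak{b}})^\circ \prec 1/\sqrt{BN}$, and the conclusion follows since the quantity is already centered.

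The gradient bound is the main obstacle and the heart of the argument. Differentiating with respect to a Gaussian entry produces two contributions: derivatives of the resolvents, handled by the identity $\partial_{x}\Q = -\Q(\partial_x(\X\X^*/(B+1)))\Q$, and derivatives of the diagonal matrices $\D_{\x}$, $\D_{\x_{\mathfrak{b}}}$. In each case, I would write the derivative of the ``$\Q_N,\D_{\x}$'' term minus that of the ``$\Q_{N,\mathfrak{b}},\D_{\x_{\mathfrak{b}}}$'' term as a commutator-like expression whose amplitude is controlled by the norm of $\X-\X_{\mathfrak{b}}$ and of $\D_{\x}-\D_{\x_{\mathfrak{b}}}$. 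Using $\|\X-\X_{\mathfrak{b}}\|/\sqrt{B+1}\prec (B/N)^{1/2}$ (the dominant contribution coming from $\Gammabs^2_r$) together with $\|\D_{\x}-\D_{\x_{\mathfrak{b}}}\|\prec 1/\sqrt{B}$, Schwartz' inequality and the trace-versus-Frobenius gain from the normalization $1/M$ yield an additional factor that converts the ``naive'' gradient size $\|\nabla F_N\|^2\sim C(z)/B$ into $\|\nabla(F_N-F_{N,\mathfrak{b}})\|^2\le C(z)\cdot(B/N)/B = C(z)/N$; combined with the $1/\sqrt{B}$ arising from the trace normalization, this gives the required $C(z)/(BN)$. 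A careful bookkeeping — analogous to the Step~1/Step~2/Step~3 decomposition in the proof of Theorem~\ref{th:simplification-theta1b} — is needed to verify that the potentially dominant linear contributions in the gradient do cancel between the $\X$ and $\X_{\mathfrak{b}}$ sides, leaving only quadratic-in-error residuals of the desired size.
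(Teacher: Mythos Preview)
Your overall strategy—applying Lemma \ref{le:concentration-integrale-helffer-sjostrand} directly to the full difference $G_N$—is different from what the paper does. The paper first splits
\[
\frac{1}{M}\Tr\bigl(\Q_N\D_{\x}-\Q_{N,\mathfrak{b}}\D_{\x_{\mathfrak{b}}}\bigr)
=\frac{1}{M}\Tr\Q_N(\D_{\x}-\D_{\x_{\mathfrak{b}}})
+\frac{1}{M}\Tr(\Q_N-\Q_{N,\mathfrak{b}})\D_{\x_{\mathfrak{b}}},
\]
handles the first piece via the fine pointwise control of $(\D_{\x}-\D_{\x_{\mathfrak{b}}})_{m,m}$ given by Lemma \ref{le:hats-hatsb}, and further decomposes the second piece through $\tilde{\Q}_N$ and $\tilde{\Q}_{N,\mathfrak{b}}$ before invoking concentration only on $\frac{1}{M}\Tr(\tilde{\Q}_N-\tilde{\Q}_{N,\mathfrak{b}})$ and on $\frac{1}{M}\Tr(\tilde{\Q}_N-\tilde{\Q}_{N,\mathfrak{b}})(\D_{\x_{\mathfrak{b}}}-\I)$ (the latter via the truncation device $\D_{\epsilon,g}$). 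Your one-shot concentration approach could in principle succeed, but as written the crucial gradient estimate is not substantiated.

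Specifically, the heuristic you give for $\|\nabla G_N\|^2\le C(z)/(BN)$ does not add up. The naive gradient squared of each individual term (e.g.\ $\frac{1}{M}\Tr\Q_N\D_{\x}$, after restricting to the event and using $\|\X\|_F^2\le CM(B+1)$) is of order $C(z)/B^2$, not $C(z)/B$; the gain from passing to the difference comes from the fact that every resulting commutator carries a factor controlled by $\|\X-\X_{\mathfrak{b}}\|/\sqrt{B+1}\prec(B/N)^{1/2}$ or $\|\D_{\x}-\D_{\x_{\mathfrak{b}}}\|\prec(B/N)^{1/2}$, which \emph{squared} contributes $B/N$ and already yields $C(z)/(BN)$. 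There is no additional ``$1/\sqrt{B}$ from the trace normalization'' to invoke—if you had truly established $\|\nabla G_N\|^2\le C(z)/N$ then concentration would only give $\Ocal_\prec(1/\sqrt{N})$, which is too weak. You also need to enlarge the event to include $\|\X_r/\sqrt{B+1}\|\le 3$ (as in (\ref{eq:def-event-proof-domination-stochastique-zeta-tildeQ-tildeQb})), and the careful bookkeeping you allude to must confirm that the contributions coming from $\partial\D_{\x}$ and $\partial\D_{\x_{\mathfrak{b}}}$ (which are individually only bounded, not small, on the spectral-norm event) really do cancel to leave a remainder of the right size; the paper circumvents this last issue by the $\D_{\epsilon,g}$ truncation, which you do not mention.
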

\begin{proof}
In order to simplify the exposition, we just establish that
\begin{align}
    \label{eq:equivalent-theorem-hatphi-hatphib-bis}
\frac{1}{\pi} \mathrm{Re} \int_{\Dcal} \bar{\partial}  \Phi_k(f)(z) \, \left( \frac{1}{M} \Tr \left(\Q_{N}(z,\nu) \D_{\x(\nu)} - \Q_{N,\mathfrak{b}}(z,\nu) \D_{\x_{\mathfrak{b}}(\nu)} \right) \right)^{\circ} \diff x \diff y  = & \\
\notag 
\Ocal_{\prec}\left( \frac{1}{\sqrt{BN}}\right).  
\end{align}
because the contribution of the term \\ $\frac{1}{M} \Tr \left(\Q^{2}_{N}(z,\nu) (\D_{\x(\nu)} - \I) - \Q^{2}_{N,\mathfrak{b}}(z,\nu) (\D_{\x_{\mathfrak{b}}(\nu)} -\I) \right)^{\circ}$ can be addressed similarly. For this, we express $\frac{1}{M} \Tr \left(  \Q_{N}(z,\nu) \D_{\x(\nu)} - \Q_{N,\mathfrak{b}}(z,\nu) \D_{\x_{\mathfrak{b}}(\nu)} \right)$ as 
\begin{align}
\label{eq:expre-QD-QbDb}
 \frac{1}{M} \Tr \left( \Q_{N}(z) \D_{\x}  - \Q_{N,\mathfrak{b}}(z)  \D_{\x_{\mathfrak{b}}} \right) = & 
 \frac{1}{M} \Tr \left( \Q_{N}(z) (\D_{\x} -  \D_{\x_{\mathfrak{b}}}) \right) \\ \notag 
 & 
 + \frac{1}{M} \Tr \left(  (\Q_{N}(z)  - \Q_{N,\mathfrak{b}}(z))  \D_{\x_{\mathfrak{b}}} \right),
\end{align}
and study separately the contribution to the left-hand side of (\ref{eq:equivalent-theorem-hatphi-hatphib-bis}) of the two terms at the right-hand side of (\ref{eq:expre-QD-QbDb}). The first term is given  by
\begin{align}
 &\frac{1}{M} \Tr \left( \Q(z) (\D_{\x} -  \D_{\x_{\mathfrak{b}}}) \right) =  
 \notag\\
 &\beta(z) \frac{1}{M} \Tr \left( \D_{\x} -  \D_{\x_{\mathfrak{b}}} \right) + 
 \frac{1}{M} \sum_{m=1}^{M} \Q^{\circ}_{m,m} ( \D_{\x} -  \D_{\x_{\mathfrak{b}}})_{m,m},
 \label{eq:first-term-proof-prop-6-1}
\end{align}
where we recall that $\beta(z) = \mathbb{E}\left( \frac{1}{M} \Tr \Q(z) \right)$. In order to study the contribution of this term to the left-hand side of (\ref{eq:equivalent-theorem-hatphi-hatphib-bis}), we need to evaluate the diagonal elements of $ \D_{\x} -  \D_{\x_{\mathfrak{b}}}$. For this, we prove in the Appendix \ref{sec:proof-lemma-hatD-hatDb} the following useful Lemma. 
\begin{lemma}
\label{le:hats-hatsb}
Matrix $(\hat{\D} - \hat{\D}_{\mathfrak{b}})\D^{-1}$
verifies 
\begin{eqnarray}
\label{eq:hats-hatsb-rond}   
\left(\left((\hat{\D} - \hat{\D}_{\mathfrak{b}})\D^{-1} \right)_{m,m}\right)^{\circ} &  = & \Ocal_{\prec}\left(\frac{1}{\sqrt{NB}}\right), \\
\label{eq:E-hats-hatsb}   
\mathbb{E}\left(\left((\hat{\D} - \hat{\D}_{\mathfrak{b}})\D^{-1} \right)_{m,m}\right) &  = & \Ocal\left(\frac{1}{N}\right), \\
\label{eq:E-hats-hatsb-square}
\mathbb{E}\left( \left( (\hat{\D} - \hat{\D}_{\mathfrak{b}})\D^{-1} \right)_{m,m}^{2} \right)  &  = & \Ocal\left(\frac{1}{NB}\right),
\end{eqnarray}
and 
\begin{eqnarray}
\label{eq:tr-hatD-hatDb-rond} 
\frac{1}{M} \Tr \left((\hat{\D} - \hat{\D}_{\mathfrak{b}})\D^{-1}\right)^{\circ}  & =  & \Ocal_{\prec}\left(\frac{1}{B\sqrt{N}}\right), \\
\label{eq:tr-hatD-hatDb}
\mathbb{E} \left( \frac{1}{M} \left((\hat{\D} - \hat{\D}_{\mathfrak{b}})\D^{-1} \right) \right) &  = & \Ocal\left(\frac{1}{N}\right).
\end{eqnarray}
\end{lemma}
Decomposition (\ref{eq:decomposition-Dx-I}) and its analog 
\begin{equation}
\label{eq:decomposition-hatD-D-over-D-analog}
\left((\hat{\D} - \D)\D^{-1}\right) = \D_{\x} - \I + \D_2 + \D_3,
\end{equation}
where $\D_2$ and $\D_3$ are defined in the same way that $\D_{2,\mathfrak{b}}$ and $\D_{3,\mathfrak{b}}$ respectively, 
imply that 
$$
 \frac{1}{M} \Tr \left( \D_{\x} -  \D_{\x_{\mathfrak{b}}} \right)^{\circ} =  \frac{1}{M} \Tr \left((\hat{\D} - \hat{\D}_{\mathfrak{b}})\D^{-1}\right)^{\circ} - \frac{1}{M} \Tr (\D_{2} - \D_{2,\mathfrak{b}}) ,
$$
because matrices $\D_{2}$ and $\D_{2,\mathfrak{b}}$ are zero mean and matrices $\D_{3}$ and $\D_{3,\mathfrak{b}}$ are deterministic. The Hanson-Wright inequality leads immediately to $\frac{1}{M} \Tr \D_{2} = \Ocal_{\prec}\left(\frac{1}{N}\right)$ and $\frac{1}{M} \Tr \D_{2,\mathfrak{b}} = \Ocal_{\prec}\left(\frac{1}{N}\right)$. Therefore, (\ref{eq:tr-hatD-hatDb-rond}) implies that 
$$ \frac{1}{M} \Tr \left( \D_{\x} -  \D_{\x_{\mathfrak{b}}} \right)^{\circ} =
\Ocal_{\prec}\left( \frac{1}{N}\right) = o_{\prec}\left(\frac{1}{\sqrt{NB}}\right).$$
Here, we have used that $\alpha > \frac{1}{2}$ implies that $\frac{1}{B \sqrt{N}} = o\left( \frac{1}{N} \right)$.
The contribution of $\beta(z) \frac{1}{M} \Tr  ( \D_{\x} -  \D_{\x_{\mathfrak{b}}})$
to the left-hand side of (\ref{eq:equivalent-theorem-hatphi-hatphib-bis}) is thus
a $o_{\prec}\left(\frac{1}{\sqrt{NB}}\right)$ term. We now prove that  
\begin{equation}
\label{eq:E-tr-Qrond-Dx-Dxb}
\mathbb{E}\left(  \frac{1}{M} \sum_{m=1}^{M} \Q^{\circ}_{m,m} ( \D_{\x} -  \D_{\x_{\mathfrak{b}}})_{m,m} \right) = \Ocal_z\left( \frac{1}{B \sqrt{N}} +  \frac{1}{N} \right) = \Ocal_z\left( \frac{1}{N}\right),
\end{equation}
and 
\begin{align}
&\frac{1}{\pi} \mathrm{Re} \int_{\Dcal} \bar{\partial}  \Phi_k(f)(z) \, \left( \frac{1}{M} \sum_{m=1}^{M} \Q^{\circ}_{m,m} ( \D_{\x} -  \D_{\x_{\mathfrak{b}}})_{m,m} \right) \, \diff x \diff y 
\notag\\
&\qquad =  \Ocal_{\prec} \left( \frac{1}{B \sqrt{N}} +  \frac{1}{N} + \frac{B^{7/2}}{N^{4}} \right) 
\notag\\ 
&\qquad 
= \Ocal_{\prec}\left( \frac{1}{N} +  \frac{B^{7/2}}{N^{4}} \right),
\label{eq:tr-Qrond-Dx-Dxb}
\end{align}
which, in turn, will imply that 
\begin{align}
&\frac{1}{\pi} \mathrm{Re} \int_{\Dcal} \bar{\partial}  \Phi_k(f)(z) \, \left( \frac{1}{M} \sum_{m=1}^{M} \Q^{\circ}_{m,m} ( \D_{\x} -  \D_{\x_{\mathfrak{b}}})_{m,m} \right)^{\circ} \, \diff x \diff y 
\notag\\
& \qquad=  \Ocal_{\prec}\left( \frac{1}{N} +  \frac{B^{7/2}}{N^{4}} \right) 
\notag\\ 
& \qquad = o_{\prec}\left( \frac{1}{\sqrt{BN}}\right).
\label{eq:tr-Qrond-Dx-Db-rond}
\end{align}
Decomposition (\ref{eq:decomposition-Dx-I}) and its analog (\ref{eq:decomposition-hatD-D-over-D-analog})
also imply that 
\begin{equation}
    \label{eq:expre-Dx-Dxb}
\left(\D_{\x} -  \D_{\x_{\mathfrak{b}}}\right)_{m,m} = \left((\hat{\D} - \hat{\D}_{\mathfrak{b}})\D^{-1} \right)_{m,m} - 
\left( \D_{2} - \D_{2,\mathfrak{b}} \right)_{m,m} - \left( \D_{3} - \D_{3,\mathfrak{b}} \right)_{m,m}.
\end{equation}
As $\D_{3}$ and $\D_{3,\mathfrak{b}}$ are deterministic, that (\ref{eq:E-hats-hatsb-square}) holds and that
$\mathbb{E}|(\D_{2})_{m,m}|^{2}$ and $\mathbb{E}|(\D_{2,\mathfrak{b}})_{m,m}|^{2}$
are both $\Ocal\left( \frac{B}{N^{2}}\right) = o\left( \frac{1}{N}\right)$ terms (see 
Eq. (\ref{eq:D2-carre-mm})), 
the evaluation $\mathbb{E}|\Q^{\circ}_{m,m}|^{2} =  \Ocal_z(B^{-1})$ (easily obtained using the Nash-Poincaré inequality (\ref{eq:nash-poincare-iid})) and the Schwartz inequality lead to (\ref{eq:E-tr-Qrond-Dx-Dxb}). In order to justify (\ref{eq:tr-Qrond-Dx-Dxb}),
we still use the decomposition (\ref{eq:expre-Dx-Dxb}). 
  (\ref{eq:expre-Trace-Phim}) and (\ref{eq:expre-Trace-Phimb}) imply that $\left( \D_{3} - \D_{3,\mathfrak{b}} \right)_{m,m} = \Ocal\left( \frac{B^{4}}{N^{4}} + \frac{1}{N} \right)$
  while $(\D_{2})_{m,m}$ and $(\D_{2,\mathfrak{b}})_{m,m}$ are $\Ocal_{\prec}\left( \frac{\sqrt{B}}{N}\right)$ terms (see (\ref{eq:domination-D2b}) which is of course also verified my matrix $\D_2$). 
Therefore, (\ref{eq:hats-hatsb-rond}) leads to
  $$
\left(\D_{\x} -  \D_{\x_{\mathfrak{b}}}\right)_{m,m} = 
\Ocal_{\prec}\left( \frac{\sqrt{B}}{N} + \frac{1}{\sqrt{BN}} +  \frac{B^{4}}{N^{4}} + \frac{1}{N} \right).
  $$
Moreover, (\ref{eq:proof-behaviour-omegam}) in Appendix (\ref{subsec:proof-step2}) implies that 
$$
\frac{1}{\pi} \mathrm{Re} \int_{\Dcal} \bar{\partial}  \Phi_k(f)(z) \, \Q^{\circ}_{m,m}(z) 
\, \diff x \diff y = \Ocal_{\prec}\left( \frac{1}{\sqrt{B}}\right).
$$
Therefore, the use of Property \ref{pr:properties-stochastic-domination}, item (i), and of Lemma \ref{le:domination-moyenne} allows to obtain (\ref{eq:tr-Qrond-Dx-Dxb}), and therefore (\ref{eq:tr-Qrond-Dx-Db-rond}).

We now study the contribution of the second term of the right-hand side of (\ref{eq:expre-QD-QbDb}) to (\ref{eq:equivalent-theorem-hatphi-hatphib-bis}), i.e. 
\begin{equation}
\label{eq:contribution-second-term-QD-QbDb}
\frac{1}{\pi} \mathrm{Re} \int_{\Dcal} \bar{\partial}  \Phi_k(f)(z) \, \left( \frac{1}{M} \Tr \left(\Q_{N}(z,\nu)  - \Q_{N,\mathfrak{b}}(z,\nu)\right)  \D_{\x_{\mathfrak{b}}(\nu)} \right)^{\circ} \diff x \diff y .
\end{equation}
For this, we remark that $\frac{1}{M} \Tr \left(\Q_{N}(z)  - \Q_{N,\mathfrak{b}}(z)\right)  \D_{\x_{\mathfrak{b}}}$
can be seen as a function of $\X_N, \X_N^{*}$ and $\X_{N,\mathfrak{b}}, \X_{N,\mathfrak{b}}^{*}$. $\X_N$ is moreover a function of 
$\X_{N,\mathfrak{b}}$ and $\X_{N,r}$ because, by (\ref{eq:representation-omegam}) and (\ref{eq:representation-omegam-bartlett}), the equality 
$$
\x_m = (\x_{m,\mathfrak{b}}, \x_{m,r}) \left( \begin{array}{c} I + \Psibs_{m,\mathfrak{b}} + \Psibs^{1}_{m,r} \\ \Psibs_{m,r}^{2} \end{array} \right) \left( \I + \Psibs_m \right)^{-1},
$$
holds for each $m$. Therefore, $\frac{1}{M} \Tr \left(\Q_{N}(z)  - \Q_{N,\mathfrak{b}}(z)\right)  \D_{\x_{\mathfrak{b}}}$
is a function of $\tilde{\X}_N = (\X_{N,\mathfrak{b}}, \X_{N,r})$ and $\tilde{\X}_N^{*}$. As $\tilde{\X}_N$ is i.i.d. 
with $\Ncal_c(0,1)$ entries, it seems reasonable to apply Lemma \ref{le:concentration-integrale-helffer-sjostrand}. In order to simplify the following calculations, it is more appropriate to rewrite  $\frac{1}{M} \Tr \left(\Q_{N}(z)  - \Q_{N,\mathfrak{b}}(z)\right)  \D_{\x_{\mathfrak{b}}}$ as 
\begin{align}
&\frac{1}{M} \Tr \Bigl(\Q_{N}(z) - \tilde{\Q}_N(z) + \tilde{\Q}_N(z) - \tilde{\Q}_{N,\mathfrak{b}}(z) 
\notag\\
&\qquad\qquad+ \tilde{\Q}_{N,\mathfrak{b}}(z) - \Q_{N,\mathfrak{b}}(z) \Bigr)  \left( \D_{\x_{\mathfrak{b}}} - \I + \I \right).
\end{align}
We first claim that the contribution of $\frac{1}{M} \Tr \left((\Q_{N}(z) - \tilde{\Q}_N(z))(\D_{\x_{\mathfrak{b}}} - \I)\right)$ and $\frac{1}{M} \Tr \left((\Q_{N,\mathfrak{b}}(z) - \tilde{\Q}_{N,\mathfrak{b}}(z))(\D_{\x_{\mathfrak{b}}} - \I)\right)$ to (\ref{eq:contribution-second-term-QD-QbDb}) are $\Ocal_{\prec}\left( \frac{1}{N} \right)$ terms. To check this, it is sufficient to adapt the approach developed to establish
(\ref{eq:evaluation-tilde-delta11}) in Appendix \ref{subsec:proof-step3}. The same result holds for the terms $\frac{1}{M} \Tr \left(\Q_{N}(z) - \tilde{\Q}_N(z)\right) $ and 
$\frac{1}{M} \Tr \left(\tilde{\Q}_{N,\mathfrak{b}}(z) - \Q_{N,\mathfrak{b}}(z) \right)$ that were addressed 
in Proposition \ref{prop:behaviour-theta2b-f}. It thus remains to consider the terms
$\frac{1}{M} \Tr \left( \tilde{\Q}_N(z) - \tilde{\Q}_{N,\mathfrak{b}}(z) \right)$ and \\
$\frac{1}{M} \Tr \left( (\tilde{\Q}_N(z) - \tilde{\Q}_{N,\mathfrak{b}}(z)) (\D_{\x_{\mathfrak{b}}} - \I)  \right)$. 
The former term is briefly evaluated in the Appendix  \ref{sec:proof-domination-stochastique-zeta-tildeQ-tildeQb} where it is proved using Lemma \ref{le:concentration-integrale-helffer-sjostrand} that $\omega_N(f,\nu)$ defined by 
\begin{equation}
    \label{eq:def-zeta-tildeQ-tildeQb}
\omega_N(f,\nu) = \frac{1}{\pi} \mathrm{Re} \int_{\Dcal} \bar{\partial}  \Phi_k(f)(z) \,  \frac{1}{M} \Tr \left( \tilde{\Q}_N(z,\nu) - \tilde{\Q}_{N,\mathfrak{b}}(z,\nu) \right)^{\circ} \diff x \diff y,   
\end{equation}
verifies 
\begin{equation}
    \label{eq:domination-stochastique-zeta-tildeQ-tildeQb}
    \omega_N(f,\nu) = \Ocal_{\prec}\left( \frac{1}{\sqrt{BN}}\right).
\end{equation}
The term $\frac{1}{M} \Tr \left( (\tilde{\Q}_N(z) - \tilde{\Q}_{N,\mathfrak{b}}(z)) (\D_{\x_{\mathfrak{b}}} - \I)  \right)$ is evaluated similarly, except that, as in the context of the proof of  (\ref{eq:stochastic-domination-tr-D-I-square}) in Appendix \ref{subsec:proof-step2},  we use the trick introduced in the proof of
Lemma 7 in \cite{loubaton-rosuel-ejs-2021}, and replace matrix $\D_{\x_{\mathfrak{b}}} - \I$ 
by matrix $\D_{\epsilon,g}$ defined by (\ref{eq:def-Depsilong}). The application of Lemma \ref{le:concentration-integrale-helffer-sjostrand} to matrix $\tilde{\X}_N$ leads to 
$$
\frac{1}{\pi} \mathrm{Re} \int_{\Dcal} \bar{\partial}  \Phi_k(f)(z) \,  \frac{1}{M} \Tr \left( (\tilde{\Q}_N(z,\nu) - \tilde{\Q}_{N,\mathfrak{b}}(z,\nu)) \D_{\epsilon,g} \right)^{\circ} \diff x \diff y = 
\Ocal_{\prec}\left( \frac{B^{\epsilon}}{\sqrt{BN}}\right),
$$
for each $\epsilon$, which, in turn, implies that 
\begin{align*}
&\frac{1}{\pi} \mathrm{Re} \int_{\Dcal} \bar{\partial}  \Phi_k(f)(z) \,  \frac{1}{M} \Tr \left( (\tilde{\Q}_N(z,\nu) - \tilde{\Q}_{N,\mathfrak{b}}(z,\nu))  (\D_{\x_{\mathfrak{b}}} - \I) \right)^{\circ} \diff x \diff y 
\\
&\qquad= 
\Ocal_{\prec}\left( \frac{1}{\sqrt{BN}}\right).
\end{align*}
This completes the proof of Proposition \ref{th:hatphi-hatphib}. 
\end{proof}

\subsection{\texorpdfstring{Study of $\zeta_{N,1}(f)$ and  $\zeta_{N,2}(f)$}{Study of zeta}}
\label{subsec:study-zeta}
We finally study the statistics  $\zeta_{N,1}(f)$ and  $\zeta_{N,2}(f)$  defined by
(\ref{eq:def-statistics-zeta1-clt}) and (\ref{eq:def-statistics-zeta-clt}) when $\alpha < \frac{7}{9}$.
\subsubsection{Proof of (\ref{eq:clt-zeta1})}
We start from (\ref{eq:representation-theta-bartlett}) and
recall that  $\kappa_{N,\mathfrak{b}}(f,\nu) = \Ocal_{\prec}\left( \frac{1}{\sqrt{NB}}\right)$.  Lemma \ref{le:domination-moyenne} implies that 
\begin{equation}
    \label{eq:contribution-deltaN-to-zetaN}
    \frac{1}{\sqrt{K'}} \sum_{\nu \in \Gcal^{'}_N} B \kappa_{N,\mathfrak{b}}(f,\nu) = \Ocal_{\prec}\left( \left( \frac{K' B}{N}\right)^{1/2}\right) = \Ocal_{\prec}(N^{-\delta/2}) = o_P(1),
\end{equation}
(we recall that $K'$ verifies (\ref{eq:def-Bprime-Kprime})). Therefore, in order to prove the CLT (\ref{eq:clt-zeta1}), it is sufficient to check that 
$$
\zeta_{N,1,w}(f) = \frac{1}{\sqrt{K'}} \sum_{\nu \in \Gcal^{'}_N}  B w_{N,\mathfrak{b}}(f,\nu),
$$
verifies 
\begin{equation}
\label{eq:clt-zeta1-w}
\frac{\zeta_{N,1,w}(f)}{\sigma_N(f)} \rightarrow_{\Dcal} \Ncal(0,1).
\end{equation}
The random variables $(B w_{N,\mathfrak{b}} (f,\nu))_{\nu \in \Gcal^{'}_N}$ are i.i.d. The standard CLT thus leads to 
$$
\frac{\zeta_{N,1,w}(f)}{\left(\mathbb{E}((B w_{N,\mathfrak{b}})^{2})\right)^{1/2}} \rightarrow_{\Dcal} \Ncal(0,1),
$$
while (\ref{eq:convergence-EWN2}) implies that $\frac{\left(\mathbb{E}((B w_{N,\mathfrak{b}})^{2})\right)^{1/2}}{\sigma_N(f)} = 1 + \Ocal(\frac{1}{B})$. This, in turn,
justifies (\ref{eq:clt-zeta1}).

\subsubsection{Proof of (\ref{eq:clt-zeta})}
 (\ref{eq:representation-theta-bartlett}) implies that
$\left(B \theta_N(f,\nu)\right)^{2} = \left(B w_{N,\mathfrak{b}} (f,\nu)\right)^{2} + \delta_{N}(f,\nu)$ where $\delta_N(f,\nu)$ is given by 
$$
\delta_N(f,\nu) = \left(B \kappa_{N,\mathfrak{b}}(f,\nu)\right)^{2} + 2 \, B \kappa_{N,\mathfrak{b}}(f,\nu) \, B w_{N,\mathfrak{b}} (f,\nu).
$$
Replacing $\D_{\x_{\mathfrak{b}}} - \I$ by the diagonal matrix $\D_{\epsilon,g}$ 
defined by (\ref{eq:def-Depsilong}), and using Lemma 
\ref{le:concentration-integrale-helffer-sjostrand}, it is easy to check that $w_{N,\mathfrak{b}} (f,\nu) =   \Ocal_{\prec}(B^{-1})$. 
As $\kappa_{N,\mathfrak{b}}(f,\nu) = \Ocal_{\prec}\left( \frac{1}{\sqrt{NB}}\right)$, we obtain that 
$\delta_N(f,\nu) = \Ocal_{\prec}\left(\left( \frac{B}{N} \right)^{1/2}\right)$. Lemma \ref{le:domination-moyenne} implies that 
\begin{equation}
    \label{eq:contribution-deltaN-to-zetaN-bis}
    \frac{1}{\sqrt{K'}} \sum_{\nu \in \Gcal^{'}_N} \delta_N(f,\nu) = \Ocal_{\prec}\left( \left( \frac{K' B}{N}\right)^{1/2}\right) = \Ocal_{\prec}(N^{-\delta/2}) = o_P(1).
\end{equation}
Therefore, in order to prove the CLT (\ref{eq:clt-zeta}), it is sufficient to check that 
$$
\zeta_{N,2,w}(f) = \frac{1}{\sqrt{K'}} \sum_{\nu \in \Gcal^{'}_N}  \left( \left(B w_{N,\mathfrak{b}}(f,\nu)\right)^{2} - \sigma^{2}_N(f)\right),
$$
verifies 
\begin{equation}
\label{eq:clt-zeta-w}
\frac{\zeta_{N,2,w}(f)}{\sqrt{2} \sigma^{2}_N(f)} \rightarrow_{\Dcal} \Ncal(0,1).
\end{equation}
(\ref{eq:convergence-EWN2}) implies that
\begin{align*}
 \zeta_{N,2,w}(f)  & =  \frac{1}{\sqrt{K'}} \sum_{\nu \in \Gcal^{'}_N}  \left( \left(B w_{N,\mathfrak{b}}(f,\nu)\right)^{2} - \sigma^{2}_N(f)\right)^{\circ} + \Ocal\left( \frac{\sqrt{K'}}{B} \right) \\
  & = \frac{1}{\sqrt{K'}} \sum_{\nu \in \Gcal^{'}_N}  \left( \left(B w_{N,\mathfrak{b}}(f,\nu)\right)^{2} - \sigma^{2}_N(f)\right)^{\circ} + o_(1),
\end{align*}
because $\frac{\sqrt{K'}}{B} = \Ocal\left( \left( \frac{N^{1-\delta}}{B^{3}} \right)^{1/2} \right) = o(1)$ for $\alpha \geq \frac{1}{3}$. 
(\ref{eq:convergence-EWN2}) and (\ref{eq:convergence-EWN4}) lead to 
$$
\mathbb{E}\left[\left( \left(B w_{N,\mathfrak{b}}(f,\nu)\right)^{2} - \sigma^{2}_N(f)\right)^{\circ}\right]^{2} = 2 \sigma_N^{4}(f) + 
O(B^{-1}).
$$
Therefore, (\ref{eq:clt-zeta-w}) is an immediate consequence of the standard CLT on the empirical mean 
of the zero mean i.i.d. random variables \\ $\left(\left( \left(B w_{N,\mathfrak{b}}(f,\nu)\right)^{2} - \sigma^{2}_N(f)\right)^{\circ}\right)_{\nu \in \Gcal^{'}_N}$. 
\begin{remark}
\label{re:chi2-approximation}    
(\ref{eq:clt-zeta-w}) implies that if $\chi^{2}(K')$ represents a $\chi^{2}$ random 
variable with $K'$ degrees of freedom, then, for each $a$
\begin{equation}
    \label{eq:approximation-chi2-zeta2}
    P\left( \frac{\zeta_{N,2}(f)}{\sqrt{2} \sigma^{2}_N(f)} > a \right) - 
    P\left( \frac{\chi^{2}(K') - K'}{\sqrt{2K'}} > a \right) \rightarrow 0,
\end{equation}
when $N \rightarrow +\infty$. To justify (\ref{eq:approximation-chi2-zeta2}), it is sufficient to remark that the standard CLT implies that 
$\frac{\chi^{2}(K') - K'}{\sqrt{2K'}}$ converges in distribution towards 
a standard Gaussian random variable. Therefore, (\ref{eq:clt-zeta}) leads to 
the conclusion that the 2 terms at the left-hand side of (\ref{eq:approximation-chi2-zeta2}) converge towards the same limit, which, of course, implies 
that (\ref{eq:approximation-chi2-zeta2}) holds. In other words, in distribution, 
$\sum_{k \in \Gcal_N^{'}} \left( \theta_N(f)\right)^{2}$ can be approximated by  
$\sigma^{2}_N(f) \chi^{2}(K')$. We will see in Section \ref{sec:simulations} devoted to the numerical simulations that this $\chi^{2}$ approximation may allow to predict more accurately than the Gaussian approximation based on (\ref{eq:clt-zeta-w}) the type I error of the test consisting in comparing $\frac{\zeta_{N,2}(f)}{\sqrt{2} \sigma^{2}_N(f)}$ to a threshold. 
\end{remark}

\begin{remark}
\label{re:alpha-larger-7-9}
If $\alpha \in (\frac{7}{9}, \frac{4}{5})$,  (\ref{eq:clt-zeta1}) and (\ref{eq:clt-zeta}) could be generalized
if the $\Ocal\left( \frac{B^{4}}{N^{4}}\right)$ term of $\mathbb{E}(\theta_N(f,\nu))$ were evaluated in closed 
form and substracted from $\theta_N(f,\nu)$. In this case, the recentered statistic 
would still have a representation (\ref{eq:representation-theta-bartlett}) in which the error term would be obtained by substracting the above mentioned $\Ocal\left( \frac{B^{4}}{N^{4}}\right)$ term from 
$\kappa_{N,\mathfrak{b}}$ given by (\ref{eq:expre-kappab-clt}). It is reasonable to conjecture that the new error term would appear to be a 
$\Ocal\left( \frac{B^{6}}{N^{6}}\right) + \Ocal_{\prec}\left( \frac{1}{\sqrt{BN}}\right)$ term, or equivalently, $\mathbb{E}(\theta_N(f,\nu))$ does not contain $\Ocal\left( \frac{B^{5}}{N^{5}}\right)$ term. This behaviour is sustained by the observation that the $\Ocal\left( \frac{B^{3}}{N^{3}}\right)$ term of 
$\mathbb{E}(\theta_N(f,\nu))$ is reduced to $0$. 
$\alpha < \frac{4}{5}$ implies that $\frac{B^{6}}{N^{6}} = o\left( \frac{1}{\sqrt{BN}}\right)$,  and that the new error term would be a $\Ocal_{\prec}\left( \frac{1}{\sqrt{BN}}\right)$ term. Therefore, the recentered 
statistics obtained from $\theta_N(f,\nu)$ would still have a representation (\ref{eq:representation-theta-bartlett}), and  (\ref{eq:clt-zeta1}) and (\ref{eq:clt-zeta}) could be generalized. As mentioned above, the closed form evaluation of 
the $\Ocal\left( \frac{B^{4}}{N^{4}}\right)$ term of $\mathbb{E}(\theta_N(f,\nu))$ is a tremendous calculation. 
\end{remark}

\begin{remark}
\label{eq:explanation-Gprime}
We remark that if $\zeta_{N,1}(f)$ and  $\zeta_{N,2}(f)$ were built on a combination of the $B \theta_N(f,\nu)$ for 
$\nu \in \mathcal{G}_N$, or equivalently if the parameter $\delta$ in Eq. (\ref{eq:contribution-deltaN-to-zetaN}) 
was equal to $0$, the rough evaluation of the contribution of the error terms
$(\kappa_{N,\mathfrak{b}}(f,\nu))_{\nu \in \Gcal_N}$ and 
$(\delta_N(f,\nu))_{\nu \in \Gcal_N}$ to 
$\zeta_{N,1}(f)$ and $\zeta_{N,2}(f)$ (see Eqs. (\ref{eq:contribution-deltaN-to-zetaN}) and 
(\ref{eq:contribution-deltaN-to-zetaN-bis}))  would imply that they would be $ \Ocal_P(1)$ terms. This explains why we choose to 
combine the $B \theta_N(f,\nu)$ on the smaller frequency grid $\Gcal_N^{'}$. However, intuitively, 
the random variables $(B \kappa_{N,\mathfrak{b}}(f,\nu))_{\nu \in \Gcal_N}$ and
$(\delta_N(f,\nu))_{\nu \in \Gcal_N}$, while not mutually independent, should nearly behave as independent random variables. This is because for each $m$, the renormalized Fourier transform $\xi_{y_m}(\nu)$ of $(y_{m,n})_{n=1, \ldots, N}$,  verifies 
$\mathbb{E}(\xi_{y_m}(\nu_2) \xi_{y_m}(\nu_1)^{*}) = \Ocal(N^{-1})$ if $\nu_2 - \nu_1$ is a non zero integer multiple of $\frac{1}{N}$. Therefore, it is reasonable to expect that the entries of $\hat{\C}(\nu_1)$ and 
$\hat{\C}(\nu_2)$ are nearly independent, in a sense to be defined, for $\nu_1, \nu_2 \in \Gcal_N$, $\nu_1 \neq \nu_2$, and that the same property should hold for
$B \kappa_{N,\mathfrak{b}}(f,\nu_1)$ and $B \kappa_{N,\mathfrak{b}}(f,\nu_2)$ as well as for 
$\delta_N(f,\nu_1)$ and $\delta_N(f,\nu_2)$. It might therefore 
be possible that
$\frac{1}{\sqrt{K}} \sum_{\nu \in \Gcal_N} B \kappa_{N,\mathfrak{b}}(f,\nu) = o_P(1)$
and 
$\frac{1}{\sqrt{K}} \sum_{\nu \in \Gcal_N} \delta_N(f,\nu) = o_P(1)$, even if the order of magnitude of each individual terms $B \kappa_{N,\mathfrak{b}}(f,\nu)$ and $\delta_N(f,\nu)$ are $\Ocal_P(\frac{B^{1/2}}{N^{1/2}})$. To establish such a result, a reasonable approach would consist in showing that
$$\mathbb{E}\left( \frac{1}{\sqrt{K}} \sum_{\nu \in \Gcal_N} B \kappa_{N,\mathfrak{b}}(f,\nu) \right)^{2} = o(1),$$ and 
$$\mathbb{E}\left( \frac{1}{\sqrt{K}} \sum_{\nu \in \Gcal_N} \delta_N(f,\nu) \right)^{2} = o(1).$$ For this, it would be necessary to evaluate the order of magnitude of the two terms
$$\mathbb{E}( B \kappa_{N,\mathfrak{b}}(f,\nu_1)  B \kappa_{N,\mathfrak{b}}(f,\nu_2)), \ \mathbb{E}( \delta_N(f,\nu_1) \delta_N(f,\nu_2)),$$ for $\nu_1, \nu_2 \in \Gcal_N$ using the integration by parts formula. However, this calculation appears tremendous. As the proof of such a result would only avoid the use of $\Gcal_N^{'}$ instead of $\Gcal_N$, we prefer to let this point for a future work. 
\end{remark}

\subsection{\texorpdfstring{Estimation of $r_N(\nu)$}{Estimation of r}}
\label{subsec:estimation-rN}
In practice, the term $r_N(\nu)$, defined by (\ref{def-eq-rN}), is unknown, and has thus to be estimated 
in order to be able to define a test statistic from the available observations. For this, we 
use the estimator $\hat{r}_N(\nu)$ proposed in \cite{loubaton-rosuel-ejs-2021}, and defined by 
\begin{equation}
\label{eq:def-hat-rN}
\hat{r}_N(\nu) = \left( \frac{1}{M} \sum_{m=1}^{M} \frac{\hat{s}^{'}_{m,L}(\nu)}{\hat{s}_{m,L}(\nu)}\right)^{2},
\end{equation}
where $\hat{s}_{m,L}(\nu)$ is the lag-window estimator of $s_m(\nu)$ given by 
\begin{equation}
\label{eq:def-hat-sm}
\hat{s}_{m,L}(\nu) = \sum_{l=-L}^{L} \hat{r}_{m,l} e^{- 2 i \pi l \nu},
\end{equation}
and $ \hat{r}_{m,l} = \frac{1}{N} \sum_{n=1}^{N-l} y_{m,n+l} y_{m,n}^{*}$ for $l \geq 0$ and 
$ \hat{r}_{m,l} =  \hat{r}_{m,-l}^{*}$ for $ l < 0$. $L$ is an integer that has to be chosen in a relevant way, while in formula (\ref{eq:def-hat-rN}), $\hat{s}^{'}_{m,L}(\nu)$ represents the derivative of $\hat{s}_{m,L}(\nu)$ 
w.r.t. $\nu$. We thus replace in practice $\theta_N(f,\nu), \zeta_{N,1}(f)$ and $\zeta_{N,2}(f)$ 
by the statistics $\hat{\theta}_N(f,\nu), \hat{\zeta}_{N,1}(f)$ and $\hat{\zeta}_{N,2}(f)$  obtained  
by replacing $r_N(\nu)$ by $\hat{r}_N(\nu)$. We now verify that, provided the term $\gamma_0$ (supposed to 
verify $\gamma_0 > 4$)  
defined by (\ref{eq:decroissance-rm}) is large enough, the statistics $\hat{\theta}_N(f,\nu), \hat{\zeta}_{N,1}(f)$ and $\hat{\zeta}_{N,2}(f)$ verify the CLT (\ref{eq:CLT-thetaN}) for $\alpha < \frac{4}{5}$, 
and (\ref{eq:clt-zeta1}) and (\ref{eq:clt-zeta}) for $\alpha < \frac{7}{9}$. More precisely, the following result holds. 
\begin{proposition}
\label{prop:clt-hat-rN}
If $L = L(N) = \mathcal{O}\left( N^{\frac{1}{2 \gamma_0 + 1}} \right)$, then, if $\alpha < \frac{4}{5}$, 
$\hat{\theta}_N(f,\nu)$ verifies  (\ref{eq:CLT-thetaN}) if $\gamma_0 > 4$ and 
\begin{equation}
\label{eq:seuil-gamma0-clt-hat-theta}
\gamma_0 > \frac{3 \alpha - 1}{5 - 6 \alpha} \, \mathds{1}_{\alpha \geq 2/3}.
\end{equation}
Moreover, if $\alpha < \frac{7}{9}$, then, $\hat{\zeta}_{N,1}(f)$ and $\hat{\zeta}_{N,2}(f)$ verify
(\ref{eq:clt-zeta1}) and (\ref{eq:clt-zeta}) if $\gamma_0 > 4$ and 
\begin{equation}
\label{eq:seuil-gamma0-clt-hat-zeta}
\gamma_0 > \frac{5 \alpha - 1}{2(4 - 5 \alpha)}  \, \mathds{1}_{\alpha \geq 3/5}.
\end{equation}
\end{proposition}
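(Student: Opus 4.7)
The plan is to reduce Proposition~\ref{prop:clt-hat-rN} to a uniform-in-frequency control of $\hat r_N(\nu)-r_N(\nu)$ and to propagate that control through the three CLTs already established for the non-hat statistics. Write
\[
\hat\theta_N(f,\nu) = \theta_N(f,\nu) - \langle D_N,f\rangle\, v_N\, \bigl(\hat r_N(\nu)-r_N(\nu)\bigr),
\]
and analogously for $\hat\zeta_{N,1}(f),\hat\zeta_{N,2}(f)$, so that the task becomes showing that the additive error terms $\langle D_N,f\rangle\, v_N(\hat r_N(\nu)-r_N(\nu))$ are negligible at the rates dictated by the CLTs~(\ref{eq:CLT-thetaN}), (\ref{eq:clt-zeta1}), (\ref{eq:clt-zeta}).

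The first and most technical step will be a bias/variance analysis of the lag-window estimator. For $\hat s_{m,L}(\nu)$ and its derivative, the mean satisfies
\[
\mathbb{E}\hat s^{(j)}_{m,L}(\nu) - s^{(j)}_m(\nu) = \mathcal{O}\bigl(L^{-\gamma_0+j}\bigr) + \mathcal{O}\bigl(L^{j+1}/N\bigr),\qquad j=0,1,
\]
using (\ref{eq:decroissance-rm}) to truncate the tail and accounting for the $(1-|l|/N)$ factor coming from $\mathbb{E}\hat r_{m,l}$. Gaussian concentration (via Lemma~\ref{le:conditional-concentration} applied to the quadratic form defining $\hat r_{m,l}$) together with (\ref{eq:moyenne-sup-hanson-wright}) for the sup over $m$ and a union bound over a discretization of $[0,1]$ will yield
\[
\sup_{m,\nu}\bigl|\hat s^{(j)}_{m,L}(\nu)-\mathbb{E}\hat s^{(j)}_{m,L}(\nu)\bigr| \prec \sqrt{L^{2j+1}/N},
\]
which is essentially already present in \cite{loubaton-rosuel-ejs-2021}. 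Combining bias and fluctuation at the choice $L=\lfloor N^{1/(2\gamma_0+1)}\rfloor$ balances the two dominant contributions of the derivative and, through $\hat r_N = (\tfrac{1}{M}\sum_m \hat s'_{m,L}/\hat s_{m,L})^2$ and Assumption~(\ref{eq:s-lower-bounded-from-below}), delivers
\[
\sup_{\nu\in[0,1]}|\hat r_N(\nu)-r_N(\nu)| \;\prec\; N^{-(\gamma_0-1)/(2\gamma_0+1)} =: \varepsilon_N.
\]

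The second step is algebraic. Since $v_N=\mathcal{O}(B^2/N^2)$, one has $Bv_N = \mathcal{O}(N^{3\alpha-2})$, so the single-frequency correction $B v_N \langle D_N,f\rangle(\hat r_N-r_N)$ is $\prec N^{3\alpha-2}\varepsilon_N$, which is $o_{\prec}(1)$ exactly when $\tfrac{\gamma_0-1}{2\gamma_0+1}>3\alpha-2$, i.e.\ $\gamma_0>\tfrac{3\alpha-1}{5-6\alpha}$; when $\alpha<2/3$ the factor $Bv_N$ already vanishes and no condition on $\gamma_0$ beyond $\gamma_0>4$ is needed. This gives (\ref{eq:seuil-gamma0-clt-hat-theta}), and Slutsky combined with the CLT (\ref{eq:CLT-thetaN}) transfers to $\hat\theta_N$. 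For $\hat\zeta_{N,1}(f)$ the summed correction is bounded, using the uniform control, by $\sqrt{K'}\,Bv_N\,\varepsilon_N$; since $\sqrt{K'}\sim N^{(1-\alpha-\delta)/2}$ with $\delta$ arbitrarily small, requiring this to be $o(1)$ for $\alpha\geq 3/5$ is equivalent, after elementary manipulation, to $\gamma_0>\tfrac{5\alpha-1}{2(4-5\alpha)}$, which is precisely (\ref{eq:seuil-gamma0-clt-hat-zeta}). For $\hat\zeta_{N,2}(f)$, expand
\[
(B\hat\theta_N)^2 = (B\theta_N)^2 - 2 B\theta_N\cdot Bv_N\langle D_N,f\rangle(\hat r_N-r_N) + \bigl(Bv_N\langle D_N,f\rangle(\hat r_N-r_N)\bigr)^2,
\]
sum over $\Gcal_N'$, divide by $\sqrt{K'}$, and apply Cauchy--Schwarz together with the moment bound $\tfrac1{K'}\sum_\nu(B\theta_N(f,\nu))^2=\mathcal{O}_P(1)$ (which follows from the CLT~(\ref{eq:CLT-thetaN}) and (\ref{eq:convergence-EWN2})); both the cross term and the square term are then controlled by $\sqrt{K'}\,Bv_N\,\varepsilon_N$ up to bounded random factors, giving the same threshold on $\gamma_0$.

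The main obstacle I foresee is not a conceptual one but the fact that, because $\hat r_N(\nu)$ is built from the \emph{same} finite sample for every $\nu$, its errors at two frequencies of $\Gcal_N'$ are strongly correlated. This prevents gaining a $1/\sqrt{K'}$ factor by cancellation in the sum and forces us to use the crude uniform bound $\sup_\nu|\hat r_N-r_N|$, which is what produces the rather restrictive thresholds (\ref{eq:seuil-gamma0-clt-hat-theta})--(\ref{eq:seuil-gamma0-clt-hat-zeta}). Getting the sup in $\nu$ with the right exponent will require combining the concentration of the quadratic forms $\hat r_{m,l}$ with a Lipschitz control in $\nu$ (the derivative $\hat s'_{m,L}(\nu)$ is itself a trigonometric polynomial of degree $L$, so Bernstein's inequality gives an $L$-dependent modulus of continuity) and a union bound over a polynomial grid, a calculation that refines those of \cite{loubaton-rosuel-ejs-2021} but does not introduce qualitatively new difficulties.
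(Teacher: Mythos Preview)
Your proposal is correct and follows the same route as the paper. The paper's proof is shorter only because it does not rederive the accuracy estimate: it simply invokes Proposition~4 of \cite{loubaton-rosuel-ejs-2021} to state $|\hat r_N(\nu)-r_N(\nu)|\prec N^{-(\gamma_0-1)/(2\gamma_0+1)}$, then writes $B\hat\theta_N=B\theta_N+\langle D_N,f\rangle\,Bv_N(\hat r_N-r_N)$ and checks that $Bv_N(\hat r_N-r_N)=o_{\prec}(1)$ under (\ref{eq:seuil-gamma0-clt-hat-theta}); for the $\hat\zeta$ statistics it observes in one line that it suffices to have $\sqrt{NB}\,v_N(\hat r_N-r_N)=o_{\prec}(1)$, which unwinds to (\ref{eq:seuil-gamma0-clt-hat-zeta}). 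Your bias/variance sketch and your Cauchy--Schwarz treatment of the cross term in $(B\hat\theta_N)^2$ are exactly the ingredients behind those two lines, and the ``obstacle'' you flag (correlation of $\hat r_N(\nu)$ across frequencies forcing the uniform bound) is precisely why the paper uses the crude $\sup_\nu$ control; nothing finer is attempted there either.
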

\begin{proof}
We recall that by Proposition 4 in \cite{loubaton-rosuel-ejs-2021}, for $L =  \mathcal{O}\left( N^{\frac{1}{2 \gamma_0 + 1}} \right)$, the estimator $\hat{r}_N(\nu)$ verifies 
\begin{equation}
\label{eq:accuracy-hat-rN}
|\hat{r}_N(\nu) - r_N(\nu)| \prec \frac{1}{N^{\frac{\gamma_0-1}{2 \gamma_0+1}}}.
\end{equation}
In order to check that $\hat{\theta}_N(f,\nu)$ satisfies (\ref{eq:CLT-thetaN}) if $\gamma_0$ 
verifies (\ref{eq:seuil-gamma0-clt-hat-theta}), we express $B \hat{\theta}_N(f,\nu)$ as 
$$
B \hat{\theta}_N(f,\nu) = B \theta_N(f,\nu) + <D_N, f> \, B v_N \, \left( \hat{r}_N(\nu) - r_N(\nu) \right).
$$
Using (\ref{eq:accuracy-hat-rN}), we obtain immmediately that 
$B  v_N \, \left( \hat{r}_N(\nu) - r_N(\nu) \right) = o_{\prec}(1)$ if $\gamma_0$ verifies 
(\ref{eq:seuil-gamma0-clt-hat-theta}). This implies that  $\hat{\theta}_N(f,\nu)$ satisfies (\ref{eq:CLT-thetaN}). 
To justify the CLTs on (\ref{eq:clt-zeta1}) and (\ref{eq:clt-zeta}), it is sufficient to 
remark that $\sqrt{NB} v_N \, \left( \hat{r}_N(\nu) - r_N(\nu) \right) = o_{\prec}(1)$ if $\gamma_0$ verifies 
(\ref{eq:seuil-gamma0-clt-hat-zeta}). 
\end{proof}
We finally remark that (\ref{eq:seuil-gamma0-clt-hat-theta}) (resp. (\ref{eq:seuil-gamma0-clt-hat-zeta})) 
holds whatever $\alpha < \frac{4}{5}$ (resp. $\alpha < \frac{7}{9}$) if $\gamma_0 > 7$ (resp. $\gamma_0 > 13$).

\section{Power analysis}
\label{sec:power}
A thorough study of the power of the proposed tests would need to establish a central limit theorem for the statistics $\zeta_{N,1}(f)$ and $\zeta_{N,2}(f)$ under a large class of alternatives. As this task is far from being obvious, we believe that a deep power analysis of $\zeta_{N,1}(f)$ and $\zeta_{N,2}(f)$ deserves an extra  paper, and is therefore not in the scope of the present contribution. However, we provide in this paragraph a rough analysis suggesting that $\zeta_{N,1}(f)$ and $\zeta_{N,2}(f)$ provide consistent tests under a simple alternative when the fonction $f(\lambda)$ coincides with $f(\lambda) = (\lambda - 1)^{2}$ considered in the section devoted to the numerical experiments. 

We assume that under the alternative $\Hcal_1$, the observation $(\y^{1}_n)_{n \in \mathbb{Z}}$ is given by 
\begin{equation}
    \label{eq:modele-alternative}
\y^{1}_n = \A_N^{*} \y_n    ,
\end{equation}
where $(\y_n)_{n \in \mathbb{Z}}$ is generated as under hypothesis $\Hcal_0$, and where $\A_N = (\a_{1,N}, \ldots, \a_{M,N})$ is a deterministic $M \times M$ matrix verifying 
\begin{equation}
    \label{eq:condition-norme-AN}
     \sup_{N} \| \A_N \| \leq a_{+} < +\infty,
\end{equation}
and 
\begin{equation}
\label{eq:condition-norme-colonnes-AN}
\inf_N \min_{m=1, \ldots, M} \|\a_{m,N}\| \geq a_{-} > 0,
\end{equation}
for each $N$ large enough. \\

In the following, we provide examples of matrices $\A_N$ such that, for each level $\tilde{\alpha}$, 
\begin{equation}
    \label{eq:consistency-zeta1}
\mathbb{P}(|\zeta_{N,1}^1(f)| > q_{1 - \tilde{\alpha}}) \rightarrow 1,
\end{equation}
and 
\begin{equation}
    \label{eq:consistency-zeta2}
\mathbb{P}(|\zeta_{N,2}^1(f)| > q_{1 - \tilde{\alpha}}) \rightarrow 1,
\end{equation}
when $N \rightarrow +\infty$ and when $f(\lambda) = (\lambda - 1)^{2}$. Here, $\zeta_{N,i}^1(f)$ for $i=1,2$ represents the statistics obtained from $\zeta_{N,i}(f)$ by replacing $\y$ by the observation $\y^1$ under the hypothesis $\Hcal_1$. In the following, we just consider property (\ref{eq:consistency-zeta1}) 
because (\ref{eq:consistency-zeta2}) is an immediate consequence of the forthcoming calculations. \\
 
We first define some notations. We denote by $\hat{\S}_{\y^{1}}(\nu)$ and 
$\hat{\C}_{\y^1}(\nu)$ the estimators of the spectral density $\S_{\y^{1}}(\nu)$
and of the spectral coherence matrix $\C_{\y^{1}}(\nu)$ of $\y^{1}$ given by 
\begin{eqnarray*}
\hat{\S}_{\y^{1}}(\nu) & = & \A_N^{*} \hat{\S}(\nu) \A_N, \\
\hat{\C}_{\y^{1}}(\nu) & = & \hat{\D}^{-1/2}_{\y^{1}}(\nu) \A_N^{*} \hat{\S}(\nu) \A_N \hat{\D}^{-1/2}_{\y^1}(\nu),
\end{eqnarray*}
where $\hat{\D}_{\y^1}(\nu) = \mathrm{dg}\left( \hat{\S}_{\y^1}(\nu)\right) = 
\mathrm{Diag}\left( \a_m^{*} \hat{\S}(\nu) \a_m,  m=1, \ldots, M \right)$
is the estimate of the diagonal matrix $\D_{\y^1}(\nu) = \mathrm{Diag}\left( \a_m^{*} \S(\nu) \a_m,  m=1, \ldots, M \right)$. We recall that $\S(\nu)$ is diagonal, so that 
matrix $\S(\nu)$ coincides with \\ $\D(\nu) = \mathrm{Diag}(s_m(\nu), m=1, \ldots, M)$. As matrix 
$\hat{\S}(\nu)$ can be written as $\hat{\S}(\nu) = \D^{1/2} (\nu) \tilde{\C}(\nu) \D^{1/2} (\nu)$, $\hat{\C}_{\y^1}(\nu)$ can be expressed as 
\begin{equation}
    \label{eq:expre-hat-C-y}
\hat{\C}_{\y^1}(\nu) = \hat{\D}^{-1/2}_{\y^1}(\nu) \A_N^{*} \D(\nu)^{1/2} \tilde{\C}(\nu) \D(\nu)^{1/2}  \A_N \hat{\D}^{-1/2}_{\y^1}(\nu) .  
\end{equation}
It is easy to check that (\ref{eq:condition-norme-AN}) and (\ref{eq:condition-norme-colonnes-AN}) imply that 
$\|  \hat{\D}_{\y^1}(\nu) -  \D_{\y^1}(\nu) \| \prec \frac{1}{\sqrt{B}}$
as well as
$\|  \hat{\D}^{-1/2}_{\y^1}(\nu) -  \D^{-1/2}_{\y^1}(\nu) \| \prec \frac{1}{\sqrt{B}}$. As we also have 
$$
\| \tilde{\C}(\nu) - \frac{\X(\nu) \X^{*}(\nu)}{B+1} \| \prec \frac{B}{N},
$$
we obtain that 
\begin{equation}
    \label{eq:approx-hat-C-y}
\left\|  \hat{\C}_{\y^1}(\nu) - \Z_N(\nu) \right\| \prec \frac{1}{\sqrt{B}} + \frac{B}{N},
\end{equation}
where $\Z_N(\nu)$ is the matrix defined by 
\begin{equation}
    \label{eq:def-Z}
\Z_N(\nu) = \D^{-1/2}_{\y^1}(\nu) \A_N^{*} \D^{1/2} (\nu)
 \frac{\X(\nu) \X^{*}(\nu)}{B+1} \D^{1/2} (\nu) \A_N \D^{-1/2}_{\y^1}(\nu) . 
\end{equation}
(\ref{eq:approx-hat-C-y}) suggests that the linear spectral statistics $\hat{f}_1(\nu)$  defined  by
\begin{equation}
    \label{eq:def-hat-f-1}
\hat{f}_1(\nu) = \frac{1}{M} \Tr f\left(\hat{\C}_{\y^1}(\nu)\right)  ,   
\end{equation}
and $\hat{f}_{Z}(\nu)$ given by 
\begin{equation}
    \label{eq:def-hat-f-Z}
\hat{f}_{Z}(\nu) = \frac{1}{M} \Tr f\left(\Z_N(\nu)\right),
\end{equation}
have the same behaviour. This is a crucial observation. More precisely, we denote by $\theta_N^{1}(f,\nu)$ the term $\theta_N(f,\nu)$ defined by (\ref{eq:def-theta-f-nu}) under 
$\Hcal_0$, but now evaluated under 
hypothesis $\Hcal_1$, i.e., 
\begin{equation}
    \label{eq:def-thetaN-H1}
    \theta_N^{1}(f,\nu) = \hat{f}_1(\nu) - \int f(\lambda) d\mu_{MP}^{(c_N)}(\lambda) - <D_N,f>\left(r_N^{1}(\nu) v_N - \frac{1}{c_N B}\right),
\end{equation}
where $r_N^{1}(\nu)$ is defined by
\begin{equation}
    \label{eq:def-rN-H1}
   r_N^{1}(\nu) = \left( \frac{1}{M} \sum_{m=1}^{M} \frac{(s_{m}^{1})^{'}(\nu)}{s_m^{1}(\nu)}\right),
\end{equation}
and $s_m^{1}(\nu)$ represents $\left(\S_{\y^{1}}(\nu)\right)_{m,m}$. 
We define $\theta_{N,Z}(f)$ as the term given by 
\begin{equation}
    \label{eq:def-theta-Z}
 \theta_{N,Z}(f,\nu) =  \hat{f}_{\Z}(\nu) -  \int f(\lambda) d\mu_{MP}^{(c_N)}(\lambda) - <D_N,f>(r_N^{1}(\nu) v_N - \frac{1}{c_N B})  . 
\end{equation}
Then, showing that $\hat{f}_1(\nu) \simeq \hat{f}_{Z}(\nu)$ will imply that
the behaviours of $\theta_N^{1}(f,\nu)$ and $\theta_{N,Z}(f,\nu)$ under $\Hcal_1$
will coincide. This is a useful observation because the asymptotic behaviour of 
$\hat{f}_{Z}(\nu)$ is of course well known. \\

We now establish that 
 $\hat{f}_1(\nu) - \hat{f}_{Z}(\nu) \rightarrow 0$, and evaluate the order of 
 magnitude of the error. The function $f(\lambda) = (\lambda -1)^{2}$ is of course not compactly supported  so that it is not possible to use the Helffer-Sjöstrand formula to evaluate $\hat{f}_1(\nu) - \hat{f}_{Z}(\nu)$ 
in terms of the difference between the normalized trace of the resolvent 
of $\hat{\C}_{\y^{1}}$ and the normalized trace of the resolvent 
of $\Z$. As shown in Paragraph \ref{subsec:location-eigenvalues-f-compactly-supported}, under $\Hcal_0$, the study of the linear statistic $\hat{f}(\nu)$ and of $\zeta_{N,1}(f)$ and $\zeta_{N,2}(f)$ is equivalent to that of 
$\hat{\bar{f}}(\lambda)$ and $\zeta_{N,1}(\bar{f})$ and $\zeta_{N,2}(\bar{f})$
where $\bar{f}$ is a $\Ccal^{\infty}$ compactly supported function that coincides with $f$ in a neigbourhood of the support $\mathrm{Supp}(\mu_{MP}^{(c)})$ of the Marcenko-Pastur distribution $\mu_{MP}^{(c)}$. This useful result is a direct consequence of Proposition \ref{prop:localisation-eigenvalues-hatC-tildeC}. We now generalize this property to the context of the alternative $\Hcal_1$. For this, we first need to study the location of the eigenvalues of $\hat{\C}_{\y^1}(\nu)$ and of $\Z(\nu)$. In order to introduce 
the corresponding result, we define by $\H_N(\nu)$ the positive matrix 
given by 
\begin{equation}
    \label{eq:def-HN}
    \H_N(\nu) = \D^{-1/2}_{\y^1}(\nu) \A_N^{*} \D(\nu) \A_N \D^{-1/2}_{\y^1}(\nu),
\end{equation}
and notice that conditions (\ref{eq:condition-norme-AN}, \ref{eq:condition-norme-colonnes-AN}) imply that $\H_N(\nu)$ verifies 
$$
\| \H_N(\nu) \| \leq \tilde{a} ,
$$
where $\tilde{a}$ is defined by 
\begin{equation}
    \label{eq:def-tilde-a}
  \tilde{a} = \frac{s_{max}}{s_{min}} \, \frac{a_{+}^{2}}{a_{-}^{2}}  ,
\end{equation}
where $s_{min} > 0$ and $s_{max} < +\infty$ verify $s_{min} \leq s_m(\nu) \leq s_{max}$ for each $m$ and each $\nu$. 
It is useful to notice that $\tilde{a} \geq 1$. 
For each $\epsilon > 0$, we define the events $\Lambda_{N,\epsilon}^{\hat{\C}_{\y^1}}(\nu)$ and 
$\Lambda_{N,\epsilon}^{\Z}(\nu)$ by 
\begin{eqnarray}
    \label{eq:def-Lambda-hatC-y1}
&  \Lambda^{\hat{\C}_{\y^1}}_{N,\epsilon}(\nu)  = \{ \| \hat{\C}_{\y^1}(\nu) \| \leq 
 \tilde{a} (1 + \sqrt{c} + \epsilon)^{2} \}, \\
    \label{eq:def-Lambda-Z}
&  \Lambda^{\Z}_{N,\epsilon}(\nu)  = \{ \| \Z_N(\nu) \| \leq 
 \tilde{a} (1 + \sqrt{c} + \epsilon)^{2} \} .
\end{eqnarray}
Then, we have the following proposition.
\begin{proposition}
\label{prop:localisation-vp-hatCy1-Z}
The families of events $\left(\Lambda^{\Z}_{N,\epsilon}(\nu)\right)_{\nu \in [0,1]}$ 
and $\left(\Lambda^{\hat{\C}_{\y^1}}_{N,\epsilon}(\nu)\right)_{\nu \in [0,1]}$
hold with exponentially high probability. 
\end{proposition}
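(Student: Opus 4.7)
The plan is to prove the bound on $\|\Z_N(\nu)\|$ first using the factorization structure and the classical spectral norm bound on Wishart matrices, and then deduce the bound on $\|\hat{\C}_{\y^1}(\nu)\|$ by transferring the result through the already-established perturbation inequality (\ref{eq:approx-hat-C-y}).

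For the first family $\Lambda^{\Z}_{N,\epsilon}(\nu)$, I would write $\Z_N(\nu) = \B_N(\nu) \frac{\X(\nu)\X^{*}(\nu)}{B+1}\B_N(\nu)^{*}$ with $\B_N(\nu) = \D_{\y^1}^{-1/2}(\nu)\A_N^{*}\D^{1/2}(\nu)$, so that $\|\B_N(\nu)\|^{2} = \|\B_N(\nu)\B_N(\nu)^{*}\| = \|\H_N(\nu)\| \leq \tilde{a}$ by the definition of $\H_N$ and of $\tilde{a}$. Therefore $\|\Z_N(\nu)\| \leq \tilde{a}\,\|\tfrac{\X(\nu)\X^{*}(\nu)}{B+1}\|$. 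Given $\epsilon>0$, pick $\epsilon' = 2\epsilon(1+\sqrt{c}) + \epsilon^{2} > 0$ so that $(1+\sqrt{c})^{2} + \epsilon' = (1+\sqrt{c}+\epsilon)^{2}$; Proposition \ref{prop:Lambda-holds-with-high-proba} applied with this $\epsilon'$ yields that the family of events $\Lambda_{N,\epsilon'}(\nu)$ defined in (\ref{eq:def-Lambda-N-epsilon}) holds with exponentially high probability, which in particular implies $\|\tfrac{\X(\nu)\X^{*}(\nu)}{B+1}\| \leq (1+\sqrt{c}+\epsilon)^{2}$ with exponentially high probability (uniformly in $\nu$). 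Multiplying by $\tilde{a}$ gives the claim for $\Z_N$.

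For the second family $\Lambda_{N,\epsilon}^{\hat{\C}_{\y^1}}(\nu)$, I would exploit the triangle inequality
\begin{equation*}
\|\hat{\C}_{\y^1}(\nu)\| \leq \|\Z_N(\nu)\| + \|\hat{\C}_{\y^1}(\nu) - \Z_N(\nu)\|,
\end{equation*}
together with the perturbation bound (\ref{eq:approx-hat-C-y}) which states $\|\hat{\C}_{\y^1}(\nu) - \Z_N(\nu)\| \prec \frac{1}{\sqrt{B}} + \frac{B}{N}$. Under Assumption \ref{as:asymptotic-regime} (namely $\alpha < 1$ and $B/M \to 1/c$) the deterministic sequence $\frac{1}{\sqrt{B}} + \frac{B}{N}$ decays polynomially to $0$, so after picking a small enough exponent in Property \ref{pr:properties-stochastic-domination}(ii) and a small enough $\epsilon'>0$ so that $\tilde{a}(1+\sqrt{c}+\epsilon')^{2} + \delta \leq \tilde{a}(1+\sqrt{c}+\epsilon)^{2}$ for a suitable fixed $\delta>0$, the event $\|\hat{\C}_{\y^1}(\nu) - \Z_N(\nu)\| \leq \delta$ holds with exponentially high probability uniformly in $\nu$. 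Combined with the first part applied at level $\epsilon'$ and a union bound this yields the claim.

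The plan is thus essentially routine modulo the inputs already proved in the paper. The one point that needs a bit of care is that the two events need to hold uniformly in the continuous parameter $\nu \in [0,1]$; however the definition of stochastic domination (Definition \ref{def:stochastic-domination}) and Proposition \ref{prop:Lambda-holds-with-high-proba} are both formulated with a supremum over the parameter set, so the uniformity in $\nu$ is already built into the inputs and no covering argument is required. The only genuinely substantive ingredient behind the whole argument is the perturbation estimate (\ref{eq:approx-hat-C-y}), which itself rests on $\|\hat{\D}_{\y^1}(\nu) - \D_{\y^1}(\nu)\| \prec 1/\sqrt{B}$ (a consequence of Hanson--Wright together with the lower bound (\ref{eq:condition-norme-colonnes-AN}) on the column norms of $\A_N$) and on the Bartlett-type approximation of $\tilde{\C}(\nu)$ by a Wishart matrix; both are already established earlier in the paper, so no new random matrix analysis is needed here.
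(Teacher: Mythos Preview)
Your proof is correct and follows essentially the same approach as the paper's: bound $\|\Z_N(\nu)\|$ via the factorization $\Z_N = \B_N\frac{\X\X^{*}}{B+1}\B_N^{*}$ with $\|\B_N\|^{2}=\|\H_N\|\le\tilde a$ and Proposition~\ref{prop:Lambda-holds-with-high-proba}, then transfer to $\hat{\C}_{\y^1}(\nu)$ by the triangle inequality and the perturbation bound (\ref{eq:approx-hat-C-y}). The only cosmetic difference is that the paper uses the specific split $\epsilon\mapsto\epsilon/2$ in the second step where you pick a generic pair $(\epsilon',\delta)$.
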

\begin{proof}
We first remark that $\| \Z_N \| \leq \| \H_N \| \| \frac{\X_N \X_N^{*}}{B+1} \|$. Therefore, 
\begin{align*}
\left(\Lambda^{\Z}_{N,\epsilon}(\nu)\right)^{c} \subset & \left\{ \| \frac{\X_N(\nu) \X_N^{*}(\nu)}{B+1} \| > \frac{1}{\|\H_N(\nu)\|} \tilde{a} (1 + \sqrt{c} + \epsilon)^{2} \right \} \\
\subset & \left\{ \| \frac{\X_N(\nu) \X_N^{*}(\nu)}{B+1} \| > (1 + \sqrt{c} + \epsilon)^{2} \right \},
\end{align*}
and Proposition \ref{prop:Lambda-holds-with-high-proba} implies that 
$\left(\Lambda^{\Z}_{N,\epsilon}(\nu)\right)_{\nu \in [0,1]}$ holds with exponentially high probability. To address $\Lambda^{\hat{\C}_{\y^1}}_{N,\epsilon}(\nu)$, we use the inequality 
$$
\| \hat{\C}_{\y^1} \| \leq \| \Z \| + \| \hat{\C}_{\y^1} - \Z \|.
$$
Therefore, we have 
\begin{align*}
& P\left( \left(\Lambda^{\hat{\C}_{\y^1}}_{N,\epsilon}(\nu)\right)^{c} \right) \leq  
P \left(\left(\Lambda^{\Z}_{N,\epsilon/2}(\nu)\right)^{c} \right) + \\
& 
P\left( \| \hat{\C}_{\y^1} - \Z \| > \tilde{a} \left( (1+\sqrt{c}+\epsilon)^{2}
- (1+\sqrt{c}+\epsilon/2)^{2}\right)\right).
\end{align*}
(\ref{eq:approx-hat-C-y}) thus implies that $\left(\Lambda^{\hat{\C}_{\y^1}}_{N,\epsilon}(\nu)\right)_{\nu \in [0,1]}$
holds with exponentially high probability. This completes the proof of 
Proposition \ref{prop:localisation-vp-hatCy1-Z}. 
\end{proof}
We now define $\bar{f}$ as any function defined 
on $\mathbb{R}$, $\Ccal^{\infty}$ in a neigbourhood of $[0, \tilde{a}(1+\sqrt{c}+2 \epsilon)^{2}]$, and verifying 
\begin{eqnarray}
\label{eq:def-bar-f-H1}
&    \bar{f}(\lambda) = f(\lambda) & , \, \lambda \in [0, \tilde{a}(1+\sqrt{c}+ \epsilon)^{2}], \\
\notag
&   \bar{f}(\lambda) =  0 & , \, \lambda >  \tilde{a}(1+\sqrt{c}+2 \epsilon)^{2}.
\end{eqnarray}
Reasoning as in the proof of Proposition \ref{prop:phi-compactly-supported}, 
we deduce from Proposition \ref{prop:localisation-vp-hatCy1-Z} that the families 
$(\hat{f}_{Z}(\nu) - \hat{\bar{f}}_{Z}(\nu))_{\nu \in [0,1]}$ and 
$(\hat{f}_1(\nu) - \hat{\bar{f}}_1(\nu))_{\nu \in [0,1]}$ verify
\begin{eqnarray*}
(\hat{f}_{Z}(\nu) - \hat{\bar{f}}_{Z}(\nu))_{\nu \in [0,1]} = & \Ocal_{\prec}\left(\frac{1}{N^{p}}\right), \\
(\hat{f}_1(\nu) - \hat{\bar{f}}_1(\nu))_{\nu \in [0,1]} = & \Ocal_{\prec}\left(\frac{1}{N^{p}}\right),
\end{eqnarray*}
for each $p > 0$. We also remark that, as $\tilde{a} \geq 1$ and $c_N \rightarrow c$, for each $N$ large enough, then 
$f(\lambda) = \bar{f}(\lambda)$ on the support $[(1- \sqrt{c}_N)^{2}, (1+ \sqrt{c}_N)^{2}]$ of the Marcenko-Pastur 
distribution $\mu_{MP}^{(c_N)}$. Therefore, we also have 
\begin{align}
\label{eq:egalite-f-barf-on-calSN}
& \int f(\lambda) d\mu_{MP}^{(c_N)}  =  \int \bar{f}(\lambda) d\mu_{MP}^{(c_N)}, \\
\label{eq:egalite-D-f-D-barf}
& <D_N, f>  =  <D_N, \bar{f}>,
\end{align}
for each $N$ large enough. 
Then, (\ref{eq:egalite-f-barf-on-calSN}) and (\ref{eq:egalite-D-f-D-barf}) imply that 
$ \theta_N^{1}(f,\nu) =  \theta_N^{1}(\bar{f},\nu) + \Ocal_{\prec}\left( \frac{1}{N^{p}}\right)$ and that $\zeta_{N,1}^1(f) = \zeta_{N,1}^1(\bar{f}) + \Ocal_{\prec}\left( \frac{1}{N^{p}}\right)$ for $i=1,2$ and for each $p$ . If we denote by 
 $(\zeta_{N,i}^{Z}(f))_{i=1,2}$ the statistics defined by 
\begin{align*}
& \zeta_{N,1}^{Z}(f)  =  \frac{1}{\sqrt{K'}} \sum_{\nu \in \Gcal_N^{'}} B \, \theta_{N,Z}(f,\nu), \\
& \zeta_{N,2}^{Z}(f)  =  \frac{1}{\sqrt{K'}} \sum_{\nu \in \Gcal_N^{'}} \left[ \left( B  \, \theta_{N,Z}(f,\nu)\right)^{2} - 
\sigma^{2} \right]  ,
\end{align*}
we also have
\begin{align}
& \theta_{N,Z}(f,\nu)  =  \theta_{N,Z}(\bar{f},\nu)+ \Ocal_{\prec}\left( \frac{1}{N^{p}}\right), \\
& \zeta_{N,i}^{Z}(f)  =   \zeta_{N,i}^{Z}(\bar{f}) +  \Ocal_{\prec}\left( \frac{1}{N^{p}}\right),
\end{align}
for $i=1,2$ and for each $p$. 

In the following, we can thus replace $\hat{f}_{Z}(\nu)$ and 
$\hat{f}_1(\nu)$ by $\hat{\bar{f}}_{Z}(\nu)$ and $\hat{\bar{f}}_1(\nu)$
without modifying significantly the behaviours of $\zeta_{N,i}^{1}(f)$ and 
$\zeta_{N,i}^{Z}(f)$. In order to simplify the notations, 
we prefer to denote $\hat{\bar{f}}_{Z}(\nu)$ and $\hat{\bar{f}}_1(\nu)$ by 
$\hat{f}_{Z}(\nu)$ and $\hat{f}_1(\nu)$. 
We deduce from this discussion the following result. 
\begin{proposition}
\label{prop:approx-hat-f1-hat-fZ}
$\hat{f}_1(\nu)$ can be written as 
\begin{equation}
    \label{eq:expre-hat-f-1}
\hat{f}_1(\nu) = \hat{f}_{Z}(\nu) + \delta(\nu)  , 
\end{equation}
as well as 
\begin{align*}
& \theta_N^{1}(f,\nu) =   \theta_{N,Z}(f,\nu) + B \delta(\nu) ,\\
& \zeta_{N,i}^1(f) =  \zeta_{N,i}^{Z}(f) + \Ocal_{\prec}\left( \sqrt{K'} B\left( \frac{1}{\sqrt{B}} + \frac{B}{N}\right)\right),
\end{align*}
where the family $(\delta(\nu))_{\nu \in [0,1]}$ verifies $\delta(\nu) = \Ocal_{\prec}\left( \frac{1}{\sqrt{B}} + \frac{B}{N} \right)$. 
\end{proposition}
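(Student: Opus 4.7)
The plan rests on two results already in hand: the operator-norm perturbation bound (\ref{eq:approx-hat-C-y}) comparing $\hat{\C}_{\y^1}(\nu)$ with $\Z_N(\nu)$, and the eigenvalue localization of Proposition \ref{prop:localisation-vp-hatCy1-Z}. The overall strategy is a Weyl-inequality perturbation argument applied to a compactly supported test function, followed by a routine summation over the frequency grid $\Gcal_N'$.

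First I would reduce to a $\Ccal^\infty$ compactly supported $f$. Mirroring the proof of Proposition \ref{prop:phi-compactly-supported}, Proposition \ref{prop:localisation-vp-hatCy1-Z} implies that the eigenvalues of both $\hat{\C}_{\y^1}(\nu)$ and $\Z_N(\nu)$ lie in $[0,\tilde{a}(1+\sqrt{c}+\epsilon)^2]$ with exponentially high probability. Using (\ref{eq:egalite-f-barf-on-calSN}) and (\ref{eq:egalite-D-f-D-barf}), replacing $f$ by the truncated function $\bar{f}$ of (\ref{eq:def-bar-f-H1}) alters every quantity in the proposition by only $\Ocal_\prec(N^{-p})$ for every $p>0$, since the deterministic recentering and Marchenko--Pastur integral terms are preserved. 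Thus we may work throughout with a $\Ccal^\infty$ compactly supported $f$ that coincides with the original function on the relevant spectrum. On the exponentially high probability event $\Lambda^{\hat{\C}_{\y^1}}_{N,\epsilon}(\nu)\cap\Lambda^{\Z}_{N,\epsilon}(\nu)$, $f$ is Lipschitz on the union of the spectra, and Weyl's inequality $|\lambda_i(\hat{\C}_{\y^1}(\nu))-\lambda_i(\Z_N(\nu))|\leq \|\hat{\C}_{\y^1}(\nu)-\Z_N(\nu)\|$ combined with the mean value theorem yields
\begin{equation*}
|\hat{f}_1(\nu) - \hat{f}_Z(\nu)| \leq \|f'\|_\infty \,\|\hat{\C}_{\y^1}(\nu) - \Z_N(\nu)\|.
\end{equation*}
Together with (\ref{eq:approx-hat-C-y}) this gives $\delta(\nu):=\hat{f}_1(\nu)-\hat{f}_Z(\nu) = \Ocal_\prec(1/\sqrt{B}+B/N)$ uniformly in $\nu$; the complementary exceptional event contributes only an $\Ocal_\prec(N^{-p})$ term absorbed into the bound.

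The identity for $\theta$ is then immediate from the definitions (\ref{eq:def-thetaN-H1}) and (\ref{eq:def-theta-Z}): the deterministic corrections $\int f\, d\mu_{MP}^{(c_N)}$ and $<D_N,f>(r_N^1(\nu)v_N - (c_N B)^{-1})$ are common to both statistics, so $\theta_N^1-\theta_{N,Z}=\delta(\nu)$. Summing $B\delta(\nu)$ over $\nu\in\Gcal_N'$ and invoking Lemma \ref{le:domination-moyenne} (or equivalently the union-bound property of stochastic domination over the $|\Gcal_N'|=K'$ frequencies) transports the uniform bound on $B\delta(\nu)$ into
\begin{equation*}
\zeta_{N,1}^1(f) - \zeta_{N,1}^Z(f) = \Ocal_\prec\bigl(\sqrt{K'}\,B(1/\sqrt{B}+B/N)\bigr).
\end{equation*}
For $\zeta_{N,2}^1-\zeta_{N,2}^Z$, I would factor $(B\theta_N^1)^2 - (B\theta_{N,Z})^2 = B\delta(\nu)\cdot B(\theta_N^1+\theta_{N,Z})$ and use that $B\theta_{N,Z}(f,\nu)$ is stochastically bounded (it is an LSS of a Wishart-type matrix, controllable by the same machinery used throughout Section \ref{sec:clt-theta-given-frequency}) to obtain the same order of magnitude.

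No genuine obstacle arises: the proposition is essentially an immediate consequence of operator-norm perturbation plus the compact-support truncation. The only mild bookkeeping concern is ensuring the truncation radius $\tilde{a}(1+\sqrt{c}+2\epsilon)^2$ defining $\bar{f}$ is large enough to capture the joint spectrum of $\hat{\C}_{\y^1}(\nu)$ and $\Z_N(\nu)$ uniformly in $\nu$, which is precisely why (\ref{eq:def-bar-f-H1}) is formulated with the geometry-dependent constant $\tilde{a}$ rather than with a constant depending only on $c$.
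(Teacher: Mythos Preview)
Your argument is correct and differs from the paper's route only in the mechanism used to transfer the operator-norm bound (\ref{eq:approx-hat-C-y}) to the LSS difference. The paper invokes the Helffer--Sj\"ostrand formula applied to $\bar f$, writing $\hat f_1(\nu)-\hat f_Z(\nu)$ as an integral of $\bar{\partial}\Phi_k(\bar f)(z)\,\frac{1}{M}\Tr(\hat{\Q}_{\y^1}(z)-\Q_Z(z))$ and then bounding the resolvent difference by $C(z)\|\hat{\C}_{\y^1}(\nu)-\Z_N(\nu)\|$. You instead use the Weyl inequality together with the global Lipschitz bound of the truncated $\bar f$, which is a more elementary device: it needs only $\bar f\in\Ccal^1$ rather than the higher regularity implicit in the Helffer--Sj\"ostrand extension, and it bypasses the resolvent machinery entirely. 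Both routes rest on exactly the same two inputs (the norm estimate (\ref{eq:approx-hat-C-y}) and the eigenvalue localization of Proposition \ref{prop:localisation-vp-hatCy1-Z}) and produce the same bound on $\delta(\nu)$. Your observation that the definitions (\ref{eq:def-thetaN-H1}) and (\ref{eq:def-theta-Z}) give $\theta_N^1-\theta_{N,Z}=\delta(\nu)$ without the extra factor $B$ is also correct; the $B$ in the displayed identity of the proposition is a misprint, as confirmed by the way the result feeds into (\ref{eq:decomposition-zeta-1}).
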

\begin{proof}
The proof follows immediately from (\ref{eq:approx-hat-C-y}) and the Helffer-Sjöstrand formula applied to function $\bar{f}$.
\end{proof} 
This result shows that up to a $\Ocal_{\prec}\left( \frac{1}{\sqrt{B}} + \frac{B}{N} \right)$ term, it is sufficient to evaluate $\hat{f}_{Z}(\nu)$ in order to 
study the behaviour of $\hat{f}_1(\nu)$.  We remark that the order of magnitude of the error term $\delta(\nu)$ is very rough, and that a much tighter result could probably be obtained by establishing a CLT on $\hat{f}_1(\nu)$. \\

We now consider the behaviour of $\hat{f}_{Z}(\nu)$. It is well established (the reader may for example use the results of 
\cite{hachem-loubaton-najim-aap-2007}) that for each $\nu$
the eigenvalue distribution of $\Z_N(\nu)$ has the same asymptotic 
behaviour than the  deterministic probability distribution 
$\mu^{1}_N(\nu)$ whose Stieltjes transform $t^1_N(z,\nu)$ is solution 
of the equation 
$$
t^{1}_N(z,\nu) = \frac{1}{M} \Tr \left[ -z \left( \I + \H_N(\nu) \tilde{t}^1_N(z,\nu) \right) \right]^{-1} ,
$$
where $\tilde{t}^1_N(z,\nu) = c_N t^1_N(z,\nu) - \frac{1 -c_N}{z}$ is the Stieltjes transform
of $\tilde{\mu}^{1}_N(\nu) = c_N \mu^{1}_N(\nu) + (1 - c_N) \delta_0$. It is useful to evaluate the 
support of $\mu_N^{1}(\nu)$ in order to relate $\int f(\lambda) d\mu^{1}_N(\nu,\lambda)$ and 
$\int \bar{f}(\lambda) d\mu^{1}_N(\nu,\lambda)$
More precisely, the following Lemma is established in the Appendix \ref{sec:proof-lemma-support-mu1}.  
\begin{lemma}
    \label{le:support-mu1}
    For each $\epsilon > 0$, and for each $N$, the support of $\mu^{1}_N$ is included into $[0, \tilde{a} (1 + \sqrt{c} + \epsilon)^{2}]$. 
\end{lemma}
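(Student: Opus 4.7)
The plan is to prove the sharper bound $\mathrm{Supp}(\mu^1_N(\nu)) \subseteq [0, \tilde a(1+\sqrt{c_N})^2]$ for each $N$, from which the lemma follows because $c_N \to c$ implies $\tilde a(1+\sqrt{c_N})^2 \leq \tilde a(1+\sqrt{c}+\epsilon)^2$ for $N$ large enough. The main tool is the Silverstein-Choi characterization of the support of $\mu^1_N(\nu)$ through the fixed-point equation defining its Stieltjes transform.

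First, I would rewrite the defining equation for $t^1_N(z,\nu)$ by substituting $\tilde t^1_N(z,\nu) = c_N t^1_N(z,\nu) - (1-c_N)/z$ and rearranging, using the identity $\frac{1}{M}\sum_k \frac{1}{1+\lambda_k m} = 1 - m \cdot \frac{1}{M}\sum_k \frac{\lambda_k}{1+\lambda_k m}$, to express $z$ as a function of $m = \tilde t^1_N(z,\nu)$:
\begin{equation*}
z = F_N(m, \nu) := -\frac{1}{m} + \frac{c_N}{M}\sum_{k=1}^M \frac{\lambda_k(\H_N(\nu))}{1+\lambda_k(\H_N(\nu))\,m}, \qquad m \in \left(-\tfrac{1}{\|\H_N(\nu)\|}, 0\right).
\end{equation*}
The Silverstein-Choi criterion then states that a real $x > 0$ lies outside $\mathrm{Supp}(\mu^1_N(\nu))$ as soon as there exists $m$ in this interval with $F_N(m,\nu) = x$ and $\partial_m F_N(m,\nu) > 0$.

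Second, I would compare $F_N$ with its Marchenko-Pastur analog $F_N^{MP}(m) = -1/m + c_N \tilde a/(1+\tilde a m)$ associated with $\H_N \equiv \tilde a\,\I_M$. A direct computation shows that $F_N^{MP}$ attains its unique minimum on $(-1/\tilde a, 0)$ at $m_{MP}^* = -1/(\tilde a(1+\sqrt{c_N}))$ with value $\tilde a(1+\sqrt{c_N})^2$, and is strictly increasing on $(m_{MP}^*, 0)$. The key observation is that
\begin{equation*}
F_N(m, \nu) \leq F_N^{MP}(m) \quad \text{and} \quad \partial_m F_N(m, \nu) \geq \partial_m F_N^{MP}(m)
\end{equation*}
both hold pointwise on $(-1/\tilde a, 0)$, by applying the monotonicity of $\lambda \mapsto \lambda/(1+\lambda m)$ and of $\lambda \mapsto \lambda^2/(1+\lambda m)^2$ on $[0, -1/m)$ (both increasing for $m < 0$) together with the bound $\lambda_k(\H_N(\nu)) \leq \tilde a$.

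Finally, I would fix $x > \tilde a(1+\sqrt{c_N})^2$ and locate $m_x \in (m_{MP}^*, 0)$ with $F_N(m_x, \nu) = x$ and $\partial_m F_N(m_x, \nu) > 0$. On $(m_{MP}^*, 0)$ one has $\partial_m F_N(\cdot,\nu) \geq \partial_m F_N^{MP} > 0$, so $F_N(\cdot,\nu)$ is strictly increasing there; combined with $F_N(m_{MP}^*, \nu) \leq \tilde a(1+\sqrt{c_N})^2 < x$ and $F_N(m, \nu) \to +\infty$ as $m \to 0^-$, the intermediate value theorem produces the required $m_x$, and the Silverstein-Choi criterion gives $x \notin \mathrm{Supp}(\mu^1_N(\nu))$. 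The only delicate point, and the main obstacle, is justifying the precise form of the Silverstein-Choi criterion adapted to the present fixed-point equation; I would follow the standard approach developed in \cite{hachem-loubaton-najim-aap-2007}, where the analogous characterization is established for sample covariance type deterministic equivalents of essentially the same structure as here.
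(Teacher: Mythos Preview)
Your approach is correct and takes a genuinely different route from the paper's proof. The paper argues by constructing, for fixed $N$, an auxiliary sequence of random matrices $\Y_P\Y_P^{*}$ (with $P\to\infty$) whose empirical spectral distribution converges weakly to $\mu^{1}_N$: they build $\Y_P = \Lambdabs_P^{1/2}\frac{\X_P}{\sqrt{Q}}$ from i.i.d.\ Gaussians and a block-diagonal matrix $\Lambdabs_P$ replicating the eigenvalues of $\H_N(\nu)$, then use the almost-sure bound $\|\X_P\X_P^{*}/Q\|\le(1+\sqrt c+\epsilon/2)^{2}$ for large $P$ together with $\|\Lambdabs_P\|\le\tilde a$ to control $\|\Y_P\Y_P^{*}\|$, and conclude via a continuous test function that $\mu^{1}_N$ puts no mass above $\tilde a(1+\sqrt c+\epsilon)^{2}$. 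Your argument is purely analytic: it bypasses any random-matrix construction and works directly with the fixed-point equation through the Silverstein--Choi support characterization, using a monotone comparison of $F_N(m,\nu)$ with the scalar case $\H_N\equiv\tilde a\,\I$. This yields the sharper inclusion $\mathrm{Supp}(\mu^{1}_N)\subseteq[0,\tilde a(1+\sqrt{c_N})^{2}]$ and is arguably more direct once the characterization is granted; the paper's route is more self-contained but less precise. Two minor remarks: the support characterization you invoke is usually attributed to Silverstein and Choi (J.\ Multivariate Anal., 1995) rather than to the reference you cite, and your reduction to ``$N$ large enough'' matches how the lemma is actually used downstream, even though the statement reads ``for each $N$''.
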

If $\bar{f}$ is defined by (\ref{eq:def-bar-f-H1}), this lemma of course implies that 
$$
\int f(\lambda) d\mu^{1}_N(\nu) =  \int \bar{f}(\lambda) d\mu^{1}_N(\nu),
$$
because $f(\lambda) = \bar{f}(\lambda)$ for $\lambda \in [0, \tilde{a} (1 + \sqrt{c} + \epsilon)^{2}]$. 
We deduce from this that the LSS $\hat{f}_{Z}$ verifies 
\begin{equation}
\label{eq:approx-hat-f-Z}
\hat{f}_{Z}(\nu) = \int f(\lambda) d\mu^{1}_N(\nu,\lambda) + \delta_{Z}(\nu),
\end{equation}
where $\delta_{Z}(\nu)$ verifies $\delta_{Z}(\nu) = \Ocal_{\prec}\left(\frac{1}{B}\right)$. To justify this
property, if $\Q_{Z}(z,\nu)$ represents 
the resolvent of $\Z(\nu)$, it is sufficient to establish using the Gaussian concentration 
inequality that $\frac{1}{M} \Tr \left(\Q_{Z}(z,\nu) - \mathbb{E}\left( \Tr \Q_{Z}(z,\nu) \right) \right) = \Ocal_{\prec,z}\left(\frac{1}{B}\right)$ and to prove using the integration by part formula that $\mathbb{E} \left( \frac{1}{M} \Tr \Q_{Z}(z,\nu) \right)-  t^{1}_N(z,\nu) = 
\Ocal_z\left( \frac{1}{B^{2}} \right)$, an evaluation valid because $\X_N(\nu)$ is complex Gaussian. The use of the Helffer-Sjöstrand formula to function $\bar{f}$ leads immediately to 
(\ref{eq:approx-hat-f-Z}). \\ 

We now evaluate the behaviour of $\theta_N^{1}(f,\nu)$ defined by (\ref{eq:def-thetaN-H1}). Using (\ref{eq:expre-hat-f-1}) and (\ref{eq:approx-hat-f-Z}), we obtain 
that $\theta_N^{1}(f,\nu) $ can be written as 
\begin{equation}
    \label{eq:expre-theta-1}
 \theta_N^{1}(f,\nu) = \int f(\lambda) d\mu^{1}_N(\nu,\lambda)  - 
 \int f(\lambda) d\mu^{c_N}_{MP}(\lambda) +  \delta_N^1(\nu),
\end{equation}
where $\delta^1_N(\nu)$ is defined by 
$$
\delta^1_N(\nu) = \delta_N(\nu) + \delta_{Z,N}(\nu) - < D_N,f> \left( r^1_N(\nu) v_N -\frac{1}{c_N B} \right) ,
$$
and verifies
\begin{equation}
\label{eq:order-magnitude-delta-1}
\delta^1_N(\nu) = \Ocal_{\prec}\left( \frac{1}{\sqrt{B}} + \frac{B}{N} \right).
\end{equation}
The statistic $\zeta_{1,N}^1(f)$ is thus given by 
\begin{equation}
\label{eq:decomposition-zeta-1}
\zeta_{N,1}^1(f) = d_N^1(f)  + \epsilon^1_N(f),
\end{equation}
where 
$$
\epsilon^1_N(f) = \frac{1}{\sqrt{K'}} \sum_{\nu \in \Gcal_N^{'}}  B \, \delta_N^1(\nu) ,
$$
and where the deterministic term $d^1_N(f)$ is defined by 
\begin{equation}
\label{eq:def-d1N}
d_N^1(f) =  \frac{1}{\sqrt{K'}} \left( \sum_{\nu \in \Gcal_N^{'}} B \left(\int f(\lambda) d\mu^{1}_N(\nu,\lambda)  - 
 \int f(\lambda) d\mu^{c_N}_{MP}(\lambda)\right) \right) .
\end{equation}
We deduce from (\ref{eq:decomposition-zeta-1}) a general sufficient condition 
for the consistency of the test based on $\zeta_{1,N}$ under the 
present alternative. 
\begin{proposition}
\label{prop:general-condition-for-consistency}
If the matrices $(\A_N)_{N \geq 1}$ verify 
 \begin{equation}
     \label{eq:condition-consistency-zeta-1}
 \min_{\nu \in \Gcal_N^{'}} \int f(\lambda) d\mu^{1}_N(\nu,\lambda)  - 
 \int f(\lambda) d\mu^{c_N}_{MP}(\lambda) > \kappa > 0,
  \end{equation}
for each $N$ large enough, where $\kappa$ is some constant, then, 
(\ref{eq:consistency-zeta1}) holds.
\end{proposition}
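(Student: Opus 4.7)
My plan is to exploit the decomposition (\ref{eq:decomposition-zeta-1}), namely
\[
\zeta_{N,1}^1(f) = d_N^1(f) + \epsilon_N^1(f),
\]
and show that, under assumption (\ref{eq:condition-consistency-zeta-1}), the deterministic bias $d_N^1(f)$ diverges to $+\infty$ strictly faster than the stochastic error $\epsilon_N^1(f)$. The conclusion (\ref{eq:consistency-zeta1}) will then follow from the fact that $q_{1-\tilde{\alpha}}$ is a fixed finite threshold.

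The first step is a direct lower bound on the deterministic term. From the definition (\ref{eq:def-d1N}) and from (\ref{eq:condition-consistency-zeta-1}) applied to each $\nu \in \Gcal_N^{'}$, I obtain
\[
d_N^1(f) \geq \frac{1}{\sqrt{K'}} \sum_{\nu \in \Gcal_N^{'}} B \kappa = \sqrt{K'}\, B\, \kappa,
\]
for each $N$ large enough. Since Assumption \ref{as:asymptotic-regime} and the definition (\ref{eq:def-Bprime-Kprime}) of $K'$ give $K' B = \Ocal(N^{1-\delta})$ with $\delta > 0$, this lower bound diverges to $+\infty$ as $N \to +\infty$.

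The second step is to control the error term $\epsilon_N^1(f) = \frac{1}{\sqrt{K'}} \sum_{\nu \in \Gcal_N^{'}} B\, \delta_N^1(\nu)$. Since $\delta_N^1(\nu) = \Ocal_{\prec}(B^{-1/2} + B/N)$ by (\ref{eq:order-magnitude-delta-1}), applying Lemma \ref{le:domination-moyenne} to the average over $\nu \in \Gcal_N^{'}$ (which has cardinality $K' \leq N$) yields
\[
\frac{1}{K'} \sum_{\nu \in \Gcal_N^{'}} B\, \delta_N^1(\nu) = \Ocal_{\prec}\!\left( \sqrt{B} + \frac{B^{2}}{N}\right),
\]
whence $\epsilon_N^1(f) = \Ocal_{\prec}\!\left( \sqrt{K'}\,\sqrt{B} + \sqrt{K'}\, B^{2}/N\right)$. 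Comparing with the lower bound $\sqrt{K'} B \kappa$, the ratio is at most a constant times $B^{-1/2} + B/N$, which vanishes as $N \to +\infty$ under Assumption \ref{as:asymptotic-regime}. Hence $\epsilon_N^1(f) / d_N^1(f) \to 0$ in probability.

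Combining the two steps gives $|\zeta_{N,1}^1(f)| \geq d_N^1(f)(1 - o_P(1)) \to +\infty$ in probability, so for any fixed threshold $q_{1-\tilde{\alpha}}$ we obtain $\mathbb{P}(|\zeta_{N,1}^1(f)| > q_{1-\tilde{\alpha}}) \to 1$, which is exactly (\ref{eq:consistency-zeta1}). The main (modest) obstacle is purely a bookkeeping check that the polynomial rates involved in $\sqrt{K'} B$, $\sqrt{K'B}$ and $\sqrt{K'}B^2/N$ line up so that the error is genuinely of smaller order than the deterministic bias for every admissible $\alpha \in (1/2, 7/9)$ and every $\delta \in (0, 1-\alpha)$; no new probabilistic argument is needed beyond the stochastic domination tools already developed in Section \ref{subsec:domination}.
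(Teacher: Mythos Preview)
Your proof is correct and follows essentially the same approach as the paper's: both use the decomposition (\ref{eq:decomposition-zeta-1}), lower-bound the deterministic term $d_N^1(f)$ by $\sqrt{K'}B\kappa$ from condition (\ref{eq:condition-consistency-zeta-1}), control $\epsilon_N^1(f)$ via (\ref{eq:order-magnitude-delta-1}) to get $|\epsilon_N^1(f)| \prec \sqrt{K'}B\bigl(B^{-1/2}+B/N\bigr)$, and conclude by comparing rates. The paper's version is slightly terser and does not invoke Lemma \ref{le:domination-moyenne} explicitly, but the argument is the same.
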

\begin{proof}
If (\ref{eq:condition-consistency-zeta-1}) holds, 
$d_N^1(f)$ verifies $d_N^1(f) > \sqrt{K'} B \, \kappa$, 
so that $\zeta_{N,1}^1(f) > \kappa \sqrt{K'} B - |\epsilon^1_N(f)|$. As 
$|\epsilon^1_N(f)| \prec \sqrt{K'} B \left( \frac{1}{\sqrt{B}} + \frac{B}{N} \right)$, 
it is clear that for each level $\tilde{\alpha}$, 
$$
\lim_{N \rightarrow +\infty} \mathbb{P}\left( \zeta_{N,1}^1(f) > q_{1-\tilde{\alpha}} \right) = 1,
$$
which of course implies that (\ref{eq:consistency-zeta1}). 
\end{proof}
\begin{remark}
    \label{eq:weaker-condition-for-consistency}
    We remark that (\ref{eq:consistency-zeta1}) still holds if (\ref{eq:condition-consistency-zeta-1}) is replaced by 
\begin{equation}
     \label{eq:weaker-condition-consistency-zeta-1}
 \min_{\nu \in \Gcal_N^{'}} \int f(\lambda) d\mu^{1}_N(\nu,\lambda)  - 
 \int f(\lambda) d\mu^{c_N}_{MP}(\lambda) > \kappa \frac{1}{N^{\beta}},
  \end{equation}
where $\beta$ is chosen in such a way that 
$$
\frac{1}{\sqrt{B}} + \frac{B}{N} = o\left(  \frac{1}{N^{\beta}} \right).
$$   
\end{remark}
\begin{remark}
The Proposition \ref{prop:general-condition-for-consistency} and the Remark \ref{eq:weaker-condition-for-consistency} are, of course, still valid if 
$f$ is any function $\Ccal^{\infty}$ in a neighborhood of $[0, \tilde{a}(1 + \sqrt{c} + \epsilon)^{2}]$.
\end{remark}
We now check (\ref{eq:condition-consistency-zeta-1}) for $f(\lambda) = (\lambda - 1)^{2}$, and first notice that 
\begin{align*}
& \int \lambda d\mu_{MP}^{c_N}(\lambda, \nu) = 1, \\
& \int \lambda^{2} d\mu_{MP}^{c_N}(\lambda, \nu) = 1 + c_N,
\end{align*}
from which we deduce that 
\begin{align*}
& \int f(\lambda) d\mu_{MP}^{c_N}(\lambda, \nu) = c_N.
\end{align*}
In order to evaluate $\int f(\lambda) d\mu^1_N(\lambda,\nu)$, we use the following 
formulas 
\begin{align}
\label{eq:premier-moment-mu1}
& \int \lambda d\mu^1_N(\lambda, \nu) = 
\lim_{y \rightarrow +\infty} \mathrm{Re} \left[ -iy \left( 1 + iy t_N(iy, \nu) \right) \right], \\
\label{eq:deuxieme-moment-mu1}
& \int \lambda^{2} d\mu^1_N(\lambda, \nu) = \lim_{y \rightarrow +\infty} \mathrm{Re} \left[ -iy \left(iy ( 1 + iy t_N(iy, \nu)) + \int \lambda  d\mu^1_N(\lambda, \nu)\right) \right].
\end{align}
(\ref{eq:premier-moment-mu1}) is established e.g. in \cite{hachem-loubaton-najim-aap-2007}, and (\ref{eq:deuxieme-moment-mu1}) can be proved similarly. 
A simple calculation eventually provides
\begin{align}
\label{eq:expression-premier-moment-mu1}
& \int \lambda d\mu^1_N(\lambda, \nu) = \frac{1}{M} \Tr \H_N(\nu), \\
\label{eq:expression-deuxieme-moment-mu1}
& \int \lambda^{2} d\mu^1_N(\lambda, \nu) = \frac{1}{M} \Tr \H_N^{2}(\nu) + c_N \left(\frac{1}{M} \Tr \H_N(\nu)\right)^2.
\end{align}
Using that $\frac{1}{M} \Tr \H_N(\nu) = 1$,  we obtain that for each $\nu$, 
$$
 \int f(\lambda) d\mu^{1}_N(\nu,\lambda)  - 
 \int f(\lambda) d\mu^{c_N}_{MP}(\lambda) = \frac{1}{M} \Tr \H_N^{2}(\nu) - 1 .
$$
It thus remains to study  $\frac{1}{M} \Tr \H_N^{2}(\nu) - 1$. For this, 
we remark that 
$$
\frac{1}{M} \Tr \H_N^{2}(\nu) - 1 = \frac{1}{M} \sum_{i \neq j} \frac{|\a_i^{*} \D(\nu) \a_j|^{2}}{\a_i^{*} \D(\nu) \a_i \; \a_j^{*} \D(\nu) \a_j },
$$
and provide examples of matrices $\A_N$ for which there exists a constant $\kappa > 0$ such that for each $N$ large enough,
\begin{align}
    \min_{\nu \in \Gcal_N^{'}} \frac{1}{M} \Tr \H_N^{2}(\nu) - 1 > \kappa.
    \label{eq:condition_consistency}
\end{align}

\paragraph{Example 1.} 
We first consider the case where $\A_N$ is given by 
\begin{equation}
    \label{eq:matrice-I+G}
\A_N = \I_M + \sigma \frac{\G_N}{\sqrt{M}}  ,  
\end{equation}
where $\sigma > 0$ is a deterministic constant and where $\G_N$ is a $M \times M$
random matrix with $\Ncal_c(0,1)$ i.i.d. entries independent from $\y^{1}$. 
We prove in the following that there exists an event of probability one on which any fixed realization of the sequence $(\G_N)_{N\geq 1}$ generates a deterministic sequence $(\A_N)_{N\geq 1}$ according to \eqref{eq:matrice-I+G} and such that \eqref{eq:condition_consistency} holds.
We denote by $\g_{1,N}, \ldots, \g_{M,N}$ the columns of $\G_N$. 

We first check that conditions (\ref{eq:condition-norme-AN}) and (\ref{eq:condition-norme-colonnes-AN}) hold almost surely w.r.t. the probability distribution of matrices $\G_N$ for $N$ large enough. On the one hand, as $\| \G_N \| \rightarrow 2$ if $N \rightarrow +\infty$, (\ref{eq:condition-norme-AN}) is verified. On the other hand, 
$\| \a_{m,N} \|^{2} = 1 + 2 \frac{\sigma}{\sqrt{M}} \mathrm{Re}(\G_{m,m})  + 
\frac{\sigma^{2}}{M} \| \g_{m} \|^{2}$. It is clear that $\G_{m,m} = \Ocal_{\prec}(1)$ and that $\frac{\sigma^{2}}{M} \| \g_{m} \|^{2} = \sigma^{2} + \Ocal_{\prec}\left( \frac{1}{\sqrt{M}}\right)$. Therefore, 
\begin{equation}
 \label{eq:evaluation-norme-amN}   
\| \a_{m,N} \|^{2} = 1 + \sigma^{2} + \Ocal_{\prec}\left( \frac{1}{\sqrt{M}}\right),
\end{equation}
a property which implies that 
\begin{equation}
    \label{eq:convergence-norme-colonnes-AN}
\lim_{N \rightarrow +\infty} \left| \sup_{m=1, \ldots,M} \| \a_{m,N} \|^{2} - (1 + \sigma^{2}) \right| = 0     .
\end{equation}
Therefore, (\ref{eq:condition-norme-colonnes-AN}) holds almost surely for $N$ large enough. We now evaluate $\frac{1}{M} \Tr \H^{2}(\nu) - 1$ and first notice that, 
as $\sup_{m} s_m(\nu) \leq s_{max} < +\infty$, the inequality 
\begin{equation}
    \label{eq:minoration-Tr-H2-1}
 \frac{1}{M} \Tr \H^{2}(\nu) - 1 \geq \frac{1}{s_{max}} \frac{1}{M} \sum_{i \neq j} \frac{|\a_i^{*} \D(\nu) \a_j|^{2}}{ \|\a_i| \|^{2}  \|\a_j \|^{2}}   ,
\end{equation}
holds. We are thus back to study the right hand side of (\ref{eq:minoration-Tr-H2-1}). 
For this, we establish in the Appendix \ref{sec:proof-prop-power-A-random} the following result. 
\begin{proposition}
    \label{prop:minorant}
    It holds that 
\begin{equation}
    \label{eq:behaviour-Tr-H2}
   \max_{\nu \in \Gcal_N^{'}} \left| \frac{1}{M} \sum_{i \neq j} \frac{|\a_i^{*} \D(\nu) \a_j|^{2}}{ \|\a_i| \|^{2}  \|\a_j \|^{2}} - \mathbb{E} \left(  \frac{1}{M} \sum_{i \neq j} \frac{|\a_i^{*} \D(\nu) \a_j|^{2}}{ (1+\sigma^{2})^{2}} \right) \right| \rightarrow 0 \; a.s. 
\end{equation}    
\end{proposition}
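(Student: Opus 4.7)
The plan is to split the argument into three steps: first replacing the random denominators $\|\a_i\|^2\|\a_j\|^2$ by the deterministic quantity $(1+\sigma^2)^2$, then establishing exponential concentration of the resulting bilinear statistic at each fixed frequency, and finally upgrading to uniform control over $\Gcal_N'$ via a union bound together with Borel--Cantelli.

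For the first step, I would define
\begin{equation*}
T_N(\nu) = \frac{1}{M(1+\sigma^2)^2}\sum_{i\neq j}|\a_i^{*}\D(\nu)\a_j|^{2},
\end{equation*}
and compare it to the expression in the statement. Using \eqref{eq:evaluation-norme-amN} one has $\|\a_i\|^2 - (1+\sigma^2) = \Ocal_{\prec}(1/\sqrt{M})$ uniformly in $i=1,\ldots,M$, hence $(\|\a_i\|^2\|\a_j\|^2)^{-1} = (1+\sigma^2)^{-2} + \Ocal_{\prec}(1/\sqrt{M})$ uniformly. On the exponentially-high-probability event $\{\|\G_N\|\le 3\sqrt{M}\}$ one has the crude bound $|\a_i^{*}\D(\nu)\a_j|^2 \le C s_{\max}^2$ uniformly in $i,j,\nu$, so the total error in replacing the denominator by $(1+\sigma^2)^2$ is $\Ocal_{\prec}(1/\sqrt{M})$ uniformly in $\nu\in[0,1]$. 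It therefore suffices to prove that $\max_{\nu\in\Gcal_N'}|T_N(\nu) - \mathbb{E} T_N(\nu)| \to 0$ almost surely.

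For the second step, I would view $T_N(\nu)$ as a function of the $2M^2$ real Gaussian variables composing $\G_N$. Expanding $\a_i = \e_i + (\sigma/\sqrt{M})\g_i$ shows that, for $i\neq j$,
\begin{equation*}
\a_i^{*}\D(\nu)\a_j = \tfrac{\sigma}{\sqrt{M}}\bigl(s_j(\nu)\overline{G_{ji}} + s_i(\nu) G_{ij}\bigr) + \tfrac{\sigma^2}{M}\g_i^{*}\D(\nu)\g_j,
\end{equation*}
so $T_N(\nu)$ is a polynomial of degree $4$ in the entries of $\G_N$. A direct differentiation, combined with the bounds $\|\D(\nu)\|\le s_{\max}$ and $\|\G_N\|\le 3\sqrt{M}$, yields on this high-probability event a Lipschitz bound of order $1/\sqrt{M}$ for the restriction of $T_N(\nu)$. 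Lemma \ref{le:conditional-concentration} together with Remark \ref{re:evaluation-sigmaN} then gives $T_N(\nu) - \mathbb{E} T_N(\nu) = \Ocal_{\prec}(1/\sqrt{M})$, uniformly in $\nu\in[0,1]$.

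For the third step, because the cardinality of $\Gcal_N'$ satisfies $K'=\Ocal(N^{1-\alpha-\delta})$ (polynomial in $N$), a union bound preserves the stochastic-domination order: for every $\varepsilon>0$ there exists $\gamma>0$ with
\begin{equation*}
\mathbb{P}\Bigl(\max_{\nu\in\Gcal_N'}|T_N(\nu) - \mathbb{E} T_N(\nu)|>\tfrac{N^{\varepsilon}}{\sqrt{M}}\Bigr) \le K'\,e^{-N^{\gamma}} \le e^{-N^{\gamma/2}}
\end{equation*}
for $N$ large enough. These probabilities being summable, Borel--Cantelli yields the a.s.\ bound $\max_{\nu\in\Gcal_N'}|T_N(\nu) - \mathbb{E} T_N(\nu)| = o(1)$ almost surely, and combined with the first step this establishes Proposition \ref{prop:minorant}. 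The main technical obstacle is the Lipschitz-norm bound in the second step: because $T_N(\nu)$ is of degree four, the gradient is a polynomial of degree three in the $G_{ij}$, and one must carefully exploit the pre-factor $1/M$ together with $\|\G_N\|/\sqrt{M}=\Ocal(1)$ on the truncation event to extract the $1/\sqrt{M}$ scaling; all the remaining arguments are routine concentration combined with the very sparse frequency grid $\Gcal_N'$.
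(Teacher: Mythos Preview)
Your overall strategy---split off the denominator, then concentrate the remaining bilinear form via Lemma \ref{le:conditional-concentration}, then union bound over $\Gcal_N'$---is the same as the paper's decomposition $\eta_N = \eta_{1,N}+\eta_{2,N}$. Steps 2 and 3 are fine. There is, however, a real counting error in Step 1.

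The replacement error is
\[
\eta_{1,N}(\nu)=\frac{1}{M}\sum_{i\neq j}|\a_i^{*}\D(\nu)\a_j|^{2}\Bigl(\frac{1}{\|\a_i\|^{2}\|\a_j\|^{2}}-\frac{1}{(1+\sigma^{2})^{2}}\Bigr).
\]
You bound each bracket by $\Ocal_{\prec}(1/\sqrt{M})$ and each $|\a_i^{*}\D\a_j|^{2}$ by a constant $Cs_{\max}^{2}$ on $\{\|\G_N\|\le 3\sqrt{M}\}$. But the sum has $M(M-1)$ terms, so this yields only $\frac{1}{M}\cdot M^{2}\cdot C\cdot \Ocal_{\prec}(1/\sqrt{M})=\Ocal_{\prec}(\sqrt{M})$, which diverges. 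The per-term bound is a factor of $M$ too crude.

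Two easy fixes. The paper's route is to observe that for $i\neq j$,
\[
\a_i^{*}\D(\nu)\a_j=\tfrac{\sigma}{\sqrt{M}}\bigl(s_i(\nu)G_{ij}+s_j(\nu)\bar G_{ji}\bigr)+\tfrac{\sigma^{2}}{M}\g_i^{*}\D(\nu)\g_j,
\]
and, writing $\g_i^{*}\D\g_j=\g^{*}(\e_i\e_j^{*}\otimes\D)\g$, the Hanson--Wright inequality gives $\g_i^{*}\D\g_j=\Ocal_{\prec}(\sqrt{M})$; hence $|\a_i^{*}\D\a_j|^{2}\prec 1/M$ for $i\neq j$. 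Then each summand is $\Ocal_{\prec}(M^{-3/2})$ and Lemma \ref{le:domination-moyenne} (applied over the $M^{2}$ pairs) yields $\eta_{1,N}=\Ocal_{\prec}(1/\sqrt{M})$. Alternatively, you can pull out the max of the bracket and control the remaining sum globally: on your event,
\[
\frac{1}{M}\sum_{i\neq j}|\a_i^{*}\D\a_j|^{2}\le \frac{1}{M}\|\A^{*}\D\A\|_F^{2}\le \|\A^{*}\D\A\|^{2}\le \|\A\|^{4}s_{\max}^{2}=\Ocal(1),
\]
which again gives $\eta_{1,N}=\Ocal_{\prec}(1/\sqrt{M})$. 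Either repair makes Step 1 valid; what you cannot do is use the per-entry bound $|\a_i^{*}\D\a_j|^{2}\le C$ alone.
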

An easy calculation leads to 
$$
\mathbb{E} \left(  \frac{1}{M} \sum_{i \neq j} \frac{|\a_i^{*} \D(\nu) \a_j|^{2}}{ (1+\sigma^{2})^{2}} \right) = \frac{\sigma^{2}(\sigma^{2} + 2)}{(1+\sigma^{2})^{2}} 
\left( 1 - \frac{1}{M} \right) \frac{1}{M} \Tr \D^{2}(\nu) > \frac{1}{2} \frac{\sigma^{2}(\sigma^{2} + 2)}{(1+\sigma^{2})^{2}} s_{min}^{2},
$$
for each $M \geq 2$. Proposition \ref{prop:minorant} thus implies that, 
on a set of probability 1,  for each $N$ large enough, $ \min_{\nu \in \Gcal_N^{'}} \frac{1}{M} \Tr \H^{2}(\nu) - 1 \geq \kappa$
for some constant $\kappa$ . We thus conclude that almost surely, for each $N$ large enough, $d^{1}_N(f) > \sqrt{K'} B \kappa$ for some constant $\kappa$, and eventually that, as expected, (\ref{eq:consistency-zeta1}) holds. \\

\paragraph{Example 2.}
We finally consider the case where 
$\A_N^{*} \A_N = \Sigmabs_N$ where 
\begin{equation}
    \label{eq:matrix-A-Autoregressive}
    \a_i^{*} \a_j = (\Sigmabs_N)_{i,j} = \frac{\sigma^{|i-j|}}{1 - \sigma^{2}},
\end{equation}
where $0 < \sigma < 1$. $\A_N$ verifies (\ref{eq:condition-norme-AN}) and (\ref{eq:condition-norme-colonnes-AN}). We justify that under the assumption that the spectral densities $(s_m)_{m \in \mathbb{Z}}$ all coincide, then (\ref{eq:consistency-zeta1}) holds. In this context, $\frac{1}{M} \Tr \H_N^{2}(\nu) - 1$ 
coincides with $\frac{1}{M} \sum_{i \neq j} \frac{|\a_i^{*} \a_j|^{2}}{ \| \a_i \|^{2} \;  \| \a_j \|^{2}}$ which does not depend on the frequency $\nu$. 
Using (\ref{eq:matrix-A-Autoregressive}), we obtain immediately that 
$$
\frac{1}{M} \sum_{i \neq j} \frac{|\a_i^{*} \a_j|^{2}}{ \| \a_i \|^{2} \;  \| \a_j \|^{2}} = 
\frac{2}{M} \sum_{i < j} \sigma^{2(j-i)} = 2 \sum_{u=1}^{M-1} \sigma^{2u} - \frac{2}{M} \sum_{u=1}^{M-1} u \sigma^{2u},
$$
converges towards $\frac{2 \sigma^{2}}{1 - \sigma^{2}}$. Therefore, 
$$
 \int f(\lambda) d\mu^{1}_N(\nu,\lambda)  - 
 \int f(\lambda) d\mu^{c_N}_{MP}(\lambda) >  \frac{\sigma^{2}}{1 - \sigma^{2}},
$$
for each $N$ large enough, which, in turn, leads to  (\ref{eq:consistency-zeta1}). 

\section{Numerical simulations}
\label{sec:simulations}
This section presents simulated examples to demonstrate the finite sample performance of the test statistics defined in Equations \ref{eq:def-statistics-zeta1-clt} and \ref{eq:def-statistics-zeta-clt}, which we restate here for clarity:
\begin{align*}
 &\xi_{N,0}(f, \nu) = \frac{B \theta_N(f,\nu) }{\sigma_N(f)}, \\
 &\xi_{N,1}(f) = \frac{\zeta_{N,1}(f)}{\sigma_N(f)} = \frac{1}{|\Gcal_N^{'}|^{1/2}} \sum_{\nu \in \Gcal_N^{'}} \xi_{N,0}(f, \nu) \to_\mathcal{D} \Ncal(0, 1),
 \\ 
  &\xi_{N,2}(f) =   \frac{\zeta_{N,2}(f)}{\sqrt{2}\sigma^2_N(f)} = \frac{1}{\sqrt{2}} \frac{1}{|\Gcal_N^{'}|^{1/2}} \sum_{\nu \in \Gcal_N^{'}} \left( \xi_{N,0}(f, \nu)^{2} - 1 \right) \to_\mathcal{D} \Ncal(0, 1).
\end{align*}

As mentioned in Remark (\ref{re:chi2-approximation}), $ \xi_{N,2}(f)$ can be approximated in distribution by $\frac{\chi^{2}(K') - K'}{\sqrt{2 K'}}$ (where we recall that $K'=|\Gcal_N^{'}|$), and we will refer to this approximation as the $\chi^{2}(K')$ approximation in this section . \\

Furthermore, we notice that, as $(\xi_{N,0}(f, \nu))_{\nu \in \Gcal_N'}$ are nearly independent and each one converges in distribution towards a standard Gaussian random variable, it is reasonable to expect that the maximum of $\xi_{N,0}(f, \nu)^{2}$ over the grid $\Gcal_N^{'}$ converges, after proper recentering, towards a Gumbel distribution. More precisely, the statistic $\xi_{N,3}(f)$ defined by 
$$
\xi_{N,3}(f) = \frac{1}{2} \left(\max_{\nu\in\Gcal_N'} \xi_{N,0}(f, \nu)^{2} -  \left( 2 \log K' -  \log(\log K') - \log \pi  \right) \right), 
$$
should verify 
$$
\xi_{N,3}(f) \to_\mathcal{D} \Gcal,
$$
where $\Gcal$ is the Gumbel distribution with cumulative distribution function $x \mapsto e^{-e^{-x}}$. The proof of the behavior of $\xi_{N,3}(f)$ does not seem an easy task, and is outside the scope of this paper. However, we also evaluate by numerical simulations the accuracy of this approximation of $\xi_{N,3}(f)$. \\

Our test statistics will be compared against the one developed in \cite{pan-gao-yang-jasa-2014} which was developed under the assumption 
that the spectral densities $(s_m)_{m=1, \ldots, M}$ all coincide. In this context, the rows of the $M \times N$ matrix 
$\Y_N$ with elements $(\Y_N)_{m,n} = y_{m,n}$ are i.i.d. It will be referred to "PGY" (which stands for the author names Pan, Gao and Yang) in the tables below. For convenience, we provide a summary of the corresponding test statistic implementation:

\begin{enumerate}
\item The time series is split in half along the spatial dimension: $\y_n = \left( \begin{array}{c} \y_n^{(1)} \\  \y_n^{(2)} \end{array} \right)$, where $\y_n^{(1)}$ consists of the first $\lfloor M/2 \rfloor$ dimensions, and $\y_n^{(2)}$ comprises the remaining dimensions.
\item Compute the covariance matrices $\S_N^{(i)} = \frac{1}{\lfloor M/2 \rfloor} \Y_N^{(i)*} \Y_N^{(i)}$ for $i=1,2$, where $\Y_N^{(i)}=(\y_1^{(i)}, \ldots, \y_N^{(i)})$.
\item Define $\tilde{G}_N(x) = G_N^{(1)}(x) - G_N^{(2)}(x) = N (F_N^{(1)}(x) - F_N^{(2)}(x))$, where $F_N^{(i)}$ is the cumulative distribution function of the eigenvalue distribution of $\S_N^{(i)}$.
\item It is shown that for two functions $f,g$ satisfying certain conditions (e.g. analytic on an open region containing the support of the limiting distribution of $\frac{1}{M} \Y^{*} \Y$), and a regime where $\frac{M}{N} \to d>0$, the random variable $(\int f d \tilde{G}_N, \int g d \tilde{G}_N)^{T}$ converges in distribution to a bivariate normal distribution with mean 0 and known covariance $\mathbf{\tilde{\Omega}}$. Note that this covariance matrix needs to be estimated, which in practice can take a non-negligible amount of time. 
\item The test statistic $\xi_{N,gpy}$ defined below is shown to converge to a $\chi^2(2)$ random variable under the null hypothesis.
$$\xi_{N,gpy} = \left( \int f d \tilde{G}_N, \int g d \tilde{G}_N \right) \mathbf{\tilde{\Omega}}^{-1} \begin{pmatrix}
\int f d \tilde{G}_N \\
\int g d \tilde{G}_N
\end{pmatrix}, $$
for $f(\lambda) = \lambda$ and $g(\lambda) = \lambda^{2}$. 

\end{enumerate}

It is important to note that the regime used in \cite{pan-gao-yang-jasa-2014} differs from the one considered in our paper: \cite{pan-gao-yang-jasa-2014} assumes $M=O(N)$, while here it is required that $M=o(N)$. We also recall that the results of  \cite{pan-gao-yang-jasa-2014} are valid if all $M$ time series share the same spectral density. 

We begin by evaluating the behavior of the various test statistics under the null hypothesis. Subsequently, we introduce spatial dependence and measure the power of the proposed tests. To calculate the sizes and power values, we first use the asymptotic normality of the test statistics. Let $z_{1 - \tilde{\alpha}}$ denote the $(1 - \tilde{\alpha})$-quantile of the asymptotic null distribution $\Ncal(0,1)$. With $R$ replications of the dataset simulated under the null hypothesis, we compute the empirical size as:
$$
    \hat{\alpha} = \frac{\text{Number of } {\xi_{i,N}^{H_0}(f) \ge z_{1 - \tilde{\alpha}} }}{R},
$$
where $\xi_{i,N}^{H_0}(f)$ represents the value of the test statistic $\xi_{i,N}(f)$ based on data simulated under the null hypothesis for $i=1,2$.  We proceed similarly when the limiting distribution is $\chi^{2}(K')$ (for $\xi_{N,2}$) or Gumbel (for $\xi_{N,3}$). In our simulations, we set $R = 10^4$ as the number of repetitions and use a significance level of $\tilde{\alpha} = 10\%$. Similarly, we calculate the empirical power as:
$$
    \hat{\beta} = \frac{\text{Number of } {\xi_{i,N}^A(f) \ge z_{1 - \tilde{\alpha}} }}{R},
$$
where $\xi_{i,N}^A(f)$ represents the value of the test statistic $\xi_{i,N}$ based on data simulated under the alternative hypothesis, and where $z_{1-\tilde{\alpha}}$ represents the 
$(1-\tilde{\alpha})$-quantile of the $\Ncal(0,1)$, $\chi^{2}(K')$ or Gumbel distributions. 

Throughout our analysis, we will use $f: x \mapsto (x-1)^2$ as our test function. For a fair comparison between our proposal and 
\cite{pan-gao-yang-jasa-2014} which uses 2 fonctions, we have implemented a version of the approach of \cite{pan-gao-yang-jasa-2014} 
using  the single function $f: x \mapsto (x-1)^2$. Also, unless otherwise stated, we will use $M = \lfloor N^\alpha \rfloor$ with $\alpha=2/3$ and $B = \lfloor M/c \rfloor$ with $c=1/2$. Last, we will use $\Gcal_N'=\Gcal_N$: although our theorem requires to sample the frequencies on the subset $\Gcal_N'$, we have observed that the performance of the tests tends to be better when $\Gcal_N'=\Gcal_N$.

\subsection{\texorpdfstring{Numerical simulations under $\Hcal_0$}{Numerical simulations under the null hypothesis}}
To evaluate the performance of our proposed test statistics, we generate sample data using several Data Generation Processes (DGPs). We begin with a $M$-dimensional ARMA(1,1) process $(\y_n)_{n\ge0}$ defined by:
\begin{equation}
\label{eq:dgp1}
DGP_1: y_{m, n} - \phi_m y_{m, n-1} = \epsilon_{m,n} + \psi_m \epsilon_{m,n-1}, 
\end{equation}
where $\{\epsilon_{m,n}, n=1,\ldots,N, m=1,\ldots,M\}$ are i.i.d. standard complex normal random variables $\Ncal_{c}(0,1)$, while $\phi_m$ and $\psi_m$ are in $(-1,1)$ for all $m$. The index $n$ will always refer to the time dimension (of size $N$), while $m$ represents the spatial dimension (of size $M$). Our proposed test statistics aim to detect spatial dependence in these time series.

\subsubsection{Graphical representation of the empirical distribution of test statistics}
We compute the proposed statistics $\xi_{N,1}$ to $\xi_{N,3}$ under $\Hcal_0$ using the simulated data for $\phi_m = 0.1$ and 
$\psi_m = 0.5$ for all $m$. Figure \ref{figure:zeta-all-distribution} compares the empirical distributions of these statistics against their theoretical limits. Two versions of each test statistic is calculated:
\begin{itemize}
\item "Estimated": for each $\nu \in \Gcal_N$, $r_N(\nu)$ is estimated from the observation by $\hat{r}_N(\nu)$ defined in (\ref{eq:def-hat-rN}). 
\item "Oracle": $r_N(\nu)$ is assumed to be known for each $\nu \in \Gcal_N$. 
\end{itemize}
The proximity of these two versions demonstrates the robustness of our estimation procedure for $\phi_m =0.1$. Also, loosely speaking, $\xi_{N,2}$ is the sum of the square of $|\Gcal_N|$ almost independent $\Ncal(0,1)$ variables. It is therefore natural to observe for finite $N$ a better fit of this test statistics against a $\chi^2(|\Gcal_N|)$ distribution instead of the Gaussian limit. 
\begin{figure}[H]
\centering
\includegraphics[scale=0.23]{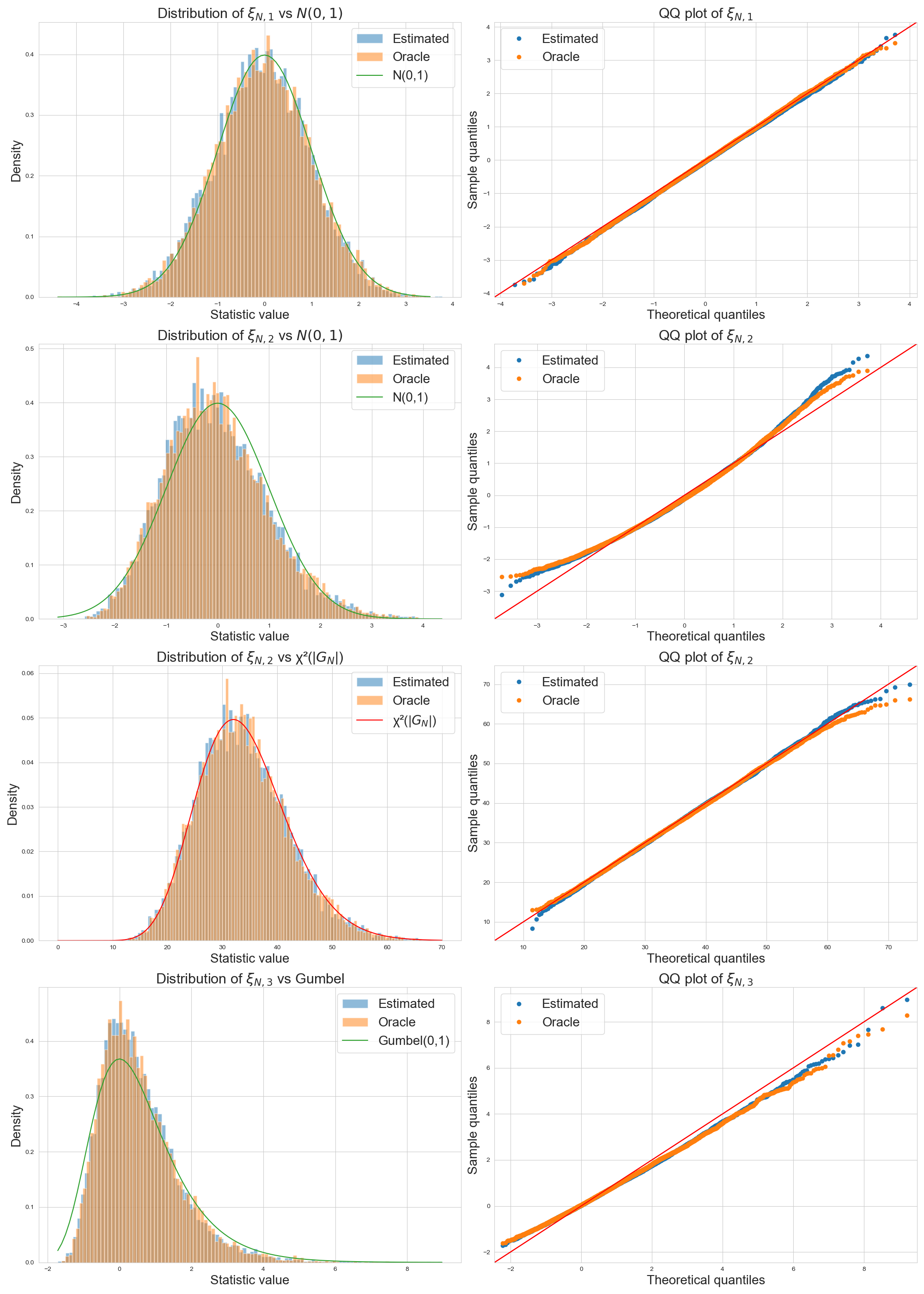}
\caption{$\xi_{N,1}, \xi_{N,2}$ (against $\Ncal(0,1)$ or a $\chi^2(|\Gcal_N|)$ random variable) and $\xi_{N,3}$ against their respective limiting distributions: histograms (left) and qq-plots (right). Data generated by DGP 1, $N=10^4, B = 301, M=120, L=5$. $\phi_m=0.1$ and $\psi_m=0.5$ for all $m$. $f: x \mapsto (x-1)^2$. $10^4$ repetitions.}
\label{figure:zeta-all-distribution}
\end{figure}

\subsubsection{Performance when $\alpha$ is close from  $\frac{7}{9}$.}
We have proved that the central limit theorem (\ref{eq:CLT-B-thetaN}) holds under the assumption that $\alpha < \frac{4}{5}$, and (\ref{eq:clt-zeta1}, \ref{eq:clt-zeta}) are valid if $\alpha < \frac{7}{9}$. These limitations are due to the presence of an unknown  deterministic $\Ocal\left( \frac{B^{5}}{N^{4}}\right)$ term in $ B \theta_N(f,\nu)$. In the following, we thus evaluate the numerical impact of this term w.r.t. $\alpha$ on the behavior of the means and standard deviations of $\xi_{N,0}(f,\nu)$ and $\xi_{N,1}(f)$. For a range of $N$ and $\alpha$, we simulate data under $\Hcal_0$ using DGP 1 with Gaussian errors, and plot in Figure \ref{figure:clt_arma_large_alpha_nu} the empirical mean (in logarithmic scale) and the standard deviation of the sampled $\xi_{N,0}(f,\nu)$ for a fixed frequency $\nu$ over $R=10^4$ repetitions. The term $\frac{B^{5}}{N^{4}}$ for $N=10000$ is also represented. 

\begin{figure}[H]
\centering
\includegraphics[width=\columnwidth, height=\textheight, keepaspectratio]{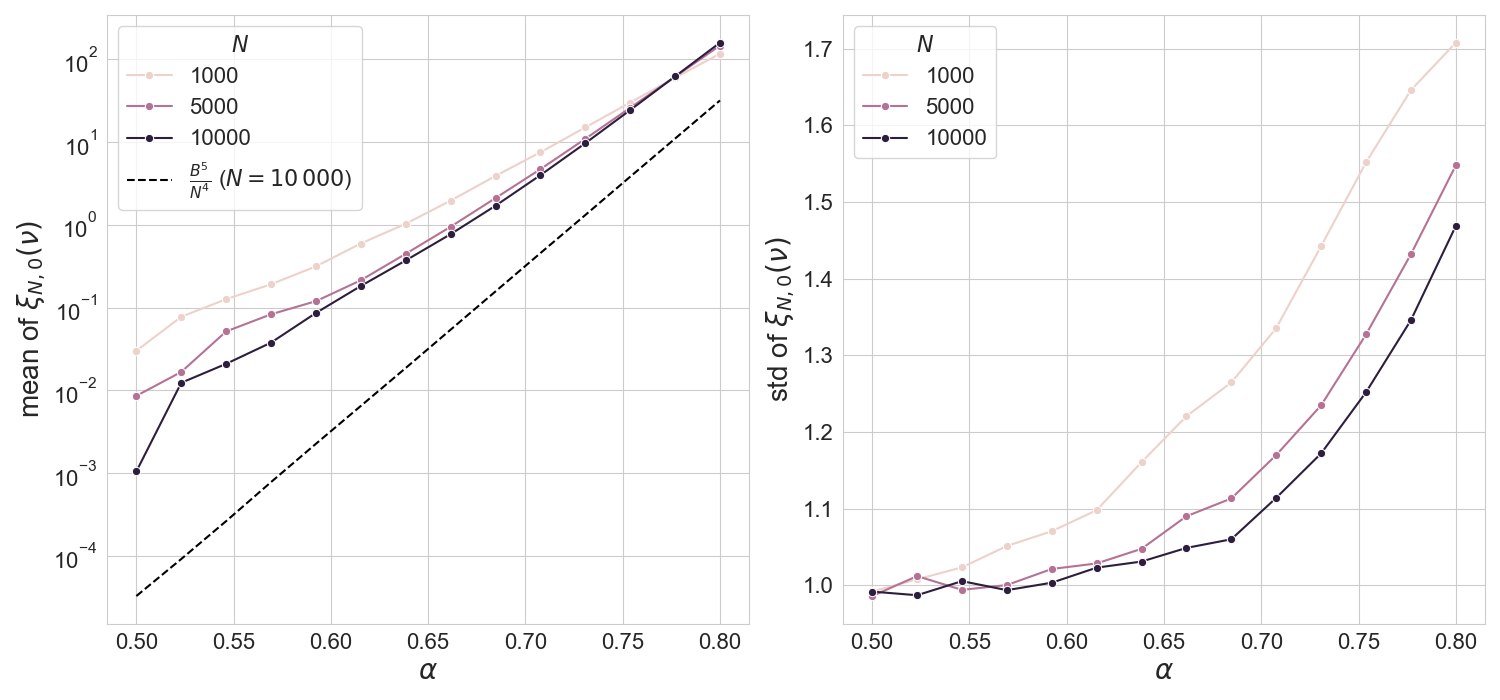}
\caption{Sample mean (left) and standard deviation (right) of $\xi_{N,0}(f, \nu)$ over $10^4$ repetitions as a function of $\alpha$ and $N$. Data generated by DGP 1, $c=1/2$, $\phi_m=0.1$ and $\psi_m=0.5$ for all $m$. $f: x \mapsto (x-1)^2$. $\nu=0.3$.  $r_N(\nu)$ is assumed to be known.}
\label{figure:clt_arma_large_alpha_nu}
\end{figure}

As seen in Figure \ref{figure:clt_arma_large_alpha_nu} left, the empirical mean of 
$\xi_{N,0}(f,\nu)$ tends to diverge when $\alpha$ increases. When $\alpha > 0.6$, 
for $N=5000$ and $N = 10.000$, this empirical mean behaves as the term proportional to $\frac{B^5}{N^4}$ evaluated for $N = 10.000$, and represented on Figure \ref{figure:clt_arma_large_alpha_nu} left. This tends to confirm that the divergence of the empirical mean of $\xi_{N,0}(f,\nu)$ is due to the $\Ocal\left(\frac{B^{5}}{N^{4}}\right)$ term of $\xi_{N,0}(f,\nu)$, up to a multiplicative constant. Moreover, while the sample variance remains close to one for $\alpha<0.7$, we also see a small degradation for larger values of $\alpha$. This emphasizes that when $\alpha$ becomes closer from the threshold $\frac{4}{5}$, for $N \leq 10.000$, $\xi_{N,0}(f,\nu)$ does not behave at all as a zero mean random variable, while its variance remains reasonably close from 1. In Figure \ref{figure:clt_arma_large_alpha} are represented the same quantities for $\xi_{N,1}(f)$. It is also observed that the test statistics is becoming unreliable when $\alpha > 0.75$. This tends to show that for $N \leq 10.000$, the use of $\xi_{N,1}(f)$ and $\xi_{N,2}(f)$ is questionable for values of 
$\alpha$ close from $\frac{4}{5}$. 

\begin{figure}[H] 
\centering
\includegraphics[scale=0.33]{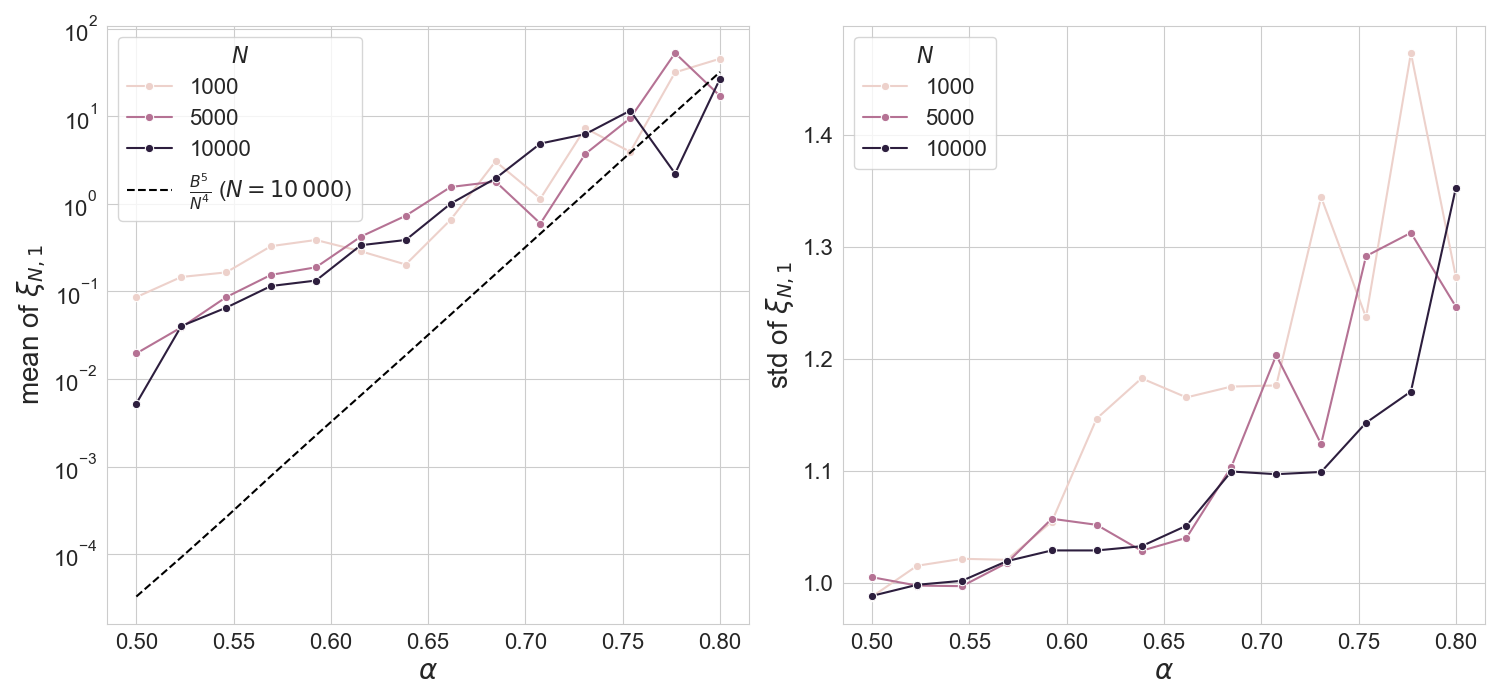}
\caption{Sample mean (left) and standard deviation (right) of $\xi_{N,1}(f)$ over $10^4$ repetitions as a function of $\alpha$ and $N$. Data generated by DGP 1, $c=1/2$, $\phi_m=0.1$ and $\psi_m=0.5$ for all $m$. $f: x \mapsto (x-1)^2$. $r_N(\nu)$ is assumed to be known.}
\label{figure:clt_arma_large_alpha}
\end{figure}

\subsubsection{Type I Error Analysis}
Table \ref{table:type-I-error-arma} presents the empirical sizes of both our test statistics and the PGY test statistic across various sample sizes $N$. As $N$ increases, the type I error rates converge to the target level of $10\%$. The PGY test statistics demonstrate adequate control of type I error when identical ARMA coefficients are applied across all time series. However, when each time series possesses its own distinct spectral density, the performance of the PGY test of course deteriorates while our test maintains consistent performance.

\begin{table}[h]
\centering
\setlength{\tabcolsep}{4pt}
\resizebox{\textwidth}{!}{
\begin{tabular}{lrrrrrrrrrr}
\hline
\toprule
ARMA coef. & \multicolumn{5}{r}{constant} & \multicolumn{5}{r}{random} \\
$\xi$ & $\xi_1$ & $\xi_2 (\Ncal)$ & $\xi_2 (\chi^2)$ & $\xi_3$ & $\xi_{pgy}$ & $\xi_1$ & $\xi_2 (\Ncal)$ & $\xi_2 (\chi^2)$ & $\xi_3$ & $\xi_{pgy}$ \\
N &  &  &  &  &  &  &  & & & \\
\midrule
1000 & 0.137 & 0.119 & 0.153 & 0.109 & 0.104 & 0.113 & 0.082 & 0.098 & 0.078 & 0.614 \\
2000 & 0.131 & 0.109 & 0.134 & 0.090 & 0.100 & 0.114 & 0.087 & 0.096 & 0.072 & 0.694 \\
3000 & 0.106 & 0.095 & 0.122 & 0.092 & 0.097 & 0.095 & 0.077 & 0.088 & 0.070 & 0.734 \\
4000 & 0.116 & 0.098 & 0.119 & 0.091 & 0.102 & 0.105 & 0.082 & 0.093 & 0.079 & 0.756 \\
5000 & 0.112 & 0.096 & 0.115 & 0.084 & 0.106 & 0.102 & 0.081 & 0.092 & 0.074 & 0.773 \\
6000 & 0.111 & 0.097 & 0.114 & 0.083 & 0.103 & 0.101 & 0.081 & 0.092 & 0.075 & 0.780 \\
7000 & 0.112 & 0.097 & 0.113 & 0.085 & 0.095 & 0.107 & 0.085 & 0.093 & 0.069 & 0.789 \\
8000 & 0.111 & 0.093 & 0.113 & 0.083 & 0.096 & 0.101 & 0.085 & 0.091 & 0.070 & 0.802 \\
\bottomrule
\hline
\end{tabular}
}

\caption{Sample type I error at $10\%$. On the left side ("constant") are shown type I errors for $\phi_m = 0.1$ and $\psi_m = 0.5$ for all $m$. On the right side ("random") $\phi_m$ and $\psi_m$ are sampled independently as uniform $U(-0.5, 0.5)$ variable. $\alpha=2/3$, $c=1/2$, $L=\lfloor N^{\frac{1}{4}} \rfloor$, $f: x \mapsto (x-1)^2$. $10^4$ repeats.}
\label{table:type-I-error-arma}
\end{table}

Although robust to heterogeneous ARMA specification, it is observed that the primary discrepancy between the expected and empirical levels of our test statistics arises from the estimation of the spectral densities $s_m(\nu)$ used to compute the corrective term $r_N(\nu)$. Table \ref{table:type-various-ar} displays the $\xi_{N,2}(\chi^2)$ type I error rates for both the "estimated" and "oracle" cases. The results show that the "estimated" version of the test statistics begins to break down when $\phi_m > 0.5$. This deterioration occurs because at high values of the AR parameter, the spectral density becomes highly concentrated around $\nu=0$ or $\nu=1/2$ (depending on the sign of $\phi_m$), which the lag-window estimator fails to approximate accurately. In contrast, the "oracle" version maintains good control of its type I error up to $\phi_m = 0.7$ where the speed of convergence of the type I error towards $10\%$ is more slowly. This might be due to  the observation that, as the correlation in the data sample strengthens, the number of effectively independent samples decreases. 

\begin{table}[h]
\centering
\setlength{\tabcolsep}{4pt}
\resizebox{\textwidth}{!}{
\begin{tabular}{lrrrrrrrrrr}
\hline
\toprule
spectral densities & \multicolumn{5}{r}{estimated} & \multicolumn{5}{r}{oracle} \\
$\phi_m$ & 0.1 & 0.3 & 0.5 & 0.6 & 0.7 & 0.1 & 0.3 & 0.5 & 0.6 & 0.7 \\
$N$ &  &  &  &  &  &  &  &  &  &  \\
\midrule
1000 & 0.083 & 0.087 & 0.166 & 0.971 & 1.000 & 0.082 & 0.096 & 0.112 & 0.137 & 0.277 \\
2000 & 0.083 & 0.093 & 0.125 & 0.565 & 1.000 & 0.083 & 0.092 & 0.107 & 0.132 & 0.370 \\
3000 & 0.092 & 0.092 & 0.112 & 0.402 & 1.000 & 0.087 & 0.095 & 0.108 & 0.125 & 0.173 \\
4000 & 0.093 & 0.097 & 0.117 & 0.326 & 1.000 & 0.090 & 0.095 & 0.104 & 0.120 & 0.183 \\
5000 & 0.092 & 0.103 & 0.114 & 0.223 & 1.000 & 0.096 & 0.099 & 0.107 & 0.118 & 0.172 \\
6000 & 0.096 & 0.099 & 0.111 & 0.229 & 1.000 & 0.099 & 0.099 & 0.101 & 0.112 & 0.168 \\
7000 & 0.092 & 0.098 & 0.105 & 0.135 & 1.000 & 0.098 & 0.099 & 0.106 & 0.113 & 0.157 \\
8000 & 0.095 & 0.097 & 0.110 & 0.179 & 1.000 & 0.099 & 0.095 & 0.102 & 0.111 & 0.156 \\
\bottomrule
\hline
\end{tabular}
}
\caption{Sample type I error at $10\%$ for $\xi_{N,2}(\chi^2)$. Data generated as DGP 1, $\psi_m = 0$ for all $m$. On the left side ("estimated") are shown type I errors in the case where the corrective term $r_N(\nu)$ is computed using a lag-window estimator of the spectral densities $s_m(\nu)$, while on the right side ("oracle") the true spectral densities are provided. $\alpha=2/3$, $c=1/2$, $L=\lfloor N^{\frac{1}{4}} \rfloor$, $f: x \mapsto (x-1)^2$. $10^4$ repeats.}
\label{table:type-various-ar}
\end{table}

In the expression (\ref{eq:def-theta-f-nu}) of $\theta_N(f,\nu)$, we have seen that for $\alpha < 2/3$, the term proportional to $r_N(\nu)v_N$ is negligible compared to the one scaled as $1/B$ (see Remark \ref{re:clt-alpha-inferieur-2tiers}). That means that it may be possible to bypass the computation of $r_N(\nu)v_N$ at a small type-I error cost. In Table \ref{table:type-dgp-1-correction} is shown a comparison where we choose to either compute all corrective terms or ignore the one proportional to $r_N(\nu)v_N$. As we can see, the type I error is significantly degraded in the second case.

\begin{table}[h]
\centering
\begin{tabular}{lrrrrrrrr}
\hline
\toprule
correction & \multicolumn{4}{r}{with $r_N(\nu)$ correction} & \multicolumn{4}{r}{no $r_N(\nu)$ correction} \\
$\xi$ & $\xi_1$ & $\xi_2 (\Ncal)$ & $\xi_2 (\chi^2)$ & $\xi_3$ & $\xi_1$ & $\xi_2 (\Ncal)$ & $\xi_2 (\chi^2)$ & $\xi_3$  \\
$N$ &  &  &  &  &  &  &  &  \\
\midrule
1000 & 0.098 & 0.079 & 0.089 & 0.072 & 0.244 & 0.120 & 0.146 & 0.119 \\
2000 & 0.115 & 0.096 & 0.097 & 0.071 & 0.278 & 0.127 & 0.150 & 0.113 \\
3000 & 0.107 & 0.087 & 0.093 & 0.077 & 0.254 & 0.110 & 0.133 & 0.108 \\
4000 & 0.099 & 0.082 & 0.086 & 0.073 & 0.247 & 0.106 & 0.129 & 0.103 \\
5000 & 0.100 & 0.091 & 0.091 & 0.070 & 0.269 & 0.113 & 0.138 & 0.105 \\
6000 & 0.102 & 0.092 & 0.090 & 0.081 & 0.253 & 0.103 & 0.128 & 0.097 \\
7000 & 0.100 & 0.091 & 0.090 & 0.074 & 0.254 & 0.105 & 0.127 & 0.099 \\
8000 & 0.104 & 0.090 & 0.095 & 0.081 & 0.270 & 0.113 & 0.134 & 0.103 \\
\bottomrule
\hline
\end{tabular}

\caption{Sample type I error at $10\%$. Data generated as DGP1, $\phi_m = 0.1$ and $\psi_m = 0.5$ for all $m$. On the left side ("with $r_N(\nu)$ correction") are shown type I errors in the case where the corrective term $r_N(\nu)$ is computed using a lag-window estimator of the spectral densities $s_m(\nu)$, while on the right side ("no $r_N(\nu)$ correction") the term proportional to $r_N(\nu)$ is ignored. $\alpha=0.6$, $c=1/2$, $L=\lfloor N^{\frac{1}{4}} \rfloor$, $f: x \mapsto (x-1)^2$, $10^4$ repeats.}
\label{table:type-dgp-1-correction}
\end{table}

\subsubsection{Robustness to non-Gaussian innovations}

We repeat the previous experiment using DGP 1, but with innovations drawn from a non-Gaussian distribution. Specifically, the i.i.d. errors $\epsilon_{m,n}$ in \eqref{eq:dgp1} are generated as $\epsilon_{m,n} = \sqrt{\tau_n} w_{m,n}$ where $\{w_{m,n}:m=1,\ldots,M, n=1,\ldots,N\}$ are i.i.d. $\Ncal_{c}(0,1)$ and $\tau_{1},\ldots,\tau_N$ are i.i.d. positive random variables independent of the sequence $(w_{m,n})$. We notice that the time series $(\epsilon_{m})_{m=1, \ldots, M}$ are not independent (for each $m_1 \neq m_2$ and each $n$, 
$\epsilon_{m_1,n}$ and $\epsilon_{m_2,n}$ are dependent random variables), 
but they remain uncorrelated. 
In particular, we consider the following cases frequently used in the field of signal processing \cite{ollila2012complex}:
\footnote
{
    We denote by $\Gcal(\alpha,\lambda)$ the Gamma distribution with density
    $x \mapsto \Gamma(\alpha)^{-1}\lambda^{\alpha} x^{\alpha-1}\erm^{-\lambda x}$ where $\Gamma$ denotes the usual Gamma function.
}
\begin{enumerate}
    \item \textit{complex Student's distribution.} When choosing $\tau_1^{-1} \sim \Gcal\left(\frac{k}{2},\frac{k}{2}\right)$, $\epsilon_{m,n}$ follows the so-called complex Student's distribution with $k$ degrees of freedom;
    \item \textit{K-distribution.} When choosing $\tau_1 \sim \Gcal\left(k,k\right)$, $\epsilon_{m,n}$ follows the K distribution with $k$ degrees of freedom.
\end{enumerate}

Figure \ref{figure:zeta-all-distribution-non-gaussian} displays the empirical distributions of the statistics $\xi_{N,1}$ to $\xi_{N,3}$ (using the estimated $r_N(\nu)$) against their respective theoretical limits over $R=10^4$ repetitions.

\begin{figure}[H] 
\centering
\includegraphics[scale=0.23]{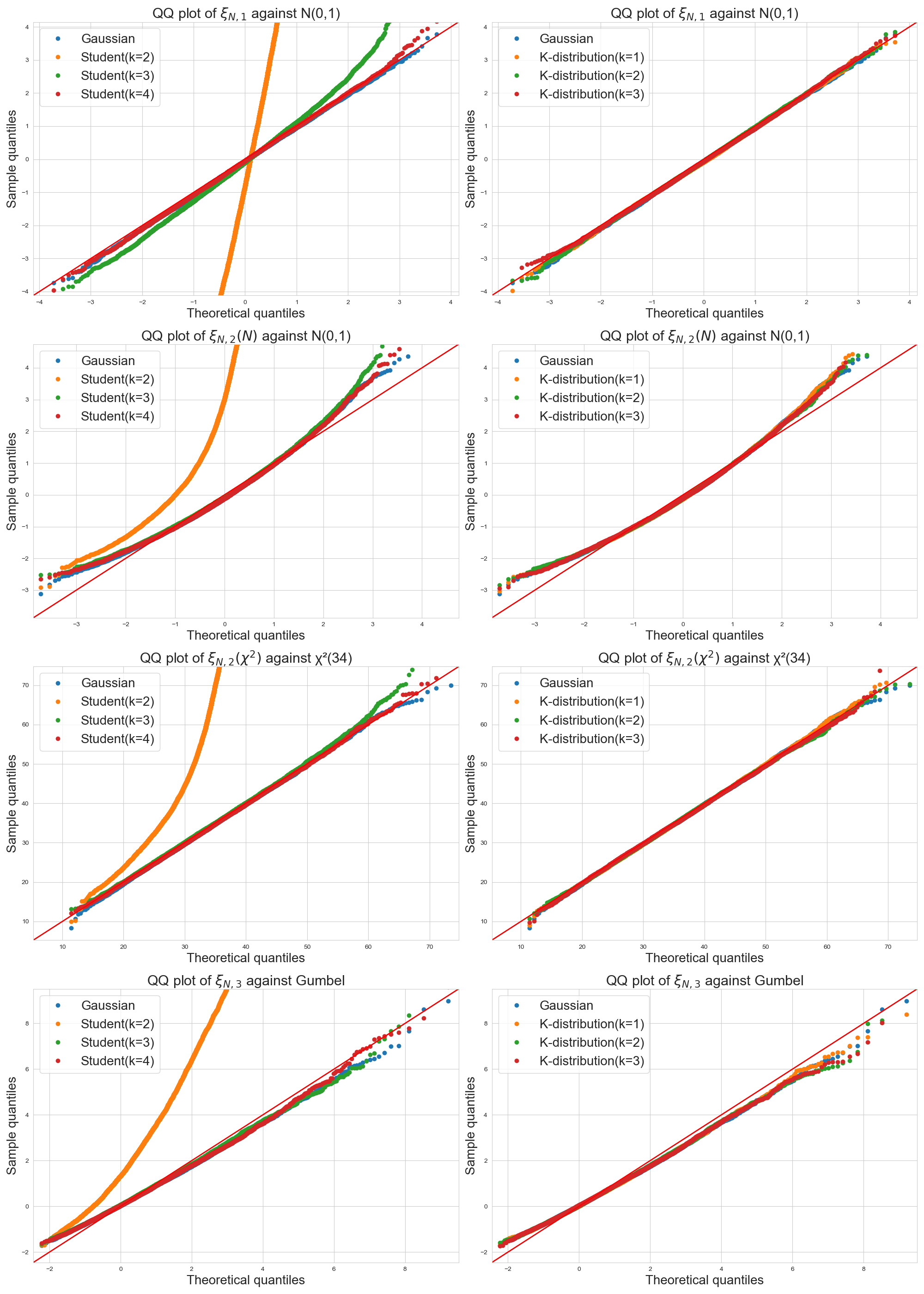}
\caption{Empirical distributions of $\xi_{N,1}$ (against a $\Ncal(0,1)$), $\xi_{N,2}$ (against a $\Ncal(0,1)$ and a $\chi^2(|\Gcal_N|)$) and $\xi_{N,3}$ (against a Gumbel random variable). Data generated by DGP 1 with non-Gaussian innovations (complex Student's distribution on the left, K-distribution on the right), $N=10^4, M=120, B = 301, L=5$. $\phi_m=0.1$ and $\psi_m=0.5$ for all $m$. $f: x \mapsto (x-1)^2$. $10^4$ repetitions.}
\label{figure:zeta-all-distribution-non-gaussian}
\end{figure}

The QQ-plots in Figure \ref{figure:zeta-all-distribution-non-gaussian} show good agreement between the empirical distributions and their theoretical counterparts, similar to the Gaussian case results depicted in Figure \ref{figure:zeta-all-distribution}. An exception occurs when the noise follows a complex Student's distribution with 2 degrees of freedom (which lacks a finite second moment). The type I errors presented in Table \ref{table:type-I-error-arma-non-gaussian} show that $\xi_{N, 2}$ is more robust than $\xi_{N,1}$, and that the performance of the PGY test is quite poor. The lack of robustness of the PGY test follows clearly from the fact that the presence of the $(\tau_n)_{n=1, \ldots, N}$ modifies the limit eigenvalue distribution of matrices $\S^{i}_N$, $i=1,2$ w.r.t. the Gaussian case. 

\begin{table}[h]
\centering
\setlength{\tabcolsep}{4pt}
\resizebox{\textwidth}{!}{
\begin{tabular}{lrrrrrrrrrr}
\hline
\toprule
Innovations & \multicolumn{5}{r}{Gaussian} & \multicolumn{5}{r}{Student (df=3)} \\
$\xi$ & $\xi_1$ & $\xi_2 (\Ncal)$ & $\xi_2 (\chi^2)$ & $\xi_3$ & $\xi_{pgy}$ & $\xi_1$ & $\xi_2 (\Ncal)$ & $\xi_2 (\chi^2)$ & $\xi_3$ & $\xi_{pgy}$ \\
N &  &  &  &  &  &  &  & & & \\
\midrule
1000 & 0.121 & 0.123 & 0.154 & 0.114 & 0.096 & 0.202 & 0.154 & 0.174 & 0.122 & 0.678 \\
2000 & 0.135 & 0.100 & 0.131 & 0.084 & 0.104 & 0.176 & 0.131 & 0.154 & 0.102 & 0.754 \\
3000 & 0.098 & 0.086 & 0.108 & 0.088 & 0.103 & 0.155 & 0.107 & 0.143 & 0.101 & 0.769 \\
4000 & 0.117 & 0.107 & 0.123 & 0.082 & 0.101 & 0.182 & 0.109 & 0.138 & 0.099 & 0.790 \\
5000 & 0.111 & 0.100 & 0.135 & 0.106 & 0.109 & 0.177 & 0.092 & 0.118 & 0.084 & 0.781 \\
6000 & 0.109 & 0.084 & 0.112 & 0.093 & 0.111 & 0.185 & 0.113 & 0.129 & 0.094 & 0.811 \\
7000 & 0.101 & 0.089 & 0.114 & 0.090 & 0.092 & 0.162 & 0.106 & 0.127 & 0.098 & 0.796 \\
8000 & 0.092 & 0.100 & 0.110 & 0.102 & 0.103 & 0.182 & 0.105 & 0.120 & 0.088 & 0.836 \\
\bottomrule
\hline
\end{tabular}
}
\caption{Sample type I error at $10\%$ when innovations are generated either by a Gaussian distribution or a Student distribution with 3 degrees of freedom. $\phi_m = 0.1$ and $\psi_m = 0.5$ for all $m$. $\alpha=2/3$, $c=1/2$, $L=\lfloor N^{\frac{1}{4}} \rfloor$, $f: x \mapsto (x-1)^2$. $10^4$ repeats.}
\label{table:type-I-error-arma-non-gaussian}
\end{table}

\subsection{Testing for Spatial Dependence}
To evaluate the power of our proposed test statistics under various spatial dependence structures, we consider three additional data generation processes. Each DGP introduces a different form of spatial dependence, allowing us to assess the robustness and effectiveness of our method across diverse scenarios.

\subsubsection{DGP 2: AR(1)-type Covariance Structure}
We first generate $(\x_n)_{n=1, \ldots, N}$ under DGP 1, then introduce spatial dependence using an AR(1)-type covariance matrix $\Sigmabs=(\sigma_{k,h})_{k,h=1}^M$, where:
\begin{equation}
\label{eq:dgp3-covariance}
\sigma_{k,h} = \frac{\sigma^{|k-h|}}{1-\sigma^2}.
\end{equation}

We compute $\mathbf{A}$ such that $\mathbf{A}\mathbf{A}^* = \Sigmabs$, and calculate $\mathbf{y}_n = \mathbf{A}\mathbf{x}_n$. Table \ref{table:power-dgp3} displays the empirical power of $\xi_{i,N}$ under this scenario. The results tend to demonstrate that $\xi_{N,1}$ and $\xi_{N,2}$ provide the best performance in terms of detection of the alternative.

\begin{table}[h]
\centering
\setlength{\tabcolsep}{4pt}
\resizebox{\textwidth}{!}{
\begin{tabular}{lrrrrrrrrrr}
\hline
\toprule
$\sigma$ & \multicolumn{5}{r}{0.05} & \multicolumn{5}{r}{0.5} \\
$\xi$ & $\xi_1$ & $\xi_2 (\Ncal)$ & $\xi_2 (\chi^2)$ & $\xi_3$ & $\xi_{pgy}$ & $\xi_1$ & $\xi_2 (\Ncal)$ & $\xi_2 (\chi^2)$  & $\xi_3$ & $\xi_{pgy}$ \\
$N$ &  &  &  &  & &  &  &  &  & \\
\midrule
1000 & 0.189 & 0.140 & 0.183 & 0.127 & 0.096 & 1.000 & 1.000 & 1.000 & 1.000 & 0.154 \\
2000 & 0.478 & 0.220 & 0.277 & 0.177 & 0.087 & 1.000 & 1.000 & 1.000 & 1.000 & 0.172 \\
3000 & 0.747 & 0.335 & 0.408 & 0.248 & 0.082 & 1.000 & 1.000 & 1.000 & 1.000 & 0.178 \\
4000 & 0.918 & 0.507 & 0.589 & 0.329 & 0.084 & 1.000 & 1.000 & 1.000 & 1.000 & 0.197 \\
5000 & 0.982 & 0.686 & 0.754 & 0.422 & 0.086 & 1.000 & 1.000 & 1.000 & 1.000 & 0.196 \\
6000 & 0.997 & 0.832 & 0.877 & 0.519 & 0.080 & 1.000 & 1.000 & 1.000 & 1.000 & 0.204 \\
7000 & 1.000 & 0.922 & 0.948 & 0.617 & 0.081 & 1.000 & 1.000 & 1.000 & 1.000 & 0.209 \\
8000 & 1.000 & 0.973 & 0.984 & 0.710 & 0.083 & 1.000 & 1.000 & 1.000 & 1.000 & 0.217 \\
\bottomrule
\hline
\end{tabular}
}
\caption{Measured power of $\xi_{i,N}$ for DGP 2. $\sigma=0.05$ (left) and $\sigma=0.5$ (right), $\phi_m=0.1$ and $\psi_m=0.5$ for all $m$. $\alpha=2/3$, $c=1/2$, $L=\lfloor N^{\frac{1}{4}} \rfloor$, $f: x \mapsto (x-1)^2$, $10^4$ repeats.}
\label{table:power-dgp3}
\end{table}

\subsubsection{DGP 3: Random Spatial Mixing}
DGP 3 is similar to DGP 2, but employs a randomly generated spatial mixing matrix:
\begin{equation}
\mathbf{y}_n = (\I_M + \frac{\sigma}{\sqrt{M}}  \A)\mathbf{x}_n, \quad n = 1, 2, \ldots, N,
\label{eq:common_random}
\end{equation}
where $(\x_n)_{n=1, \ldots, N}$ is generated under DGP 1, $\I_M$ is the M-dimensional identity matrix and $\A$ is a $M \times M$ random matrix with i.i.d. $\mathcal{N}(0,1)$ components. Table \ref{table:power-dgp4} shows the measured power of the various test statistics for this scenario. Again, 
$\xi_{N,1}$ and $\xi_{N,2}$ show the highest probability of detecting the alternative hypothesis. 

\begin{table}[h]
\centering
\setlength{\tabcolsep}{4pt}
\resizebox{\textwidth}{!}{
\begin{tabular}{lrrrrrrrrrr}
\hline
\toprule
$\sigma$ & \multicolumn{5}{r}{0.1} & \multicolumn{5}{r}{1.0} \\
$\xi$ & $\xi_1$ & $\xi_2 (\Ncal)$ & $\xi_2 (\chi^2)$  & $\xi_3$ & $\xi_{pgy}$ & $\xi_1$ & $\xi_2 (\Ncal)$ & $\xi_2 (\chi^2)$  & $\xi_3$ & $\xi_{pgy}$ \\
$N$ &  &  &  & & &  &  &  & & \\
\midrule
1000 & 0.182 & 0.148 & 0.189 & 0.136 & 0.111 & 1.000 & 1.000 & 1.000 & 1.000 & 0.172 \\
2000 & 0.479 & 0.220 & 0.274 & 0.182 & 0.110 & 1.000 & 1.000 & 1.000 & 1.000 & 0.183 \\
3000 & 0.744 & 0.340 & 0.414 & 0.249 & 0.110 & 1.000 & 1.000 & 1.000 & 1.000 & 0.182 \\
4000 & 0.913 & 0.508 & 0.587 & 0.341 & 0.114 & 1.000 & 1.000 & 1.000 & 1.000 & 0.187 \\
5000 & 0.980 & 0.686 & 0.751 & 0.423 & 0.113 & 1.000 & 1.000 & 1.000 & 1.000 & 0.184 \\
6000 & 0.996 & 0.823 & 0.872 & 0.520 & 0.113 & 1.000 & 1.000 & 1.000 & 1.000 & 0.184 \\
7000 & 1.000 & 0.921 & 0.947 & 0.618 & 0.109 & 1.000 & 1.000 & 1.000 & 1.000 & 0.184 \\
8000 & 1.000 & 0.967 & 0.979 & 0.698 & 0.113 & 1.000 & 1.000 & 1.000 & 1.000 & 0.189 \\
\bottomrule
\hline
\end{tabular}
}

\caption{Measured power of $\zeta_{i,N}$ for DGP 3. $\sigma=0.1$ (left) and $\sigma=1$ (right) . $\phi_m=0.1$ and $\psi_m=0.5$ for all $m$. $\alpha=2/3$, $c=1/2$, $L=\lfloor N^{\frac{1}{4}} \rfloor$, $f: x \mapsto (x-1)^2$. $10^4$ repeats.}
\label{table:power-dgp4}
\end{table}

\subsubsection{DGP 4: Factor Model}
Consider the following factor model:
\begin{equation}
y_{m,n} = \boldsymbol{\lambda_m}^{T}\mathbf{f}_n + \epsilon_{m,n}, \quad n = 1, 2, \ldots, N,
\label{eq:dgp}
\end{equation}
where $(\mathbf{f}_{n})_{n=1,\ldots,N}$ is a $r$-dimensional process generated according to DGP 1, $(\boldsymbol{\lambda_m})_{m=1,\ldots,M}$ are $r \times 1$ deterministic vector of factor loadings, and \\ $(\varepsilon_{m,n})_{m=1,\ldots,M, n=1\ldots,N}$ are iid $\mathcal{N}_c(0, 1)$. Table \ref{table:power-dgp5-r-1} presents the measured power for two Signal to Noise Ratio (SNR) defined as:
$$
SNR = \frac{\sum_{m=1}^M  \|\boldsymbol{\lambda}_m\|^2 \Ebb|\f_{1}|^2 }{\sum_{m=1}^M  \Ebb|\epsilon_{m,1}|^2} 
$$
measured in dB (décibels, i.e. $10 \log_{10} SNR$). Note that this kind of alternative is typically one that is difficult for our test to detect. In \cite{rosuel-vallet-loubaton-mestre-ieeesp-2021}, it was shown that in this so-called spiked model, the eigenvalue distribution of $\hat{\C}_N(\nu)$ converges towards the Marcenko-Pastur distribution as under $\Hcal_0$, but at most $r$ eigenvalues may escape from its support $[\lambda_{-}, \lambda_{+}]$. Since our test statistics is based on a Linear Spectral Statistic of all the eigenvalues, a deviation of only few ones of them will be hard to detect. Table \ref{table:power-dgp5-r-1} confirms this claim
when the Signal to Noise ratio is equal to $-13 dB$, but the performance becomes satisfying when SNR = -7 dB. 
 
\begin{table}[h]

\centering
\setlength{\tabcolsep}{4pt}
\resizebox{\textwidth}{!}{
\begin{tabular}{lrrrrrrrrrr}
\hline
\toprule
SNR(dB) & \multicolumn{5}{r}{-13} & \multicolumn{5}{r}{-7} \\
$\xi$ & $\xi_1$ & $\xi_2 (\Ncal)$ & $\xi_2 (\chi^2)$ & $\xi_3$ & $\xi_{pgy}$ & $\xi_1$ & $\xi_2 (\Ncal)$ & $\xi_2 (\chi^2)$ & $\xi_3$ & $\xi_{pgy}$ \\
$N$ &  &  &  &  &  &  &  &  & & \\
\midrule
1000 & 0.101 & 0.065 & 0.079 & 0.060 & 0.095 & 0.999 & 0.999 & 1.000 & 0.998 & 0.169 \\
2000 & 0.116 & 0.085 & 0.100 & 0.070 & 0.102 & 1.000 & 1.000 & 1.000 & 1.000 & 0.192 \\
3000 & 0.105 & 0.083 & 0.096 & 0.076 & 0.104 & 1.000 & 1.000 & 1.000 & 1.000 & 0.185 \\
4000 & 0.133 & 0.091 & 0.106 & 0.083 & 0.101 & 1.000 & 1.000 & 1.000 & 1.000 & 0.192 \\
5000 & 0.155 & 0.094 & 0.114 & 0.089 & 0.101 & 1.000 & 1.000 & 1.000 & 1.000 & 0.198 \\
6000 & 0.181 & 0.109 & 0.133 & 0.099 & 0.106 & 1.000 & 1.000 & 1.000 & 1.000 & 0.195 \\
7000 & 0.223 & 0.124 & 0.156 & 0.110 & 0.102 & 1.000 & 1.000 & 1.000 & 1.000 & 0.202 \\
8000 & 0.284 & 0.144 & 0.185 & 0.130 & 0.100 & 1.000 & 1.000 & 1.000 & 1.000 & 0.191 \\
\bottomrule
\hline
\end{tabular}
}
\caption{Measured power of $\xi_{i,N}$ for DGP 4 and two values of SNR measured in dB. $\phi_m=0.1$ and $\psi_m=0.5$ for all $m$, $\alpha=2/3$, $c=1/2$, $L=\lfloor N^{\frac{1}{4}} \rfloor$, $f: x \mapsto (x-1)^2$, $10^4$ repeats.}
\label{table:power-dgp5-r-1}
\end{table}

\subsubsection{DGP 3 with non-Gaussian innovations}
In this section, we investigate the robustness of our proposed statistics when applied on time series with non-Gaussian innovations. To assess this, we have specifically considered innovations following a complex Student's distribution with 3 degrees of freedom, known for its heavier tails compared to the Gaussian distribution. The results are compiled in Table \ref{table:power-dgp3-non-gaussian}. A key observation from these results is the stability in the measured power of our proposed statistics, while $\xi_{pgy}$ shows more pronounced differences in view of the results obtained under $\Hcal_0$. 

\begin{table}[h]

\centering
\setlength{\tabcolsep}{4pt}
\resizebox{\textwidth}{!}{
\begin{tabular}{lrrrrrrrrrr}
\hline
\toprule
Innovations & \multicolumn{5}{r}{Gaussian} & \multicolumn{5}{r}{Student (k=3)} \\
$\xi$ & $\xi_1$ & $\xi_2 (\Ncal)$ & $\xi_2 (\chi^2)$ & $\xi_3$ & $\xi_{pgy}$ & $\xi_1$ & $\xi_2 (\Ncal)$ & $\xi_2 (\chi^2)$ & $\xi_3$ & $\xi_{pgy}$ \\
$N$ &  &  &  &  &  &  &  &  & & \\
\midrule
1000 & 0.180 & 0.142 & 0.182 & 0.133 & 0.112 & 0.233 & 0.168 & 0.216 & 0.152 & 0.643 \\
2000 & 0.476 & 0.216 & 0.272 & 0.173 & 0.104 & 0.485 & 0.242 & 0.302 & 0.194 & 0.729 \\
3000 & 0.753 & 0.333 & 0.412 & 0.258 & 0.106 & 0.736 & 0.371 & 0.439 & 0.264 & 0.767 \\
4000 & 0.914 & 0.512 & 0.589 & 0.326 & 0.107 & 0.897 & 0.528 & 0.598 & 0.349 & 0.783 \\
5000 & 0.978 & 0.676 & 0.748 & 0.428 & 0.110 & 0.966 & 0.682 & 0.748 & 0.433 & 0.792 \\
6000 & 0.996 & 0.821 & 0.867 & 0.517 & 0.108 & 0.993 & 0.816 & 0.863 & 0.537 & 0.804 \\
7000 & 1.000 & 0.920 & 0.947 & 0.618 & 0.111 & 0.999 & 0.907 & 0.933 & 0.630 & 0.817 \\
8000 & 1.000 & 0.968 & 0.979 & 0.702 & 0.111 & 1.000 & 0.960 & 0.975 & 0.710 & 0.827 \\
\bottomrule
\hline
\end{tabular}
}
\caption{Measured power of $\xi_{i,N}$ for DGP 3 under Gaussian or Student (df=3) innovations. $\phi_m=0.1$ and $\psi_m=0.5$ for all $m$, $\alpha=2/3$, $c=1/2$, $L=\lfloor N^{\frac{1}{4}} \rfloor$, $f: x \mapsto (x-1)^2$, $10^4$ repeats.}
\label{table:power-dgp3-non-gaussian}
\end{table}

\subsection{Conclusion on the numerical results.}
In this paragraph, we highlight some conclusions from the simulation results.
First, the behaviors of  $\xi_{N,1}(f)$ and $\xi_{N,2}(f)$  when the value of $\alpha$ is close to 7/9 deviates from the CLTs established in this article due to the $\Ocal\left(\frac{B^{5}}{N^{4}}\right)$ bias term which, in principle, tends towards 0, but which in practice can prove to be non-negligible for finite values of $M,B,N$. Second, 
the various numerical results we have presented tend to indicate that it 
is hard to recommend to use $\xi_{N,1}(f)$ rather than  $\xi_{N,2}(f)$
because their first and second type empirical errors appear rather similar. 
We can however clearly conclude that our proposed statistics are more powerful than the statistics presented in PGY.

\section{Final discussion}
We conclude this article with a discussion highlighting future work that we consider relevant. A first direction of research is to study the power analysis of our tests under a large class of alternative. A reasonable model for the alternative $\Hcal_1$ is $\y_n = \sum_{k=0}^{+\infty} \A_k \epsilonbs_{n-l}$, where $(\epsilonbs_n)_{n \in \mathbb{Z}}$ represents a sequence of i.i.d. $\Ncal_c(0, \I)$ distributed random $M$--dimensional vectors. A possible approach would be to establish, under  $\Hcal_1$, a CLT for the LSS of $\hat{\C}(\nu)$ for each $\nu$ , and to study the equivalent of statistics 
$\zeta_{N,1}(f)$ and $\zeta_{N,2}(f)$. \cite{deitmar-2024} established that under $\Hcal_1$, the empirical eigenvalue distribution of 
$\hat{\C}(\nu)$ converges towards a generalized Marcenko-Pastur distribution. However, establishing a CLT requires a much more accurate analysis, and appears to be a challenge. If this line of research is successful, it should be possible, using similar tools, to address problems such as testing the independence between two high-dimensional time series or testing the equality of the spectral densities of two high-dimensional time series (see e.g. \cite{yao-bai-book-2015} and \cite{bodnar-dette-parolya-2019} that studied the case $(\y_n)_{n \in \mathbb{Z}}$ i.i.d.). A second direction is to develop the lag domain approach of \cite{loubaton-mestre-rmta-2022} which studied the behavior of the linear spectral statistics of a normalized version of the sample covariance matrix of vectors $(\y_n^{L})_{n=1, \ldots, N}$, where $\y_n^{L} = (\y_n^{T}, \y_{n+1}^{T}, \ldots, \y_{n+L-1}^{T})^{T}$ and $L = L(N)$ converges toward $+\infty$ in such a way that $\frac{ML}{N} \rightarrow \tilde{c}$, where $0 < \tilde{c} < 1$. In particular, establishing a CLT under $\Hcal_0$ and $\Hcal_1$ on these linear spectral statistics would allow to propose test statistics taking into account implicitly all the frequencies. This would avoid the technical difficulties posed by the frequency domain approach that have been resolved in the present paper thanks to Bartlett's factorization. Finally, the study of the non Gaussian case, discussed in Section \ref{subsub:discussion-assumptions}, is also a quite relevant topic.

\appendix

\section{Proof of Lemma \ref{le:conditional-concentration}}
\label{sec:proof-lemma-concentration}
The proof is a reformulation of various elements presented in \cite{louart-couillet-2023-hanson-wright}. We denote by $\Fcal$ the $\sigma$-algebra of the probability space 
$(\Omega, \Fcal, P)$ on which all the random vectors $(\X_N)_{N \geq 1}$ are defined, and consider the $\sigma$-algebra $\Fcal_{A_N} = \Fcal \cap A_N$ and 
$P_{A_N}$ the probability measure defined on $\Fcal_{A_N}$ by 
$P_{A_N}(B) = P(B)/P(A_N)$. As in  \cite{louart-couillet-2023-hanson-wright} (see the footnote p. 6), we denote by $\X_N/A_N$ the random 
vector defined as the mapping defined on the probability space 
$(A_N, \Fcal_{A_N}, P_{A_N})$ by $(\X_N/A_N)(\omega) = \X_N(\omega)$ for each 
$\omega \in A_N$. We express $P\left( |h_N(\X_N) - \mathbb{E}(h_N(\X_N))| > t \right)$ as
\begin{align*}
  &P\left( |h_N(\X_N) - \mathbb{E}(h_N(\X_N))| > t \right) = 
  \\
  &\qquad\qquad
  P(A_N) P\left( |h_N(\X_N) -  \mathbb{E}(h_N(\X_N)) | > t \, | \, A_N \right) 
  \\
  &\qquad\qquad+ P\left( |h_N(\X_N) -  \mathbb{E}(h_N(\X_N)) | > t, A_N^{c} \right) ,
\end{align*}
and deduce from this that 
\begin{align}
    &P\left( |h_N(\X_N) - \mathbb{E}(h_N(\X_N))| > t \right) \leq 
    \notag\\
    &\qquad P\left( |h_N(\X_N) - \mathbb{E}(h_N(\X_N))| > t \, | \, A_N \right) + e^{-N^{\gamma}}.
    \label{eq:intermediaire-concentration-conditionnee}
\end{align}
It is clear that 
\begin{align*}
&P\left( |h_N(\X_N) - \mathbb{E}(h_N(\X_N))| > t \, | \, A_N \right) = 
\\
&\qquad P_{A_N}\left( | h_N(\X_N|A_N) - \mathbb{E}(h_N(\X_N))| > t \right).
\end{align*}
As $h_N$ is $\sigma_N$-Lipschitz on $\X_N(A_N)= (\X_N|A_N)(A_N)$, Remark 1.5 and Lemma 1.4 in  \cite{louart-couillet-2023-hanson-wright} imply that 
\begin{equation}
    \label{eq:concentration-f-X-conditionne}
    P_{A_N}\left( |h_N(\X_N|A_N) - \mathbb{E}(h_N(\X_N|A_N)| \geq t \right) 
\leq C_1 \exp-C_2\left(\frac{t}{\sigma_N}\right)^{2},
\end{equation}
where it should be understood that $\mathbb{E}(h_N(\X_N|A_N)$ represents the mathematical expectation defined on the probability space $(A_N, \Fcal_{A_N}, P_{A_N})$. In order to conclude, we have to evaluate $\kappa_N = \mathbb{E}(h_N(\X_N)) - \mathbb{E}(h_N(\X_N|A_N))$. For this, we again write that 
\begin{align*}
\mathbb{E}(h_N(\X_N)) & = \mathbb{E}(h_N(\X_N) \mathds{1}_{A_N}) + \mathbb{E}(h_N(\X_N)  \mathds{1}_{A_N^{c}}) \\ 
 & = P(A_N) \mathbb{E}(h_N(\X_N|A_N)) + \mathbb{E}(h_N(\X_N)  \mathds{1}_{A_N^{c}}).
 \end{align*}
 The Schwartz inequality and (\ref{eq:second-moments-f-X}) implies that 
 $$
 |\mathbb{E}(h_N(\X_N)  \mathds{1}_{A_N^{c}})| \leq C e^{-N^{\gamma}/2}.
 $$
As $1 - e^{-N^{\gamma}} \leq P(A_N) \leq 1$, we obtain that 
$$
|\kappa_N | = \left| \mathbb{E}(h_N(\X_N)) -  \mathbb{E}(h_N(\X_N|A_N)) \right| \leq 
C e^{-N^{\gamma}/2}.
$$
We finally remark that 
\begin{align*}
  P_{A_N}\left( | h_N(\X_N|A_N) - \mathbb{E}(h_N(\X_N))| > t \right)  & \\ \leq 
P_{A_N}\left( | h_N(\X_N|A_N) - \mathbb{E}(h_N(\X_N|A_N))| > t - |\kappa_N| \right) 
& \\
\leq \min\left(1, C_1 \exp-\left[C_2( (t - |\kappa_N|)/\sigma_N)^{2}\right]\right),
 \end{align*}
 where the last inequality follows from (\ref{eq:concentration-f-X-conditionne}). Reasoning as in Lemma A.15 in  \cite{these-louart-2023}, we verify that
\begin{align*}
    &\min\left(1, C_1 \exp-\left[C_2( (t - |\kappa_N|)/\sigma_N)^{2}\right]\right) 
    \leq 
    \\
    &\qquad\max\left((C_1, \exp \left[ C_2 (\kappa_N/\sigma_N)^{2} \right] \right) \, \exp-C_2\left(\frac{t}{2\sigma_N}\right)^{2}.
\end{align*}
The hypothesis that $\sigma_N  > \frac{C}{N^{a}}$ and the above evaluation of $\kappa_N$ 
imply that $\exp \left[ C_2 (\kappa_N/\sigma_N)^{2} \right] \leq C$ for some constant $C$
for each $N$. Therefore, we obtain that 
$$
P_{A_N}\left( | h_N(\X_N|A_N) - \mathbb{E}(h_N(\X_N))| > t \right) \leq C_1' 
\exp-C_2'\left(\frac{t}{\sigma_N}\right)^{2} ,
$$
for some constants $C_1'$ and $C_2'$. This and (\ref{eq:intermediaire-concentration-conditionnee}) in turn imply (\ref{eq:conditional-gaussian-concentration}).

\section{Proof of (\ref{eq:moyenne-sup-hanson-wright})}
In order to simplify the notations, we denote $\x_{m,N}$, $\A_{m,N}$ and $\kappa_N$ by $\x_m$, $\A_m$ and $\kappa$, and put 
$\delta_m = |\x_m^* \A_m \x_m - \Tr \A_m|$ as well as $\delta = \sup_{m=1, \ldots, M} \delta_m$. 
In order to take benefit of (\ref{eq:hanson-wright-2}), we express $\mathbb{E}(\delta^{k})$ as 
$$
\mathbb{E}(\delta^{k}) = k \int_0^{+\infty} t^{k-1} P\left( \delta > t\right) \, dt.
$$
For any $\epsilon > 0$, we split the above integral as follows
\begin{equation}
\label{eq:moment-k-sup-hanson-wright}
\mathbb{E}(\delta^{k}) = k \int_0^{N^{\epsilon/k} \kappa}  t^{k-1} P( \delta > t) \, dt + 
 k \int_{N^{\epsilon/k} \kappa}^{+\infty}  t^{k-1} P\left( \delta > t\right) \, dt .
\end{equation}
As $ \mathbb{P}\left( \delta > t\right) \leq 1$, the first term of the right-hand side of (\ref{eq:moment-k-sup-hanson-wright}) verifies 
$$
k \int_0^{N^{\epsilon/k} \kappa}  t^{k-1} \mathbb{P}( \delta > t) \, dt \leq N^{\epsilon}  \kappa^{k}.
$$
In order to evaluate the second term of the right-hand side of (\ref{eq:moment-k-sup-hanson-wright}), we remark 
that if $t \geq N^{\epsilon/k} \kappa$,  (\ref{eq:hanson-wright-2}) implies that 
$$
\mathbb{P}( \delta_m > t ) \leq 2 \exp -C \frac{t}{\kappa},
$$
because $\| \A_m \|_F \leq \kappa$. Therefore, the union bound leads to 
$$
\mathbb{P}(\delta > t) \leq 2 M \exp -C \frac{t}{\kappa},
$$
and to 
\begin{align*}
 k \int_{N^{\epsilon/k} \kappa}^{+\infty}  t^{k-1} \mathbb{P}( \delta > t) \, dt  &  \leq 
2 M k \int_{N^{\epsilon/k} \kappa}^{+\infty}  t^{k-1} \exp -C \frac{t}{\kappa} dt \\
& = 2 M k \kappa^{k} \int_{N^{\epsilon/k}}^{+\infty} t^{k-1} \exp -t \, dt .
\end{align*}
A simple calculation leads to $ \int_{N^{\epsilon/k}}^{+\infty} t^{k-1} \exp -t \, dt = \Ocal\left( N^{\frac{(k-1)\epsilon}{k}} \exp-N^{\epsilon/k} \right)$. Therefore, we also have
$$
k \int_{N^{\epsilon/k} \kappa}^{+\infty}  t^{k-1} \mathbb{P}( \delta > t) \, dt  = o\left(N^{\epsilon} \kappa^{k}\right) .
$$
This completes the proof of (\ref{eq:moyenne-sup-hanson-wright}).

\section{Justification of (\ref{eq:Etrace-tildeQ-Q-precise-1-improved}) and  (\ref{eq:Etrace-tildeQ-Q-precise-2-improved})}
\label{sec:improvement-bias}
(\ref{eq:Etrace-tildeQ-Q-precise-1}) and (\ref{eq:Etrace-tildeQ-Q-precise-2}) are established 
in \cite{loubaton-rosuel-ejs-2021} by evaluating $\frac{1}{M}\mathbb{E}\left( \tilde{\Q}_N(z) - \Q_N(z) \right)$
using the integration by parts formula. The corresponding calculation is long and tedious, so that 
\cite{these-alexis}, Chap. 2 developed a more efficient approach based on the observation that 
matrix $\D_N^{-1/2} \Sigmabs_N$ can be interpreted as a $M \times (B+1)$ large random matrix with mutually independent 
rows having covariance matrices $(\Thetabs_m)_{m=1, \ldots, M}$ given by 
$\Thetabs_m = \I + \Phibs_m$ for each $m$. Therefore, it is possible to use existing large random matrix methods
(see e.g. \cite{kammoun-alouini-ieeeit-2016}) to evaluate the behaviour of the LSS of the eigenvalues of $\tilde{\C}_N = \D_N^{-1/2} \Sigmabs_N \Sigmabs_N^{*}  \D_N^{-1/2}$ and of the expectation $\mathbb{E}(\tilde{\Q}_N(z))$ of the resolvent of $\tilde{\C}_N$. In particular, 
the elements of $\mathbb{E}(\tilde{\Q}_N(z))$ have the same behaviour that the elements of a matrix-valued 
Stieltjes transform $\tilde{\T}_N(z)$. Similarly, if $\bar{\Q}_N(z)$ represents the resolvent of matrix $\Sigmabs_N^{*} \D_N^{-1} \Sigmabs_N$, the elements of the expectation 
$\mathbb{E}(\bar{\Q}_N(z))$ behave as the elements a another matrix-valued 
Stieltjes transform $\bar{\T}_N(z)$. Moreover, $\tilde{\T}_N(z)$ and $\bar{\T}_N(z)$ are defined as the solutions a coupled system of equations. If the covariance matrices $(\Theta_m)_{m=1, \ldots, M}$ were reduced to $\I$, 
$\tilde{\T}_N(z)$ and $\bar{\T}_N(z)$ would be equal to $t_N(z) \I$ and $\tilde{t}_N(z) \I$ respectively. As 
$\| \Thetabs_m - \I \| = \| \Phibs_m \|$ converges towards $0$, it is reasonable to expect that 
$\tilde{\epsilon}_N(z)= \frac{1}{M} \Tr \tilde{\T}_N(z) - t_N(z)$ and 
$\bar{\epsilon}_N(z) =  \frac{1}{M} \Tr \bar{\T}_N(z) - \tilde{t}_N(z)$ converge towards $0$. It is proved 
in (\cite{these-alexis}, Chap. 2) that $\tilde{\epsilon}_N(z)$ is given by 
\begin{align}
    \label{eq:expre-tilde-epsilon-1}
\tilde{\epsilon}_N(z) = & 
 p_N(z)  \left( \frac{1}{B+1} \Tr \left( \frac{1}{M} \sum_{m=1}^{M}  \Phibs_m \right)^{2} \right) - \tilde{p}_N(z) \left( \frac{1}{M} \sum_{m=1}^{M} \frac{1}{B+1} \Tr \Phibs_m \right) + \\ 
 \notag 
 & 
  \mathcal{O}_z\left( \left( \frac{B}{N}\right)^{3} + \frac{1}{N} \right)   \\
   \label{eq:expre-tilde-epsilon-2}
   = & p_N(z) r_N(\nu)  v_N - \tilde{p}_N(z) \tilde{r}_N(\nu) v_N  + 
  \mathcal{O}_z\left( \left( \frac{B}{N}\right)^{3} +  \frac{1}{N} \right) ,
\end{align}
However, a closer look to the proof of Proposition 2-2 in \cite{these-alexis} shows that 
(\ref{eq:expre-tilde-epsilon-1}) can be replaced by 
\begin{align*}
\tilde{\epsilon}_N(z) = & 
 p_N(z)  \left( \frac{1}{B+1} \Tr \left( \frac{1}{M} \sum_{m=1}^{M}  \Phibs_m \right)^{2} \right) - \tilde{p}_N(z) \left( \frac{1}{M} \sum_{m=1}^{M} \frac{1}{B+1} \Tr \Phibs_m \right) + \\ 
 & 
  \mathcal{O}_z\left( \left( \frac{B}{N}\right)^{4} + \frac{1}{N} \right).
\end{align*}
Moreover, it is easily seen that 
\begin{align}
\label{eq:expansion-trace-average-Phi-carre}
\frac{1}{B+1} \Tr \left( \frac{1}{M} \sum_{m=1}^{M}  \Phibs_m \right)^{2} = r_N \, v_N + \mathcal{O}_z\left( \left( \frac{B}{N}\right)^{4} + \frac{1}{N} \right), \\
\label{eq:expansion-trace-average-Phi}
 \frac{1}{M} \sum_{m=1}^{M} \frac{1}{B+1} \Tr \Phibs_m  = \tilde{r}_N \, v_N + \mathcal{O}_z\left( \left( \frac{B}{N}\right)^{4} + \frac{1}{N} \right),
\end{align}
and therefore that 
\begin{equation}
\label{eq:final-tight-evaluation-tilde-epsilon}
\tilde{\epsilon}_N(z) = p_N(z) r_N(\nu)  v_N - \tilde{p}_N(z) \tilde{r}_N(\nu) v_N  + 
  \mathcal{O}_z\left( \left( \frac{B}{N}\right)^{4} +  \frac{1}{N} \right) .
\end{equation}
 (\ref{eq:Etrace-tildeQ-Q-precise-1-improved}) and  (\ref{eq:Etrace-tildeQ-Q-precise-2-improved}) then follow from 
$$
\frac{1}{M} \Tr  \mathbb{E}(\tilde{\Q}_N(z)) - \frac{1}{M} \Tr  \tilde{\T}_N(z)) = \Ocal_z(B^{-2}),
$$
and
$$
\frac{1}{M} \Tr  \mathbb{E}(\Q_N(z)) - t_N(z) = \Ocal_z(B^{-2}),
$$
as well as from $B^{-2} = o(N^{-1})$ because $\alpha > \frac{1}{2}$.

\section{Proof of Proposition \ref{prop:properties-Psi1r-Psi2r}}
\label{sec:proof-properties-Psi1r-Psi2r}
In order to evaluate the properties of $\Psibs_{m,r}^{1}$ and 
$\Psibs_{m,r}^{2}$, we study the behaviour of the entries of the covariance matrix  
 $\Omegabs_{m,r} = \mathbb{E}(\omegabs_{m,r}^{*} \omegabs_{m,r}) $ of $\omegabs_{m,r}$, and take benefit of the identity 
\begin{equation}
\label{eq:decomposition-Omegamr}
\frac{\Omegabs_{m,r}}{s_m}  = (\Psibs_{m,r}^{1})^{*} \Psibs_{m,r}^{1} +  (\Psibs_{m,r}^{2})^{*} \Psibs_{m,r}^{2},
\end{equation}
to deduce (\ref{eq:norme-Psi2r}). We first prove the following Lemma. 
\begin{lemma}
\label{le:entries-Omegamr}
For each $b_1,b_2 \in \{ -B/2, \ldots, B/2 \}$, $b_1 \neq b_2$ , we have
\begin{equation}
\label{eq:entries-Omegamr}
\left(\Omegabs_{m,r}\right)_{b_1,b_2} = \mathbb{E}\left(r_{m,\mathfrak{b}}(\nu+b_2/N) r_{m,\mathfrak{b}}(\nu+b_1/N)^{*}\right) = \mathcal{O}(\frac{1}{N}),
\end{equation}
where the $\mathcal{O}(\frac{1}{N})$ term is uniform w.r.t. $b_1,b_2,\nu$ and $m$.
\end{lemma}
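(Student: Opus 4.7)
My plan is to derive an explicit ``edge effect'' formula for $r_{m,\mathfrak{b}}(\nu)$, then express the covariance of interest in a factorized form whose summability follows immediately from the decay of the MA coefficients $(a_{m,k})_{k \geq 0}$.

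First, using the causal representation $y_{m,n} = \sum_{k \geq 0} a_{m,k} \epsilon_{m,n-k}$ and swapping the order of summation in the definition of $\xi_{y_m}(\nu)$, the DFT splits into an ``interior'' part that reconstructs $h_m(\nu)\xi_{\epsilon_m}(\nu)$ plus two boundary terms. This gives the identity $r_{m,\mathfrak{b}}(\nu) = r_m^{-}(\nu) - r_m^{+}(\nu)$, where
\[
r_m^{\pm}(\nu) = \frac{1}{\sqrt{N}} \sum_{k \geq 1} a_{m,k} e^{-2i\pi k \nu} \sum_{j \in J_k^{\pm}} \epsilon_{m,j} e^{-2i\pi(j-1)\nu},
\]
with $J_k^{-} = \{1-k,\ldots,0\}$ and $J_k^{+} = \{N-k+1,\ldots,N\}$. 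Setting $\delta = \nu_2 - \nu_1 = (b_2 - b_1)/N$, I expand $\mathbb{E}[r_{m,\mathfrak{b}}(\nu_2) \overline{r_{m,\mathfrak{b}}(\nu_1)}]$ into four pieces. The two cross pieces $\mathbb{E}[r_m^{-}(\nu_2)\overline{r_m^{+}(\nu_1)}]$ and $\mathbb{E}[r_m^{+}(\nu_2)\overline{r_m^{-}(\nu_1)}]$ are supported on the indices with $J_{k_1}^{+} \cap J_{k_2}^{-} \neq \emptyset$, which forces $\max(k_1,k_2) \geq N+1$; combined with the tail bound (\ref{eq:reste-serie-am}) for some $\gamma < \gamma_0$ close to $\gamma_0$, these pieces are $o(1/N)$ uniformly in $m,\nu,b_1,b_2$.

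Next, for the two ``diagonal'' pieces I use the whiteness of $(\epsilon_{m,n})$ to reduce the inner sum to $\sum_{j \in J_{k_1}^{\pm} \cap J_{k_2}^{\pm}} e^{-2i\pi(j-1)\delta}$. For the $-$ case, substituting $j = -l$ gives $e^{2i\pi\delta} D_{k_1 \wedge k_2}(\delta)$ where $D_n(\delta) = \sum_{l=0}^{n-1} e^{2i\pi l \delta}$; for the $+$ case, substituting $j = N - l$ and using $e^{-2i\pi N\delta} = e^{-2i\pi(b_2-b_1)} = 1$ yields the same expression $e^{2i\pi\delta} D_{k_1 \wedge k_2}(\delta)$. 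Therefore the two diagonal pieces are equal. Interchanging the sum over $(k_1,k_2)$ with the defining sum of $D_{k_1 \wedge k_2}$ (Fubini), the diagonal contribution can be written in the factorized form
\[
\mathbb{E}[r_m^{-}(\nu_2)\overline{r_m^{-}(\nu_1)}] = \frac{e^{2i\pi\delta}}{N} \sum_{l \geq 0} e^{2i\pi l \delta}\, H_{m,l}(\nu_2)\, \overline{H_{m,l}(\nu_1)},
\]
where $H_{m,l}(\nu) = \sum_{k \geq l+1} a_{m,k} e^{-2i\pi k \nu}$ is the $l$-tail of the Fourier series of $h_m$.

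Finally, the uniform tail bound (\ref{eq:reste-serie-am}) gives $|H_{m,l}(\nu)| \leq C/(l+1)^{\gamma}$ for any $\gamma < \gamma_0$, uniformly in $m,\nu$. Picking $\gamma > 1/2$, which is allowed since $\gamma_0 > 4$, the crude bound
\[
\Bigl| \sum_{l \geq 0} e^{2i\pi l \delta}\, H_{m,l}(\nu_2)\, \overline{H_{m,l}(\nu_1)} \Bigr| \leq \sum_{l \geq 0} \frac{C^2}{(l+1)^{2\gamma}}
\]
is a nice constant, and the $\mathcal{O}(1/N)$ factor in front yields the claimed estimate uniformly in $b_1, b_2, \nu, m$. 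The main technical obstacle is the bookkeeping in Step 2--3: one must carefully identify the intersection of index sets and, crucially, exploit the integrality of $N\delta$ to merge the $-$ and $+$ diagonal contributions; once the factorized expression is in hand, the conclusion is immediate from absolute summability.
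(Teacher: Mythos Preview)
Your proof is correct. Both your argument and the paper's start from the same Walker edge-effect expansion $r_{m,\mathfrak{b}}(\nu)=\sum_{u\ge 1}a_{m,u}e^{-2i\pi u\nu}z_{m,u}(\nu)$ (your $r_m^- - r_m^+$ is exactly this), and both dispose of the pieces with some index $\ge N$ via the tail bound \eqref{eq:reste-serie-am}. The difference is in how the main double sum is controlled. The paper simply bounds the inner covariance by $|\mathbb{E}[z_{m,u}(\nu_2)z_{m,u'}(\nu_1)^*]|\le \frac{2\min(u,u')}{N}\le \frac{2\sqrt{u u'}}{N}$, which factorizes the double sum as $\frac{2}{N}\bigl(\sum_u u^{1/2}|a_{m,u}|\bigr)^2$. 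You instead compute the inner sum exactly, exploit the integrality of $N\delta$ to identify the $-$ and $+$ diagonal contributions, and then swap the $(k_1,k_2)$-sum with the $l$-sum to obtain the tail-product representation $\frac{1}{N}\sum_{l\ge 0}e^{2i\pi l\delta}H_{m,l}(\nu_2)\overline{H_{m,l}(\nu_1)}$. Your route yields an explicit formula for the leading $1/N$ term, which is elegant but relies on $\delta\in \frac{1}{N}\mathbb{Z}$; the paper's cruder bound works for arbitrary $\nu_1,\nu_2$ and needs only \eqref{eq:decroissance-am} with exponent $\tfrac12$, so it is slightly more robust though less informative.
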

\begin{proof} 
We recall that if
    $$
    h_m(\nu) = \sum_{u=0}^{+\infty} a_{m,u} e^{-2 \pi\nu u}
    $$
    represents the Fourier expansion of $h_m$, then, (\ref{eq:reste-serie-am}) implies that 
    \begin{equation}
      \label{eq:majoration-reste}
      \sum_{u=N}^{+\infty} |a_{m,u}| \leq \frac{C}{N^{\gamma}},
        \end{equation}
        where $\gamma \geq 1$. 
    We follow \cite{walker-1965}, and express $r_{m,\mathfrak{b}}(\nu)$ as 
    $$
    r_{m,\mathfrak{b}}(\nu) = \sum_{u=0}^{+\infty} a_{m,u} e^{-2 i \pi u \nu} z_{m,u}(\nu),
    $$
    where $z_{m,u}(\nu)$ is given by $z_{m,0}(\nu) = 0$ for $u=0$,
    \begin{align}
      \label{eq:ugeqN}
      z_{m,u}(\nu) = \frac{1}{\sqrt{N}} \sum_{v=1-u}^{N-u} \epsilon_{m,v} e^{-2 i \pi (v-1) \nu} - \frac{1}{\sqrt{N}} \sum_{v=1}^{N} \epsilon_{m,v} e^{-2 i \pi (v-1) \nu}, \\
      \end{align}
      for $u \geq N$, and
      \begin{align}
      \label{eq:ulessN}
      z_{m,u}(\nu) = \frac{1}{\sqrt{N}} \sum_{v=1-u}^{0} \epsilon_{m,v} e^{-2 i \pi (v-1) \nu} - \frac{1}{\sqrt{N}} \sum_{v=N-u+1}^{N} \epsilon_{m,v} e^{-2 i \pi (v-1) \nu},
    \end{align}
    for $1 \leq u \leq N-1$. 
    We consider $\nu_1 = \nu + b_1/N$ and $\nu_2 = \nu + b_2/N$, and evaluate $(\Omega_{m,r})_{b_1,b_2} = \mathbb{E}(r_{m,\mathfrak{b}}(\nu_2) r_{m,\mathfrak{b}}(\nu_1)^{*})$. 
    $(\Omega_{m,r})_{b_1,b_2}$ is given by
    $$
    (\Omega_{m,r})_{b_1,b_2} = \sum_{u,u'=0}^{+\infty} a_{m,u} a_{m,u'}^{*} e^{-2i\pi u \nu_2}  e^{2i\pi u' \nu_1} \mathbb{E}\left(z_{m,u}(\nu_2) z_{m,u'}(\nu_1)^{*} \right).
    $$
    If $u \geq N$ and $u^{'} \geq N$, it is easy to check that $\left| \mathbb{E}\left(z_{m,u}(\nu_2) z_{m,u'}(\nu_1)^{*} \right) \right| \leq 1 + \delta_{\nu_1 = \nu_2}$. Therefore, 
    \begin{align*}
    \left| \sum_{u,u'=N}^{+\infty} a_{m,u} a_{m,u'}^{*} e^{-2i\pi u \nu_2}  e^{2i\pi u' \nu_1} \mathbb{E}\left(z_{m,u}(\nu_2) z_{m,u'}(\nu_1)^{*} \right) \right| & \leq 2 \left( \sum_{u=N}^{+\infty} |a_{m,u}| \right)^{2} \\ 
    & 
    = \mathcal{O}\left( \frac{1}{N^{2\gamma}} \right),
    \end{align*}
    If $1 \leq u \leq N-1$ and $1 \leq u' \leq N-1$, the two terms at the right-hand side of  (\ref{eq:ulessN}) are uncorrelated, and we obtain that $\left| \mathbb{E}\left(z_{m,u}(\nu_2) z_{m,u'}(\nu_1)^{*} \right) \right| \leq 2 \frac{\min(u,u')}{N} \leq 2 \frac{u^{1/2} u'^{1/2}}{N}$. This implies that  
    \begin{align*}
    &\left| \sum_{u,u'=1}^{N-1} a_{m,u} a_{m,u'}^{*} e^{-2i\pi u \nu_2}  e^{2i\pi u' \nu_1} \mathbb{E}\left(z_{m,u}(\nu_2) z_{m,u'}(\nu_1)^{*} \right) \right| 
    \\
    & \qquad\leq
    \frac{2}{N} \left(\sum_{u=1}^{+\infty} u^{1/2} |a_{m,u}| \right)^{2} 
    \\ 
    & \qquad = \mathcal{O}\left( \frac{1}{N} \right).
    \end{align*}
    If $1 \leq u \leq N-1$ and $u' \geq N$, we have  $ \mathbb{E}\left(z_{m,u}(\nu_2) z_{m,u'}(\nu_1)^{*} \right) = T_1 + T_2$ with
    $$
    T_1 = \mathbb{E} \left( \frac{1}{\sqrt{N}} \sum_{v=1-u}^{0} \epsilon_{m,v} e^{-2 i \pi (v-1) \nu_2}  \frac{1}{\sqrt{N}} \sum_{v'=1-u'}^{N-u'} \epsilon_{m,v}^{*} e^{2 i \pi (v-1) \nu_1}
    \right),
    $$
    and
    $$
    T_2 = \frac{1}{N} \sum_{v=N-u+1}^{N} e^{-2 i \pi (v-1)(\nu_2 - \nu_1)},
    $$
    It is clear that $|T_2| \leq \frac{u}{N}$. Moreover, if $N-u' < 1 - u$, i.e. if $u'-u > N-1$,  then $T_1 = 0$. If $N-u' \geq 1 - u$, $T_1$ is equal yo $T_1 = \frac{1}{N} \sum_{v=1-u}^{N-u'} e^{-2 i \pi (v-1)(\nu_2 - \nu_1)}$, and $T_1 \leq 1 - \frac{u'-u}{N} \mathds{1}_{u'-u \leq N-1} \leq 1$. We deduce from this that 
    \begin{align*}
    &\left| \sum_{u=1}^{N-1}  \sum_{u'=N}^{+\infty} a_{m,u} a_{m,u'}^{*} e^{-2i\pi u \nu_2}  e^{2i\pi u' \nu_1} \mathbb{E}\left(z_{m,u}(\nu_2) z_{m,u'}(\nu_1)^{*} \right) \right| 
    \\
    &\leq
    \left(\frac{1}{N} \sum_{u=0}^{N-1} u |a_{m,u}|\right) \left(\sum_{u=N}^{+\infty} |a_{m,u}|\right) +  \left( \sum_{u=0}^{N-1}  |a_{m,u}|\right) \left(\sum_{u=N}^{+\infty}  |a_{m,u}| \right) 
    \\
    &= \mathcal{O}\left( \frac{1}{N^{\gamma}} \right).
    \end{align*}
    As $\gamma \geq 1$, this completes the proof of the Lemma. 
    \end{proof}
 As a consequence of Lemma \ref{le:entries-Omegamr}, we obtain immediately that  
\begin{equation}
\label{eq:proprietes-Omegamr}
\| \Omegabs_{m,r} \| = \mathcal{O}(\frac{B}{N}), \; \frac{1}{B+1} \mathrm{Tr} \,  \Omegabs_{m,r} = \mathcal{O}(\frac{1}{N}).
\end{equation}
We therefore deduce from (\ref{eq:decomposition-Omegamr}) that 
\begin{align}
\label{eq:prop-TracePsimr1}
\frac{1}{B+1} \mathrm{Tr} (\Psibs_{m,r}^{1})^{*} \Psibs_{m,r}^{1} =  \mathcal{O}(\frac{1}{N}), \\
\label{eq:prop-TracePsimr2}
\frac{1}{B+1} \mathrm{Tr} (\Psibs_{m,r}^{2})^{*} \Psibs_{m,r}^{2} =  \mathcal{O}(\frac{1}{N}),
\end{align}
as well as $\|  \Psibs_{m,r}^{i} \| = \mathcal{O}\left((\frac{B}{N})^{1/2}\right)$, $i=1,2$. 
We have thus in particular proved that (\ref{eq:norme-Psi2r}) holds.
(\ref{eq:norme-Psi1mr}) is eventually a direct consequence of the following result.
\begin{lemma}
\label{le:entries-Psi1mr}
The entries of $\Psibs_{m,r}^{1}$ verify 
\begin{equation}
\label{eq:entries-Psimr}
 \max_{b_1,b_2} \left|(\Psibs_{m,r}^{1})_{b_1,b_2}\right| \leq \frac{C}{N},
\end{equation}
 for some nice constant $C$
\end{lemma}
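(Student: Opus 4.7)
The plan is to write $(\Psibs_{m,r}^{1})_{b_1,b_2}$ as an explicit cross-covariance and then to control it by comparing a causal Fourier sum with the ideal product $h_m \xi_{\epsilon_m}$, exploiting the decay of the Fourier coefficients $(a_{m,k})_{k \geq 0}$ given by Assumption~\ref{as:spectral-densities} and the estimate (\ref{eq:reste-serie-am}).

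First, since $\x_{m,\mathfrak{b}}(\nu)$ is $\Ncal_c(0,\I_{B+1})$ distributed and (\ref{eq:def-Psi1mr}) is precisely the orthogonal projection of $\omegabs_{m,r}/\sqrt{s_m}$ onto $\mathrm{sp}(\x_{m,\mathfrak{b}})$, the projection formula yields
\begin{equation*}
(\Psibs_{m,r}^{1})_{b_1,b_2} = \mathbb{E}\Bigl[\,\overline{(\x_{m,\mathfrak{b}}(\nu))_{b_1}}\,\tfrac{(\omegabs_{m,r}(\nu))_{b_2}}{\sqrt{s_m(\nu)}}\Bigr] = \frac{\overline{h_m(\nu)}}{s_m(\nu)}\,\mathbb{E}\bigl[r_{m,\mathfrak{b}}(\nu_2)\,\overline{\xi_{\epsilon_m}(\nu_1)}\bigr],
\end{equation*}
where $\nu_i = \nu + b_i/N$. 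Since $\inf_{m,\nu} s_m(\nu) > 0$ and $\sup_m \|h_m\|_\infty < +\infty$, it suffices to bound $|\mathbb{E}[r_{m,\mathfrak{b}}(\nu_2)\overline{\xi_{\epsilon_m}(\nu_1)}]|$ uniformly by $C/N$. Using the definition of $r_{m,\mathfrak{b}}$ and the fact that $(\epsilon_{m,n})_{n\in\mathbb{Z}}$ is a unit-variance complex Gaussian white noise, this reduces to comparing
\begin{equation*}
A_{b_1,b_2} := \mathbb{E}\bigl[\xi_{y_m}(\nu_2)\overline{\xi_{\epsilon_m}(\nu_1)}\bigr] \quad\text{with}\quad B_{b_1,b_2} := h_m(\nu_2)\,\mathbb{E}\bigl[\xi_{\epsilon_m}(\nu_2)\overline{\xi_{\epsilon_m}(\nu_1)}\bigr].
\end{equation*}
The second term equals $h_m(\nu_2)\mathds{1}_{b_1=b_2}$, since $\nu_2-\nu_1=(b_2-b_1)/N$ is either zero or a nonzero multiple of $1/N$.

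Next, I will compute $A_{b_1,b_2}$ using the causal representation (\ref{eq:representation-causale-innovation}). A direct calculation gives
\begin{equation*}
A_{b_1,b_2} = \frac{1}{N}\sum_{k=0}^{\infty} a_{m,k}\,e^{-2i\pi k\nu_1}\sum_{\substack{n=k+1 \\ 1\leq n\leq N}}^{N} e^{-2i\pi(n-1)(\nu_2-\nu_1)}.
\end{equation*}
When $b_1=b_2$, the inner geometric sum equals $N-k$ (for $k<N$), so $A_{b_1,b_1}-h_m(\nu_1) = -\tfrac{1}{N}\sum_{n=1}^{N}\sum_{k\geq n} a_{m,k}e^{-2i\pi k\nu_1}$, which is $\Ocal(1/N)$ uniformly in $m,\nu$ thanks to (\ref{eq:reste-serie-am}) with $\gamma>1$. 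When $b_1\neq b_2$, the full geometric sum over $n\in\{1,\ldots,N\}$ vanishes because the phases are $N$-th roots of unity, so the inner sum reduces to $-\sum_{n=0}^{k-1} e^{-2i\pi n(b_2-b_1)/N}$ for $k<N$, whose modulus is at most $k$. The contribution for $k\geq N$ is handled directly by (\ref{eq:reste-serie-am}). Combining both regimes:
\begin{equation*}
|A_{b_1,b_2}| \leq \frac{1}{N}\sum_{k=0}^{N-1} k\,|a_{m,k}| + \sum_{k\geq N}|a_{m,k}| \leq \frac{C}{N},
\end{equation*}
uniformly in $m,\nu,b_1,b_2$, where the first sum is finite by (\ref{eq:decroissance-am}) applied with $\gamma=1$ and the second is $\Ocal(N^{-\gamma_0})$ with $\gamma_0>4$.

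Putting these two cases together yields $|A_{b_1,b_2}-B_{b_1,b_2}|\leq C/N$, and hence $|(\Psibs_{m,r}^{1})_{b_1,b_2}|\leq C/N$ as claimed. The only non-routine step is the $b_1\neq b_2$ case, where one must carefully exploit the vanishing of the full geometric sum to gain a factor of $1/N$, whereas the bound $|\sum_{n=0}^{k-1}(\cdots)|\leq k$ is then absorbed by the summability of $k|a_{m,k}|$; this is where the slightly strengthened decay assumption $\gamma_0>4$ (replacing $\gamma_0>3$ of \cite{loubaton-rosuel-ejs-2021}) is comfortably used.
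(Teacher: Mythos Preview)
Your proof is correct and follows essentially the same approach as the paper, which simply says ``Reasoning as in the proof of Lemma~\ref{le:entries-Omegamr}'': you write the entry as a cross-covariance via the projection formula, expand using the causal representation (\ref{eq:representation-causale-innovation}), and control the resulting geometric sums exactly as in that earlier lemma. Your write-up is in fact more explicit than the paper's one-line reference. One minor comment: your closing remark that this is ``where the slightly strengthened decay assumption $\gamma_0>4$ \ldots\ is comfortably used'' overstates things---the argument here only needs $\sum_k k|a_{m,k}|<\infty$, which already follows from $\gamma_0>1$; the condition $\gamma_0>4$ is needed elsewhere in the paper (e.g.\ for (\ref{eq:expre-Trace-Phimb})), not for this lemma.
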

\begin{proof}
$\Psibs_{m,r}^{1}$ is equal to  $ \Psibs_{m,r}^{1} = \mathbb{E} \left(\x_{m,\mathfrak{b}}^{*}  \, \frac{\omega_{m,r}}{\sqrt{s_m}}  \right)$. Reasoning as in the proof of Lemma  \ref{le:entries-Omegamr},
we easily obtain (\ref{eq:entries-Psimr}).
\end{proof}

\section{Details of the proof of Theorem \ref{th:simplification-theta1b}}
\label{sec:details-proof-theorem2}
We first provide in Subsection \ref{subsec:needed-results} an overview of the various results that are needed to achieve the 3 steps of the proof, and then present in Subsections 
\ref{subsec:proof-step1}, \ref{subsec:proof-step2} and \ref{subsec:proof-step3} all the detailed proofs. 
\subsection{Overview of the proofs of Steps 1 to 3}
\label{subsec:needed-results}
{\bf Step 1}. We first establish that 
\begin{eqnarray}
\label{eq:domination-norme-Upsilon1}
\| \Upsilonbs_1 \| & = & \Ocal_{\prec}(B^{-3/2}), \\
\label{eq:moments-norme-Upsilon1}
\mathbb{E}( \| \Upsilonbs_1 \|^{k}) & = & \Ocal(B^{-3k/2+\epsilon}),
\end{eqnarray} 
for each $\epsilon > 0$. As $B^{-3/2} = o\left( \sqrt{BN}^{-1}\right)$ because $\alpha > \frac{1}{2}$, this immediately implies that the contribution of $\Upsilonbs_1$ to 
$\mathbb{E} \left(  \frac{1}{M} \Tr \left( \hat{\Q}_{N,\mathfrak{b}}(z) - \tilde{\Q}_{N,\mathfrak{b}}(z)\right) \right)$ and to $\frac{1}{M} \Tr \left( \hat{\Q}_{N,\mathfrak{b}}(z) - \tilde{\Q}_{N,\mathfrak{b}}(z)\right)$, and therefore to 
$ \left(\frac{1}{M} \Tr \left( \hat{\Q}_{N,\mathfrak{b}}(z) - \tilde{\Q}_{N,\mathfrak{b}}(z)\right)\right)^{\circ}$ 
are  terms $\Ocal_z(B^{-3k/2+\epsilon})$ and $\Ocal_{\prec,z}(B^{-3/2})$
respectively. As the moments of $\| \tilde{\C}_{\mathfrak{b}} \|$ are easily seen to be $\Ocal(1)$ terms, 
we also deduce from (\ref{eq:E-norm-hatDb-D}) and (\ref{eq:moments-norme-Upsilon1}) that
\begin{equation}
\label{eq:E-hatCb-tildeCb}
\mathbb{E}\left( \|\hat{\C}_{\mathfrak{b}} - \tilde{\C}_{\mathfrak{b}}\|^{k}\right) = \Ocal\left( B^{-k/2+\epsilon} \right),
\end{equation} 
for each integer $k$ and each $\epsilon > 0$, a result that will be used extensively in the following. 
\\

{\bf Step 2}. We then study the contribution of the 3 quadratic terms in (\ref{eq:decomposition-hatCb-tildeCb}) (i.e. the third, fourth, and fifth term
at the right-hand side of (\ref{eq:decomposition-hatCb-tildeCb})) to \\ $- \frac{1}{M} \Tr \left( \hat{\Q}_{\mathfrak{b}} (\hat{\C}_{\mathfrak{b}} - \tilde{\C}_{\mathfrak{b}}) \tilde{\Q}_{\mathfrak{b}}\right)$. We first justify that replacing $ \hat{\Q}_{\mathfrak{b}}$ and $ \tilde{\Q}_{\mathfrak{b}}$ by $\Q_{\mathfrak{b}}$, and 
$\tilde{\C}_{\mathfrak{b}}$ by $\frac{\X_{\mathfrak{b}} \X_{\mathfrak{b}}^{*}}{B+1}$,  in the terms  
\begin{equation}
  \label{eq:contrib-terme-3-4}
\delta_{34,\mathfrak{b}} =  - \frac{3}{8} \frac{1}{M} \Tr \left( \hat{\Q}_{\mathfrak{b}} \left( \left((\hat{\D}_{\mathfrak{b}} - \D ) \D^{-1}\right)^{2} \tilde{\C}_{\mathfrak{b}} +   \tilde{\C}_{\mathfrak{b}} \left((\hat{\D}_{\mathfrak{b}} - \D ) \D^{-1}\right)^{2} \right) \tilde{\Q}_{\mathfrak{b}}\right),
\end{equation}
and
\begin{equation}
  \label{eq:contrib-terme-5}
\delta_{5,\mathfrak{b}} =  - \frac{1}{4} \frac{1}{M} \Tr \left( \hat{\Q}_{\mathfrak{b}} \left(  (\hat{\D}_{\mathfrak{b}} - \D) \D^{-1} \tilde{\C}_{\mathfrak{b}}   (\hat{\D}_{\mathfrak{b}} - \D) \D^{-1} \right)  \tilde{\Q}_{\mathfrak{b}}\right),
\end{equation}
produces error terms that are negligible w.r.t. $ \frac{1}{\sqrt{BN}}$. More precisely, we have 
\begin{align}
  \label{eq:equivalent1-delta34}
  \delta_{34,\mathfrak{b}}  = & -\frac{3}{4} \frac{1}{M} \Tr (\Q_{\mathfrak{b}} + z \Q_{\mathfrak{b}}^{2}) \left((\hat{\D}_{\mathfrak{b}} - \D ) \D^{-1}\right)^{2} + \Ocal_{\prec,z} \left(\frac{1}{N} + \frac{1}{B^{3/2}}\right), \\
  \label{eq:equivalent1-delta5}
  \delta_{5,\mathfrak{b}}  = &  -\frac{1}{4} \frac{1}{M} \Tr \Q_{\mathfrak{b}}^{2} (\hat{\D}_{\mathfrak{b}} - \D ) \D^{-1} \frac{\X_{\mathfrak{b}} \X_{\mathfrak{b}}^{*}}{B+1}  (\hat{\D}_{\mathfrak{b}} - \D ) \D^{-1}  \\ 
  \notag & 
  + \Ocal_{\prec,z} \left(\frac{1}{N} + \frac{1}{B^{3/2}}\right),
\end{align}
as well as
\begin{align}
  \label{eq:E-equivalent1-delta34}
  \mathbb{E}(\delta_{34,\mathfrak{b}})  = & -\frac{3}{4} \mathbb{E} \left(  \frac{1}{M} \Tr (\Q_{\mathfrak{b}} + z \Q_{\mathfrak{b}}^{2}) \left((\hat{\D}_{\mathfrak{b}} - \D ) \D^{-1}\right)^{2} \right)+ \Ocal_{z} \left(\frac{B^{\epsilon}}{N} + \frac{B^{\epsilon}}{B^{3/2}}\right), \\
  \label{eq:E-equivalent1-delta5}
 \mathbb{E}(\delta_{5,\mathfrak{b}}) = &  -\frac{1}{4} \mathbb{E} \left(\frac{1}{M} \Tr \Q_{\mathfrak{b}}^{2} (\hat{\D}_{\mathfrak{b}} - \D ) \D^{-1} \frac{\X_{\mathfrak{b}} \X_{\mathfrak{b}}^{*}}{B+1}  (\hat{\D}_{\mathfrak{b}} - \D ) \D^{-1} \right) \\ \notag 
 & + \Ocal_{z} \left(\frac{B^{\epsilon}}{N} + \frac{B^{\epsilon}}{B^{3/2}}\right) ,
\end{align}
for each $\epsilon > 0$. In (\ref{eq:equivalent1-delta34}) and (\ref{eq:E-equivalent1-delta34}), we have used the resolvent identity (\ref{eq:resolvent-identity}) $\frac{\X_{\mathfrak{b}} \X_{\mathfrak{b}}^{*}}{B+1} \Q_{\mathfrak{b}}= \I + z \Q_{\mathfrak{b}}$. Using the decomposition (\ref{eq:decomposition-Dx-I}) and the properties of $\D_{2,\mathfrak{b}}$ and $\D_{3,\mathfrak{b}}$, we establish that
\begin{align}
&\frac{1}{\pi} \mathrm{Re} \int_{\Dcal} \bar{\partial}  \Phi_k(f)(z) \,   \left(  \delta_{34,\mathfrak{b}}(z) + 
\frac{3}{4} ( z t(z))^{'} \frac{1}{M} \Tr \left(\D_{\x_{\mathfrak{b}}} - \I\right)^{2} \right)^{\circ}  \diff x \diff y 
    \notag\\
&\qquad =  \Ocal_{\prec}\left(\frac{1}{N} + \frac{1}{B^{3/2}} + \frac{B^{5/2}}{N^{3}} \right) ,
\label{eq:equivalent2-delta34}
\end{align}
and
\begin{align}
\label{eq:equivalent2-E-delta34} 
 &\mathbb{E} (\delta_{34,\mathfrak{b}})  =  -\frac{3}{4}  ( z t(z))^{'} \mathbb{E} \left(  \frac{1}{M} \Tr \left(\D_{\x_{\mathfrak{b}}} - \I\right)^{2} \right)  
 \notag\\
 &\qquad\qquad+  \Ocal_{z} \left(\frac{B^{\epsilon}}{N} + \frac{B^{\epsilon}}{B^{3/2}} + \frac{B^{\epsilon} B^{5/2}}{N^{3}} + \left(\frac{B}{N}\right)^{4} \right),
\end{align}
as well as 
\begin{align}
& \frac{1}{\pi} \mathrm{Re} \int_{\Dcal} \bar{\partial}  \Phi_k(f)(z) \,   \left(  \delta_{5,\mathfrak{b}}(z) +
\frac{1}{4} \frac{1}{M} \Tr \Q_{\mathfrak{b}}^{2} (\D_{\x_{\mathfrak{b}}} - \I)  \frac{\X_{\mathfrak{b}} \X_{\mathfrak{b}}^{*}}{B+1}   (\D_{\x_{\mathfrak{b}}} - \I) \right)^{\circ}  \diff x \diff y  \notag\\
&\qquad = \Ocal_{\prec}\left(\frac{1}{N} + \frac{1}{B^{3/2}} + \frac{B^{5/2}}{N^{3}} \right),
\label{eq:equivalent2-delta5}
\end{align}
and
\begin{align}
 \mathbb{E} (\delta_{5,\mathfrak{b}})  = &  -\frac{1}{4} \mathbb{E} \left( \frac{1}{M} \Tr \Q_{\mathfrak{b}}^{2} (\D_{\x_{\mathfrak{b}}} - \I)  \frac{\X_{\mathfrak{b}} \X_{\mathfrak{b}}^{*}}{B+1}   (\D_{\x_{\mathfrak{b}}} - \I) \right)  \notag \\
& \qquad+  \Ocal_{z} \left(\frac{B^{\epsilon}}{N}+ \frac{B^{\epsilon}}{B^{3/2}} + \frac{B^{\epsilon} B^{5/2}}{N^{3}} + \left(\frac{B}{N}\right)^{4} \right).
\label{eq:equivalent2-E-delta5}  
\end{align}
We eventually prove that 
\begin{align}
\label{eq:stochastic-domination-tr-D-I-square}
& \frac{1}{M} \Tr \left( \left(\D_{\x_{\mathfrak{b}}} - \I\right)^{2} \right)^{\circ} = \Ocal_{\prec}(B^{-3/2}) \\
\label{eq:stochastic-domination-tr-Q2-D-I-XX-D-I}
& \frac{1}{\pi} \mathrm{Re} \int_{\Dcal} \bar{\partial}  \Phi_k(f)(z) \, \left( \frac{1}{M} \Tr \Q_{\mathfrak{b}}^{2} (\D_{\x_{\mathfrak{b}}} - \I)  \frac{\X_{\mathfrak{b}} \X_{\mathfrak{b}}^{*}}{B+1}   (\D_{\x_{\mathfrak{b}}} - \I) \right)^{\circ}  \diff x \diff y  \\ \notag  & =  \Ocal_{\prec}(B^{-3/2}),
\end{align}
and that 
\begin{eqnarray}
\label{eq:E-tr-D-I-square}
& \mathbb{E}\left(  \frac{1}{M} \Tr \left(\D_{\x_{\mathfrak{b}}} - \I \right)^{2} \right)  = \frac{1}{B+1}, \\
\label{eq:E-tr-tr-Q2-D-I-XX-D-I}
& \mathbb{E}\left(  \frac{1}{M} \Tr \Q_{\mathfrak{b}}^{2} (\D_{\x_{\mathfrak{b}}} - \I)  \frac{\X_{\mathfrak{b}} \X_{\mathfrak{b}}^{*}}{B+1}   (\D_{\x_{\mathfrak{b}}} - \I) \right)  = \frac{t'(z)}{B+1} +  \Ocal_{z} \left(\frac{1}{B^{3/2}}\right) . 
\end{eqnarray}
As $\frac{B^{\epsilon}}{N} + \frac{B^{\epsilon}}{B^{3/2}} + \frac{B^{\epsilon} B^{5/2}}{N^{3}} = o\left( \frac{1}{\sqrt{BN}}\right)$ for $\epsilon$ small enough if $\alpha < \frac{4}{5}$, this, in turn, leads to 
\begin{equation}
  \label{eq:equivalent-delta34-plus-delta5}
\frac{1}{\pi} \mathrm{Re} \int_{\Dcal} \bar{\partial}  \Phi_k(f)(z) \,   \left(\delta_{34,\mathfrak{b}}(z) + \delta_{5,\mathfrak{b}}(z) \right)^{\circ} 
\diff x \diff y  =  o_{\prec} \left(\frac{1}{\sqrt{NB}}\right) ,
\end{equation}
and to
\begin{equation}
  \label{eq:equivalent-E-delta34-plus-delta5}
  \mathbb{E}(\delta_{34,\mathfrak{b}} + \delta_{5,\mathfrak{b}}) = -\frac{1}{4(B+1)} \, \left( 3 (zt(z))' + (t(z))' \right) +
  \Ocal_{z}\left( \left( \frac{B}{N}\right)^{4} \right) + o_{z} \left(\frac{1}{\sqrt{NB}}\right) .
\end{equation}
{\bf Step 3} We next address the behaviour of the contributions $\delta_{1,\mathfrak{b}}$ and $\delta_{2,\mathfrak{b}}$ of the two linear terms of 
the right-hand side of (\ref{eq:decomposition-hatCb-tildeCb}) to 
$- \frac{1}{M} \Tr \left( \hat{\Q}_{\mathfrak{b}} (\hat{\C}_{\mathfrak{b}} - \tilde{\C}_{\mathfrak{b}}) \tilde{\Q}_{\mathfrak{b}}\right)$. As these two terms are very similar, we provide more details on the evaluation of $\delta_{1,\mathfrak{b}}$ defined 
by 
\begin{equation}
\label{eq:def-delta}
\delta_{1,\mathfrak{b}} = \frac{1}{2} \frac{1}{M} \Tr \left( \hat{\Q}_{\mathfrak{b}}  (\hat{\D}_{\mathfrak{b}} - \D) \D^{-1} \tilde{\C}_{\mathfrak{b}}    \tilde{\Q}_{\mathfrak{b}}\right).
\end{equation}
We express $\hat{\Q}_{\mathfrak{b}}$ as 
$$
\hat{\Q}_{\mathfrak{b}} = \tilde{\Q}_{\mathfrak{b}} - \hat{\Q}_{\mathfrak{b}} (\hat{\C}_{\mathfrak{b}} - \tilde{\C}_{\mathfrak{b}}) \tilde{\Q}_{\mathfrak{b}},
$$
and plug this expression in (\ref{eq:def-delta}) to obtain that 
$$
\delta_{1,\mathfrak{b}} = \delta_{11,\mathfrak{b}} + \delta_{12,\mathfrak{b}},
$$
where $\delta_{11,\mathfrak{b}} = \frac{1}{2} \frac{1}{M} \Tr \left( \tilde{\Q}_{\mathfrak{b}}  (\hat{\D}_{\mathfrak{b}} - \D) \D^{-1} \tilde{\C}_{\mathfrak{b}}    \tilde{\Q}_{\mathfrak{b}}\right)$
and where $\delta_{12,\mathfrak{b}}$ is given by
$$
\delta_{12,\mathfrak{b}} = - \frac{1}{2} \frac{1}{M} \Tr \left(  \hat{\Q}_{\mathfrak{b}} (\hat{\C}_{\mathfrak{b}} - \tilde{\C}_{\mathfrak{b}})  \tilde{\Q}_{\mathfrak{b}}  (\hat{\D}_{\mathfrak{b}} - \D) \D^{-1} \tilde{\C}_{\mathfrak{b}}   \tilde{\Q}_{\mathfrak{b}}\right).
$$
For $i=1,2$, we evaluate $\mathbb{E}(\delta_{1i,\mathfrak{b}})$ and 
$\frac{1}{\pi} \mathrm{Re} \int_{\Dcal} \bar{\partial}  \Phi_k(f)(z) \, \left(\delta_{1i,\mathfrak{b}} (z)\right)^{\circ} \, \diff x \diff y $. \\

We begin by the study of $\delta_{12,\mathfrak{b}}$, 
and use the expansion (\ref{eq:decomposition-hatCb-tildeCb}) of $\hat{\C}_{\mathfrak{b}} - \tilde{\C}_{\mathfrak{b}}$, 
We remark that the contributions of the non-linear terms w.r.t. $(\hat{\D}_{\mathfrak{b}} - \D) \D^{-1}$ to $\delta_{12,\mathfrak{b}}$ are $\Ocal_{\prec,z}(B^{-3/2}) = 
o_{\prec,z} \left(\frac{1}{\sqrt{NB}}\right)$ terms. Therefore, $\delta_{12,\mathfrak{b}}$ can be written as
\begin{align}
\delta_{12,\mathfrak{b}} =  &  \frac{1}{4} \; \frac{1}{M} \mathrm{Tr} \Biggl[\hat{\Q}_{\mathfrak{b}} \left( (\hat{\D}_{\mathfrak{b}} - \D) \D^{-1} \tilde{\C}_{\mathfrak{b}}  +  \tilde{\C}_{\mathfrak{b}} (\hat{\D}_{\mathfrak{b}} - \D) \D^{-1} \right) 
\notag\\
  &\qquad\tilde{\Q}_{\mathfrak{b}}  (\hat{\D}_{\mathfrak{b}} - \D) \D^{-1} \tilde{\C}_{\mathfrak{b}} \tilde{\Q}_{\mathfrak{b}} \Biggr]
  + \Ocal_{\prec,z}(B^{-3/2}),
  \label{eq:expre-1-delta12b}
\end{align}
Moreover, for each $\epsilon > 0$, we also have
\begin{align}
\label{eq:expre-1-E-delta12b}
\mathbb{E}\left(\delta_{12,\mathfrak{b}}\right) &=  
\frac{1}{4} \mathbb{E}
\Bigl(
    \frac{1}{M} \mathrm{Tr} \Bigl[\hat{\Q}_{\mathfrak{b}} \left( (\hat{\D}_{\mathfrak{b}} - \D) \D^{-1} \tilde{\C}_{\mathfrak{b}}  +  \tilde{\C}_{\mathfrak{b}} (\hat{\D}_{\mathfrak{b}} - \D) \D^{-1} \right) 
    \notag\\
    &\qquad\qquad\qquad\tilde{\Q}_{\mathfrak{b}}  (\hat{\D}_{\mathfrak{b}} - \D) \D^{-1} \tilde{\C}_{\mathfrak{b}} \tilde{\Q}_{\mathfrak{b}} \Bigr]
\Bigr)
  + \Ocal_{z}(B^{-3/2+\epsilon}).
\end{align}

Up to $\Ocal_{\prec,z}\left( N^{-1} + B^{-3/2} \right) = o_{\prec,z} \left(\frac{1}{\sqrt{NB}}\right)$ error terms, it is possible to replace 
$\hat{\Q}_{\mathfrak{b}}$, $\tilde{\Q}_{\mathfrak{b}}$ by $\Q_{\mathfrak{b}}$
and $\tilde{\C}_{\mathfrak{b}}$ by $\frac{\X_{\mathfrak{b}}\X_{\mathfrak{b}}^*}{B+1}$. In other words, 
$\delta_{12,\mathfrak{b}}$ is given by 
\begin{equation}
\label{eq:domination-stochastique-delta12b}
\delta_{12,\mathfrak{b}} = \tilde{\delta}_{12,\mathfrak{b}} + \Ocal_{\prec,z}\left( N^{-1} + B^{-3/2} \right),
\end{equation}
where 
\begin{align}
\label{eq:def-tildedelta12-delta11}
\tilde{\delta}_{12,\mathfrak{b}} &=  
\frac{1}{4} \frac{1}{M} \mathrm{Tr} \Bigl[\Q_{\mathfrak{b}} \left(  (\hat{\D}_{\mathfrak{b}} - \D) \D^{-1}  \frac{\X_{\mathfrak{b}}\X_{\mathfrak{b}}^*}{B+1} + 
\frac{\X_{\mathfrak{b}}\X_{\mathfrak{b}}^*}{B+1} (\hat{\D}_{\mathfrak{b}} - \D) \D^{-1} \right) 
\notag\\
&\qquad\qquad\qquad
\Q_{\mathfrak{b}}   (\hat{\D}_{\mathfrak{b}} - \D) \D^{-1}  \frac{\X_{\mathfrak{b}}\X_{\mathfrak{b}}^*}{B+1} \Q_{\mathfrak{b}} \Bigr] .
\end{align}
We also have
\begin{equation}
\label{eq:E-delta12b-tilde-delta12b}
\mathbb{E}(\delta_{12,\mathfrak{b}}) = \mathbb{E}(\tilde{\delta}_{12,\mathfrak{b}}) + \Ocal_{z}\left( B^{\epsilon} N^{-1} + B^{-3/2+\epsilon} \right).
\end{equation}
As in the context of the evaluation of $\delta_{5,\mathfrak{b}}$ in the course of Step 2, using the properties of $\D_{2,\mathfrak{b}}$
and  $\D_{3,\mathfrak{b}}$, we obtain that 
\begin{equation}
\label{eq:simplification-tildedelta12-rond}
\frac{1}{\pi} \mathrm{Re} \int_{\Dcal} \bar{\partial}  \Phi_k(f)(z) \,   \left(  (\tilde{\delta}_{12,\mathfrak{b}}(z) -  T_{12,\mathfrak{b}}(z))\right)^{\circ}  \diff x \diff y =  \Ocal_{\prec}\left(\frac{1}{N} + \frac{1}{B^{3/2}} + \frac{B^{5/2}}{N^{3}} \right),
\end{equation}
as well as 
\begin{equation}
\label{eq:simplification-E-tildedelta12-rond}
\mathbb{E} \left( \tilde{\delta}_{12,\mathfrak{b}}(z) -  T_{12,\mathfrak{b}}(z))\right) =  \Ocal_{z} \left(\frac{B^{\epsilon}}{N} + \frac{B^{\epsilon}}{B^{3/2}} + \frac{B^{\epsilon} B^{5/2}}{N^{3}} + \left(\frac{B}{N}\right)^{4} \right),
\end{equation}
where $T_{12,\mathfrak{b}}$ is defined by 
\begin{align}
\label{eq:def-T12-delta11}
T_{12,\mathfrak{b}} &=  \frac{1}{4} \frac{1}{M} \mathrm{Tr} \Biggl[\Q_{\mathfrak{b}} \left((\D_{\x_{\mathfrak{b}}} - \I) \frac{\X_{\mathfrak{b}}\X_{\mathfrak{b}}^*}{B+1} + 
\frac{\X_{\mathfrak{b}}\X_{\mathfrak{b}}^*}{B+1} (\D_{\x_{\mathfrak{b}}} - \I) \right) 
\notag\\
&\qquad\qquad
\Q_{\mathfrak{b}} (\D_{\x_{\mathfrak{b}}} - I) \frac{\X_{\mathfrak{b}}\X_{\mathfrak{b}}^*}{B+1} \Q_{\mathfrak{b}} \Biggr] .
\end{align}
We prove that 
\begin{equation}
 \label{eq:behaviour-T12-circ}
\frac{1}{\pi} \mathrm{Re} \int_{\Dcal} \bar{\partial}  \Phi_k(f)(z) \, \left(T_{12,\mathfrak{b}} (z)\right)^{\circ} \, \diff x \diff y   
=  \Ocal_{\prec}\left( B^{-3/2} \right)  ,
\end{equation}
and therefore that 
\begin{equation}
 \label{eq:behaviour-delta12-circ}
\frac{1}{\pi} \mathrm{Re} \int_{\Dcal} \bar{\partial}  \Phi_k(f)(z) \, \left(\delta_{12,\mathfrak{b}}(z)\right)^{\circ} \, \diff x \diff y   
= \Ocal_{\prec}\left( N^{-1} + B^{-3/2} + \frac{B^{5/2}}{N^{3}} \right)  .
\end{equation}
Moreover, using the integration by parts formula, we obtain that 
\begin{equation}
\label{eq:expre-E-T12b}
\mathbb{E}(T_{12,\mathfrak{b}}(z)) = \frac{1}{4(B+1)} \left( 2 t(z) + 3 z t^{2}(z) + 2 z^{2} t(z) t'(z) + z t'(z) \right) + \Ocal_{z}\left( \frac{1}{B^{3/2}}\right) ,
\end{equation}
which itself implies that 
\begin{align}
\label{eq:comportement-final-E-delta12}
\mathbb{E}(\delta_{12,\mathfrak{b}}(z)) = & \frac{1}{4(B+1)} \left( 2 t(z) + 3 z t^{2}(z) + 2 z^{2} t(z) t'(z) + z t'(z) \right) \\ \notag & 
+  \Ocal_{z} \left(\frac{B^{\epsilon}}{N} + \frac{B^{\epsilon}}{B^{3/2}} + \frac{B^{\epsilon} B^{5/2}}{N^{3}}
+ \left(\frac{B}{N}\right)^{4} \right).
\end{align}
In order to evaluate $\delta_{11,\mathfrak{b}}$, we remark that 
$\tilde{\C}_{\mathfrak{b}}   \tilde{\Q}_{\mathfrak{b}} = \I + z  \tilde{\Q}_{\mathfrak{b}}$, 
and express $\delta_{11,\mathfrak{b}}$ as 
$$
\delta_{11,\mathfrak{b}} =  \frac{1}{2} \frac{1}{M} \Tr \left( (\tilde{\Q}_{\mathfrak{b}} + z \tilde{\Q}_{\mathfrak{b}}^{2})   (\hat{\D}_{\mathfrak{b}} - \D) \D^{-1} \right).
$$
We denote by $ T_{11,\mathfrak{b}}$ the term defined by 
\begin{equation}
    \label{eq:def-T11}
 T_{11,\mathfrak{b}} =  \frac{1}{2} \frac{1}{M} \Tr \left( (\Q_{\mathfrak{b}} + z \Q_{\mathfrak{b}}^{2})   (\hat{\D}_{\mathfrak{b}} - \D) \D^{-1} \right)   ,
\end{equation}
and define $\eta(z)$ by 
$\eta = \delta_{11,\mathfrak{b}} - T_{11,\mathfrak{b}} $. We first show that
\begin{eqnarray}
  \label{eq:evaluation-tilde-delta11}
\frac{1}{\pi} \mathrm{Re} \int_{\Dcal} \bar{\partial}  \Phi_k(f)(z) \,(\eta(z))^{\circ}  \, \diff x \diff y  & = \Ocal_{\prec} \left(\frac{1}{N} + \frac{B^{5/2}}{N^{3}} \right), \\
 \label{eq:evaluation-E-tilde-delta11}
\mathbb{E}(\eta(z)) & = 
 \Ocal_{z}\left( N^{-1} + \frac{B^{5/2}}{N^{3}} +  \left(\frac{B}{N}\right)^{4} \right).
\end{eqnarray}
We are thus back to the evaluation of $T_{11,\mathfrak{b}}$. For this, we use the expression 
(\ref{eq:decomposition-Dx-I}) of $(\hat{\D}_{\mathfrak{b}} - \D) \D^{-1}$, 
and obtain that 
$$
T_{11,\mathfrak{b}} = \sum_{i=1}^{3} T_{11,\mathfrak{b}}^{i},
$$
where $T_{11,\mathfrak{b}}^{i}$ represents the contribution of the $i$-th term of 
(\ref{eq:decomposition-Dx-I}) to $T_{11,\mathfrak{b}}$. We notice in particular that 
$T^{1}_{11,\mathfrak{b}} =  \frac{1}{2}  \frac{1}{M} \Tr \left( (\Q_{\mathfrak{b}} + z \Q_{\mathfrak{b}}^{2})   (\D_{\x_{\mathfrak{b}}} - \I) \right)$. 
We prove that  
\begin{eqnarray}
\label{eq:E-T11-1}
& \mathbb{E}(T_{11,\mathfrak{b}}^{1})  = -\frac{1}{2(B+1)} \, \left(t(z) + z t^{2}(z) + z (t(z) + z t^{2}(z))'\right) + \Ocal_{z} \left(\frac{1}{B^{3/2}}\right), \\
  \label{eq:E-T11-2}
&  \mathbb{E}(T_{11,\mathfrak{b}}^{2})  =  \Ocal_{z} \left(\frac{1}{N}\right), \\
   \label{eq:E-T11-3}
&  \mathbb{E}(T_{11,\mathfrak{b}}^{3})  = \frac{1}{2} \tilde{p}_N(z) \tilde{r}_N(\nu) v_N +  \Ocal_{z} \left(\frac{B^{4}}{N^{4}} + \frac{B^{5/2}}{N^{3}}\right) ,
\end{eqnarray}
as well as 
\begin{equation}
 \label{eq:T11-rond}
 \frac{1}{\pi} \mathrm{Re} \int_{\Dcal} \bar{\partial}  \Phi_k(f)(z) \, \left(T_{11,\mathfrak{b}} (z) - T_{11,\mathfrak{b}}^{1}(z)\right)^{\circ} \, \diff x \diff y  
=  \Ocal_{\prec} \left(\frac{1}{N}\right).
\end{equation}
$\delta_{1,\mathfrak{b}}$ thus verifies 
\begin{align}
    \label{eq:detla1b-rond}
& \frac{1}{\pi} \mathrm{Re} \int_{\Dcal} \bar{\partial}  \Phi_k(f)(z) \, \left(\delta_{1,\mathfrak{b}} (z) -  \frac{1}{2} \frac{1}{M} \Tr \left( (\Q_{\mathfrak{b}} + z \Q_{\mathfrak{b}}^{2})   (\D_{\x_{\mathfrak{b}}} - \I) \right) \right)^{\circ} \, \diff x \diff y  
=  \\
\notag
&  \Ocal_{\prec,z}\left( N^{-1} + B^{-3/2} + \frac{B^{5/2}}{N^{3}} \right)  = o_{\prec}\left( \frac{1}{\sqrt{BN}} \right).
\end{align}
Moreover, the evaluation of $\mathbb{E}(T_{12,\mathfrak{b}})$ leads to 
\begin{align}
    \label{eq:E-delta1b}
    \mathbb{E}(\delta_{1,\mathfrak{b}}) = & -\frac{1}{4(B+1)} \left( z t^{2}(z) + z t'(z) + 2 z^{2} t(z) t'(z)  \right) + \frac{1}{2} \tilde{p}_N(z) \tilde{r}_N(\nu) v_N   + \\
    \notag
    &  \Ocal_{z} \left(\frac{B^{\epsilon}}{N} + \frac{B^{\epsilon}}{B^{3/2}} + \frac{B^{\epsilon} B^{5/2}}{N^{3}} + \left(\frac{B}{N}\right)^{4} \right),
\end{align}
and therefore to 
\begin{align}
    \label{eq:E-delta1b-final}
    \mathbb{E}(\delta_{1,\mathfrak{b}}) =  & -\frac{1}{4(B+1)} \left( z t^{2}(z) + z t'(z) + 2 z^{2} t(z) t'(z)  \right) + \frac{1}{2} \tilde{p}_N(z) \tilde{r}_N(\nu) v_N \\ \notag & 
    + \Ocal_z \left(\frac{B}{N}\right)^{4} + o_{z}\left( \frac{1}{\sqrt{BN}} \right).
\end{align}
The behavior of $\delta_{2,\mathfrak{b}}$ is studied using the same arguments. In particular, 
$\delta_{2,\mathfrak{b}}$ can still be written as $\delta_{2,b} = \delta_{21,\mathfrak{b}} + \delta_{22,\mathfrak{b}}$ 
where $(\delta_{2i,\mathfrak{b}})_{i=1,2}$ are defined in the same way than the  $(\delta_{1i,\mathfrak{b}})_{i=1,2}$. As a result, we obtain that 
\begin{equation}
  \label{eq:behaviour-delta2b-circ}
\frac{1}{\pi} \mathrm{Re} \int_{\Dcal} \bar{\partial}  \Phi_k(f)(z) \,  \left(  \delta_{2,\mathfrak{b}}(z)  - \frac{1}{2} \,  \frac{1}{M} \Tr (\Q_{\mathfrak{b}} + z \Q_{\mathfrak{b}}^{2})(\D_{\x_{\mathfrak{b}}} - \I) \right)^{\circ} = o_{\prec} \left(\frac{1}{\sqrt{NB}}\right),
\end{equation}
while 
\begin{align}
    \label{eq:expre-E-delta2b}
    \mathbb{E}(\delta_{2,\mathfrak{b}}) = & -\frac{1}{4(B+1)} \left( 3 z t^{2}(z) + t(z) - t'(z) +  2 z^{2} t(z) t'(z)  \right) + \frac{1}{2} \tilde{p}_N(z) \tilde{r}_N(\nu) v_N \\ \notag & 
    +  \Ocal_z \left(\frac{B}{N}\right)^{4}  +  o_{z} \left(\frac{1}{\sqrt{NB}}\right).
\end{align}
Gathering Steps 1 to 3, we eventually obtain (\ref{eq:approximation-tr-hatQ-tildeQ}). To complete
the proof of 
(\ref{eq:approximation-E-tr-hatQ-tildeQ}), we obtain after some algebra that
\begin{align}
    \label{eq:conclusion-biais}
\mathbb{E}(\delta_{1,b} + \delta_{2,b} + \delta_{34,\mathfrak{b}} + \delta_{5,\mathfrak{b}}) & = - \frac{1}{B+1} \left( t(z) + z t^{2}(z) + z t'(z) + z^{2} t(z) t'(z) \right)  +  \\
\notag
& \tilde{p}_N(z) \tilde{r}_N(\nu) v_N + \Ocal_{z} \left( \frac{B^{4}}{N^{4}} \right) + o_{z}\left( \frac{1}{\sqrt{BN}}\right) \\
 \notag
 & =  \frac{ z t(z) \tilde{t}(z) (zt(z))'}{B+1}   + \tilde{p}_N(z) \tilde{r}_N(\nu) v_N  + \\ \notag 
 & \Ocal_{z} \left( \frac{B^{4}}{N^{4}} \right) + o_{z}\left( \frac{1}{\sqrt{BN}}\right) \\
 \notag
 & = \frac{1}{B+1} \frac{(z t(z) \tilde{t}(z))^{3}}{1 - c (z t(z) \tilde{t}(z))^{2}}   + \tilde{p}_N(z) \tilde{r}_N(\nu) v_N  + \\ \notag 
 & \Ocal_{z} \left( \frac{B^{4}}{N^{4}} \right)  + o_{z}\left( \frac{1}{\sqrt{BN}}\right) \\
 \notag
 & = -\frac{1}{c_N(B+1)} \, p_N(z)  + \tilde{p}_N(z) \tilde{r}_N(\nu) v_N  + \\ \notag 
 & \Ocal_{z} \left( \frac{B^{4}}{N^{4}} \right)  + o_{z}\left( \frac{1}{\sqrt{BN}}\right),
\end{align}
because the equalities 
$$
1 + z t(z) = - z t(z) \tilde{t}(z),
$$
(see Eq. (\ref{eq:equation-MP-tilde-tN},\ref{eq:equation-MP-tN})) and 
$$
 (zt(z))' = \frac{(z t(z) \tilde{t}(z))^{2}}{1 - c (z t(z) \tilde{t}(z))^{2}},
$$
hold. This proves (\ref{eq:approximation-E-tr-hatQ-tildeQ}), and completes the proof of Theorem \ref{th:simplification-theta1b}.

\subsection{Proof of Step 1}
\label{subsec:proof-step1}
(\ref{eq:domination-hatDb-D-alpha-inferieur-4-5}) and (\ref{eq:E-norm-hatDb-D}) immediately imply that in order to prove (\ref{eq:domination-norme-Upsilon1}) and (\ref{eq:moments-norme-Upsilon1}),
it is sufficient to verify that 
\begin{eqnarray}
\label{eq:domination-norme-hatFb}
\| \hat{\F}_{\mathfrak{b}} \| & = & \Ocal_{\prec}(B^{-3/2}), \\
\label{eq:moments-norme-hatFb}
\mathbb{E}( \|  \hat{\F}_{\mathfrak{b}} \|^{k}) & = & \Ocal(B^{-3k/2+\epsilon}),
\end{eqnarray} 
for each $\epsilon > 0$.  
Using $\hat{s}_{m,\mathfrak{b}} - s_m = \Ocal_{\prec}(B^{-1/2})$ and $\hat{\theta}_m \prec 1$ (because $\hat{\theta}_m$
 lies between $\hat{s}_{m,\mathfrak{b}}$ and $s_m$), we obtain immediately (\ref{eq:domination-norme-hatFb}). 
 In order to prove (\ref{eq:moments-norme-Upsilon1}), 
we first remark that 
$$
\| \hat{\F}_{\mathfrak{b}} \| \leq C \, \| \hat{\D}_{\mathfrak{b}} - \D \|^{3} \| \hat{\Thetabs} \|^{-7/2},
$$
where $ \hat{\Thetabs}$ represents the diagonal matrix $\dg\left(\hat{\theta}_m, m=1, \ldots, M\right)$.
The Schwartz inequality and  (\ref{eq:E-norm-hatDb-D}) imply that (\ref{eq:moments-norme-Upsilon1}) holds provided
we verify that for each integer $k$, $\mathbb{E}(\| \hat{\Thetabs} \|^{-k}) = \Ocal(1)$. For this, we notice that
 $$
 \frac{1}{\hat{\theta}_m} \leq  \frac{1}{s_m}  + \frac{1}{\hat{s}_{m,\mathfrak{b}}}   \leq C + \frac{1}{\hat{s}_{m,\mathfrak{b}}},
 $$
for some nice constant $C$. (\ref{eq:expre-hatsb}) and 
$\| \Phibs_{m,\mathfrak{b}} \| = \Ocal\left( \frac{B}{N} \right)$ imply that it exists a nice constant $C$ such that 
$$
 \hat{s}_{m,\mathfrak{b}} \geq C \, \frac{\| \x_{m,\mathfrak{b}}\|^{2}}{B+1},
$$ 
for each $N$ large enough. Therefore, $ \frac{1}{\hat{\theta}_m}$ verifies 
$$
 \frac{1}{\hat{\theta}_m} \leq C \left( 1 + \left( \frac{\| \x_{m,\mathfrak{b}}\|^{2}}{B+1} \right)^{-1} \right),
$$
 and $\mathbb{E}(\| \hat{\Thetabs} \|^{-k}) = \Ocal(1)$
 is a consequence of the following Lemma proved in the Appendix \ref{proof-lemma-moments-Dxb-inverse}. 
\begin{lemma}
\label{le:moments-Dxb-1}
For each integer $k$, we have 
\begin{equation}
\label{eq:moments-Dxb-1}
\mathbb{E}\left( \| \D_{\x_{\mathfrak{b}}} \|^{-k} \right) \leq C_k,
\end{equation}
for some constant $C_k$ depending on $k$.
\end{lemma}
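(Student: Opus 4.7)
The plan is first to unpack the notation: the bound $\|\hat{\Thetabs}\|^{-7/2}$ that appears in the preceding step (and analogously $\|\D_{\x_{\mathfrak{b}}}\|^{-k}$) really refers to $\|\hat{\Thetabs}^{-1}\|^{7/2}$ in the standard sense, i.e., the reciprocal of the smallest diagonal entry raised to the appropriate power. So writing $X_m = \|\x_{m,\mathfrak{b}}(\nu)\|^{2}/(B+1)$, the target is to show that
\begin{equation*}
\mathbb{E}\left[(\min_{m=1,\dots,M} X_m)^{-k}\right] \leq C_k
\end{equation*}
for each integer $k$, uniformly in $N$. Because $\x_{m,\mathfrak{b}}(\nu) \sim \mathcal{N}_c(0,\I_{B+1})$ and the vectors $(\x_{m,\mathfrak{b}})_{m=1,\dots,M}$ are mutually independent (see the discussion after (\ref{eq:representation-omegam-bartlett})), the random variables $(X_m)_{m=1,\dots,M}$ are i.i.d., and $(B+1)X_m$ is Gamma$(B+1,1)$ distributed, being a sum of $B+1$ i.i.d.\ Exp$(1)$ variables.

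The next step is a standard Chernoff lower-tail bound. For $s \in (0,1]$ and $\lambda > 0$, using $\mathbb{E}[e^{-\lambda |x|^{2}}] = 1/(1+\lambda)$ for $x \sim \mathcal{N}_c(0,1)$, one gets $\mathbb{E}[e^{-\lambda(B+1)X_m}] = (1+\lambda)^{-(B+1)}$, and Markov's inequality optimized at $\lambda = 1/s - 1$ yields
\begin{equation*}
P(X_m \leq s) \leq \bigl(s\,e^{1-s}\bigr)^{B+1} \leq (es)^{B+1},
\end{equation*}
where the last inequality uses $e^{1-s} \leq e$. A union bound gives $P(\min_m X_m \leq s) \leq M\min(1,(es)^{B+1})$.

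To conclude, apply the layer-cake formula to express $\mathbb{E}[(\min_m X_m)^{-k}]$ as $\int_{0}^{\infty} k t^{k-1} P(\min_m X_m < 1/t)\,dt$ and split the integration at $t=e$. The contribution from $t\in[0,e]$ is at most $\int_{0}^{e} k t^{k-1} dt = e^{k}$. For $t > e$, $s=1/t < 1/e$, and the union bound yields $P(\min_m X_m < 1/t) \leq M (e/t)^{B+1}$. For $N$ large enough that $B > k$,
\begin{equation*}
\int_{e}^{\infty} k t^{k-1}\,M\left(\frac{e}{t}\right)^{B+1}\! dt \;=\; M k\,e^{B+1}\!\int_{e}^{\infty}\! t^{k-B-2}\,dt \;=\; \frac{Mk\,e^{k}}{B+1-k},
\end{equation*}
and this is bounded by a nice constant by Assumption \ref{as:asymptotic-regime}, since $M/B \to c \in (0,1)$ forces $M/(B+1-k) = \mathcal{O}(1)$. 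Adding the two contributions completes the proof.

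There is no genuine obstacle here: the essential idea is that $X_m$ concentrates at $1$ with an exponential rate in $B$ while $M$ only grows polynomially in $N$ (hence in $B$), so $\min_m X_m$ still concentrates near $1$ and its negative moments remain bounded. The only mild care needed is in choosing the split point $t=e$ (rather than, say, $t=2$), which is what makes the exponential factor $e^{B+1}$ combine cleanly with $t^{-(B+1)}$ over the integration range to cancel the $B$-dependence entirely.
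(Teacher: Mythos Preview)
Your proof is correct and takes a more direct route than the paper. The paper splits $\mathbb{E}[\delta^{k}]$ (with $\delta = (\min_m X_m)^{-1}$) over the event $A_\epsilon = \{X_m \in [1-\epsilon,1+\epsilon]\ \forall m\}$ and its complement: on $A_\epsilon$ the integrand is trivially bounded, while on $A_\epsilon^{c}$ Cauchy--Schwarz is used together with the exponential smallness of $P(A_\epsilon^{c})$ and a crude polynomial bound $\mathbb{E}[\delta^{2k}] \leq M(B+1)^{2k}$ obtained from the explicit density of $\max_m \|\x_{m,\mathfrak{b}}\|^{-2}$. Your argument bypasses this splitting entirely: the Chernoff lower-tail bound $P(X_m \leq s)\leq (es)^{B+1}$ combined with a union bound and the layer-cake representation yields the result in one shot, and even delivers the sharper explicit bound $e^{k} + Mke^{k}/(B+1-k)$. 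The paper's route has the advantage of reusing a template (restriction to an exponentially likely good set) that pervades the rest of the manuscript; your route is more elementary and avoids the density computation altogether. Both need $B \geq k$ for the negative moment to be finite, which is harmless since $B\to\infty$.
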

We have thus justified (\ref{eq:moments-norme-Upsilon1}). In order to establish (\ref{eq:E-hatCb-tildeCb}), we first remark that 
the moments of $\| \frac{\X_{\mathfrak{b}}}{\sqrt{B+1}} \|$ are $\Ocal(1)$ terms. Therefore,
(\ref{eq:E-norm-Gammab-Gamma1r}) leads to the conclusion that 
$\mathbb{E}(\| \tilde{\C}_{\mathfrak{b}} \|^{k}) = \Ocal(1)$.
(\ref{eq:E-hatCb-tildeCb}) thus follows directly from the expansion (\ref{eq:decomposition-hatCb-tildeCb}), (\ref{eq:E-norm-hatDb-D}), (\ref{eq:moments-norme-Upsilon1}), and a relevant use of the Schwartz inequality.

\subsection{Proof of Step 2}
\label{subsec:proof-step2}
We first prove (\ref{eq:equivalent1-delta34}), (\ref{eq:equivalent1-delta5}),
and  (\ref{eq:E-equivalent1-delta34}), (\ref{eq:E-equivalent1-delta5}). Using Eq. (\ref{eq:expre-hatQ-tildeQ}), we 
remark that $\| \hat{\Q}_{\mathfrak{b}} - \tilde{\Q}_{\mathfrak{b}} \| \leq 
C(z) \| \hat{\C}_{\mathfrak{b}} - \tilde{\C}_{\mathfrak{b}} \|$. Using (\ref{eq:domination-hatCb-tildeCb})
and the condition $\alpha < \frac{4}{5}$, we obtain that  
\begin{equation}
\label{eq:error-hatQb-tildeQb}
\| \hat{\Q}_{\mathfrak{b}} - \tilde{\Q}_{\mathfrak{b}} \| = \Ocal_{\prec,z}\left( \frac{1}{\sqrt{B}} \right).
\end{equation}
Similarly, the property $\| \tilde{\C}_{\mathfrak{b}} - \frac{\X_{\mathfrak{b}} \X_{\mathfrak{b}}^{*}}{B+1} \| =
\| \tilde{\Deltabs} \| 
\prec \frac{B}{N}$ implies that 
\begin{equation}
\label{eq:error-tildeQb-Qb}
\| \tilde{\Q}_{\mathfrak{b}} - \Q_{\mathfrak{b}} \| = \Ocal_{\prec,z}\left( \frac{B}{N} \right).
\end{equation}
Using that $\| \hat{\D}_{\mathfrak{b}} - \D\| \prec \frac{1}{\sqrt{B}}$, we obtain easily that 
\begin{eqnarray*}
\delta_{34,\mathfrak{b}} & = & -\frac{3}{4} \frac{1}{M} \Tr \Q_{\mathfrak{b}} \left(  (\hat{\D}_{\mathfrak{b}} - \D)\D^{-1} \right)^{2} \frac{\X_{\mathfrak{b}} \X_{\mathfrak{b}}^{*}}{B+1}  \Q_{\mathfrak{b}} + \Ocal_{\prec,z}\left( \frac{1}{B^{3/2}} + \frac{1}{N}\right) .
\end{eqnarray*}
We now justify  (\ref{eq:E-equivalent1-delta34}). For this, we remark that 
(\ref{eq:E-hatCb-tildeCb}) implies that 
$\mathbb{E}(\| \hat{\Q}_{\mathfrak{b}} - \tilde{\Q}_{\mathfrak{b}} \|^{k}) = \Ocal_z(B^{-k/2 + \epsilon})$ for each $\epsilon > 0$. Moreover, (\ref{eq:E-norm-Gammab-Gamma1r}) leads to the evaluation 
$$
\mathbb{E}\left( \left\| \tilde{\C}_{\mathfrak{b}} - \frac{\X_{\mathfrak{b}} \X_{\mathfrak{b}}^{*}}{B+1} \right \|^{k} \right) = \Ocal\left( \left(\frac{B}{N}\right)^{k} \right).
$$
Therefore, we obtain that  
$$
\mathbb{E}(\| \tilde{\Q}_{\mathfrak{b}} - \Q_{\mathfrak{b}} \|^{k})  =  \Ocal_z\left(    \left(\frac{B}{N}\right)^{k} \right),
$$
as well as
$$
\mathbb{E}(\| \hat{\Q}_{\mathfrak{b}} - \Q_{\mathfrak{b}} \|^{k})  =  \Ocal_z\left( \frac{B^{\epsilon}}{B^{k/2}} +  \left(\frac{B}{N}\right)^{k} \right).
$$
The Schwartz inequality thus implies that for each $\epsilon > 0$ small enough, we have 
$$
\mathbb{E}\left( \frac{1}{M} \Tr (\hat{\Q}_{\mathfrak{b}} - \Q_{\mathfrak{b}})\left((\hat{\D}_{\mathfrak{b}} - \D)\D^{-1}\right)^{2} \tilde{\C}_{\mathfrak{b}} \tilde{\Q}_{\mathfrak{b}} \right) = \Ocal_z\left( 
\frac{B^{\epsilon}}{B^{3/2}} + \frac{B^{\epsilon}}{N} \right) .
$$
Using the identity 
$$
 \tilde{\C}_{\mathfrak{b}} \tilde{\Q}_{\mathfrak{b}} - 
 \frac{\X_{\mathfrak{b}} \X_{\mathfrak{b}}^{*}}{B+1}  \Q_{\mathfrak{b}} =  
 \left( \tilde{\C}_{\mathfrak{b}} -  \frac{\X_{\mathfrak{b}} \X_{\mathfrak{b}}^{*}}{B+1}\right) \tilde{\Q}_{\mathfrak{b}} + \frac{\X_{\mathfrak{b}} \X_{\mathfrak{b}}^{*}}{B+1} \left( \tilde{\Q}_{\mathfrak{b}} - \Q_{\mathfrak{b}} \right),
$$
we obtain similarly that 
$$
\mathbb{E}\left( \frac{1}{M} \Tr \left( \Q_{\mathfrak{b}} \left((\hat{\D}_{\mathfrak{b}} - \D)\D^{-1}\right)^{2} \left( \tilde{\C}_{\mathfrak{b}} \tilde{\Q}_{\mathfrak{b}} - 
 \frac{\X_{\mathfrak{b}} \X_{\mathfrak{b}}^{*}}{B+1} \Q_{\mathfrak{b}} \right) \right)\right) = \Ocal_z\left( 
 \frac{B^{\epsilon}}{N} \right) .
$$
Therefore,  (\ref{eq:E-equivalent1-delta34}) holds. We omit the proofs of 
(\ref{eq:equivalent1-delta5}) and (\ref{eq:E-equivalent1-delta5}) which are very similar. \\

We now establish (\ref{eq:equivalent2-delta34}) and (\ref{eq:equivalent2-E-delta34}). For this, we remark that 
$\mathbb{E}( \Q_{\mathfrak{b}}(z) + z  \Q_{\mathfrak{b}}^{2}(z)) = (\beta(z) + z \beta^{'}(z)) \I = (z \beta(z))^{'} \I = ((z t(z))^{'} + \Ocal_z(B^{-2})) \I $. Therefore, 
\begin{align*}
&\frac{1}{M} \Tr \left( ( \Q_{\mathfrak{b}} + z  \Q_{\mathfrak{b}}^{2})  \left((\hat{\D}_{\mathfrak{b}} - \D)\D^{-1}\right)^{2} \right)  = 
\notag\\
&\qquad\qquad \frac{1}{M} \Tr \left( ( \Q_{\mathfrak{b}} + z  \Q_{\mathfrak{b}}^{2})^{\circ}  \left((\hat{\D}_{\mathfrak{b}} - \D)\D^{-1}\right)^{2} \right) 
\notag\\
&\qquad\qquad\qquad+
  (z t(z))^{'}  \frac{1}{M} \Tr  \left((\hat{\D}_{\mathfrak{b}} - \D)\D^{-1}\right)^{2} +  \Ocal_{\prec,z}(B^{-3}).
\end{align*}
We denote by $\eta(z)$ the term $\eta(z) = \frac{1}{M} \Tr \left( ( \Q_{\mathfrak{b}} + z  \Q_{\mathfrak{b}}^{2})^{\circ}  \left((\hat{\D}_{\mathfrak{b}} - \D)\D^{-1}\right)^{2} \right)$
and express $\eta(z)$ as 
$$
\eta(z) = \frac{1}{M} \sum_{m=1}^{M} (\Q_{\mathfrak{b}} + z  \Q_{\mathfrak{b}}^{2})^{\circ}_{m,m} 
\left(\frac{\hat{s}_{m,{\mathfrak{b}}} - s_m}{s_m}\right)^{2}.
$$
Therefore, the term $\gamma =  \frac{1}{\pi} \mathrm{Re} \int_{\Dcal} \bar{\partial}  \Phi_k(f)(z) \, \eta(z) \diff x \diff y$ can be written as 
$$
\gamma = 
\frac{1}{M} \sum_{m} \left( \frac{1}{\pi} \mathrm{Re} \int_{\Dcal} \bar{\partial}  \Phi_k(f)(z) \, \left( \left(\Q_{\mathfrak{b}}^{\circ} + z (\Q_{\mathfrak{b}}^{2})^{\circ}\right)_{m,m} \right) \diff x \diff y \right) \, \left(\frac{\hat{s}_{m,{\mathfrak{b}}} - s_m}{s_m}\right)^{2}.
$$
We claim that the family $(\omega_m)_{m=1, \ldots, M}$ defined by 
$$
\omega_m = \left( \frac{1}{\pi} \mathrm{Re} \int_{\Dcal} \bar{\partial}  \Phi_k(f)(z) \, \left( \left(\Q_{\mathfrak{b}}^{\circ} + z (\Q_{\mathfrak{b}}^{2})^{\circ}\right)_{m,m} \right) \diff x \diff y \right)_{m=1, \ldots, M},
$$
verifies 
\begin{equation}
    \label{eq:proof-behaviour-omegam}
    \omega_m = \Ocal_{\prec}(B^{-1/2}).
\end{equation}
To check this, 
we apply Lemma \ref{le:concentration-integrale-helffer-sjostrand} in the same way than in the proof of (\ref{eq:behaviour-theta2b-f-recentered}). However, it is not necessary to introduce events $(A_N)$ because it holds 
$\| \nabla (\Q_{\mathfrak{b}} + z \Q_{\mathfrak{b}}^{2})_{m,m} \|^{2} \leq \frac{C(z)}{B}$ where $C(z)= P_1(|z|) P_2\left( \frac{1}{\mathrm{Im}z}\right)$ for some nice polynomials $P_1$ and $P_2$. 
Using Property \ref{pr:properties-stochastic-domination} item (i), we obtain that $\omega_m \left(\frac{\hat{s}_{m,{\mathfrak{b}}} - s_m}{s_m}\right)^{2} =  \Ocal_{\prec}(B^{-3/2})$. Lemma \ref{le:domination-moyenne} eventually implies that 
\begin{align*}
 &\frac{1}{\pi} \mathrm{Re} \int_{\Dcal} \bar{\partial}  \Phi_k(f)(z) \,  \frac{1}{M} \Tr \left( ( \Q_{\mathfrak{b}} + z  \Q_{\mathfrak{b}}^{2})^{\circ}  \left((\hat{\D}_{\mathfrak{b}} - \D)\D^{-1}\right)^{2} \right)  \diff x \diff y 
 \\
 &\qquad= \Ocal_{\prec}(B^{-3/2}).
\end{align*}
As $\mathcal{D}$ is compact, $\mathbb{E}(\omega_m^{2})$ verifies 
$$
\mathbb{E}(\omega_m^{2}) \leq C \, \int_{\Dcal} |\bar{\partial}   \Phi_k(f)(z)|^{2} \, \mathbb{E} \left| \left(\Q_{\mathfrak{b}}^{\circ} + z (\Q_{\mathfrak{b}}^{2})^{\circ} \right)_{m,m} \right|^{2} \diff x \diff y .
$$
It is easy to check that the Nash-Poincaré inequality implies that 
$$
\mathbb{E} \left| \left(\Q_{\mathfrak{b}}^{\circ} + z (\Q_{\mathfrak{b}}^{2} \right)_{m,m}^{\circ} \right|^{2} = 
\Ocal_{z}\left( \frac{1}{B} \right),
$$
so that $\mathbb{E}(\omega_m^{2}) = \Ocal(B^{-1})$. The Schwartz inequality and 
$\mathbb{E}(\hat{s}_{m,{\mathfrak{b}}} - s_m)^{4}=  \Ocal(B^{-2})$ in turn leads to 
$\mathbb{E}(\eta) =  \Ocal(B^{-3/2})$. 

In order to complete the proof of (\ref{eq:equivalent2-delta34}) and (\ref{eq:equivalent2-E-delta34}), it is thus sufficient to
establish that 
\begin{equation}
\label{eq:equivalent-trace-Dhat-D-2}
\left( \frac{1}{M} \Tr  \left((\hat{\D}_{\mathfrak{b}} - \D)\D^{-1}\right)^{2} \right)^{\circ} =
\left(  \frac{1}{M} \Tr (\D_{\x_{\mathfrak{b}}} - \I)^{2} \right)^{\circ}  +  
\Ocal_{\prec}\left( \frac{1}{N} + \frac{B^{5/2}}{N^{3}} \right),
\end{equation}
and 
\begin{equation}
\label{eq:equivalent-trace-E-Dhat-D-2}
\mathbb{E} \left( \frac{1}{M} \Tr  \left((\hat{\D}_{\mathfrak{b}} - \D)\D^{-1}\right)^{2} \right)  = 
\mathbb{E} \left( \frac{1}{M} \Tr (\D_{\x_{\mathfrak{b}}} - \I)^{2}  \right)+ \Ocal\left( \frac{1}{N}  + \frac{B^{4}}{N^{4}} \right).
\end{equation}
For this, we use the decomposition (\ref{eq:decomposition-Dx-I}) of 
$(\hat{\D}_{\mathfrak{b}} - \D)\D^{-1}$, and express  \\ $ \frac{1}{M} \Tr  \left((\hat{\D}_{\mathfrak{b}} - \D)\D^{-1}\right)^{2}$ as 
\begin{align*}
& \frac{1}{M} \Tr (\D_{\x_{\mathfrak{b}}} - \I)^{2} + 2 \frac{1}{M} \Tr (\D_{\x_{\mathfrak{b}}} - \I) \D_{2,\mathfrak{b}} + 2 \frac{1}{M} \Tr (\D_{\x_{\mathfrak{b}}} - \I) \D_{3,\mathfrak{b}} + \\ 
& 2  \frac{1}{M} \Tr \D_{2,\mathfrak{b}} \D_{3,\mathfrak{b}} + \frac{1}{M} \Tr \D_{2,\mathfrak{b}}^{2} + \frac{1}{M} \Tr \D_{3,\mathfrak{b}}^{2}.
\end{align*}
As $\D_{\x_{\mathfrak{b}}} - \I$ and $\D_{2,\mathfrak{b}}$ are zero mean, we obtain that 
\begin{align*}
&\mathbb{E} \left(  \frac{1}{M} \Tr  \left((\hat{\D}_{\mathfrak{b}} - \D)\D^{-1}\right)^{2} \right) - \mathbb{E} \left( \frac{1}{M} \Tr (\D_{\x_{\mathfrak{b}}} - \I)^{2}  \right)
=
\\
&2 \mathbb{E} \left(  \frac{1}{M} \Tr (\D_{\x_{\mathfrak{b}}} - \I) \D_{2,\mathfrak{b}} \right) + 
\mathbb{E} \left(  \frac{1}{M} \Tr \D_{2,\mathfrak{b}}^{2} \right) +  \frac{1}{M} \Tr \D_{3,\mathfrak{b}}^{2}.
\end{align*}
The Schwartz inequality and (\ref{eq:trace-Dx-I-carre},\ref{eq:trace-D2-carre}) imply that $\mathbb{E}\left( \frac{1}{M} \Tr (\D_{\x_{\mathfrak{b}}} - \I) \D_{2,\mathfrak{b}} \right) = \Ocal_{\prec}(N^{-1})$ while $\mathbb{E} \left(  \frac{1}{M} \Tr \D_{2,\mathfrak{b}}^{2} \right) = 
\Ocal\left(\frac{B}{N^{2}}\right) = o\left( \frac{1}{N}\right)$ (see \ref{eq:trace-D2-carre}) and $\frac{1}{M} \Tr \D_{3,\mathfrak{b}}^{2} = 
\Ocal \left(  \frac{B^{4}}{N^{4}} \right)$. Therefore, (\ref{eq:equivalent-trace-E-Dhat-D-2}) is valid. We also have 
\begin{align*}
& \left( \frac{1}{M} \Tr  \left((\hat{\D}_{\mathfrak{b}} - \D)\D^{-1}\right)^{2}  -  \frac{1}{M} \Tr (\D_{\x_{\mathfrak{b}}} - \I)^{2}  \right)^{\circ} = 
\\
&\qquad 2 \left(\frac{1}{M} \Tr (\D_{\x_{\mathfrak{b}}} - \I) \D_{2,\mathfrak{b}}\right)^{\circ}  
\\
&\qquad\qquad + 2 \frac{1}{M} \Tr (\D_{\x_{\mathfrak{b}}} - \I) \D_{3,\mathfrak{b}} 
+ 
 2  \frac{1}{M} \Tr \D_{2,\mathfrak{b}} \D_{3,\mathfrak{b}} + \left( \frac{1}{M} \Tr \D_{2,\mathfrak{b}}^{2} \right)^{\circ}.
\end{align*}
(\ref{eq:domination-D2b}) implies that 
$$
\frac{1}{M} \Tr \D_{2,\mathfrak{b}}^{2} = \Ocal_{\prec}\left( \frac{B}{N^{2}} \right) = o_{\prec}\left( \frac{1}{N} \right),
$$
while  $\mathbb{E}\left( \frac{1}{M} \Tr \D_{2,\mathfrak{b}}^{2} \right)=\Ocal\left( \frac{B}{N^{2}}\right)$. Therefore, we have 
$$
\left( \frac{1}{M} \Tr \D_{2,\mathfrak{b}}^{2} \right)^{\circ}= \Ocal_{\prec}\left( \frac{B}{N^{2}} \right) = o_{\prec}\left( \frac{1}{N} \right).
$$
Moreover, it holds that
$$
 \frac{1}{M} \Tr \D_{2,\mathfrak{b}} \D_{3,\mathfrak{b}} = \Ocal_{\prec}\left( \frac{B^{5/2}}{N^{3}} \right) .
$$
Using Property \ref{pr:properties-stochastic-domination} item (i) and Lemma \ref{le:domination-moyenne}, 
we obtain immediately that $$\frac{1}{M} \Tr (\D_{\x_{\mathfrak{b}}} - \I) \D_{2,\mathfrak{b}} = 
\Ocal_{\prec}(N^{-1}).$$ As $\mathbb{E}\left( \frac{1}{M} \Tr (\D_{\x_{\mathfrak{b}}} - \I) \D_{2,\mathfrak{b}} \right) = \Ocal(N^{-1})$, we obtain that 
$$\left( \frac{1}{M} \Tr (\D_{\x_{\mathfrak{b}}} - \I) \D_{2,\mathfrak{b}} \right)^{\circ} =  \Ocal_{\prec}(N^{-1}).$$
Finally, the Hanson-Wright inequality leads to 
$$
\frac{1}{M} \Tr (\D_{\x_{\mathfrak{b}}} - \I) \D_{3,\mathfrak{b}} = \Ocal_{\prec}\left( \frac{B}{N^{2}} \right) .
$$ 
Therefore, we obtain that
$$
\left( \frac{1}{M} \Tr  \left((\hat{\D}_{\mathfrak{b}} - \D)\D^{-1}\right)^{2} - \frac{1}{M} \Tr (\D_{\x_{\mathfrak{b}}} - \I)^{2} \right)^{\circ} = \Ocal_{\prec}\left( \frac{1}{N} +  \frac{B^{5/2}}{N^3} \right) .
$$
as expected. 
 
We now establish (\ref{eq:equivalent2-delta5}) and (\ref{eq:equivalent2-E-delta5}). For this, we prove that 
\begin{align}
\label{eq:inter-domination-stochastique-delta5b-1}
 & \frac{1}{\pi} \mathrm{Re} \int_{\Dcal} \bar{\partial}  \Phi_k(f)(z) \, \left( \frac{1}{M} \Tr \Q_{\mathfrak{b}}^{2} (\hat{\D} - \D)\D^{-1}  \frac{\X_{\mathfrak{b}} \X_{\mathfrak{b}}^{*}}{B+1}  (\hat{\D} - \D)\D^{-1} \right)^{\circ} \diff x \diff y = 
 \notag\\ 
 & \qquad\frac{1}{\pi} \mathrm{Re} \int_{\Dcal} \bar{\partial}  \Phi_k(f)(z) \, \left( \frac{1}{M} \Tr \Q_{\mathfrak{b}}^{2} (\D_{\x_{\mathfrak{b}}} - \I)  \frac{\X_{\mathfrak{b}} \X_{\mathfrak{b}}^{*}}{B+1}   (\D_{\x_{\mathfrak{b}}} - \I) \right)^{\circ} \diff x \diff y  
 \notag\\
 &\qquad\qquad+  \Ocal_{\prec}\left( \frac{B^{5/2}}{N^{3}} + \frac{1}{N} \right),
 \end{align}
 and 
 \begin{align}
\label{eq:inter-E-delta5b-1}
&\mathbb{E} \left(  \frac{1}{M} \Tr \Q_{\mathfrak{b}}^{2} (\hat{\D} - \D)\D^{-1}  \frac{\X_{\mathfrak{b}} \X_{\mathfrak{b}}^{*}}{B+1}  (\hat{\D} - \D)\D^{-1} \right) =  \\ \notag
 & \mathbb{E} \left( \frac{1}{M} \Tr \Q_{\mathfrak{b}}^{2} (\D_{\x_{\mathfrak{b}}} - \I)  \frac{\X_{\mathfrak{b}} \X_{\mathfrak{b}}^{*}}{B+1}   (\D_{\x_{\mathfrak{b}}} - \I) \right) + \Ocal_{z}\left( \frac{B^{4}}{N^{4}} + \frac{B^{\epsilon}B^{5/2}}{N^{3}} + \frac{B^{\epsilon}}{N} \right),
\end{align}
for each $\epsilon > 0$. We expand $ (\hat{\D}_{\mathfrak{b}} - \D)\D^{-1} $ using (\ref{eq:decomposition-Dx-I}), study  
$$
 \frac{1}{M} \Tr \Q_{\mathfrak{b}}^{2} (\hat{\D}_{\mathfrak{b}} - \D)\D^{-1}  \frac{\X_{\mathfrak{b}} \X_{\mathfrak{b}}^{*}}{B+1} \D_{3,b},
$$
and evaluate its contribution to the left-hand side of (\ref{eq:inter-domination-stochastique-delta5b-1}) and of (\ref{eq:inter-E-delta5b-1}). We write that 
\begin{align*}
 &\frac{1}{M} \Tr \Q_{\mathfrak{b}}^{2} (\hat{\D} - \D)\D^{-1}  \frac{\X_{\mathfrak{b}} \X_{\mathfrak{b}}^{*}}{B+1} \D_{3,b}  = 
  \notag\\
  &\qquad
 \frac{1}{M} \Tr \Q_{\mathfrak{b}}^{2} (\D_{\x_{\mathfrak{b}}} - \I)  \frac{\X_{\mathfrak{b}} \X_{\mathfrak{b}}^{*}}{B+1} \D_{3,b} +
      \frac{1}{M} \Tr \Q_{\mathfrak{b}}^{2} \D_{2,\mathfrak{b}} \frac{\X_{\mathfrak{b}} \X_{\mathfrak{b}}^{*}}{B+1} \D_{3,b} 
      \\
      &\qquad\qquad+ \frac{1}{M} \Tr \Q_{\mathfrak{b}}^{2} \D_{3,\mathfrak{b}} \frac{\X_{\mathfrak{b}} \X_{\mathfrak{b}}^{*}}{B+1} \D_{3,b}.
\end{align*}
We first prove that 
\begin{align}
\label{eq:stochastic-domination-Qb2-Dx-I-XX-D3b}
\frac{1}{\pi} \mathrm{Re} \int_{\Dcal} \bar{\partial}  \Phi_k(f)(z) \, \left( \frac{1}{M} \Tr \Q_{\mathfrak{b}}^{2} (\D_{\x_{\mathfrak{b}}} - \I)  \frac{\X_{\mathfrak{b}} \X_{\mathfrak{b}}^{*}}{B+1}   \D_{3,\mathfrak{b}} \right)^{\circ} \diff x \diff y  &= \Ocal_{\prec}\left(\frac{B}{N^{2}}\right) 
\notag\\
&= o_{\prec}\left(  \frac{1}{N} \right) .
\end{align}
We again apply Lemma \ref{le:concentration-integrale-helffer-sjostrand} in the same way than in the proof of (\ref{eq:behaviour-theta2b-f-recentered}), and consider the family of events $A_N(\nu)$ defined by 
$A_N(\nu) = \{ \left\| \frac{\X_N(\nu)}{\sqrt{B+1}} \right\| \leq 3 \}$. It is clear that $\X_N(\nu)(A_N(\nu))$ is a convex subset and that $\sup_{\nu \in [0,1]} P\left(A_N(\nu)^{c}\right) \leq e^{-N^{\gamma}}$ for some constant $\gamma > 0$. A simple calculation leads to 
$$
\left\| \nabla \frac{1}{M} \Tr \Q_{\mathfrak{b}}^{2} (\D_{\x_{\mathfrak{b}}} - \I)  \frac{\X_{\mathfrak{b}} \X_{\mathfrak{b}}^{*}}{B+1} \D_{3,b} \right\|^{2} \leq C(z) \frac{B^{2}}{N^{4}},
$$
on the event $A_N$. Lemma \ref{le:concentration-integrale-helffer-sjostrand}
thus implies (\ref{eq:stochastic-domination-Qb2-Dx-I-XX-D3b}). In order to evaluate 
\\ $\left(  \frac{1}{M} \Tr \Q_{\mathfrak{b}}^{2} \D_{2,\mathfrak{b}} \frac{\X_{\mathfrak{b}} \X_{\mathfrak{b}}^{*}}{B+1} \D_{3,b} \right)^{\circ}$, we remark that 
$$
 \frac{1}{M} \Tr \Q_{\mathfrak{b}}^{2} \D_{2,\mathfrak{b}} \frac{\X_{\mathfrak{b}} \X_{\mathfrak{b}}^{*}}{B+1} \D_{3,b} = \Ocal_{\prec,z}\left( \frac{B^{5/2}}{N^{3}} \right),
$$
because $\| \D_{2,\mathfrak{b}} \| \prec \frac{\sqrt{B}}{N}$ and $\| \D_{3,\mathfrak{b}} \| = \Ocal\left( \frac{B^{2}}{N^{2}}\right)$. Moreover, using 
(\ref{eq:E-norm-D2b}) and (\ref{eq:norm-D3b}), we also obtain that  
\begin{equation}
\label{eq:E-Q2-D2b-XX-D3b}
\mathbb{E} \left( \frac{1}{M} \Tr \Q_{\mathfrak{b}}^{2} \D_{2,\mathfrak{b}} \frac{\X_{\mathfrak{b}} \X_{\mathfrak{b}}^{*}}{B+1} \D_{3,b} \right) =  \Ocal_{\prec,z}\left( \frac{B^{\epsilon}B^{5/2}}{N^{3}} \right),
\end{equation}
for each $\epsilon > 0$. Therefore, $\left(  \frac{1}{M} \Tr \Q_{\mathfrak{b}}^{2} \D_{2,\mathfrak{b}} \frac{\X_{\mathfrak{b}} \X_{\mathfrak{b}}^{*}}{B+1} \D_{3,b} \right)^{\circ} = \Ocal_{\prec,z}\left( \frac{B^{\epsilon} B^{5/2}}{N^{3}} \right)$ for each $\epsilon > 0$, or equivalently, 
$$
\left(  \frac{1}{M} \Tr \Q_{\mathfrak{b}}^{2} \D_{2,\mathfrak{b}} \frac{\X_{\mathfrak{b}} \X_{\mathfrak{b}}^{*}}{B+1} \D_{3,b} \right)^{\circ} = \Ocal_{\prec,z}\left( \frac{B^{5/2}}{N^{3}} \right).
$$
This implies that 
\begin{align*}
\frac{1}{\pi} \mathrm{Re} \int_{\Dcal} \bar{\partial}  \Phi_k(f)(z) \, \left( \frac{1}{M} \Tr \Q_{\mathfrak{b}}^{2}  \D_{2,\mathfrak{b}} \frac{\X_{\mathfrak{b}} \X_{\mathfrak{b}}^{*}}{B+1}   \D_{3,\mathfrak{b}} \right)^{\circ} \diff x \diff y  = \Ocal_{\prec}\left(\frac{B^{5/2}}{N^{3}} \right).
\end{align*}
The use of Lemma \ref{le:concentration-integrale-helffer-sjostrand} with the family of events $A_N(\nu)$ defined by 
$A_N(\nu) = \{ \left\| \frac{\X_N(\nu)}{\sqrt{B+1}} \right\| \leq 3 \}$ leads immediately to 
\begin{align*}
\frac{1}{\pi} \mathrm{Re} \int_{\Dcal} \bar{\partial}  \Phi_k(f)(z) \, \left( \frac{1}{M} \Tr \Q_{\mathfrak{b}}^{2}  \D_{3,\mathfrak{b}} \frac{\X_{\mathfrak{b}} \X_{\mathfrak{b}}^{*}}{B+1}   \D_{3,\mathfrak{b}} \right)^{\circ} \diff x \diff y  &= \Ocal_{\prec}\left(\frac{B^{5/2}}{N^{4}} \right) 
\\
&=  o_{\prec}\left( \frac{1}{N}\right).
\end{align*}
We have thus proved that 
\begin{align*}
&\frac{1}{\pi} \mathrm{Re} \int_{\Dcal} \bar{\partial}  \Phi_k(f)(z) \, \left( \frac{1}{M} \Tr \Q_{\mathfrak{b}}^{2}  (\hat{\D}_{\mathfrak{b}} - \D) \D^{-1} \frac{\X_{\mathfrak{b}} \X_{\mathfrak{b}}^{*}}{B+1}   \D_{3,\mathfrak{b}} \right)^{\circ} \diff x \diff y  
\\
&\qquad\qquad=  \Ocal_{\prec}\left(  \frac{1}{N} + \frac{B^{5/2}}{N^{3}} \right).
\end{align*}
We now evaluate $\mathbb{E} \left( \frac{1}{M} \Tr \Q_{\mathfrak{b}}^{2} (\hat{\D} - \D)\D^{-1}  \frac{\X_{\mathfrak{b}} \X_{\mathfrak{b}}^{*}}{B+1} \D_{3,b}  \right)$
and first prove that 
\begin{equation}
    \label{eq:E-Q2-Dx-I-XX*-D3b}
 \mathbb{E} \left( \frac{1}{M} \Tr \Q_{\mathfrak{b}}^{2} (\D_{\x_{\mathfrak{b}}} - \I)  \frac{\X_{\mathfrak{b}} \X_{\mathfrak{b}}^{*}}{B+1}   \D_{3,\mathfrak{b}}  \right) = \Ocal_z\left( \frac{B}{N^{2}} \right) = o_{z}\left(  \frac{1}{N} \right) .
\end{equation}
We notice that it is straightforward that the left-hand side of (\ref{eq:E-Q2-Dx-I-XX*-D3b}) is of the order
$\Ocal_z\left( \frac{B^{3/2+\epsilon}}{N^{2}} \right)$ term for each $\epsilon > 0$. However, $\frac{B^{3/2}}{N^{2}} = o\left(  \frac{1}{\sqrt{NB}} \right)$ only holds if $\alpha < \frac{3}{4}$, a condition which is not supposed to be verified in the context of the present work. It is thus necessary to improve this rough evaluation of the left-hand side of (\ref{eq:E-Q2-Dx-I-XX*-D3b}). In order to establish (\ref{eq:E-Q2-Dx-I-XX*-D3b}), we remark that 
$\Q_{\mathfrak{b}}^{2}(z) = \Q_{\mathfrak{b}}^{'}(z)$ (where $'$ stands for the differential w.r.t. $z$),  evaluate the order of magnitude of 
$\mathbb{E} \left( \frac{1}{M} \Tr \Q_{\mathfrak{b}} (\D_{\x_{\mathfrak{b}}} - \I)  \frac{\X_{\mathfrak{b}} \X_{\mathfrak{b}}^{*}}{B+1}   \D_{3,\mathfrak{b}}  \right)$, and briefly verify that differentiating w.r.t. $z$ keeps unchanged its order of magnitude. We first use the integration by parts formula to compute 
$\eta_m(z) = \mathbb{E}\left( \left( \Q_{\mathfrak{b}} (\D_{\x_{\mathfrak{b}}} - \I)  \frac{\X_{\mathfrak{b}} \X_{\mathfrak{b}}^{*}}{B+1} \right)_{m,m} \right)$. $\eta_m$ is given by 
$$
\eta_m = \sum_{m',n} \mathbb{E} \left( (\Q_{\mathfrak{b}})_{m,m'} \left( \frac{\|\x_{m',\mathfrak{b}}\|^{2}}{B+1} - 1 \right) \frac{(\X_{\mathfrak{b}})_{m',n} (\bar{\X}_{\mathfrak{b}})_{m,n}}{B+1} \right).
$$
The integration by parts formula leads to 
\begin{align*}
& \mathbb{E}\left( (\Q_{\mathfrak{b}})_{m,m'} \left( \frac{\|\x_{m',\mathfrak{b}}\|^{2}}{B+1} - 1 \right) \frac{(\X_{\mathfrak{b}})_{m',n} (\bar{\X}_{\mathfrak{b}})_{m,n}}{B+1} \right) = \\ 
& \frac{1}{B+1} \mathbb{E} \left( \frac{\partial}{\partial (\bar{\X}_{\mathfrak{b}})_{m',n}} 
\left( (\Q_{\mathfrak{b}})_{m,m'} \left( \frac{\|\x_{m',\mathfrak{b}}\|^{2}}{B+1} - 1 \right)  (\bar{\X}_{\mathfrak{b}})_{m,n} \right) \right).
\end{align*}
After some easy calculations, we obtain that 
\begin{align*}
\eta_m = & \mathbb{E}\left( (\Q_{\mathfrak{b}}(\D_{\x_{\mathfrak{b}}} - \I))_{m,m} \right) - c \, \mathbb{E}\left( \left( \Q_{\mathfrak{b}} \frac{\X_{\mathfrak{b}} \X_{\mathfrak{b}}^{*}}{B+1} \right)_{m,m}  \frac{1}{M} \Tr \Q_{\mathfrak{b}} (\D_{\x_{\mathfrak{b}}} - \I) \right) \\ 
& + \frac{1}{B+1} \mathbb{E}\left( \left( \Q_{\mathfrak{b}} \frac{\X_{\mathfrak{b}} \X_{\mathfrak{b}}^{*}}{B+1} \right)_{m,m} \right) \\
= & \mathbb{E}\left( (\Q_{\mathfrak{b}}(\D_{\x_{\mathfrak{b}}} - \I))_{m,m} \right) - c \, \mathbb{E}\left( \left(1 + z (\Q_{\mathfrak{b}})_{m,m}\right)  \frac{1}{M} \Tr \Q_{\mathfrak{b}} (\D_{\x_{\mathfrak{b}}} - \I) \right) \\ 
& + \frac{1}{B+1} \mathbb{E}\left( 1 + z  (\Q_{\mathfrak{b}})_{m,m} \right) .
\end{align*}
We recall that $\mathbb{E}\left((\Q_{\mathfrak{b}})_{m,m}\right)) = \beta(z)$, and 
remark that $\mathbb{E}\left( (\Q_{\mathfrak{b}}(\D_{\x_{\mathfrak{b}}} - \I))_{m,m} \right)$ does not depend on $m$ because the probability distribution of 
$\X_{\mathfrak{b}}$ is invariant by permutation of its rows. Consequently, we have
$$
\mathbb{E}\left( (\Q_{\mathfrak{b}}(\D_{\x_{\mathfrak{b}}} - \I))_{m,m} \right) = 
\mathbb{E} \left( \frac{1}{M} \mathrm{Tr}  (\Q_{\mathfrak{b}}(\D_{\x_{\mathfrak{b}}} - \I))\right).
$$
Therefore, writing that 
\begin{align*}
 & \mathbb{E}\left( \left(1 + z (\Q_{\mathfrak{b}})_{m,m}\right)  \frac{1}{M} \Tr \Q_{\mathfrak{b}} (\D_{\x_{\mathfrak{b}}} - \I) \right) =  \\ 
 & \mathbb{E}\left(1 + z (\Q_{\mathfrak{b}})_{m,m}\right) \mathbb{E}\left( \frac{1}{M} \Tr \Q_{\mathfrak{b}} (\D_{\x_{\mathfrak{b}}} - \I) \right) + \delta_m(z),
\end{align*}
where $\delta_m(z)$ is given by 
$$
\delta_m(z) =  z \mathbb{E}\left(  (\Q^{\circ}_{\mathfrak{b}})_{m,m})  \left(\frac{1}{M} \Tr \Q_{\mathfrak{b}} (\D_{\x_{\mathfrak{b}}} - \I)\right)^{\circ} \right) ,
$$
we get that 
$$
\eta_m = \mathrm{E}\left( \frac{1}{M}  \Tr  \Q_{\mathfrak{b}} (\D_{\x_{\mathfrak{b}}} - \I) \right) \, 
( 1 - c -c \, z \beta(z)) + \frac{1}{B+1} (1 + z \beta(z)) - c \delta_m(z)  .
$$
Using the integration by parts formula, it is easy to check that 
\begin{equation}
    \label{eq:expre-E-Q-Dx-I}
 \mathbb{E}\left( \frac{1}{M} \Tr \Q_{\mathfrak{b}} (\D_{\x_{\mathfrak{b}}} - \I) \right) = - \frac{\beta(z)(1 + z \beta(z))}{B+1} + \tilde{\delta}(z),
\end{equation}
where $\tilde{\delta}(z) = -\frac{1}{B+1} z \, \frac{1}{M} \sum_{m=1}^{M} \mathbb{E}\left(  (\Q^{\circ}_{\mathfrak{b}})_{m,m})^{2} \right) = \Ocal_z(B^{-2})$.
Therefore, we obtain that $\eta_m$ can be written as 
\begin{align*}
&\eta_m(z) = 
\\
&\qquad
\left( -\frac{1}{B+1} (\beta(z)(1 + z \beta(z))) + \tilde{\delta}(z) \right) ( 1 - c -c \, z \beta(z)) 
\\
&\qquad\qquad
+ \frac{1}{B+1} (1 + z \beta(z)) - c \delta_m(z) ,
\end{align*}
and that 
\begin{align}
\label{eq:before-differentiation}
& \mathbb{E} \left( \frac{1}{M} \Tr \Q_{\mathfrak{b}}  (\D_{\x_{\mathfrak{b}}} - \I)  \frac{\X_{\mathfrak{b}} \X_{\mathfrak{b}}^{*}}{B+1}   \D_{3,\mathfrak{b}}  \right) = 
\notag\\ 
& \Biggl(\Bigl( -\frac{1}{B+1} (\beta(z)(1 + z \beta(z))) + \tilde{\delta}(z) \Bigr) ( 1 - c -c \, z \beta(z))
\notag\\
&\qquad  + \frac{1}{B+1} (1 + z \beta(z)) \Biggr) \frac{1}{M} \Tr  \D_{3,\mathfrak{b}} 
\notag\\
&\qquad\qquad - c z 
\mathbb{E} \left( \frac{1}{M} \Tr \Q_{\mathfrak{b}}^{\circ} \D_{3,\mathfrak{b}}  \frac{1}{M} \Tr \left( \Q_{\mathfrak{b}}(\D_{\x_{\mathfrak{b}}} - \I)\right)^{\circ} \right).
\end{align}
The Nash-Poincaré inequality implies that 
$$
\mathbb{E} \left| \frac{1}{M} \Tr \Q_{\mathfrak{b}}^{\circ} \D_{3,\mathfrak{b}} \right|^{2}= \Ocal_z\left( \frac{B^{2}}{N^{4}}\right) ,
$$
and 
$$
\mathbb{E} \left|  \frac{1}{M} \Tr \left( \Q_{\mathfrak{b}}(\D_{\x_{\mathfrak{b}}} - \I)\right)^{\circ} \right|^{2} = \Ocal_z(B^{-2}),
$$
which, using the Schwartz inequality as well as $\frac{1}{M} \Tr \D_{3,\mathfrak{b}} = \Ocal\left( \frac{B^{2}}{N^{2}}\right)$, leads to the evaluation 
\begin{equation}
    \label{eq:E-Q-Dx-I-XX*-D3b}
 \mathbb{E} \left( \frac{1}{M} \Tr \Q_{\mathfrak{b}} (\D_{\x_{\mathfrak{b}}} - \I)  \frac{\X_{\mathfrak{b}} \X_{\mathfrak{b}}^{*}}{B+1}   \D_{3,\mathfrak{b}}  \right) = \Ocal_z\left( \frac{B}{N^{2}} \right) = o\left( \frac{1}{N}\right).
\end{equation}
Differentiating (\ref{eq:before-differentiation}) w.r.t. $z$, using again the Nash-Poincaré inequality, eventually allows to justify after some extra calculations that (\ref{eq:E-Q2-Dx-I-XX*-D3b}) holds. As we already mentioned 
that (\ref{eq:E-Q2-D2b-XX-D3b}) holds, it remains to evaluate the quantity $ \mathbb{E} \left( \frac{1}{M} \Tr \Q_{\mathfrak{b}}   \D_{3,\mathfrak{b}}  \frac{\X_{\mathfrak{b}} \X_{\mathfrak{b}}^{*}}{B+1}   \D_{3,\mathfrak{b}}  \right)$. Using that $\| \D_{3,\mathfrak{b}} \| = \Ocal\left( \frac{B^{2}}{N^{2}}\right)$, 
we obtain immediately that 
$$
 \mathbb{E} \left( \frac{1}{M} \Tr \Q_{\mathfrak{b}}   \D_{3,\mathfrak{b}}  \frac{\X_{\mathfrak{b}} \X_{\mathfrak{b}}^{*}}{B+1}   \D_{3,\mathfrak{b}}  \right) = \Ocal_{z}\left( \frac{B^{4}}{N^{4}}\right) .
$$
We have thus verified that 
$$
\mathbb{E} \left( \frac{1}{M} \Tr \Q_{\mathfrak{b}}^{2} (\hat{\D} - \D)\D^{-1}  \frac{\X_{\mathfrak{b}} \X_{\mathfrak{b}}^{*}}{B+1} \D_{3,\mathfrak{b}}\right) =  \Ocal_{z}\left( \frac{B^{4}}{N^{4}} + \frac{B^{\epsilon}B^{5/2}}{N^{3}} + \frac{1}{N} \right).
$$

Using the evaluations (\ref{eq:domination-hatDb-D-alpha-inferieur-4-5}), 
(\ref{eq:E-norm-hatDb-D}), (\ref{eq:domination-D2b}), (\ref{eq:E-norm-D2b}),  
it is easy to check that 
\begin{align*}
& \frac{1}{\pi} \mathrm{Re} \int_{\Dcal} \bar{\partial}  \Phi_k(f)(z) \, \left( \frac{1}{M} \Tr \Q_{\mathfrak{b}}^{2} (\hat{\D} - \D)\D^{-1}  \frac{\X_{\mathfrak{b}} \X_{\mathfrak{b}}^{*}}{B+1} \D_{2,\mathfrak{b}} \right)^{\circ} \diff x \diff y  = \Ocal_{\prec}\left(   \frac{1}{N} \right), \\
 & \mathbb{E}\left(  \frac{1}{M} \Tr \Q_{\mathfrak{b}}^{2} (\hat{\D} - \D)\D^{-1}  \frac{\X_{\mathfrak{b}} \X_{\mathfrak{b}}^{*}}{B+1} \D_{2,\mathfrak{b}} \right) =  \Ocal_{z}\left(  \frac{B^{\epsilon}}{N} \right).
\end{align*}
Moreover, using similar arguments, we obtain that the following evaluations hold: 
\begin{align}
 & \frac{1}{\pi} \mathrm{Re} \int_{\Dcal} \bar{\partial}  \Phi_k(f)(z) \, \left( \frac{1}{M} \Tr \Q_{\mathfrak{b}}^{2} (\hat{\D} - \D)\D^{-1}  \frac{\X_{\mathfrak{b}} \X_{\mathfrak{b}}^{*}}{B+1}  (\D_{\x_{\mathfrak{b}}} - \I) \right)^{\circ} \diff x \diff y =
 \notag \\
 & \frac{1}{\pi} \mathrm{Re} \int_{\Dcal} \bar{\partial}  \Phi_k(f)(z) \, \left( \frac{1}{M} \Tr \Q_{\mathfrak{b}}^{2} (\D_{\x_{\mathfrak{b}}} - \I)  \frac{\X_{\mathfrak{b}} \X_{\mathfrak{b}}^{*}}{B+1}   (\D_{\x_{\mathfrak{b}}} - \I) \right)^{\circ} \diff x \diff y  
 \notag\\
 &+ \Ocal_{\prec}\left( \frac{B^{5/2}}{N^{3}} + \frac{1}{N} \right),
 \label{eq:inter-domination-stochastique-delta5b-2}
 \end{align}
 and 
 \begin{align}
\label{eq:inter-E-delta5b-2}
&\mathbb{E} \left(  \frac{1}{M} \Tr \Q_{\mathfrak{b}}^{2} (\hat{\D} - \D)\D^{-1}  \frac{\X_{\mathfrak{b}} \X_{\mathfrak{b}}^{*}}{B+1}  (\D_{\x_{\mathfrak{b}}} - \I) \right) =  
\notag\\
& \mathbb{E} \left( \frac{1}{M} \Tr \Q_{\mathfrak{b}}^{2} (\D_{\x_{\mathfrak{b}}} - \I)  \frac{\X_{\mathfrak{b}} \X_{\mathfrak{b}}^{*}}{B+1}   (\D_{\x_{\mathfrak{b}}} - \I) \right)+ 
\Ocal_{z}\left( \frac{B^{4}}{N^{4}} +
   \frac{B^{\epsilon}B^{5/2}}{N^{3}} + \frac{B^{\epsilon}}{N} \right).
\end{align}
Therefore, the proof of (\ref{eq:equivalent2-delta5}) and (\ref{eq:equivalent2-E-delta5}) is complete. \\

We establish (\ref{eq:stochastic-domination-tr-D-I-square}). 
For this, we use a trick introduced in the proof of
Lemma 7 in \cite{loubaton-rosuel-ejs-2021}. For $\epsilon > 0$, we remark that the set $\tilde{A}_{N,\epsilon}(\nu)$ defined by 
$$
\tilde{A}_{N,\epsilon}(\nu) = \bigcap_{m=1}^{M} \left\{ \frac{\|\x_{m,\mathfrak{b}}(\nu)\|^{2}}{B+1} \in [1 - \frac{B^{\epsilon}}{\sqrt{B}}, 1 + \frac{B^{\epsilon}}{\sqrt{B}}] \right \},
$$
verifies $\sup_{\nu} P(\tilde{A}_{N,\epsilon}(\nu)^{c}) \leq e^{-N^{\gamma}}$ where $\gamma$ is a constant depending only on $\epsilon$. Moreover, considered as a function of the entries 
of $\X_{N,\mathfrak{b}}(\nu)$, the function 
$\frac{1}{M} \Tr\left( \D_{\x_{\mathfrak{b}}} - \I \right)^{2}$ 
is Lipschitz on $\X_{N,\mathfrak{b}}(\nu)(\tilde{A}_{N,\epsilon}(\nu))$
with Lipschitz constant of the order $\Ocal\left(\frac{B^{\epsilon}}{B^{3/2}}\right)$. 
Unfortunately, $\X_{N,\mathfrak{b}}(\nu)(\tilde{A}_{N,\epsilon}(\nu))$ is not a convex set. Therefore,  Lemma 
\ref{le:conditional-concentration} does not imply that 
(\ref{eq:stochastic-domination-tr-D-I-square}) holds. However, it is possible to replace for each $m$ $\frac{\|\x_{m,\mathfrak{b}}\|^{2}}{B+1} - 1$ by a function of $\x_{m,\mathfrak{b}}$, Lipschitz on $\mathbb{C}^{N}$ with constant 
$\Ocal\left(\frac{B^{\epsilon}}{\sqrt{B}}\right)$. This function is given by  $g_{B,\epsilon}\left(\frac{\|\x_{m,\mathfrak{b}}\|^{2}}{B+1}\right)$
where $g_{B,\epsilon}(t)$ is a smooth function verifying 
\begin{align*}
    g_{B,\epsilon}(t) & =  t -1 \; \mathrm{if} \; t \in  [1 - \frac{B^{\epsilon}}{\sqrt{B}}, 1 + \frac{B^{\epsilon}}{\sqrt{B}}] \\
      & = 0 \; \mathrm{if} \; t \notin  [1 - 2 \frac{B^{\epsilon}}{\sqrt{B}}, 1 + 2 \frac{B^{\epsilon}}{\sqrt{B}}],
\end{align*}
and
$$
\sup_{t} |g_{B,\epsilon}(t)| \leq C \frac{B^{\epsilon}}{\sqrt{B}}, \; \sup_{t} |g'_{B,\epsilon}(t) ,\leq C 
$$
where $C$ is a nice constant. We refer to \cite{loubaton-rosuel-ejs-2021} for more details concerning the existence of such a function. We remark that if $\D_{\epsilon,g}$ represents 
the diagonal matrix with diagonal entries
\begin{equation}
\label{eq:def-Depsilong}
\D_{\epsilon,g} = \left(g_{B,\epsilon}\left(\frac{\|\x_{m,\mathfrak{b}}\|^{2}}{B+1}\right)\right)_{m=1, \ldots, M} ,
\end{equation}
then, 
$\D_{\x_{\mathfrak{b}}} - \I = \D_{\epsilon,g}$ on $\tilde{A}_{N,\epsilon}$. Therefore, 
adapting the arguments in \cite{loubaton-rosuel-ejs-2021}, it is easy to check that we have the following implication:
\begin{equation}
    \label{eq:equivalence-Dx-Dg}
\left( \frac{1}{M} \Tr \left( \D_{\epsilon,g} \right)^{2}  \right)^{\circ} \prec \frac{B^{\epsilon}}{B^{3/2}}  \Longrightarrow  \left( \frac{1}{M} \Tr \left( \D_{\x_{\mathfrak{b}}} - \I \right)^{2}  \right)^{\circ} \prec \frac{B^{\epsilon}}{B^{3/2}} .
\end{equation}
The standard Gaussian concentration inequality (\ref{eq:standard-gaussian-concentration}) implies immediately that the left-hand side of 
(\ref{eq:equivalence-Dx-Dg}) holds for each $\epsilon > 0$, which, in turn, leads to (\ref{eq:stochastic-domination-tr-D-I-square})
(we recall Property \ref{pr:properties-stochastic-domination}, item (ii)). \\

We now establish (\ref{eq:stochastic-domination-tr-Q2-D-I-XX-D-I}). For this, we combine the above trick and Lemma \ref{le:concentration-integrale-helffer-sjostrand}. We denote by 
$\eta_N(z)$ and $\gamma_N(f)$ the terms
\begin{align*}
\eta_N(z) = & \frac{1}{M} \Tr \Q_{\mathfrak{b}}^{2} (\D_{\x_{\mathfrak{b}}} - \I)  \frac{\X_{\mathfrak{b}} \X_{\mathfrak{b}}^{*}}{B+1}   (\D_{\x_{\mathfrak{b}}} - \I),  \\
\gamma_N(f) = & \frac{1}{\pi} \mathrm{Re} \int_{\Dcal} \bar{\partial}  \Phi_k(f)(z) \, 
\eta_N(z) \diff x \diff y,
\end{align*}
and by $\eta_{N,g,\epsilon}(z)$ and $\gamma_{N,g,\epsilon}(f)$ their Lipschitz approximations 
\begin{align*}
\eta_{N,g,\epsilon}(z) = & \frac{1}{M} \Tr \Q_{\mathfrak{b}}^{2} \D_{\epsilon,g}   \frac{\X_{\mathfrak{b}} \X_{\mathfrak{b}}^{*}}{B+1}  \D_{\epsilon,g},   \\
\gamma_{N,g,\epsilon}(f) = & \frac{1}{\pi} \mathrm{Re} \int_{\Dcal} \bar{\partial}  \Phi_k(f)(z) \, 
\eta_{N,g,\epsilon}(z) \diff x \diff y.
\end{align*}
We have still the implication 
$$
\gamma_{N,g,\epsilon}(f) = \Ocal_{\prec}\left( \frac{B^{\epsilon}}{B^{3/2}} \right) \Longrightarrow \gamma_{N}(f) = \Ocal_{\prec}\left( \frac{B^{\epsilon}}{B^{3/2}} \right).
$$
We are therefore back to prove that $\gamma_{N,g,\epsilon}(f) = \Ocal_{\prec}\left( \frac{B^{\epsilon}}{B^{3/2}} \right)$. For this, it is sufficient to apply Lemma 
\ref{le:concentration-integrale-helffer-sjostrand} with  
$A_{N}(\nu) = \left \{ \left \| \frac{\X_{N,\mathfrak{b}}(\nu)}{\sqrt{B+1}} \right\| \right \}\leq 3$. The details are left to the reader. \\

We finally briefly consider the proof (\ref{eq:E-tr-tr-Q2-D-I-XX-D-I}) because 
(\ref{eq:E-tr-D-I-square}) is an obvious property that was already mentioned (see Eq. (\ref{eq:trace-Dx-I-carre})). To establish (\ref{eq:E-tr-tr-Q2-D-I-XX-D-I}), 
it is possible to remark that $\Q_{\mathfrak{b}}'(z) = \Q^{2}_{\mathfrak{b}}(z)$ and to use the integration by parts formula already used to establish (\ref{eq:E-Q2-Dx-I-XX*-D3b}). While the calculations are of course more tedious, they are rather straightforward, and are therefore omitted. We however briefly motivate (\ref{eq:E-tr-tr-Q2-D-I-XX-D-I}). For this, 
we remark that
\begin{align}
\label{eq:expre-inter-1-delta5}
& \mathbb{E} \left(\frac{1}{M} \Tr \Q_{\mathfrak{b}}^{2} (\D_{\x_{\mathfrak{b}}} - \I) \frac{\X_{\mathfrak{b}} \X_{\mathfrak{b}}^{*}}{B+1}  (\D_{\x_{\mathfrak{b}}} - \I) \right)= \\
\notag
& (\beta(z))'  \mathbb{E} \left(\frac{1}{M}\Tr(\D_{\x_{\mathfrak{b}}} - \I)  \frac{\X_{\mathfrak{b}} \X_{\mathfrak{b}}^{*}}{B+1} (\D_{\x_{\mathfrak{b}}} - \I) \right) \\ \notag &
+ \mathbb{E} \left(\frac{1}{M} \Tr \left(\Q_{\mathfrak{b}}^{2}\right)^{\circ} \D_{\x_{\mathfrak{b}}} - \I) \frac{\X_{\mathfrak{b}} \X_{\mathfrak{b}}^{*}}{B+1}  (\D_{\x_{\mathfrak{b}}} - \I)\right) .
\end{align}
$\frac{1}{M}\Tr(\D_{\x_{\mathfrak{b}}} - \I)  \frac{\X_{\mathfrak{b}} \X_{\mathfrak{b}}^{*}}{B+1} (\D_{\x_{\mathfrak{b}}} - \I)$ 
coincides with $\frac{1}{M}\Tr(\D_{\x_{\mathfrak{b}}} - \I) \D_{\x_{\mathfrak{b}}} (\D_{\x_{\mathfrak{b}}} - \I) = \frac{1}{M}\Tr(\D_{\x_{\mathfrak{b}}} - \I)^{2} +
 \frac{1}{M}\Tr(\D_{\x_{\mathfrak{b}}} - \I)^{3}$. Therefore, 
 \begin{align*}
 \mathbb{E}\left( \frac{1}{M}\Tr(\D_{\x_{\mathfrak{b}}} - \I)  \frac{\X_{\mathfrak{b}} \X_{\mathfrak{b}}^{*}}{B+1} (\D_{\x_{\mathfrak{b}}} - \I) \right) & =  \mathbb{E}\left( \frac{1}{M}\Tr(\D_{\x_{\mathfrak{b}}} - \I)^{2} \right) + \Ocal\left( \frac{B^{\epsilon}}{B^{3/2}}\right) \\ 
 &= \frac{1}{B+1} + \Ocal\left( \frac{B^{\epsilon}}{B^{3/2}}\right),
 \end{align*}
 for each $\epsilon > 0$. To complete the proof of (\ref{eq:E-tr-tr-Q2-D-I-XX-D-I}) using this approach, it would be necessary to prove that the second term of the right-hand side of (\ref{eq:expre-inter-1-delta5}) is a term $\Ocal_{z}(B^{-3/2})$. However, this property is not obvious. Therefore, it seems difficult to verify (\ref{eq:E-tr-tr-Q2-D-I-XX-D-I}) using the simple approach mentioned above, which is why we evaluated the left-hand side of (\ref{eq:expre-inter-1-delta5}) using the integration by parts formula. \\
 
 (\ref{eq:equivalent-delta34-plus-delta5}) and (\ref{eq:equivalent-E-delta34-plus-delta5}) in turn follow 
 directly from Eq. (\ref{eq:equivalent2-delta34}) to (\ref{eq:E-tr-tr-Q2-D-I-XX-D-I}).
 \subsection{Proof of Step 3}
 \label{subsec:proof-step3}
We omit the proof of (\ref{eq:domination-stochastique-delta12b}), (\ref{eq:E-delta12b-tilde-delta12b}), 
(\ref{eq:simplification-tildedelta12-rond}), and (\ref{eq:simplification-E-tildedelta12-rond}). In order to prove (\ref{eq:behaviour-T12-circ}), 
we again first replace $\D_{\x_{\mathfrak{b}}} - \I$ by the diagonal matrix $\D_{\epsilon,g}$ 
defined by (\ref{eq:def-Depsilong}), and establish that it is sufficient to verify that 
$$
\frac{1}{\pi} \mathrm{Re} \int_{\Dcal} \bar{\partial}  \Phi_k(f)(z) \, 
\left(T_{12,\epsilon,g}(z)\right)^{\circ} \diff x \diff y = \Ocal_{\prec}\left( \frac{B^{\epsilon}}{B^{3/2}} \right),
$$
where $T_{12,\epsilon,g}$ represents the term obtained by replacing $\D_{\x_{\mathfrak{b}}} - \I$ by  $\D_{\epsilon,g}$ in the expression of $T_{12,\mathfrak{b}}$. For this, we
apply Lemma 
\ref{le:concentration-integrale-helffer-sjostrand} with  
$A_{N}(\nu) = \left \{ \left \| \frac{\X_{N,\mathfrak{b}}(\nu)}{\sqrt{B+1}} \right\| \right \}\leq 3$. (\ref{eq:behaviour-delta12-circ}) then follows from (\ref{eq:domination-stochastique-delta12b}), (\ref{eq:simplification-tildedelta12-rond}), and 
(\ref{eq:behaviour-T12-circ}). (\ref{eq:expre-E-T12b}) is still established using the integration by parts formula after tedious, but rather straightforward calculations that are omitted. Finally, (\ref{eq:comportement-final-E-delta12}) follows directly from (\ref{eq:E-delta12b-tilde-delta12b}) and (\ref{eq:expre-E-T12b})  \\

We now prove (\ref{eq:evaluation-tilde-delta11}) and
(\ref{eq:evaluation-E-tilde-delta11}). For this, we express $\eta(z) = \delta_{11,\mathfrak{b}}(z) - T_{11,\mathfrak{b}}(z)$ as $\eta(z) = \eta_1(z) + \eta_2(z)$ where 
\begin{eqnarray}
    \label{eq:expre-delta11-T11-1}
\eta_1(z) & = & \frac{1}{M} \Tr \left(\left(\tilde{\Q}_{\mathfrak{b}}- \Q_{\mathfrak{b}}\right)^{\circ} + z  \left(\tilde{\Q}_{\mathfrak{b}}^{2} - \Q_{\mathfrak{b}}^{2}\right)^{\circ} \right) (\hat{\D}_{\mathfrak{b}} - \D) \D^{-1}, \\
\label{eq:expre-delta11-T11-2}
\eta_2(z) & = & \frac{1}{M} \Tr \mathbb{E}\left( \tilde{\Q}_{\mathfrak{b}}- \Q_{\mathfrak{b}}  + z  \tilde{\Q}_{\mathfrak{b}}^{2} - z \Q_{\mathfrak{b}}^{2}\right) (\hat{\D}_{\mathfrak{b}} - \D) \D^{-1}, 
\end{eqnarray}
and define for $i=1,2$ $\gamma_i(f)$ as $\gamma_i(f) =  \frac{1}{\pi} \mathrm{Re} \int_{\Dcal} \bar{\partial}  \Phi_k(f)(z) \, \eta_{i}(z) \diff x \diff y$. Adapting the proof of (\ref{eq:behaviour-theta2b-f-recentered}), it can easily be shown that the family 
\begin{align*}
&\frac{1}{\pi} \mathrm{Re} \int_{\Dcal} \bar{\partial}  \Phi_k(f)(z) \, \Bigl( \left(\tilde{\Q}_{\mathfrak{b}}(z,\nu)- \Q_{\mathfrak{b}}(z,\nu)\right)_{m,m}^{\circ} 
\\
&\qquad+ z  \left(\tilde{\Q}_{\mathfrak{b}}^{2}(z,\nu) - \Q_{\mathfrak{b}}^{2}(z,\nu)\right)_{m,m}^{\circ} \Bigr) \diff x  \diff y,  
\end{align*}
$m=1, \ldots, M, \nu \in [0,1]$ is a $\Ocal_{\prec}\left( \frac{\sqrt{B}}{N}\right)$ term.  As we have
$\left((\hat{\D}_{\mathfrak{b}} - \D) \D^{-1}\right)_{m,m} = \Ocal_{\prec}\left( \frac{1}{\sqrt{B}}\right)$, Lemma \ref{le:domination-moyenne} implies that $\gamma_1$ is a $ \Ocal_{\prec,z}\left( \frac{1}{N}\right)$ term. Moreover, $\D_{3,\mathfrak{b}}$ deterministic
  implies that $\mathbb{E}(\eta_1)$ is reduced to
  $$
  \mathbb{E}(\eta_1) = \mathbb{E} \left(\frac{1}{M} \Tr \left(\left(\tilde{\Q}_{\mathfrak{b}}- \Q_{\mathfrak{b}}\right)^{\circ} + z  \left(\tilde{\Q}_{\mathfrak{b}}^{2} - \Q_{\mathfrak{b}}^{2}\right)^{\circ} \right) (\D_{\x_{\mathfrak{b}}} - \I + \D_{2,\mathfrak{b}}) \right).
  $$
  The Nash-Poincaré inequality leads to 
  $$
  \mathbb{E}\left| \left(\tilde{\Q}_{\mathfrak{b}}(z,\nu)- \Q_{\mathfrak{b}}(z,\nu)\right)_{m,m}^{\circ} + z  \left(\tilde{\Q}_{\mathfrak{b}}^{2}(z,\nu) - \Q_{\mathfrak{b}}^{2}(z,\nu)\right)_{m,m}^{\circ}  \right|^{2} = \Ocal_z\left( \frac{B}{N^{2}}\right),
  $$
  from which we deduce immediately $\mathbb{E}(\eta_1) = \Ocal_{z}\left( \frac{1}{N} \right)$.
  Therefore, we also have $\mathbb{E}(\gamma_1) =  \Ocal\left( \frac{1}{N} \right)$
and $\gamma_1^{\circ} = \Ocal_{\prec}\left( \frac{1}{N} \right)$.

In order to evaluate $\gamma_2^{\circ}$ and $\mathbb{E}(\eta_2)$, we use the 
decomposition (\ref{eq:decomposition-Dx-I}) of $(\hat{\D}_{\mathfrak{b}} - \D) \D^{-1}$, and of course have
$$
\eta_2^{\circ} = \frac{1}{M} \Tr \mathbb{E}\left( \tilde{\Q}_{\mathfrak{b}}- \Q_{\mathfrak{b}}  + z  \tilde{\Q}_{\mathfrak{b}}^{2} - z \Q_{\mathfrak{b}}^{2}\right) \left( \D_{\x_{\mathfrak{b}}} - \I + \D_{2,\mathfrak{b}} \right),
$$
and 
$$\mathbb{E}(\eta_2) = \frac{1}{M} \Tr \mathbb{E}\left( \tilde{\Q}_{\mathfrak{b}}- \Q_{\mathfrak{b}}  + z  \tilde{\Q}_{\mathfrak{b}}^{2} - z \Q_{\mathfrak{b}}^{2}\right) \D_{3,\mathfrak{b}}.
$$
We first mention that (\ref{eq:E-norm-Gammab-Gamma1r}) implies 
that $\mathbb{E}\left(\tilde{\Q}_{\mathfrak{b}}- \Q_{\mathfrak{b}} \right)_{m,m} = \Ocal_{z}\left(\frac{B}{N}\right)$, and similarly
that $\mathbb{E}\left(\tilde{\Q}^{'}_{\mathfrak{b}}- \Q^{'}_{\mathfrak{b}} \right)_{m,m} =  \mathbb{E}\left(\tilde{\Q}^{2}_{\mathfrak{b}}- \Q^{2}_{\mathfrak{b}} \right)_{m,m}$ is a $\Ocal_{z}\left(\frac{B}{N}\right)$ term. The Hanson-Wright inequality thus leads 
to 
\begin{align*}
& \frac{1}{M} \Tr \mathbb{E}\left( \tilde{\Q}_{\mathfrak{b}}- \Q_{\mathfrak{b}}  + z  \tilde{\Q}_{\mathfrak{b}}^{2} - z \Q_{\mathfrak{b}}^{2}\right) (\D_{\x_{\mathfrak{b}}} - \I)  = \Ocal_{\prec,z}\left(\frac{1}{N}\right), \\
& \frac{1}{M} \Tr \mathbb{E}\left( \tilde{\Q}_{\mathfrak{b}}- \Q_{\mathfrak{b}}  + z  \tilde{\Q}_{\mathfrak{b}}^{2} - z \Q_{\mathfrak{b}}^{2}\right) \D_{2,\mathfrak{b}}  = \Ocal_{\prec,z}\left(\frac{B}{N^{2}}\right)= o_{\prec,z}\left(\frac{1}{N}\right).
\end{align*}
Therefore, $\eta_2^{\circ} =  \Ocal_{\prec,z}\left(\frac{1}{N}\right)$ and
$\gamma_2^{\circ} =  \Ocal_{\prec}\left(\frac{1}{N}\right)$. 
It remains to study $\mathbb{E}(\eta_2)$. For this, we prove in Appendix \ref{sec:proof-inter-moyenne-delta13} the following Lemma.
\begin{lemma}
\label{le:terme-moyenne-delta13}
The equality 
\begin{equation}
\label{eq:expre-inter-moyenne-delta13}
 \frac{1}{M} \Tr \mathbb{E}(\tilde{\Q}_{\mathfrak{b}} + z  \tilde{\Q}_{\mathfrak{b}}^{2}) \, \D_{3,\mathfrak{b}}  = 
 \frac{1}{M} \Tr \mathbb{E}(\Q_{\mathfrak{b}} + z  \Q_{\mathfrak{b}}^{2}) \, \D_{3,\mathfrak{b}} + 
\Ocal_{z} \left( \frac{B^{4}}{N^{4}} + \frac{B^{5/2}}{N^{3}} \right) ,
\end{equation}
holds each $\alpha < 1$
\end{lemma}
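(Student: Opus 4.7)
The plan is to reduce the lemma to a single resolvent difference and then exploit both the smallness of $\D_{3,\mathfrak{b}}$ (of order $(B/N)^{2}$, by (\ref{eq:bound-trace-Phimb})) and the fact that $\mathbb{E}\tilde{\Q}_{\mathfrak{b}}$ is close to a deterministic matrix that differs from $t_N(z)\I_M$ only through normalized traces of $\Phibs_{m,\mathfrak{b}}$, which coincide precisely with the entries of $\D_{3,\mathfrak{b}}$. Since $\tilde{\Q}_{\mathfrak{b}}(z) + z\tilde{\Q}_{\mathfrak{b}}^{2}(z) = \partial_z[z\tilde{\Q}_{\mathfrak{b}}(z)]$ and likewise for $\Q_{\mathfrak{b}}(z)$, and since the $\Ocal_z$ convention is preserved under $z$-differentiation up to an augmented polynomial factor in $1/\Im z$, it is enough to establish
\[
\frac{1}{M}\Tr\mathbb{E}\bigl(\tilde{\Q}_{\mathfrak{b}}(z) - \Q_{\mathfrak{b}}(z)\bigr)\D_{3,\mathfrak{b}} = \Ocal_z\!\left(\frac{B^{4}}{N^{4}} + \frac{B^{5/2}}{N^{3}}\right).
\]

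I would then decompose the left-hand side as $\frac{1}{M}\sum_{m}(\D_{3,\mathfrak{b}})_{m,m}\,\mathbb{E}[\tilde{\Q}_{\mathfrak{b}} - \Q_{\mathfrak{b}}]_{m,m}$. For the second factor, row-permutation symmetry and (\ref{eq:expre-beta-t}) yield $\mathbb{E}[\Q_{\mathfrak{b}}]_{m,m} = \beta_N(z) = t_N(z) + \Ocal_z(B^{-2})$ independently of $m$. For $\mathbb{E}[\tilde{\Q}_{\mathfrak{b}}]_{m,m}$, I would invoke the matrix-valued deterministic equivalent $\tilde{\T}_N(z)$ underlying the derivation of (\ref{eq:Etrace-tildeQb-Qb}) in \cite{these-alexis}, Chap.~2, which gives $\mathbb{E}[\tilde{\Q}_{\mathfrak{b}}]_{m,m} = \tilde{T}_{m}(z) + \Ocal_z(B^{-2})$, where $\tilde{T}_m$ is the $m$-th diagonal of $\tilde{\T}_N$. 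A first-order perturbative expansion of the fixed-point equation for $\tilde{\T}_N$ in the small parameter $\Phibs_{m,\mathfrak{b}}$ then yields $\tilde{T}_{m}(z) - t_N(z) = \alpha_N(z)(\D_{3,\mathfrak{b}})_{m,m} + \Ocal_z((B/N)^{4})$ for some $\alpha_N$ with $|\alpha_N(z)| \le C(z)$, the point being that $\tilde{\T}_N$ is close to $t_N\I_M$ so only the normalized trace of $\Theta_m = \I + \Phibs_{m,\mathfrak{b}}$ enters at leading order. Substituting back, the dominant contribution is $\alpha_N(z)\frac{1}{M}\sum_m(\D_{3,\mathfrak{b}})_{m,m}^{2} = \Ocal_z(B^{4}/N^{4})$, while the error is $\Ocal_z\bigl(((B/N)^{4} + B^{-2})\cdot\|\D_{3,\mathfrak{b}}\|\bigr) = \Ocal_z((B/N)^{6} + N^{-2})$; since $\alpha > 1/2$ gives $N^{-2} \le B^{5/2}/N^{3}$, the total matches the bound of the lemma.

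The main obstacle is the quantitative perturbation analysis of the fixed-point equation for $\tilde{\T}_N$, which is not explicitly carried out in the present paper but is developed in \cite{these-alexis}. A self-contained fallback, in the spirit of the rest of this Appendix, would be to iterate the resolvent identity $\tilde{\Q}_{\mathfrak{b}} - \Q_{\mathfrak{b}} = -\Q_{\mathfrak{b}}\tilde{\Deltabs}_{\mathfrak{b}}\tilde{\Q}_{\mathfrak{b}}$ once, bound the second-order remainder by $\mathbb{E}\|\tilde{\Deltabs}_{\mathfrak{b}}\|^{2}\cdot\|\D_{3,\mathfrak{b}}\| = \Ocal_z((B/N)^{4})$, split $\tilde{\Deltabs}_{\mathfrak{b}}$ into its three building blocks, and treat the cross terms $\X_{\mathfrak{b}}\Gammabs_{\mathfrak{b}}^{*}/(B+1)$ by the Gaussian integration-by-parts formula (\ref{eq:ipp-iid}) applied to $\X_{\mathfrak{b}}$, using $(\Gammabs_{\mathfrak{b}})_{m,n} = (\Psibs_{m,\mathfrak{b}})_{n,n}(\X_{\mathfrak{b}})_{m,n}$ (since $\Psibs_{m,\mathfrak{b}}$ is diagonal). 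The improvement from the naive bound $\Ocal_z(B^{3}/N^{3})$ to the required rate comes from the fact that $\frac{1}{B+1}\Tr\Phibs_{m,\mathfrak{b}}$ is smaller by a factor $B/N$ than $\|\Phibs_{m,\mathfrak{b}}\|$ (compare (\ref{eq:bound-norm-Phimb}) and (\ref{eq:bound-trace-Phimb})), while the residual $\Ocal_z(B^{5/2}/N^{3})$ arises from Schwartz pairings of $\D_{2,\mathfrak{b}}$-type fluctuations of order $\sqrt{B}/N$ with the $\D_{3,\mathfrak{b}}$ weight of order $(B/N)^{2}$, exactly as in the analogous computation carried out for (\ref{eq:before-differentiation}).
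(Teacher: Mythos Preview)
Your fallback approach is exactly the route the paper takes: reduce to $\frac{1}{M}\Tr\mathbb{E}(\tilde{\Q}_{\mathfrak{b}}-\Q_{\mathfrak{b}})\D_{3,\mathfrak{b}}$ via $\partial_z[z\Q]=\Q+z\Q^2$, iterate the resolvent identity once, bound the second-order remainder by $\mathbb{E}\|\tilde{\Deltabs}_{\mathfrak{b}}\|^{2}\cdot\|\D_{3,\mathfrak{b}}\|=\Ocal_z((B/N)^{4})$, drop the $\Gammabs_{\mathfrak{b}}\Gammabs_{\mathfrak{b}}^{*}$ block by (\ref{eq:E-norm-Gammab-Gamma1r}), and then evaluate the cross term $\eta=\frac{1}{M}\Tr\mathbb{E}\bigl(\Q_{\mathfrak{b}}\frac{\Gammabs_{\mathfrak{b}}\X_{\mathfrak{b}}^{*}}{B+1}\Q_{\mathfrak{b}}\bigr)\D_{3,\mathfrak{b}}$ entry by entry via two nested applications of (\ref{eq:ipp-iid}). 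One minor correction: in the paper the $\Ocal_z(B^{5/2}/N^{3})$ does not come from $\D_{2,\mathfrak{b}}$-type pairings, but from the Nash--Poincar\'e bounds on the IPP remainder terms $\epsilon_{m,r}$, each of size $\Ocal_z(\|\Psibs_{m,\mathfrak{b}}\|/B^{3/2})=\Ocal_z\bigl(\frac{1}{\sqrt{B}\,N}\bigr)$; summing over $m$ and multiplying by $\|\D_{3,\mathfrak{b}}\|=\Ocal((B/N)^{2})$ gives $\Ocal_z\bigl(\frac{\sqrt{B}}{N}\cdot\frac{B^{2}}{N^{2}}\bigr)$.

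Your primary approach through the diagonal of the deterministic equivalent $\tilde{\T}_N$ is a genuinely different idea and conceptually cleaner, but two points need care. First, the entry-wise bound $\mathbb{E}[\tilde{\Q}_{\mathfrak{b}}]_{m,m}-\tilde{T}_m=\Ocal_z(B^{-2})$ does not follow from the normalized-trace result of Appendix~\ref{sec:improvement-bias}; it requires a separate argument (which you rightly flag as the obstacle). Second, the expansion $\tilde{T}_m-t_N=\alpha_N(z)(\D_{3,\mathfrak{b}})_{m,m}+\Ocal_z((B/N)^{4})$ is not literally correct: the first-order perturbation of the fixed point also produces an $m$-independent piece of order $(B/N)^{2}$, coming from $\frac{1}{B+1}\Tr\bar{\T}_N-\tilde{t}_N$. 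This does not damage your conclusion, since that piece still contributes only $\Ocal_z((B/N)^{4})$ after pairing with $(\D_{3,\mathfrak{b}})_{m,m}$ and averaging. The paper avoids these issues by staying with the direct IPP computation.
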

Lemma \ref{le:terme-moyenne-delta13} completes the proof of (\ref{eq:evaluation-tilde-delta11}) and (\ref{eq:evaluation-E-tilde-delta11}).  \\

(\ref{eq:E-T11-1}) follows from (\ref{eq:expre-E-Q-Dx-I}) and from the observation that $z \Q_{\mathfrak{b}}^{2} = z \Q_{\mathfrak{b}}^{'}$.  
(\ref{eq:E-T11-2}) is proved using that 
$$
T_{11,\mathfrak{b}}^{2} = (\beta(z) + z \beta'(z)) \frac{1}{M} \Tr \D_{2,\mathfrak{b}} + 
\frac{1}{M} \Tr (\Q_{\mathfrak{b}} + z  \Q_{\mathfrak{b}}^{2})^{\circ} \D_{2,\mathfrak{b}},
$$
and by remarking that $\mathbb{E} \left( \frac{1}{M} \Tr (\Q_{\mathfrak{b}} + z  \Q_{\mathfrak{b}}^{2})^{\circ} \D_{2,\mathfrak{b}} \right)= \Ocal_z\left(\frac{1}{N}\right)$. (\ref{eq:behaviour-moyenne-trace-Phimb}), $\tilde{p}(z) = (z t(z))^{'}$ and $\mathbb{E}(\Q_{\mathfrak{b}} + z  \Q_{\mathfrak{b}}^{2}) = (z \beta(z))^{'} \I = \left((z t(z))^{'} + \Ocal_z(B^{-2}) \right) \I$ imply immediately (\ref{eq:E-T11-3}). 
We omit the proof of (\ref{eq:T11-rond}) and of (\ref{eq:detla1b-rond}). (\ref{eq:E-delta1b}) follows directly from 
(\ref{eq:expre-E-T12b}), (\ref{eq:E-T11-1}) and (\ref{eq:E-T11-3}). \\

We omit the proofs of (\ref{eq:behaviour-delta2b-circ}) and (\ref{eq:expre-E-delta2b}) which are very similar. 

\section{Proof of Lemma \ref{le:moments-Dxb-1}.}
\label{proof-lemma-moments-Dxb-inverse}
The Hanson-Wright inequality implies that for $\epsilon < 1$, the set $A_{\epsilon}$ defined by 
\begin{equation}
\label{eq:def-Aepsilon-proof-le-moments-Dxb-1}
A_{\epsilon} = \{ \|\x_{m,\mathfrak{b}}\|^{2}/(B+1) \in [1 - \epsilon, 1 + \epsilon], m=1, \ldots, M \},
\end{equation}
holds with exponentially high probability. We put $\delta_m = \left(  \|\x_{m,\mathfrak{b}}\|^{2}/(B+1) \right)^{-1}$
and $\delta = \sup_{m=1, \ldots, M} \delta_m = \| \D_{\x_{\mathfrak{b}}}\|^{-1}$. $\mathbb{E}(\delta^{k})$ can be written as 
$$
\mathbb{E}(\delta^{k}) = \mathbb{E}(\delta^{k} \, \mathds{1}_{A_{\epsilon}}) + \mathbb{E}(\delta^{k} \, \mathds{1}_{A_{\epsilon}^{c}}).
$$
$\mathbb{E}(\delta^{k} \, \mathds{1}_{A_{\epsilon}})$ is of course a $\Ocal(1)$ term. The Schwartz inequality 
leads to 
$$
|\mathbb{E}(\delta^{k} \, \mathds{1}_{A_{\epsilon}^{c}})| \leq \left(\mathbb{E}(\delta^{2k})\right)^{1/2} 
\left(\mathbb{P}(A_{\epsilon}^{c})\right)^{1/2}.
$$
As $\mathbb{P}(A_{\epsilon}^{c})$ converges towards $0$ exponentially, it is sufficient to verify that 
$\mathbb{E}(\delta^{2k})$ can be upper-bounded by a term that converges possibly towards $+\infty$ 
at a polynomial rate. For this, we use the explicit expression of the probability density $p(u)$ of the $\chi^{2}$ random variables $(\|\x_{m,\mathfrak{b}}\|^{2})_{m=1, \ldots, M}$, i.e. 
$$
p(u) = \frac{u^{B}}{B!} e^{-u} \mathds{1}_{\mathbb{R}^{+}}(t),
$$
in order to evaluate the probability density $q(t)$ of $\frac{\delta}{B+1} = \sup_{m} \frac{1}{\|\x_{m,\mathfrak{b}}\|^{2}}$. It is easily checked that 
$$
q(t) = \frac{M}{B!} \frac{1}{t^{B+2}} e^{-\frac{1}{t}} \left(\mathbb{P}\left(\|\x_{m,\mathfrak{b}}\|^{2} > \frac{1}{t}\right)\right)^{M-1} \leq \frac{M}{B!} \frac{1}{t^{B+2}} e^{-\frac{1}{t}}.
$$
Therefore, $\mathbb{E}(\delta^{k})$ verifies 
$$
\mathbb{E}(\delta^{k}) \leq \frac{M(B+1)^{k}}{B!} \int_0^{+\infty} \frac{1}{t^{B+2}}  e^{-\frac{1}{t}} dt .
$$
It is easily checked that 
$$
 \int_0^{+\infty} \frac{1}{t^{B+2}}  e^{-\frac{1}{t}} dt  = B!,
$$
so that $\mathbb{E}(\delta^{k}) \leq M(B+1)^{k}$. This establishes that $ \left(\mathbb{E}(\delta^{2k})\right)^{1/2} 
\left(\mathbb{P}(A_{\epsilon}^{c})\right)^{1/2} \rightarrow 0$ as expected.

\section{Sketch of proof of (\ref{eq:expre-inter-moyenne-delta13})}
\label{sec:proof-inter-moyenne-delta13}
We just briefly justify that 
\begin{equation}
\label{eq:expre-appendix-inter-moyenne-delta13}
 \frac{1}{M} \Tr \mathbb{E}(\tilde{\Q}_{\mathfrak{b}}) \, \D_{3,\mathfrak{b}}  =  
 \frac{1}{M} \Tr \mathbb{E}(\Q_{\mathfrak{b}}) \, \D_{3,\mathfrak{b}} + 
\Ocal_{z} \left( \frac{B^{4}}{N^{4}} + \frac{B^{5/2}}{N^{3}} \right) ,
\end{equation}
because it can be shown that (\ref{eq:expre-inter-moyenne-delta13}) can be obtained from (\ref{eq:expre-appendix-inter-moyenne-delta13}) by differentiating w.r.t. $z$. We express $\tilde{\Q}_{\mathfrak{b}} - \Q_{\mathfrak{b}}$
as
$$
\tilde{\Q}_{\mathfrak{b}} - \Q_{\mathfrak{b}} = - \Q_{\mathfrak{b}} \tilde{\Deltabs}_{\mathfrak{b}} \Q_{\mathfrak{b}} + \tilde{\Q}_{\mathfrak{b}} \tilde{\Deltabs}_{\mathfrak{b}} \Q_{\mathfrak{b}} \tilde{\Deltabs}_{\mathfrak{b}} \Q_{\mathfrak{b}}.
$$
As 
$$
\mathbb{E}\left \|  \tilde{\Deltabs}_{\mathfrak{b}}\right \|^{2} = \Ocal \left( \left(\frac{B}{N}\right)^{2} \right),
$$
we obtain that 
$$
 \frac{1}{M} \Tr \mathbb{E}(\tilde{\Q}_{\mathfrak{b}}) \, \D_{3,\mathfrak{b}}  =  
 \frac{1}{M} \Tr \mathbb{E}(\Q_{\mathfrak{b}}) \, \D_{3,\mathfrak{b}} - \frac{1}{M} \Tr \mathbb{E}( \Q_{\mathfrak{b}} \tilde{\Deltabs}_{\mathfrak{b}} \Q_{\mathfrak{b}}) \, \D_{3,\mathfrak{b}} + \Ocal_z \left( \frac{B^{4}}{N^{4}} \right).
$$
As $\mathbb{E} \left\| \frac{\Gammabs_{\mathfrak{b}}\Gammabs_{\mathfrak{b}}^{*}}{B+1} \right\| = \Ocal\left( B^{2}/N^{2}\right)$ (see (\ref{eq:E-norm-Gammab-Gamma1r})), we also have 
\begin{align*}
&\frac{1}{M} \Tr \mathbb{E}(\tilde{\Q}_{\mathfrak{b}}) \, \D_{3,\mathfrak{b}}  =  
 \\
 &\qquad
 \frac{1}{M} \Tr \mathbb{E}(\Q_{\mathfrak{b}}) \, \D_{3,\mathfrak{b}}  - \frac{1}{M} \Tr \mathbb{E}\left( \Q_{\mathfrak{b}} \left(  \frac{\X_{\mathfrak{b}} \Gammabs_{\mathfrak{b}}^{*}}{B+1}  +  \frac{\Gammabs_{\mathfrak{b}} \X_{\mathfrak{b}}^{*}}{B+1} \right)  \Q_{\mathfrak{b}}\right) \, \D_{3,\mathfrak{b}} 
 \\ 
 & \qquad\qquad+ \Ocal_z \left( \frac{B^{4}}{N^{4}} \right).
\end{align*}
We just indicate how to obtain the order of magnitude of the term $\eta$ defined by 
$$
\eta =  \frac{1}{M} \Tr \mathbb{E}\left( \Q_{\mathfrak{b}} \frac{\Gammabs_{\mathfrak{b}} \X_{\mathfrak{b}}^{*}}{B+1}  \Q_{\mathfrak{b}}\right) \, \D_{3,\mathfrak{b}}.
$$
In order to simplify the notations, we define $\W$ as the matrix $\W = \frac{\X_{\mathfrak{b}}}{\sqrt{B+1}}$. Then, $\eta$ can be written as 
\begin{eqnarray*}
\eta & = & \frac{1}{M} \sum_{r=1}^{M} \mathbb{E} \left(  \Q_{\mathfrak{b}} \frac{\Gammabs_{\mathfrak{b}} \X_{\mathfrak{b}}^{*}}{B+1}  \Q_{\mathfrak{b}}\right)_{r,r} \, \frac{1}{B+1} \Tr \Phibs_{r,\mathfrak{b}} \\
  & = &  \frac{1}{M} \sum_{r=1}^{M} \sum_{m=1}^{M} \mathbb{E}\left(  \Q_{\mathfrak{b},r,m} \w_{m} \Psibs_{m,\mathfrak{b}} \W^{*}  \Q_{\mathfrak{b}} \e_r \right) \, \frac{1}{B+1} \Tr \Phibs_{r,\mathfrak{b}} .
\end{eqnarray*}
It is thus necessary to evaluate $\eta_{r,m} = \mathbb{E}\left(  \Q_{\mathfrak{b},r,m} \w_{m} \Psibs_{m,\mathfrak{b}} \W^{*}  \Q_{\mathfrak{b}} \e_r \right)$ for each $r,m$. Using the integration by parts formula, we obtain easily that 
\begin{align}
\label{eq:ipp-1-etarm}
\eta_{r,m} &= \mathbb{E}(  \Q_{\mathfrak{b},r,m}  \Q_{\mathfrak{b},m,r}) \, \frac{1}{B+1} \Tr \Psibs_{m,\mathfrak{b}} 
\notag\\
&\qquad - \frac{1}{B+1} \mathbb{E} \left(\Q_{\mathfrak{b},m,m} (\Q_{\mathfrak{b}} \W \Psibs_{m,\mathfrak{b}} \W^{*} 
\Q_{\mathfrak{b}})_{r,r} \right) 
\notag\\
&\qquad\qquad - \mathbb{E} \left(  \Q_{\mathfrak{b},r,m}  \Q_{\mathfrak{b},m,r} 
 \frac{1}{B+1} \Tr \Q_{\mathfrak{b}} \W  \Psibs_{m,\mathfrak{b}} \W^{*} \right),
\end{align}
or equivalently
\begin{align}
\label{eq:ipp-2-etarm}
\eta_{r,m} &=  \mathbb{E}(  \Q_{\mathfrak{b},r,m}  \Q_{\mathfrak{b},m,r}) \, \frac{1}{B+1} \Tr \Psibs_{m,\mathfrak{b}} 
\notag\\
&\qquad- \frac{1}{B+1} \mathbb{E}(\Q_{\mathfrak{b},m,m}) \mathbb{E} \left((\Q_{\mathfrak{b}} \W \Psibs_{m,\mathfrak{b}} \W^{*} 
\Q_{\mathfrak{b}})_{r,r} \right) 
\notag\\
&\qquad\qquad - \mathbb{E} (\Q_{\mathfrak{b},r,m}  \Q_{\mathfrak{b},m,r})  
\mathbb{E} \left( \frac{1}{B+1} \Tr \Q_{\mathfrak{b}} \W  \Psibs_{m,\mathfrak{b}} \W^{*} \right) + \epsilon_{m,r},
\end{align}
where $\epsilon_{m,r}$ is given by 
\begin{align}
\label{eq:expre-epsilon-mr}
\epsilon_{m,r} = & -\frac{1}{B+1}  \mathbb{E} \left(\Q_{\mathfrak{b},m,m}^{\circ} (\Q_{\mathfrak{b}} \W \Psibs_{m,\mathfrak{b}} \W^{*} \Q_{\mathfrak{b}})_{r,r}^{\circ} \right) \\ \notag & -  \mathbb{E} \left(  (\Q_{\mathfrak{b},r,m}  \Q_{\mathfrak{b},m,r})^{\circ} 
 \left(\frac{1}{B+1} \Tr \Q_{\mathfrak{b}} \W  \Psibs_{m,\mathfrak{b}} \W^{*}\right)^{\circ} \right).
\end{align}
Using the Nash-Poincaré inequality and the Schwartz inequality, we obtain easily that 
$\epsilon_{m,r} = \Ocal_{z} \left( \frac{\|\Psibs_{m,\mathfrak{b}}\|}{B^{2}} + \frac{\|\Psibs_{m,\mathfrak{b}}\|}{B^{3/2}} \right)
 =  \Ocal_{z} \left( \frac{1}{\sqrt{B} N} \right)$. Therefore, we have 
\begin{equation}
\label{eq:behaviour-sum-m-epsilon-mr}
\sum_{m=1}^{M} \epsilon_{m,r} =  \Ocal_{z} \left( \frac{\sqrt{B}}{N} \right),
\end{equation}
and conclude that the contribution of the error terms $(\epsilon_{m,r})_{m,r=1, \ldots,M}$ to 
$\eta$ is a $\Ocal_{z} \left( \frac{\sqrt{B}}{N} \frac{B^{2}}{N^{2}}\right) = \Ocal_{z} \left(  \frac{B^{5/2}}{N^{3}}\right)$ term (we recall that $\frac{1}{B+1} \Tr \Phibs_{r,\mathfrak{b}} = 
\Ocal_{z}\left( \frac{B^{2}}{N^{2}}\right)$). We now evaluate the contribution to $\eta$ of the second term of the right-hand side of (\ref{eq:ipp-2-etarm}). For this, we first recall that  $\mathbb{E}(\Q_{\mathfrak{b},m,m}) = \beta(z)$
does not depend on $m$. We have thus to evaluate the order of magnitude of 
$ \mathbb{E} \left(\Q_{\mathfrak{b}} \W \Psibs_{m,\mathfrak{b}} \W^{*} 
\Q_{\mathfrak{b}})_{r,r} \right)$. For this, we consider any $(B+1) \times (B+1)$ matrix 
$\A$, and compute $\zeta(\A)$ defined by 
\begin{equation}
\label{eq:def-zeta-A}
\zeta(\A) = \mathbb{E} \left((\Q_{\mathfrak{b}} \W \A \W^{*} 
\Q_{\mathfrak{b}})_{r,r} \right),
\end{equation}
using the integration by parts formula. After some algebra, we obtain that 
\begin{align}
\label{eq:zeta-A-ipp-1}
\zeta(\A) = & \mathbb{E}(\Q^{2}_{\mathfrak{b},r,r}) \, \frac{1}{B+1} \Tr \A - c \, \mathbb{E} \left( 
\frac{1}{M} \Tr \Q_{\mathfrak{b}} \, \left(\Q_{\mathfrak{b}} \W \A \W^{*} 
\Q_{\mathfrak{b}})_{r,r} \right)\right) \\ \notag 
& - c \mathbb{E}\left( (\Q^{2}_{\mathfrak{b},r,r}) \frac{1}{B+1} \Tr\Q_{\mathfrak{b}} \W \A \W^{*} \right),
\end{align}
and therefore that 
\begin{equation}
\label{eq:zeta-A-ipp-2}
\zeta(\A) = (\beta(z))' \frac{1}{B+1} \Tr \A - c \, \beta(z) \, \zeta(\A)  - c (\beta(z))' \mathbb{E}\left( \frac{1}{B+1} \Tr\Q_{\mathfrak{b}} \W \A \W^{*} \right) + \omega_r,
\end{equation}
where the error term $\omega_r$ is defined by 
\begin{align*}
\omega_r &= - c \, \mathbb{E} \left( 
\frac{1}{M} \Tr \Q^{\circ}_{\mathfrak{b}} \, \left((\Q_{\mathfrak{b}} \W \A \W^{*} 
\Q_{\mathfrak{b}})_{r,r} \right)^{\circ} \right)  
\\
&\qquad- c \mathbb{E}\left( (\Q^{2}_{\mathfrak{b},r,r})^{\circ}  \left( \frac{1}{B+1} \Tr\Q_{\mathfrak{b}} \W \A \W^{*} \right)^{\circ} \right),
\end{align*}
and verifies $\omega_r =  \Ocal_{z} \left( \frac{\|\A\|}{B^{3/2}} \right)$. Solving (\ref{eq:zeta-A-ipp-2})
w.r.t. $\zeta(\A)$, we obtain that 
$$
\zeta(\A) = \frac{1}{1 + \beta \, c} \left(  \frac{1}{B+1} \Tr \A - \, c \,  (\beta(z))' \, \mathbb{E}\left( \frac{1}{B+1} \Tr\Q_{\mathfrak{b}} \W \A \W^{*} \right) + \omega_r \right).
$$
The term $\mathbb{E}\left( \frac{1}{B+1} \Tr\Q_{\mathfrak{b}} \W \A \W^{*} \right)$ is easily evaluated using the calculations in \cite{loubaton-rosuel-ejs-2021}, Appendix F, from which it can be deduced that 
$$
\mathbb{E}\left( \frac{1}{B+1} \Tr\Q_{\mathfrak{b}} \W \A \W^{*} \right) = \Ocal_z\left( 
\frac{1}{B+1} \Tr \A + \frac{\| \A \|}{B^{2}} \right).
$$
Therefore, we obtain that 
$$
\zeta(\A) = \Ocal_{z} \left( \frac{1}{B+1} \Tr \A + \frac{\|\A\|}{B^{3/2}} \right).
$$
Using this result for $\A = \Psibs_{m,\mathfrak{b}}$, we deduce that the contribution of 
the second term of the right-hand side of (\ref{eq:ipp-2-etarm}) to $\eta$ is a 
$ \Ocal_{z} \left( \frac{B^{4}}{N^{4}} + \frac{B^{3/2}}{N^{3}} \right)$ term (we recall
that $ \frac{1}{B+1} \Tr \Psibs_{m,\mathfrak{b}} = \Ocal\left( B^{2}/N^{2}\right)$). 
The contribution to $\eta$ of the first and third terms of the right-hand side of 
(\ref{eq:ipp-2-etarm}) can be written as 
$$
\frac{1}{M} \sum_{r=1}^{M} \mathbb{E} \left( (\Q_{\mathfrak{b}} \A \Q_{\mathfrak{b}})_{r,r} \right) 
\,  \frac{1}{B+1} \Tr \Phibs_{r,\mathfrak{b}} ,
$$
where $\A$ represents either the diagonal matrix $\dg \left( \frac{1}{B+1} \Tr \Psibs_{m,\mathfrak{b}}, m=1, \ldots, M \right)$ (contribution of the first term), either the diagonal matrix \\
$\dg \left( \mathbb{E} \left( \frac{1}{B+1} \Tr \Q_{\mathfrak{b}} \W  \Psibs_{m,\mathfrak{b}} \W^{*} \right), 
 m=1, \ldots, M \right)$. It thus appears necessary to evaluate $\mathbb{E} \left( (\Q_{\mathfrak{b}} \A \Q_{\mathfrak{b}})_{r,r} \right)$. For this, it is still possible to use the integration by parts formula. As the calculations are easy, but tedious, we omit to establish that 
$$
\mathbb{E} \left( (\Q_{\mathfrak{b}} \A \Q_{\mathfrak{b}})_{r,r} \right) = 
\Ocal_{z} \left( \A_{r,r} + \frac{1}{B+1} \Tr \A + \frac{\| \A \|}{B^{3/2}} \right).
$$
Using this result in the context of the two above mentioned matrices $\A$, we obtain that the contributions to 
$\eta$ of the first and third terms of the right-hand side of (\ref{eq:ipp-2-etarm})
both coincide with a $\Ocal_{z} \left( \frac{B^{4}}{N^{4}} + \frac{B^{3/2}}{N^{3}} \right)$ term. Putting all the pieces together, we obtain  (\ref{eq:expre-inter-moyenne-delta13}). 

 \section{\texorpdfstring{Outline of the computation of the $\Ocal\left( \frac{B}{N}\right)^{4}$ of $\mathbb{E}(\theta_N(f,\nu))$.}{Outline of computation}}
\label{sec:guideline-calculation-high-order-term}
We mentioned above that the computation of the $\Ocal\left( \frac{B}{N}\right)^{4}$ term of $\mathbb{E}(\theta_N(f,\nu))$ might be useful to extend the results of this paper for larger values of $\alpha$. In effect, substracting a consistent estimate of this term from $\theta_N(f,\nu)$ should allow to 
derive CLT on statistics based on the corresponding recentered versions 
of $\theta_N(f,\nu)$. Assuming that the next significant term of the 
mean of the recentered version of $\theta_N(f,\nu)$ is $\Ocal\left(\left( \frac{B}{N}\right)^{6}\right)$, the CLT might be extended to values of $\alpha$ for which 
$\left(\frac{B}{N}\right)^{6} = o\left( \frac{1}{\sqrt{NB}}\right)$, i.e. for $\alpha < \frac{11}{13}$. For those values of 
$\alpha$, we mentioned that the closed form evaluation of the $\Ocal\left(\left( \frac{B}{N}\right)^{4}\right)$ is quite complicated. In this section, we justify this claim by identifying more precisely the various steps that have to be 
achieved for this purpose. For this, we have to consider the decomposition 
(\ref{eq:decomposition-gammab}), and identify the $\Ocal_z\left(\left( \frac{B}{N}\right)^{4}\right)$ terms of $\mathbb{E}( \gamma_{i,N,\mathfrak{b}}(z))$ that should be evaluated in closed form 
for $i=1,2$. \\

\paragraph{Evaluation of the $\Ocal_z\left(\left( \frac{B}{N}\right)^{4}\right)$ term of $\mathbb{E}( \gamma_{1,N,\mathfrak{b}}(z))$.}
We recall that $\gamma_{1,N,\mathfrak{b}}(z))$ is evaluated in Step 1, Step 2, and Step 3 (see Appendix \ref{subsec:needed-results}). We have therefore to reconsider these various steps, and 
to identify the various deterministic $\Ocal_z\left( \frac{B}{N}\right)^{4}$ terms.  
We first mention that as $\alpha$ is potentially larger than 0.8, 
it is no longer true that (\ref{eq:domination-hatDb-D-alpha-inferieur-4-5}) 
and (\ref{eq:E-norm-hatDb-D}) hold (see Remark \ref{re:biais-variance-hats}). Therefore, some simplications that were used in Steps 1, 2, 3 are no longer valid. 
\begin{itemize}
    \item In Step 1. It is easy to check that, while (\ref{eq:moments-norme-Upsilon1}) is no more valid, the moments of $\| \Upsilonbs_1 \|$ 
    are always negligible w.r.t. $\frac{1}{\sqrt{NB}}$ for $\alpha < \frac{11}{13}$
    \item In Step 2, (\ref{eq:E-equivalent1-delta34}) and 
    (\ref{eq:E-equivalent1-delta5}) are still valid, but with different error terms that are negligible w.r.t. $\frac{1}{\sqrt{NB}}$. However, 
    the next calculations presented in the overview of the proof 
    Step 2 (see Appendix \ref{subsec:needed-results}) 
    have to be reconsidered to identify the $\Ocal_z\left(\left( \frac{B}{N}\right)^{4}\right)$ terms of $\delta_{34,\mathfrak{b}}$ and 
    $\delta_{5,\mathfrak{b}}$. For this, one has to use the 
    decomposition (\ref{eq:decomposition-Dx-I}) of $\left(\hat{\D}_{\mathfrak{b}}(\nu) - \D(\nu)\right) \D^{-1}(\nu)$, and to 
    recognize that the $\Ocal_z\left( \frac{B}{N}\right)^{4}$ terms of $\delta_{34,\mathfrak{b}}$ and 
    $\delta_{5,\mathfrak{b}}$ are due to the deterministic term $\D_{3,\mathfrak{b}}$. It is thus necessary to evaluate 
    $$
    \mathbb{E} \left(  \frac{1}{M} \Tr (\Q_{\mathfrak{b}} + z \Q_{\mathfrak{b}}^{2}) \left( \D_{3,\mathfrak{b}} \right)^{2}  \right),
    $$
    and 
    $$
    \mathbb{E} \left(\frac{1}{M} \Tr \Q_{\mathfrak{b}}^{2} \D_{3,\mathfrak{b}} \frac{\X_{\mathfrak{b}} \X_{\mathfrak{b}}^{*}}{B+1}  \D_{3,\mathfrak{b}} \right).
    $$
    This can be done by using the integration by parts formula, and leads to the closed form expression of the $\Ocal_z\left(\left( \frac{B}{N}\right)^{4}\right)$ 
    at the right hand side of (\ref{eq:equivalent-E-delta34-plus-delta5}).     
    \item In Step 3, (\ref{eq:E-delta12b-tilde-delta12b}) is still valid (with a different error term still negligible 
    w.r.t. $\frac{1}{\sqrt{NB}}$), and we now have to 
    evaluate the  $\Ocal_z\left(\left( \frac{B}{N}\right)^{4}\right)$ 
    term at the right hand side of (\ref{eq:simplification-E-tildedelta12-rond}), i.e. the term
    $$
\mathbb{E} \left(\frac{1}{M} \mathrm{Tr} \Bigl[\Q_{\mathfrak{b}} 
\left(\D_{3,\mathfrak{b}} \frac{\X_{\mathfrak{b}}\X_{\mathfrak{b}}^*}{B+1} + 
\frac{\X_{\mathfrak{b}}\X_{\mathfrak{b}}^*}{B+1} \D_{3,\mathfrak{b}} \right)
\Q_{\mathfrak{b}}  \D_{3,\mathfrak{b}} \frac{\X_{\mathfrak{b}}\X_{\mathfrak{b}}^*}{B+1} \Q_{\mathfrak{b}} \Bigr] \right).
$$
This can also be done using the integration by parts formula
after very tedious calculations. A similar job has to be done to 
evaluate the $\Ocal_z\left(\left( \frac{B}{N}\right)^{4}\right)$ at the right hand side of (\ref{eq:evaluation-E-tilde-delta11}). It is finally necessary to compute the $\Ocal_z\left(\left( \frac{B}{N}\right)^{4}\right)$ term at the right hand side of (\ref{eq:E-T11-3}), or equivalently the $\Ocal_z\left(\left( \frac{B}{N}\right)^{4}\right)$ term at the right hand side of (\ref{eq:expre-inter-moyenne-delta13}). For this, it is necessary 
follow the proof of Lemma \ref{le:terme-moyenne-delta13}
in Appendix \ref{sec:proof-inter-moyenne-delta13} and to
identify the $\Ocal_z\left(\left( \frac{B}{N}\right)^{4}\right)$ term 
of $\eta$ defined by 
$$
\eta =  \frac{1}{M} \Tr \mathbb{E}\left( \Q_{\mathfrak{b}} \frac{\Gammabs_{\mathfrak{b}} \X_{\mathfrak{b}}^{*}}{B+1}  \Q_{\mathfrak{b}}\right) \, \D_{3,\mathfrak{b}},
$$
and of
$$
\frac{1}{M} \Tr \mathbb{E}( \Q_{\mathfrak{b}} \tilde{\Deltabs}_{\mathfrak{b}} \Q_{\mathfrak{b}} \tilde{\Deltabs}_{\mathfrak{b}} \Q_{\mathfrak{b}}) \, \D_{3,\mathfrak{b}}.
$$
While the former calculation is rather easy, the later one, that should again be based on the integration by parts formula, 
is extremely tedious.      
    \end{itemize}
    
\paragraph{Evaluation of the $\Ocal_z\left(\left( \frac{B}{N}\right)^{4}\right)$ term of $\mathbb{E}( \gamma_{2,N,\mathfrak{b}}(z))$.}
We refer the reader to Appendix \ref{sec:improvement-bias}
devoted to a sketch of proof of (\ref{eq:Etrace-tildeQ-Q-precise-1}) and (\ref{eq:Etrace-tildeQ-Q-precise-2}). 
While it is easy to evaluate the $\Ocal\left(\left( \frac{B}{N}\right)^{4}\right)$ terms at the right hand side of (\ref{eq:expansion-trace-average-Phi-carre}) 
and (\ref{eq:expansion-trace-average-Phi}), the identification 
of the same term at the right hand side of (\ref{eq:final-tight-evaluation-tilde-epsilon}) appears tremendous. It is hard to be more specific without spending a number of pages to explain intricate calculations. Therefore, the reader 
may convince himself of this by following carefully the various steps of the calculations in \cite{these-alexis}, Chap. 2 in which the $\Ocal_z\left(\left( \frac{B}{N}\right)^{4}\right)$ terms have to evaluated in closed form.

\section{Proof of (\ref{eq:WN-finite-moment})}
\label{sec:proof-eq:WN-finite-moment}
As we will use (\ref{eq:WN-finite-moment}) for finite values of $k$, we do not mention that the constant at the right-hand side of (\ref{eq:WN-finite-moment}) depends on $k$. The Hölder inequality implies that it is sufficient to establish (\ref{eq:WN-finite-moment}) when $k$ is even. We thus prove that 
\begin{equation}
\label{eq:WN-finite-even-moment}
\mathbb{E}|W_N|^{2k} \leq C,
\end{equation}
for some constant $C$. To verify (\ref{eq:WN-finite-even-moment}), we again use that the compactness of $\Dcal$ implies that 
$$
\mathbb{E}(W_N^{2k}) \leq C \, \int_{\Dcal} |h(z)|^{2k} \, \mathbb{E} \left|  \left( M \bar{\gamma}_N \right)^{\circ} \right|^{2k} \, \diff x \diff y,   
$$
where $\bar{\gamma}_N$ is defined by 
$$
\bar{\gamma}_N(z, \nu) = \frac{1}{M} \Tr \left(\Q_{N,\mathfrak{b}}(z,\nu) \D_{\x_{\mathfrak{b}}(\nu)}  + z \Q^{2}_{N,\mathfrak{b}}(z,\nu)(\D_{\x_{\mathfrak{b}}(\nu)} - \I\right).
$$
It is thus sufficient to check that 
\begin{equation}
    \label{eq:Mgammabarrond-moments}
     \mathbb{E} \left|  \left( M \bar{\gamma}_N \right)^{\circ} \right|^{2k} = \Ocal_z(1),
\end{equation}
for each integer $k$. We prove this property by induction on $k$. We consider $k=1$. It is easy to check that 
\begin{equation}
\label{eq:norme-gradient-M-gamma-bar}
\| \nabla \left( M \bar{\gamma}_N \right)^{\circ} \|^{2} = \| \nabla M \bar{\gamma}_N \|^{2} \leq C(z) \left( 1 + \frac{1}{M} \Tr \D_{\x_{\mathfrak{b}}}^{2} \right).   
\end{equation}
The Nash-Poincaré inequality thus implies that  (\ref{eq:Mgammabarrond-moments}) holds for $k=1$. 
We now assume that (\ref{eq:Mgammabarrond-moments}) holds for each $p \leq k$, and establish it for $k+1$. For this, we express $\mathbb{E} \left|  \left( M \bar{\gamma}_{N,\mathfrak{b}} \right)^{\circ} \right|^{2k+2}$ as 
$$
\mathbb{E} \left|  \left( M \bar{\gamma}_{N,\mathfrak{b}} \right)^{\circ} \right|^{2k+2} = \mathrm{Var}\left( \left( M \bar{\gamma}_{N,\mathfrak{b}} \right)^{\circ}\right)^{k+1} + 
\left| \mathbb{E} \left( \left( M \bar{\gamma}_{N,\mathfrak{b}} \right)^{\circ} \right)^{k+1} \right|^{2}.
$$
If $k+1$ is even, the second term of the right-hand side of the above equation is a $\Ocal_z(1)$ term because $k+1 \leq 2k$. 
If $k+1$ is odd, $k+2$ is even, and the Hölder inequality implies that 
$$
\mathbb{E}  \left| \left( M \bar{\gamma}_{N,\mathfrak{b}} \right)^{\circ} \right|^{k+1} \leq \left( \mathbb{E}  \left| \left( M \bar{\gamma}_{N,\mathfrak{b}} \right)^{\circ} \right|^{k+2} \right)^{\frac{k+1}{k+2}}.
$$
As $k+2 \leq 2k$, $ \mathbb{E}  \left| \left( M \bar{\gamma}_{N,\mathfrak{b}} \right)^{\circ} \right|^{k+2}$ is supposed to be a $\Ocal_z(1)$ term. 
Moreover, the inequality 
$\left(\Ocal_z(1)\right)^{(k+1)/(k+2)} \leq 1 + \Ocal_z(1) = \Ocal_z(1)$, and the Hölder inequality leads to the conclusion 
that $\left| \mathbb{E}  \left( \left( M \bar{\gamma}_{N,\mathfrak{b}} \right)^{\circ} \right)^{k+1} \right|^{2} = \Ocal_z(1)$. We finally evaluate 
$ \mathrm{Var}\left( \left( M \bar{\gamma}_{N,\mathfrak{b}} \right)^{\circ}\right)^{k+1}$. For this, we remark that 
$$
\nabla \left( \left( M \bar{\gamma}_{N,\mathfrak{b}} \right)^{\circ}\right)^{k+1} = (k+1) \, \left( \left( M \bar{\gamma}_{N,\mathfrak{b}} \right)^{\circ}\right)^{k} \nabla \left( M \bar{\gamma}_{N,\mathfrak{b}} \right)^{\circ}.
$$
Therefore, using (\ref{eq:norme-gradient-M-gamma-bar}), we obtain that
$$
\left\| \nabla \left( \left( M \bar{\gamma}_{N,\mathfrak{b}} \right)^{\circ}\right)^{k+1} \right \|^{2} \leq 
C(z)  \left( 1 + \frac{1}{M} \Tr \D_{\x_{\mathfrak{b}}}^{2} \right) \, 
\left|\left( M \bar{\gamma}_{N,\mathfrak{b}} \right)^{\circ} \right|^{2k}.
$$
The Nash-Poincaré inequality implies that 
$$
\mathrm{Var}\left( \left( M \bar{\gamma}_{N,\mathfrak{b}} \right)^{\circ}\right)^{k+1} \leq C(z) \mathbb{E} \left[  (1 + \frac{1}{M} \Tr \D_{\x_{\mathfrak{b}}}^{2}) \left|\left( M \bar{\gamma}_{N,\mathfrak{b}} \right)^{\circ} \right|^{2k} \right].
$$
Using the Hölder inequality, we thus have 
\begin{align*}
&\mathrm{Var}\left( \left( M \bar{\gamma}_{N,\mathfrak{b}} \right)^{\circ}\right)^{k+1} 
\leq 
\\
&\qquad C(z) \left( \mathbb{E}  \left|\left( M \bar{\gamma}_{N,\mathfrak{b}} \right)^{\circ} \right|^{2k+2} \right)^{\frac{2k}{2k+2}} \, \left( \mathbb{E}\left( 1 + \frac{1}{M} \Tr \D_{\x_{\mathfrak{b}}}^{2} \right)^{k+1} \right)^{\frac{1}{k+1}}.
\end{align*}
Adding $\left| \mathbb{E} \left( \left( M \bar{\gamma}_{N,\mathfrak{b}} \right)^{\circ} \right)^{k+1} \right|^{2}$ to both sides of this inequality, and using that the later term is $\Ocal_z(1)$, we obtain 
that 
$$
\mathbb{E}  \left|\left( M \bar{\gamma}_{N,\mathfrak{b}} \right)^{\circ} \right|^{2k+2} \leq C(z) \left( 1 + 
\left(\mathbb{E}  \left|\left( M \bar{\gamma}_{N,\mathfrak{b}} \right)^{\circ} \right|^{2k+2}\right)^{\frac{k}{k+1}} \right).
$$
For $x \geq 0$ and $0 < \alpha < 1$, it is clear that $\frac{x}{1+x^{\alpha}} > \frac{x}{2} \mathds{1}_{x \leq 1} + \frac{x^{1-\alpha}}{2} \mathds{1}_{x > 1}$. Therefore, if $x$ verifies $\frac{x}{1+x^{\alpha}} <  C$
for some $C > 0$, $x$ must verify $\frac{x}{2} \mathds{1}_{x \leq 1} + \frac{x^{1-\alpha}}{2} \mathds{1}_{x > 1} < C$. If $x \leq 1$, then $x$ is smaller than $2 C$, while if $x > 1$, $x$ satisfies $x \leq (2 C)^{\frac{1}{1-\alpha}}$. Hence, the inequality $\frac{x}{1+x^{\alpha}} <  C$ implies 
$x \leq 2 C + (2C)^{\frac{1}{1-\alpha}} $. Using this property for $x = \mathbb{E}  \left|\left( M \bar{\gamma}_{N,\mathfrak{b}} \right)^{\circ} \right|^{2k+2}$ and $\alpha = \frac{k}{k+1}$, we obtain that 
$$
\mathbb{E}  \left|\left( M \bar{\gamma}_{N,\mathfrak{b}} \right)^{\circ} \right|^{2k+2} \leq  \left( 2 C(z) + 
(2C(z))^{k} \right),
$$
and that $\mathbb{E}  \left|\left( M \bar{\gamma}_{N,\mathfrak{b}} \right)^{\circ} \right|^{2k+2} = \Ocal_z(1)$
as expected.

\section{Proof of Lemma \ref{le:positivite-sigmaN}}
\label{sec:proof-le-positivite-sigmaN}
In order to establish Lemma \ref{le:positivite-sigmaN}, we evaluate $\sigma^{2}(f)$ defined as the right-hand side of (\ref{eq:sigma2}) when $c_N, t_N(z), \tilde{t}_N(z)$ are replaced by $c, t(z), \tilde{t}(z)$. As $| c (z t(z) \tilde{t}(z))^{2}| < 1$ for all $z \in \Cbb^+$ (see Subsection \ref{subsec:properties-wishart}), we have the series expansion
\begin{equation}
\label{eq:serie-entiere}
\frac{1}{(1 - c z_1 t(z_1)\tilde{t}(z_1)z_2 t(z_2)\tilde{t}(z_2))^{2}} - 1 = \sum_{l=1}^{+\infty} (l+1) \left(c z_1 t(z_1)\tilde{t}(z_1)z_2 t(z_2)\tilde{t}(z_2)\right)^{l},
\end{equation}
for all $z_1,z_2 \in \Dcal = [a_1, a_2] \times [0,1]$ where $[a_1, a_2]$ contains the support 
$[\lambda_{-}, \lambda_{+}]$ of the Marcenko-Pastur distribution with parameter $c$. 
We now justify that the above series and  the integral in (\ref{eq:sigma2}) can be exchanged. 
From the definition of $h$ and the properties of the complex extension $\Phi_k(f)$, it is clear that there exists a nice constant $C$ such that,
\begin{align*}
    \sup_{z \in \Dcal} \left|h(z)\right| \leq C (\Im z)^k ,
\end{align*}
where we recall that $k$ is the order of the complex extension of $\Phi_k(f)$. Moreover, 
inequality (\ref{eq:upper-bound-1-czttilde-carre}) implies that there exists a nice constant 
$C_1$ such that $| c (z t(z) \tilde{t}(z))^{2}| < 1 - C_1 (\Im z)^{4} $ for each $z \in \Dcal$. 
Moreover, it is possible to choose $C_1$ small enough so that 
$$
\left( 1 - C_1 (\Im z)^{4} \right)^{1/2} < 1 - \frac{C_1 (\Im z)^{4}}{4},
$$
for $z \in \Dcal$, i.e. for $\Im z \in [0,1]$. We also have 
$|s(z)| \leq \frac{C}{(\Im z)^{4}}$ for each $z \in \Dcal$. Therefore, we have 
\begin{align*}
&|h(z_1)| |h(z_2)| |s(z_1)||s(z_2)| \left|\sqrt{c} z_1 t(z_1)\tilde{t}(z_1)\right|^{l}  \left|\sqrt{c} z_2 t(z_2)\tilde{t}(z_2)\right|^{l} 
\\
&\qquad\qquad \leq  C \frac{(\Im z_1)^{k} (\Im z_2)^{k}}{(\Im z_1)^{4} (\Im z_2)^{4}} (1 - C_1 (\Im z_1)^{4})^{l/2} (1 - C_1 (\Im z_2)^{4})^{l/2} 
\\
& \qquad\qquad
\leq C \frac{(\Im z_1)^{k} (\Im z_2)^{k}}{(\Im z_1)^{4} (\Im z_2)^{4}} (1 - C_2 (\Im z_1)^{4})^{l} (1 - C_2 (\Im z_2)^{4})^{l} ,
\end{align*}
and consequently
\begin{align*}
    &\sum_{l=1}^{+\infty} (l+1) |h(z_1)| |h(z_2)| |s(z_1)||s(z_2)| \left|\sqrt{c} z_1 t(z_1)\tilde{t}(z_1)\right|^{l}  \left|\sqrt{c} z_2 t(z_2)\tilde{t}(z_2)\right|^{l} 
    \\
    &\qquad\qquad
    \leq  \frac{C (\Im z_1)^{k-4} (\Im z_2)^{k-4}}{\left(1 - (1-C_2(\Im z_1)^{4})(1-C_2 (\Im z_2)^{4})\right)^{2}},
\end{align*}
where $C_2 = \frac{C_1}{4}$. 
It is easy to check that if $x$ and $y$ belong to $[0,1]$, then, 
$x + y - xy \geq \frac{1}{2}(x+y)$. Using this inequality for 
$x = C_2(\Im z_1)^{4}$ and $y = C_2(\Im z_2)^{4}$, we obtain that 
\begin{align*}
    & \sup_{z_1,z_2 \in \Dcal} \sum_{l=1}^{+\infty} (l+1) |h(z_1)| |h(z_2)| |f(z_1)||f(z_2)| \left|\sqrt{c} z_1 t(z_1)\tilde{t}(z_1)\right|^{l}  \left|\sqrt{c} z_2 t(z_2)\tilde{t}(z_2)\right|^{l} 
    \\
    &\qquad\qquad
    \leq  \frac{C (\Im z_1)^{k-4} (\Im z_2)^{k-4}}{\left((\Im z_1)^{4} + (\Im z_2)^{4}\right)^{2}} 
    \\
    &\qquad\qquad
    < C (\Im z_1)^{k-8} (\Im z_2)^{k-8},
\end{align*}
a function that is integrable on $\Dcal$ as soon as $k \geq 8$. This justifies that 
the integral in \eqref{eq:sigma2} can be evaluated by exchanging the above series 
and the integral. Therefore, $\sigma^{2}(f)$  is given by 
\begin{align*}
    \sigma^{2} = \frac{1}{4 \pi^{2} c^{2}} \sum_{l=1}^{\infty} (u_l^{2} + u_l^{*2} + 2 |u_l|^{2}),
\end{align*}
with
\begin{align*}
    u_l = 
    (l+1) \int_{\Dcal} h(z) s(z) \left(\sqrt{c} z t(z) \tilde{t}(z)\right)^{l} \, \drm x \drm y.
\end{align*}
Since $u_l^{2} + u_l^{*2} + 2 |u_l|^{2} \geq 0$ with equality iff $u_l^2 \in \Rbb^-$, we have $\sigma^2 \geq 0$ with equality iff $\Re(u_l) = 0$  for all $l \geq 1$. Next, we notice that the function
\begin{align*}
    z \mapsto s(z) \left(\sqrt{c} z t(z) \tilde{t}(z)\right)^l
\end{align*}
is the Stieltjes transform of a distribution $D_l$ (see Lemma 9.2 in \cite{loubaton-jotp-2016}) carried by the interval $[(1-\sqrt{c})^2,(1+\sqrt{c})^2]$. Therefore, the Helffer-Sjöstrand formula leads to 
\begin{align*}
    \mathrm{Re}(u_l) = \pi (l+1) <D_l, f>.
\end{align*}
Therefore, $\sigma^2 > 0$ holds if there exists $l \geq 1$ such that $\Re(u_l) \neq 0$, a condition equivalent to $<D_l, f> \neq 0$. 

\section{Proof of Proposition \ref{prop:master_eq}}
\label{sec:proof-prop:master_eq}

We only provide the main steps of the proof of Proposition \ref{prop:master_eq} as the computations, which are mostly based on the repeated use of Proposition \ref{prop:nash-poincare-ipp-iid}, are standard (see e.g. \cite{mestre-vallet-ieeeit-2017}).
For the remainder, we use the generic notation $\epsilon(z_1,z_2)$  for any continuous function (depending on $N$) defined on $\Cbb\backslash\Rbb \times \Cbb\backslash \Rbb$, and such that
\begin{align}
  \left|\int_{\Cbb^+} \int_{\Cbb^+} h(z_1) h(z_2) \epsilon(z_1,z_2) \drm z_1 \drm z_2 \right| 
  \leq \Ebb\Biggl[Z \Bigl(|\phi(W)| + |\phi'(W)|\Bigr)\Biggr],
  \label{eq:def_epsilon}
\end{align}
where $Z$ is a positive random variable sharing the same properties as $Y$ in the statement of Proposition \ref{prop:master_eq}. We also use the notation $\epsilon(z)$ if the function only depends on one variable. Note also that the precise value of the function $\epsilon$ is irrelevant, and that it may take different values from one line to another.

In the following, we also make use of the following result compiling various classical and useful approximations, which we provide without proof (see e.g. again \cite{mestre-vallet-ieeeit-2017} for similar results in a different model): if $\beta(z) = \Ebb\left[\frac{1}{B+1} \tr \Q_{\bfrak}(z)\right]$, then we have the following lemma.
\begin{lemma}
    \label{lemma:formula_beta}
    The following holds
    \begin{align*}
        \beta(z) &= t(z) + \frac{\epsilon(z)}{B},
    \\
    \frac{1 + \beta(z)}{1-z(1+\beta(z))} &= t(z) + \frac{\epsilon(z)}{B},
    \\
    \frac{1}{1- z(1+\beta(z))  - \frac{\frac{M}{B+1}}{1-z(1+\beta(z))}}
    &= t(z) + z t'(z) + \frac{\epsilon(z)}{B},
     \\
     \frac{1}{1-z(1+\beta(z))} &= \frac{t(z)}{1+ct(z)} + \frac{\epsilon(z)}{B}.
    \end{align*}
\end{lemma}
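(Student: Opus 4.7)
My plan is to prove the four identities in two passes: establish identity one via Gaussian integration by parts (essentially quoting (\ref{eq:expre-beta-t})), and then derive identities two, three, and four by algebraic manipulation using the Marchenko-Pastur fixed point equations (\ref{eq:equation-MP-tN}), (\ref{eq:equation-MP-tilde-tN}), (\ref{eq:equation-MP-tN-tilde-tN}) together with a first-order Taylor expansion around the deterministic limit of $\beta$.

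For identity one, I would apply the integration by parts formula (\ref{eq:ipp-iid}) to each entry of $\mathbb{E}[\Q_{\mathfrak{b}}(z)]$, following the standard scheme recalled in Subsection \ref{subsec:properties-wishart} and used to derive (\ref{eq:expre-beta-t}). The IBP computation produces a closed perturbed fixed point equation for $\beta$ of the form $\beta(z)\bigl[-z + (1+\beta(z))^{-1}\bigr] = 1 + \mathcal{O}_z(B^{-2})$; subtracting the exact Marchenko-Pastur equation (\ref{eq:equation-MP-tN}) satisfied by $t(z)$ and solving for $\beta - t$ gives $\beta(z) - t(z) = \mathcal{O}_z(B^{-2})$, which is a fortiori $\epsilon(z)/B$ in the sense of the generic notation introduced before the statement, since the control $|\beta(z) - t(z)| \le C(z)/B^{2}$ produces an $\mathcal{O}(B^{-2})$ bound on the Helffer--Sjöstrand integral and hence on the right-hand side of (\ref{eq:def_epsilon}).

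For identities two, three and four, I would first check that when $\beta$ is replaced by its deterministic leading order the identities become exact algebraic relations among $t$, $\tilde{t}$ and their derivatives. The key elementary facts to combine are $\tilde{t}(z) = -[z(1+ct(z))]^{-1}$ from (\ref{eq:equation-MP-tilde-tN}), giving $1-z(1+\beta_{\mathrm{det}}) = (1+\tilde{t})/\tilde{t}$ and $1+\beta_{\mathrm{det}} = -1/(z\tilde{t})$ once the correct constant factor is identified, together with $1+\tilde{t} = -1/(zt)$ from (\ref{eq:equation-MP-tN-tilde-tN}); these reduce identity two to a one-line check. Identity four is similar. For identity three, the extra term $M/(B+1) = c_N$ combined with $(1-z(1+\beta))^{-1}$ reproduces precisely $(zt(z))' = t + zt' = \tilde{p}(z)$ via (\ref{eq:def-tildep}), which can be verified either by a direct calculation using the above relations or by differentiating the Marchenko-Pastur equation once in $z$. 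Once the exact identities are in place, a first-order Taylor expansion of the rational functions of $\beta$ around $\beta_{\mathrm{det}}$ transfers the $\mathcal{O}_z(B^{-2})$ bound from identity one into an $\epsilon(z)/B$ residual for each of the three identities.

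The main obstacle is error tracking through the rational expressions. Specifically, the denominators $1-z(1+\beta)$ and $1-z(1+\beta) - (M/(B+1))/(1-z(1+\beta))$ must remain uniformly bounded below on $\mathbb{C}^{+}$ for $N$ large enough, so that the Taylor residuals are controlled by a nice function $C(z) = P_{1}(|z|)P_{2}(1/\Im z)$ and hence, after pairing with $h(z)$, produce integrals dominated by $\mathbb{E}[|\phi(W_{N})| + |\phi'(W_{N})|]$ times a quantity of size $1/B$. For the first factor this is immediate from the Marchenko-Pastur equations; for identity three, the rearrangement $u/(u^{2}-c_{N})$ with $u = 1-z(1+\beta)$ relies on the lower bound (\ref{eq:upper-bound-1-czttilde-carre}) on $1-c(zt\tilde{t})^{2}$, which is the only place where the nontrivial spectral geometry of the Marchenko-Pastur support enters; once this lower bound is invoked, the remainder of the argument is a careful but routine expansion.
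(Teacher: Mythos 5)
First, a caveat on the ground truth: the paper states Lemma \ref{lemma:formula_beta} explicitly \emph{without proof} (it only points to \cite{mestre-vallet-ieeeit-2017} for similar results in a different model), so there is no internal argument to compare yours against. Your route — integration by parts for the first line, then algebra via the Marchenko--Pastur fixed-point equations (\ref{eq:equation-MP-tN})--(\ref{eq:equation-MP-tN-tilde-tN}) plus a Taylor expansion around the deterministic limit for the remaining lines — is the natural one, and your error-tracking discussion is sound: $|1-c_N(zt\tilde t)^2|\ge 1-c_N|zt\tilde t|^2$ together with (\ref{eq:upper-bound-1-czttilde-carre}) gives the needed lower bound on the denominators, and a deterministic $\Ocal_z(B^{-2})$ remainder qualifies as $\epsilon(z)/B$ in the sense of (\ref{eq:def_epsilon}).

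There are, however, two concrete problems. The first is a normalization inconsistency that your write-up papers over with ``once the correct constant factor is identified.'' Here $\beta(z)=\Ebb[\frac{1}{B+1}\tr\Q_{\mathfrak b}(z)]=\frac{M}{B+1}\,\Ebb[(\Q_{\mathfrak b})_{1,1}(z)]$, so (\ref{eq:expre-beta-t}), which concerns the diagonal entry, yields $\beta(z)=c_N t_N(z)+\Ocal_z(B^{-2})$, not $t(z)+\Ocal_z(B^{-2})$; your ``closed fixed-point equation'' $\beta[-z+(1+\beta)^{-1}]=1+\Ocal_z(B^{-2})$ is the equation satisfied by the diagonal entry, not by $\beta$. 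Meanwhile your verification of lines two and four silently takes $1+\beta_{\mathrm{det}}=-1/(z\tilde t)=1+ct$, i.e.\ $\beta_{\mathrm{det}}=ct$ — which is the \emph{correct} value (only then does $\frac{1+ct}{1-z(1+ct)}=-(1+ct)zt\tilde t=t$ hold), but it contradicts your proof of line one; the first line of the lemma as printed is a typo for $\beta=c\,t+\epsilon/B$. Second, the third line is false as printed, and the ``direct calculation'' you defer to would reveal this: with $u=1-z(1+ct)=-1/(zt\tilde t)$ and $w=zt\tilde t$ one gets $u-c/u=-(1-cw^2)/w$, hence $\bigl(u-\tfrac{c}{u}\bigr)^{-1}=\frac{1+ct}{t}\,(t+zt')\neq t+zt'$. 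What is true — and what the proof of Proposition \ref{prop:master_eq} actually uses in its final display — is the combined identity $\frac{t/(1+ct)}{1-z(1+ct)-\frac{c}{1-z(1+ct)}}=t+zt'$, i.e.\ the third line multiplied by the left-hand side of the fourth. A correct proof must restate lines one and three (or prove the combined versions); as written, your argument for line one establishes the wrong constant and your argument for line three asserts an identity that fails.
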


\subsection{\texorpdfstring{Expansion of $\Ebb\left[\int_{\Cbb^+} h(z_1) \tr \left(\Q_{\bfrak}(z_1)\D_{\x}\right)^\circ \drm z_1 \phi(W) \right]$}{Some expansion}}

\paragraph{2nd order expansion.} After a first series of computations using Proposition \ref{prop:nash-poincare-ipp-iid} eq. \eqref{eq:ipp-iid}, we have
\begin{align}
 &\Ebb\left[\tr \Q_{\bfrak}(z_1) \D_{\x}\phi(W) \right]
   =
 \notag \\
 & 
  - \Ebb\left[\frac{1}{B+1} \tr \Q_{\bfrak}(z_1) \odot\left(\Q_{\bfrak}(z_1) \frac{\X\X^*}{B+1}\right) \phi(W)\right]
  \notag\\
  & + \Ebb\left[\tr \Q_{\bfrak}(z_1) \phi(W)\right] 
\notag\\
  & + 
  \frac{1}{2\sigma} \int_{\Cbb^+}
  \Bigl(h(z_2) \vartheta(z_1,z_2) +  \overline{h(z_2)} \vartheta(z_1,\bar{z_2})\Bigr) \drm z_2 ,
    \label{eq:trQD}
\end{align}
with
\begin{align*}
  \vartheta(z_1,z_2) =
  \frac{1}{B+1} \sum_{m,j}
  \Ebb
  \Bigl[
  &[\Q(z_1)]_{m,m} \overline{X_{m,j}} \ 
  \overline{\partial}_{m,j} \Bigl\{\tr \left(\Q_{\bfrak}(z_2)\D_{\x}\right)^\circ 
  \\
  &+ z_1 \ \tr \left(\Q_{\bfrak}(z_2)^2(\D_{\x}-\I)\right)^\circ\Bigr\} \phi'(W)\Bigr],
\end{align*}
where $\overline{\partial}_{m,j}$ denotes the operator $\frac{\partial}{\partial \overline{X_{m,j}}}$.
Using again Proposition \ref{prop:nash-poincare-ipp-iid} eq. \eqref{eq:ipp-iid}, we also have the equality
\begin{align*}
  \Ebb\left[\tr \Q_{\bfrak}(z) \D_{\x}\right] = \Ebb\left[\tr \Q_{\bfrak}(z)\right] - \Ebb\left[\frac{1}{B+1} \tr \Q_{\bfrak}(z) \odot \left(\Q_{\bfrak}(z) \frac{\X\X^*}{B+1}\right)\right],
\end{align*}
which further provides
\begin{align}
 &\Ebb\left[\tr \left(\Q_{\bfrak}(z_1) \D_{\x}\right)^\circ \phi(W) \right]
   =
   \notag\\
  &\Ebb\left[\left(\tr \Q_{\bfrak}(z_1)\right)^\circ \phi(W)\right] 
  +\frac{1}{2\sigma} \int_{\Cbb^+}
  \Bigl(h(z_2) \vartheta(z_1,z_2) +  \overline{h(z_2)} \vartheta(z_1,\bar{z_2})\Bigr) \drm z_2 
    +  \Omega_1(z_1),
    \label{eq:trQDrond}
\end{align}
where
\begin{align*}
    \Omega_1(z_1) = 
    \Ebb\left[\left(\frac{1}{B+1} \tr \Q_{\bfrak}(z_1) \odot\left(\Q_{\bfrak}(z_1) \frac{\X\X^*}{B+1}\right)\right)^\circ \phi(W)\right].
\end{align*}
Using Proposition \ref{prop:nash-poincare-ipp-iid} eq. \eqref{eq:nash-poincare-iid}, we can show that
\begin{align*}
    \Vbb\left[\frac{1}{B+1} \tr \Q_{\bfrak}(z_1) \odot\left(\Q_{\bfrak}(z_1) \frac{\X\X^*}{B+1}\right)\right] \leq \frac{1}{B^2}  P_1\left(\frac{1}{|\Im(z_1)|}\right) P_2\left(|z_1|\right) ,
\end{align*}
with $P_1,P_2$ two polynomials with positive coefficients independent of $N$, so that
\begin{align*}
    \Omega_1(z_2) = \epsilon(z_1),
\end{align*}
where we recall that $\epsilon(z_1)$ is a generic notation defined in \eqref{eq:def_epsilon}.

Expanding in the same way $\Ebb\left[\tr \Q_{\bfrak}(z_1) \phi(W)\right]$, we get
\begin{align}
 & \Ebb\left[\tr \Q_{\bfrak}(z_1)  \phi(W)\right] =
\notag\\
  &  \frac{M (1+\beta(z_1))}{1- z_1(1+\beta(z_1))}  \Ebb[\phi(W)] 
  \notag\\
  & + \frac{1}{2 \sigma}  \frac{1}{1- z_1(1+\beta(z_1))} 
  \int_{\Cbb^+} \Bigl(h(z_2) \tilde{\vartheta}(z_1,z_2) +  h(\bar{z_2}) \tilde{\vartheta}(z_1,\bar{z_2})\Bigr) \drm z_2 
    \notag\\
    & +  \frac{1}{1 - z_1(1+ \beta(z_1))} \Ebb\left[\frac{1}{B+1} \tr \Q_{\bfrak}(z_1)^\circ \, \Tr\left(\Q_{\bfrak}(z_1) \frac{\X_{\bfrak}\X_{\bfrak}^*}{B+1}\right) \phi(W)\right] ,
  \notag\\
    \label{eq:trQ}
\end{align}
with
\begin{align*}
  \tilde{\vartheta}(z_1,z_2) =
  \frac{1}{B+1} \sum_{m,n,k}
  \Ebb
  &\Bigl[
  [(z_1)]_{m,n} \overline{X_{m,k}} \ 
  \overline{\partial}_{n,k}\Bigl\{\tr \left(\Q_{\bfrak}(z_2)\D_{\x}\right)^\circ 
  \\
  &+ z_1 \ \tr \left(\Q_{\bfrak}(z_2)^2(\D_{\x}-\I)\right)^\circ\Bigr\} \phi'(W)\Bigr].
\end{align*}
Moreover,
\begin{align*}
   &\Ebb\left[ \frac{1}{B+1} \tr \Q_{\bfrak}(z_1)^\circ \, \Tr\left(\Q_{\bfrak}(z_1) \frac{\X_{\bfrak}\X_{\bfrak}^*}{B+1}\right) \phi(W)\right]
  =
  \notag\\
  &\qquad\qquad \frac{M}{B+1}\frac{1}{1-z_1(1+\beta(z_1))} \Ebb[\Tr \Q_{\bfrak}(z_1)^\circ \phi(W)] +  \Omega_2(z_1) + \Omega_3(z_1),
\end{align*}
with
\begin{align*}
    &\Omega_2(z_1) =
    \frac{1}{2\sigma} \frac{\beta(z_1)}{1+\beta(z_1)}
    \int_{\Cbb^+} \left(h(z_2) \omega_1(z_1,z_2) + \overline{h(z_2)} \omega_1(z_1,\overline{z_2})\right) \drm z_2,
    \notag
\end{align*}
where
\begin{align*}
  &\omega_1(z_1,z_2) =
    \\
  &\Ebb\left[\frac{1}{B+1} \tr \Q_{\bfrak}(z_1) \Q_{\bfrak}(z_2) \D_{\x} \Q_{\bfrak}(z_2) \frac{\X_{\bfrak}\X_{\bfrak}^*}{B+1} \frac{1}{B+1} \Tr \Q_{\bfrak}(z_1)^\circ \phi'(W)\right]
  \notag\\
   & - \Ebb\left[\frac{1}{B+1} \tr \left(\Q_{\bfrak}(z_1)\frac{\X_{\bfrak}\X_{\bfrak}^*}{B+1}\right) \odot \Q(z_2) \frac{1}{B+1} \Tr \Q_{\bfrak}(z_1)^\circ \phi'(W)\right]
  \\
  & - z_2 \Ebb\left[\frac{1}{B+1} \tr \left(\Q_{\bfrak}(z_1) \frac{\X_{\bfrak}\X_{\bfrak}^*}{B+1}\right) \odot \Q_{\bfrak}(z_2)^2 \frac{1}{B+1} \Tr \Q_{\bfrak}(z_1)^\circ \phi'(W)\right]
  \\
  & +z_2 \Ebb\left[\frac{1}{B+1} \tr \Q_{\bfrak}(z_1)\Q_{\bfrak}(z_2)^2 (\hat{\D} - \I) \Q_{\bfrak}(z_2) \frac{\X_{\bfrak}\X_{\bfrak}^*}{B+1} \frac{1}{B+1} \Tr \Q_{\bfrak}(z_1)^\circ \phi'(W)\right]
  \\
  &+ z_2 \Ebb\left[\frac{1}{B+1} \tr \Q_{\bfrak}(z_1)\Q_{\bfrak}(z_2) (\hat{\D} - \I) \Q_{\bfrak}(z_2)^2 \frac{\X_{\bfrak}\X_{\bfrak}^*}{B+1} \frac{1}{B+1} \Tr \Q_{\bfrak}(z_1)^\circ \phi'(W)\right],
\end{align*}
and
\begin{align*}
    &\Omega_3(z_1) =
    \notag\\
    &\frac{1}{B+1} \left(\frac{1}{1-z_1(1 + \beta(z_1))} - \frac{1}{1+\beta(z_1)}\right) \Biggl(\Ebb\left[\frac{1}{B+1} \tr \Q_{\bfrak}(z_1)^3 \frac{\X_{\bfrak} \X_{\bfrak}^*}{B+1} \phi(W)\right]
     \notag\\
     &+\Ebb\left[\frac{1}{B+1} \tr \Q_{\bfrak}(z_1) \frac{\X_{\bfrak} \X_{\bfrak}^*}{B+1} \left(\tr \Q_{\bfrak}(z_1)^\circ\right)^2 \phi(W)\right]\Biggr).
    \notag
\end{align*}
Using again Proposition \ref{prop:nash-poincare-ipp-iid} eq. \eqref{eq:nash-poincare-iid}, we can show that
\begin{align*}
    \Omega_2(z_1)+\Omega_3(z_1) = \epsilon(z_1),
\end{align*}
Using Lemma \ref{lemma:formula_beta}, we have
\begin{align*}
     \Ebb\left[\Tr \Q_{\bfrak}(z_1)\right] - M \frac{1+\beta(z)}{1-z_1\left(1+\beta(z_1)\right)}
     = \epsilon(z),
\end{align*}
so that going back to \eqref{eq:trQ}, we get
\begin{align*}
 & \Ebb\left[\Tr \Q_{\bfrak}(z_1)^\circ \drm z_1 \phi(W)\right]
   =
   \\
  &\frac{1}{2\sigma} \frac{1}{1- z_1(1+\beta(z_1))} \int_{\Cbb^+} \Bigl(h(z_2) \tilde{\vartheta}(z_1,z_2,u) 
  +  \overline{h(z_2)} \tilde{\vartheta}(z_1,\bar{z_2},u)\Bigr) \drm z_2
  \notag\\
  &+ \frac{M}{B+1} \left(\frac{1}{1-z_1(1+\beta(z_1))}\right)^2 \Ebb[\Tr \Q_{\bfrak}(z_1)^\circ \phi(W)]
  \notag\\
    &+ \epsilon(z_1).
\end{align*}
Factorizing again, we get
\begin{align*}
 & \Ebb\left[\Tr \Q_{\bfrak}(z_1)^\circ \drm z_1 \phi(W)\right]
   =
   \\
  &\frac{1}{2\sigma} \frac{\int_{\Cbb^+} \Bigl(h(z_2) \tilde{\vartheta}(z_1,z_2) +  \overline{h(z_2)} \tilde{\vartheta}(z_1,\bar{z_2})\Bigr) \drm z_2}{1- z_1(1+\beta(z_1))  - \frac{\frac{M}{B+1}}{1-z_1(1+\beta(z_1))}} 
 + \epsilon(z_1).
\end{align*}
Going back now to \eqref{eq:trQDrond}, we finally obtain
\begin{align}
 &\Ebb\left[\tr \left(\Q_{\bfrak}(z_1)\D_{\x}\right)^\circ  \phi(W)\right]
   =
 \\
  & 
  \frac{1}{2 \sigma} \int_{\Cbb^+}
  \Bigl(h(z_2) \vartheta(z_1,z_2) +  \overline{h(z_2)} \vartheta(z_1,\bar{z_2})\Bigr) \drm z_2
   \\
  & + \frac{1}{2 \sigma} 
  \frac{\int_{\Cbb^+} \Bigl(h(z_2) \tilde{\vartheta}(z_1,z_2) +  \overline{h(z_2)} \tilde{\vartheta}(z_1,\bar{z_2})\Bigr) \drm z_2}{1- z_1(1+\beta(z_1))  - \frac{\frac{M}{B+1}}{1-z_1(1+\beta(z_1))}}
    \\
  &    + \epsilon(z_1).
  \label{eq:trQrond2}
\end{align}

\paragraph{Computation of $\vartheta(z_1,z_2)$ and $\tilde{\vartheta}(z_1,z_2)$.} 

A direct computation of $\vartheta(z_1,z_2)$ provides 
\begin{align*}
  &\vartheta(z_1,z_2) =
    \\
  &  -\Ebb\left[\frac{1}{B+1} \Tr \Q_{\bfrak}(z_1) \odot \left(\Q_{\bfrak}(z_2)\D_{\x}\Q_{\bfrak}(z_2) \frac{\X_{\bfrak}\X_{\bfrak}^*}{B+1}\right) \phi'(W)\right]
                   \\
    &  + \Ebb\left[\frac{1}{B+1} \Tr \Q_{\bfrak}(z_1) \odot \left(\Q_{\bfrak}(z_2)\D_{\x}\right) \phi'(W)\right]
  \\
  & + z_2 \Ebb\left[\frac{1}{B+1} \Tr \Q_{\bfrak}(z_1) \odot \left(\Q_{\bfrak}(z_2)^2\D_{\x}\right) \phi'(W)\right]
  \\
  & - z_2 \Ebb\left[\frac{1}{B+1} \Tr \Q_{\bfrak}(z_1) \odot \left(\Q_{\bfrak}(z_2)^2(\D_{\x}-\I)\Q_{\bfrak}(z_2) \frac{\X_{\bfrak}\X_{\bfrak}^*}{B+1}\right) \phi'(W)\right]
  \\
  & - z_2 \Ebb\left[\frac{1}{B+1} \Tr \Q_{\bfrak}(z_1) \odot \left(\Q_{\bfrak}(z_2)(\D_{\x}-\I)\Q_{\bfrak}(z_2)^2 \frac{\X_{\bfrak}\X_{\bfrak}^*}{B+1}\right) \phi'(W)\right].
\end{align*}
Using again Proposition \ref{prop:nash-poincare-ipp-iid}, one can show that $\vartheta(z_1,z_2)$ vanishes in the sense that
\begin{align*}
  &\vartheta(z_1,z_2) = \epsilon(z_1,z_2).
\end{align*}
Regarding $\tilde{\vartheta}(z_1,z_2)$, we have
\begin{align*}
  &\tilde{\vartheta}(z_1,z_2)
    =
    \\
  &\Ebb\left[\frac{1}{B+1} \tr \Q_{\bfrak}(z_1) \Q_{\bfrak}(z_2) \D_{\x} \Q_{\bfrak}(z_2) \frac{\X_{\bfrak}\X_{\bfrak}^*}{B+1} \phi'(W)\right]
   \\
    & - \Ebb\left[\frac{1}{B+1} \tr \left(\Q_{\bfrak}(z_1)\frac{\X_{\bfrak}\X_{\bfrak}^*}{B+1}\right) \odot \Q_{\bfrak}(z_2) \phi'(W)\right]
  \\
  &- z_2 \Ebb\left[\frac{1}{B+1} \tr \left(\Q_{\bfrak}(z_1) \frac{\X_{\bfrak}\X_{\bfrak}^*}{B+1}\right) \odot \Q_{\bfrak}(z_2)^2 \phi'(W)\right]
  \\
  & + z_2 \Ebb\left[\frac{1}{B+1} \tr \Q_{\bfrak}(z_1)\Q_{\bfrak}(z_2)^2 (\D_{\x} - \I) \Q_{\bfrak}(z_2) \frac{\X_{\bfrak}\X_{\bfrak}^*}{B+1} \phi'(W)\right]
  \\
  & + z_2 \Ebb\left[\frac{1}{B+1} \tr \Q_{\bfrak}(z_1)\Q_{\bfrak}(z_2) (\D_{\x} - \I) \Q_{\bfrak}(z_2)^2 \frac{\X_{\bfrak}\X_{\bfrak}^*}{B+1} \phi'(W)\right].
\end{align*}
A first approximation provides 
\begin{align*}
  &\tilde{\vartheta}(z_1,z_2)
    =
    \\
  &\Ebb\left[\frac{1}{B+1} \tr \Q_{\bfrak}(z_1) \Q_{\bfrak}(z_2)^2 \frac{\X_{\bfrak}\X_{\bfrak}^*}{B+1}\right] \Ebb\left[\phi'(W)\right]
    \\
  & - \frac{ct(z_1)}{1+ct(z_1)} \left(t(z_2) + z_2 t'(z_2)\right) \Ebb\left[\phi'(W)\right]
  \\
  & + \epsilon(z_1,z_2).
\end{align*}
Moreover, standard computations based on Proposition \ref{prop:nash-poincare-ipp-iid} show that
\begin{align*}
  \Ebb\left[\frac{1}{B+1} \tr \Q_{\bfrak}(z_1) \Q_{\bfrak}(z_2)^2 \frac{\X_{\bfrak}\X_{\bfrak}^*}{B+1}\right]
  = 
  \frac{c t(z_1)\left(t(z_2) + z_2 t'(z_2)\right)}{(1+ct(z_1)) \Gamma\left(z_1,z_2\right)^2}
  + \epsilon(z_1,z_2),
\end{align*}
where
\begin{align*}
  \Gamma(z_1,z_2) = 1 - \frac{ct(z_1)t(z_2)}{(1+ct(z_1))(1+ct(z_2))}.
\end{align*}
Thus,
\begin{align*}
    \tilde{\vartheta}(z_1,z_2) = 
    \frac{c t(z_1)\left(t(z_2) + z_2 t'(z_2)\right)}{(1+ct(z_1))}
    \left(\frac{1}{\Gamma(z_1,z_2)^2} - 1\right)
    +
    \epsilon(z_1,z_2).
\end{align*}

\paragraph{Final form.} Going back to \eqref{eq:trQrond2} and given the fact that
\begin{align*}
    &\left|\frac{1}{1- z_1(1+\alpha(z_1))  - \frac{\frac{M}{B+1}}{1-z_1(1+\alpha(z_1))}}
    -
     \frac{1}{1-z_1(1 + c t(z_1))  - \frac{c}{1-z_1(1 + c t(z_1))}} \right|
     \\
     &\qquad\qquad
     \leq \frac{1}{B^2} \Prm_1(|z_1|) \Prm_2\left(\frac{1}{\Im(z_2)}\right) ,
\end{align*}
for some universal polynomials $\Prm_1,\Prm_2$, as well as the equalities
\begin{align*}
    \frac{\frac{t(z)}{1+c t(z)}}{1-z(1 + c t(z))  - \frac{c}{1-z(1 + c t(z))}}
    =
    \frac{t'(z)}{\left(1+ c t(z)\right)^2}
    =
    t(z) + z t'(z),
\end{align*}
we finally obtain
\begin{align*}
 &\Ebb\left[\tr \left(\Q_{\bfrak}(z_1)\D_{\x}\right)^\circ \phi(W)\right]
  =
 \\
  & 
    \frac{1}{2\sigma} \int_{\Cbb^+} h(z_2) \frac{c t'(z_1) t'(z_2)}{(1+ct(z_1))^2 (1+ct(z_2))^2}
    \left(\frac{1}{\Gamma(z_1,z_2)^2} - 1\right)\drm z_2 \Ebb\left[\phi'(W)\right]
  \\
  &+ \frac{1}{2 \sigma} \int_{\Cbb^+}  \overline{h(z_2)}
  \frac{c t'(z_1) t'(\bar{z_2})}{(1+c t(z_1))^2 (1+c t(\bar{z_2}))^2}
   \left(\frac{1}{\Gamma(z_1,z_2)^2} - 1\right)\drm z_2 \Ebb\left[\phi'(W)\right]
  \\
  &    + \epsilon(z_1).
\end{align*}
Consequently,
\begin{align}
\notag
    &\Ebb\left[\int_{\Cbb^+} h(z_1) \tr \left(\Q_{\bfrak}(z_1)\D_{\x}\right)^\circ \drm z_1 \phi(W) \right]
    =
    \\ \notag
    &\frac{1}{2\sigma} \int_{\Cbb^+}\int_{\Cbb^+}
    \frac{h(z_1) h(z_2) c t'(z_1) t'(z_2)}{(1+ct(z_1))^2 (1+ct(z_2))^2}
    \left(\frac{1}{\Gamma(z_1,z_2)^2} - 1\right)\drm z_2 \drm z_1  \Ebb\left[\phi'(W)\right]
  \\ \notag
  &+ \frac{1}{2 \sigma} \int_{\Cbb^+} \int_{\Cbb^+}  
 \frac{h(z_1)  \overline{h(z_2)} c t'(z_1) t'(\bar{z_2})}{(1+c t(z_1))^2 (1+c t(\bar{z_2}))^2}
   \left(\frac{1}{\Gamma(z_1,z_2)^2} - 1\right)\drm z_2 \drm z_1  \Ebb\left[\phi'(W)\right]
  \\
  &    + \Delta_1,
  \label{eq:master1}
\end{align}
where $\Delta_1$ shares the same properties as $\Delta$ in the same statement of Proposition \ref{prop:master_eq}.

    \subsection{\texorpdfstring{Expansion of $\Ebb\left[\int_{\Cbb^+} h(z_1) \tr\left(\Q_{\bfrak}(z_1)^2\left(\D_{\x} - \I\right)\right)^\circ \drm z_1 \phi(W) \right] $}{Some expansion}}
    
Using computations similar to the previous section (details are omitted), we find that
\begin{align*}
  &\Ebb\left[z_1 \left(\tr\Q_{\bfrak}(z_1)^2(\D_{\x} - \I)\right)^\circ \phi(W)\right]
  =
  \\
&\qquad\frac{1}{2 \sigma} \int_{\Cbb^+} \left(h(z_2) \vartheta(z_1,z_2) + \overline{h(z_2)} \vartheta(z_1,\overline{z_2}) \right) \drm z_2
  +
  \epsilon(z_1),
\end{align*}
where this time
\begin{align*}
  &\vartheta(z_1,z_2) =
\\
  &\qquad  - z_1\Ebb\left[\frac{1}{B+1} \tr \Q_{\bfrak}(z_1)^2 \odot \left(\Q_{\bfrak}(z_2)\D_{\x}\Q_{\bfrak}(z_2) \frac{\X\X^*}{B+1}\right) \phi'(W)\right]
\\    
  &\qquad  + z_1\Ebb\left[\frac{1}{B+1} \tr \left(\Q_{\bfrak}(z_1)^2\D_{\x}\right) \odot \Q_{\bfrak}(z_2) \phi'(W) \right]
  \\
  &\qquad+ z_1 z_2 \Ebb\left[\frac{1}{B+1} \tr \left(\Q_{\bfrak}(z_1)^2\D_{\x}\right) \odot \Q_{\bfrak}(z_2)^2 \phi'(W) \right]
  \\
  &\qquad- z_1 z_2 \Ebb\left[\frac{1}{B+1} \tr \Q_{\bfrak}(z_1)^2 \odot \left(\Q_{\bfrak}(z_2)^2(\D_{\x}-\I)\Q_{\bfrak}(z_2) \frac{\X\X^*}{B+1}\right) \phi'(W)\right]
  \\
  &\qquad- z_1 z_2 \Ebb\left[\frac{1}{B+1} \tr \Q_{\bfrak}(z_1)^2 \odot \left(\Q_{\bfrak}(z_2)(\D_{\x}-\I)\Q_{\bfrak}(z_2)^2 \frac{\X\X^*}{B+1}\right) \phi'(W)\right].
\end{align*}
We can show that
\begin{align*}
    \vartheta(z_1,z_2) = \epsilon(z_1,z_2),
\end{align*}
so that
\begin{align}
    \Ebb\left[\int_{\Cbb^+} h(z_1) \tr\left(\Q_{\bfrak}(z_1)^2\left(\D_{\x} - \I\right)\right)^\circ \drm z_1 \phi(W) \right]
    = \Delta_2,
    \label{eq:master2}
\end{align}
where $\Delta_2$ shares the same properties as $\Delta$ in Proposition \ref{prop:master_eq}.

    \subsection{Final equation}

Gathering \eqref{eq:master1} and \eqref{eq:master2}, we finally obtain
\begin{align}
    &\Ebb
    \left[
    \int_{\Cbb^+} h(z_1) 
    \left(
        \tr \left(\Q_{\bfrak}(z_1)\D_{\x}\right)^\circ 
        +
        z_1 \left(\tr\Q_{\bfrak}(z_1)^2(\D_{\x} - \I)\right)^\circ
    \right)
    \drm z_1 f(W) 
    \right]
    =
    \notag\\
    &\frac{1}{2\sigma} \int_{\Cbb^+}\int_{\Cbb^+}
    \left(h(z_1) h(z_2) \omega(z_1,z_2) + h(z_1)  \overline{h(z_2)} 
 \omega(z_1,\overline{z_2})\right) \drm z_2 \drm z_1  \Ebb\left[\phi'(W)\right]
  + \Delta_3,
  \label{eq:master3}
\end{align}
where $\Delta_3$ shares the same properties as $\Delta$ in Proposition \ref{prop:master_eq}, 
because $\omega(z_1,z_2)$ defined by 
(\ref{eq:def-thetaz1z2}) is easily seen 
to be given by the alternative expression
\begin{align*}
    \omega(z_1,z_2) = 
    \frac{c t'(z_1) t'(z_2)}{(1+ct(z_1))^2 (1+ct(z_2))^2}
    \left(\frac{1}{\Gamma(z_1,z_2)^2} - 1\right).
\end{align*}
This identity follows immediately from 
$$
t'(z) = \frac{t^{2}(z)}{1 - c (z t(z) \tilde{t}(z))^{2}}
$$
and 
$$
\Gamma(z_1,z_2) = 1 - c (z_1 t(z_1) \tilde{t}(z_1))^{2} (z_2 t(z_2) \tilde{t}(z_2))^{2} .
$$
From \eqref{eq:master3} and the definition of $\sigma^2$ in \eqref{eq:sigma2}, we easily deduce Proposition \ref{prop:master_eq}.

\section{Proof of (\ref{eq:domination-stochastique-zeta-tildeQ-tildeQb})}
\label{sec:proof-domination-stochastique-zeta-tildeQ-tildeQb}
We apply Lemma \ref{le:concentration-integrale-helffer-sjostrand} to the case $U^{(N)} = [0,1]$ and 
$\X_N(u) = \tilde{\X}_N(\nu)$, and $q_N(z, \X_N(u), \X_N(u)^{*})=  \frac{1}{M} \Tr \left(\tilde{\Q}_N(z,\nu) - \tilde{\Q}_{N,\mathfrak{b}}(z,\nu) \right) = \eta_N(z, \tilde{\X}(\nu), \tilde{\X}(\nu)^{*})$. For each $\delta > 0$, we consider the event $A_{N,\delta}$ defined by
\begin{align}
    \label{eq:def-event-proof-domination-stochastique-zeta-tildeQ-tildeQb}
A_{N,\delta}(\nu) = & \left\{ \| \frac{\X_{\mathfrak{b}}}{\sqrt{B+1}} \| \leq 3,  \| \frac{\X_{r}}{\sqrt{B+1}} \| \leq 3,  \|\frac{\Gammabs_{\mathfrak{b}}}{\sqrt{B+1}} \| \leq N^{\delta} \frac{B}{N} \right\} \ \cap \\ \notag & \left\{   \| \frac{\Gammabs^{1}_{r}}{\sqrt{B+1}} \| \leq N^{\delta} \frac{B}{N}, \| \frac{\Gammabs^{2}_{r}}{\sqrt{B+1}} \| \leq N^{\delta} \left(\frac{B}{N}\right)^{1/2}\right\}.
\end{align}
Proposition \ref{prop:localisation-eigenvalues-hatC-tildeC}, (\ref{eq:domination-stochastique-Gammab-Gamma1r}) and (\ref{eq:domination-stochastique-Gamma2r}) imply the existence of $\gamma > 0$
for which $\sup_{\nu} P(A_{N,\delta}(\nu)) \leq e^{-N^{\gamma}}$ for each $N$ large enough. 
In order to evaluate the gradient of $\eta$ w.r.t. $\tilde{\X}$, we express $\eta$ as 
$$
\eta = - \frac{1}{M} \Tr \tilde{\Q} \left( \tilde{\C} - \tilde{\C}_{\mathfrak{b}}\right) \tilde{\Q}_{\mathfrak{b}}.
$$
We use the representation (\ref{eq:expre-tildeC-bartlett}) in order to express $ \tilde{\C} - \tilde{\C}_{\mathfrak{b}}$ in terms of 
$\tilde{\X}$, and, after some tedious but straighforward calculations, we obtain that on $\| \frac{\X_{\mathfrak{b}}}{\sqrt{B+1}} \| \leq 3,  \| \frac{\X_{r}}{\sqrt{B+1}} \| \leq 3$, the inequality 
\begin{align}
    \label{eq:majorant-norme-carre-gradient-eta}
& ||\nabla_{\tilde{\X}}  \eta||^{2}  \leq 
\notag\\
&\frac{C(z)}{B^{2}} \left(    \| \frac{\Gammabs^{2}_r \Gammabs_r^{2*}}{B+1} \| +  \| \frac{\Gammabs^{1}_r \Gammabs_r^{1*}}{B+1} \| +    \| \frac{\Gammabs^{2}_r \Gammabs_r^{2*}}{B+1} \|^{2} +  \| \frac{\Gammabs^{1}_r \Gammabs_r^{1*}}{B+1} \|^{2} +   \| \frac{\Gammabs_{\mathfrak{b}} \Gammabs_{\mathfrak{b}}^{*}}{B+1} \|^{2}  \right) 
\notag\\
     & +\frac{C(z)}{NB} \Bigl(1 + \| \frac{\Gammabs_{\mathfrak{b}}}{\sqrt{B+1}} \| + \| \frac{\Gammabs_r^{1}}{\sqrt{B+1}} \| + \| \frac{\Gammabs_r^{2}}{\sqrt{B+1}} \| 
     \notag\\
     &\qquad+   \| \frac{\Gammabs^{2}_r \Gammabs_r^{2*}}{B+1} \| +  \| \frac{\Gammabs^{1}_r \Gammabs_r^{1*}}{B+1} \| +    \| \frac{\Gammabs_{\mathfrak{b}} \Gammabs_{\mathfrak{b}}^{*}}{B+1} \|  \Bigr),
\end{align}
holds. Therefore, on the set $A_{N,\delta}$, we have 
$$
||\nabla_{\tilde{\X}}  \eta||^{2}  \leq C(z) \frac{N^{2\delta}}{NB}.
$$
As $\tilde{\X}_N(A_{N,\delta})$ is convex and  the other conditions mentioned in Lemma \ref{le:concentration-integrale-helffer-sjostrand} are met, we deduce from Lemma \ref{le:concentration-integrale-helffer-sjostrand} that $\omega_{N}(f,\nu) = \Ocal_{\prec}\left( \frac{N^{\delta}}{\sqrt{BN}}\right)$. As this property holds for each $\delta > 0$, (\ref{eq:domination-stochastique-zeta-tildeQ-tildeQb}) is verified.

\section{Proof of Lemma \ref{le:hats-hatsb}}
\label{sec:proof-lemma-hatD-hatDb}
We recall that $\hat{\D} = \dg(\hat{s}_{m}, m=1, \ldots, M)$ and $\hat{\D}_{\mathfrak{b}} = \dg(\hat{s}_{m,\mathfrak{b}}, m=1, \ldots, M)$. We express $\hat{s}_m$ and $\hat{s}_{m,\mathfrak{b}}$ as $\hat{s}_m = \frac{\| \omegabs_m \|^{2}}{B+1}$, $\hat{s}_{m,\mathfrak{b}} = \frac{\| \omegabs_{m,\mathfrak{b}} \|^{2}}{B+1}$. Using the representations (\ref{eq:representation-omegam-bartlett}) and (\ref{eq:representation-omegaw}), we obtain that
 $$
 \hat{s}_{m} - \hat{s}_{m,\mathfrak{b}} = s_m \left( \tilde{\x}_m \frac{\tilde{\Phibs}_m}{B+1} \tilde{\x}_m^{*} \right),
 $$
 where $\tilde{\x}_m$ is the $\mathcal{N}_c(0, \I_{2(B+1)}$ distributed vector $\tilde{\x}_m = (\x_{m,\mathfrak{b}},\x_{m,r})$ and where $\tilde{\Phibs}_m$ is defined as the $2\times2$ block matrix with blocks given by 
\begin{align*}
 [\tilde{\Phibs}_m]_{1,1} &= \Psibs_{m,r}^{1} (I+\Psibs_{m,\mathfrak{b}})^{*} +  (I+\Psibs_{m,\mathfrak{b}})  (\Psibs_{m,r}^{1})^{*} + \Psibs_{m,r}^{1} (\Psibs_{m,r}^{1})^{*}
 \\
 [\tilde{\Phibs}_m]_{1,2} &= (I+\Psibs_{m,\mathfrak{b}} + \Psibs_{m,r}^{1})  (\Psibs_{m,r}^{2})^{*} 
 \\
 [\tilde{\Phibs}_m]_{2,1} &= \Psibs_{m,r}^{2}  (I+\Psibs_{m,\mathfrak{b}} + \Psibs_{m,r}^{1})^{*} 
 \\
 [\tilde{\Phibs}_m]_{2,2} &= \Psibs_{m,r}^{2}  (\Psibs_{m,r}^{2})^{*}.
 \end{align*}
 We first claim that $\mathbb{E}(\hat{s}_m - \hat{s}_{m,\mathfrak{b}}) =  \frac{s_m}{2} \,  \frac{1}{B+1} \mathrm{Tr} \tilde{\Phibs}_m = 
 \mathcal{O}(N^{-1})$, a property which immediately implies (\ref{eq:E-hats-hatsb}) and 
  (\ref{eq:tr-hatD-hatDb}). To verify this, we first use (\ref{eq:prop-TracePsimr1}), (\ref{eq:prop-TracePsimr2})
, and (\ref{eq:norme-Psi1mr}). Moreover, the elements of the diagonal matrix $\Psibs_{m,\mathfrak{b}}$ are $\mathcal{O}\left(\frac{B}{N}\right)$ terms. Therefore, (\ref{eq:entries-Psimr}) 
 leads to $\frac{1}{B+1} \mathrm{Tr} \Psibs^{1}_{m,r} \Psibs_{m,\mathfrak{b}}^{*} = \mathcal{O}\left(\frac{B}{N^{2}}\right) = o(N^{-1})$. In order to evaluate 
 $\hat{s}_m - \hat{s}_{m,\mathfrak{b}}  - \mathbb{E}(\hat{s}_m - \hat{s}_{m,\mathfrak{b}})$, we remark that the Hanson-Wright inequality provides  $ |\hat{s}_m - \hat{s}_{m,\mathfrak{b}} - s_m \frac{1}{B+1} \mathrm{Tr} \tilde{\Phibs}_m| \prec \| \frac{\tilde{\Phibs}_m}{B+1} \|_{F}$. Using the properties of matrices $\Psibs_{m,\mathfrak{b}}, \Psibs^{1}_{m,r}$ and 
 $\Psibs^{2}_{m,r}$, a simple calculation then leads to 
$ \| \frac{\tilde{\Phibs}_m}{B+1} \|_{F} = \mathcal{O}\left( \frac{1}{\sqrt{NB}}\right)$. This completes 
the proof of (\ref{eq:hats-hatsb-rond}), and also implies (\ref{eq:E-hats-hatsb-square}). (\ref{eq:tr-hatD-hatDb-rond}) is a consequence of mutual independence of vectors $(\tilde{\x}_m)_{m=1, \ldots, M}$ and of the Hanson-Wright inequality.    
\section{Proof of Lemma \ref{le:support-mu1}}
\label{sec:proof-lemma-support-mu1}
In order to establish Lemma \ref{le:support-mu1}, we show that for $N$ (and thus $M$ and $B$) fixed, 
$\mu_N^{1}$ coincides with the limit of the sequence of empirical eigenvalue distributions $(\hat{\mu}_P)_{P \geq 1}$ of certain well chosen $P \times P$ matrices. We denote by $(\lambda_m)_{m=1, \ldots, M}$ the eigenvalues of matrix 
$\H_N(\nu)$ (we do not mention these eigenvalues depend on $N$ and $\nu$ because these two parameters are supposed to 
be fixed in the present analysis), and consider $\Lambdabs = \mathrm{diag}\left((\lambda_m)_{m=1, \ldots, M}\right)$. For each 
integer $P$, we express $P$  as $P = kM + p$, $k,p \in \mathbb{N}$, $0 \leq p \leq M-1$, and define $\Lambdabs_P$ as the 
$P \times P$ diagonal matrix given by 
$$
\Lambdabs_P = \left( \begin{array}{ccccc} \I_k \otimes \Lambdabs & 0 & 0 & \ldots & 0 \\
                                    0 & \lambda_1 & 0 & \ldots & 0 \\
                                    0 & 0 & \lambda_2 & \ldots & 0 \\
                                    \vdots & \ddots & \ddots & \ddots & \vdots \\
                                    0 & 0 & 0 & 0 & \lambda_p \end{array} \right),
$$
where $\otimes$ represents the Kronecker product of matrices. It is easy to check that the empirical eigenvalue distribution of $\Lambdabs_P$ converges when $P \rightarrow +\infty$ weakly towards
the empirical eigenvalue distribution $\mu_{\H_N} = \frac{1}{M} \sum_{m=1}^{M} \delta_{\lambda_m}$ of  $\H_N$. For each 
integer $P$, we define $Q = \lfloor \frac{P}{c} \rfloor$, and define a complex Gaussian $P \times Q$ random matrix $\X_P$ with i.i.d. $\Ncal_c(0,1)$ entries. Then, if we denote by $\Y_P$ the random matrix $\Y_P = \Lambdabs_P^{1/2} \frac{\X_P}{\sqrt{Q}}$, 
the empirical eigenvalue $\hat{\mu}_P$ of $\Y_P \Y_P^{*}$ converges weakly $\mu_N^{1}$ (see e.g. \cite{silverstein-jmva-1995}). The eigenvalues $(\lambda_m)_{m=1, \ldots, M}$
of $\H_N$ belong to $[0, \tilde{a}]$. Moreover, for each $P$ large enough, almost surely w.r.t. the probability distribution of matrices $\X_P$, $\left\| \frac{\X_P \X_P^{*}}{Q} \right \| \leq (1 + \sqrt{c} + \epsilon/2)^{2}$
holds for each $P$ large enough. Therefore, for the same values of $P$, we also have $\| \Y_P \Y_P^{*} \| \leq \tilde{a}(1 + \sqrt{c}+ \epsilon/2)^{2}$ . If we consider a continuous function $\phi(\lambda)$ verifying 
\begin{eqnarray*}
    & \phi(\lambda) = 0, \, \lambda \in [0, \tilde{a}(1 + \sqrt{c}+ \epsilon/2)^{2}] \\
    & \phi(\lambda) = 1, \, \lambda \geq \tilde{a}(1 + \sqrt{c}+ \epsilon)^{2},
\end{eqnarray*}
as well as $\phi(\lambda) \geq 0$ for each $\lambda \geq 0$, then, $\int \phi(\lambda) d\hat{\mu}_P(\lambda) = 0$ for each $P$ large enough. As the sequence $(\hat{\mu}_P)_{P \geq 1}$ 
converges almost surely towards $\mu_N^{1}$, we deduce that 
$$
\int \phi(\lambda) d\mu_N^{1}(\lambda) = \lim_{P \rightarrow +\infty} \int \phi(\lambda)  d\hat{\mu}_P(\lambda) = 0.
$$
This, in turn, shows that the support of $\mu_N^{1}$ is included into $[0, \tilde{a}(1 + \sqrt{c}+ \epsilon)^{2}]$.

\section{Proof of Proposition \ref{prop:minorant}}
\label{sec:proof-prop-power-A-random}
We denote by $\eta_N(\nu)$ the term defined by 
$$
\eta_N(\nu) =  \frac{1}{M} \sum_{i \neq j} \frac{|\a_i^{*} \D(\nu) \a_j|^{2}}{ \|\a_i| \|^{2}  \|\a_j \|^{2}} - \mathbb{E} \left(  \frac{1}{M} \sum_{i \neq j} \frac{|\a_i^{*} \D(\nu) \a_j|^{2}}{ (1+\sigma^{2})^{2}} \right) ,
$$
and establish that the family $(\eta_N(\nu))_{\nu \in [0,1]}$ verifies
\begin{equation}
    \label{eq:stochastic-domination-etaN}
    \eta_N(\nu) = \Ocal_{\prec}\left( \frac{1}{\sqrt{M}}\right),
\end{equation}
which, of course, will imply (\ref{eq:behaviour-Tr-H2}). For this, we express 
$\eta_N(\nu)$ as $\eta_N(\nu) = \eta_{1,N}(\nu) + \eta_{2,N}(\nu)$ where 
\begin{align*}
  &  \eta_{1,N}(\nu) =  \frac{1}{M} \sum_{i \neq j} |\a_i^{*} \D(\nu) \a_j|^{2}  \left( \frac{1}{\|\a_i| \|^{2}  \|\a_j \|^{2}} - \frac{1}{(1 + \sigma^{2})^{2}} \right),\\
  & \eta_{2,N}(\nu) = \frac{1}{(1 + \sigma^{2})^{2}} \left( 
 \frac{1}{M} \sum_{i \neq j} \left( |\a_i^{*} \D(\nu) \a_j|^{2} - \mathbb{E} \left(  |\a_i^{*} \D(\nu) \a_j|^{2} \right) \right)\right)    .
\end{align*}
We first evaluate $\eta_{1,N}(\nu)$ and prove that 
\begin{equation}
\label{eq:evaluation-eta-1}
\eta_{1,N}(\nu) = \Ocal_{\prec}\left( \frac{1}{\sqrt{M}}\right).
\end{equation}
We first remark that 
(\ref{eq:evaluation-norme-amN}) implies that the family 
$(\frac{1}{\|\a_i\|^{2} \|\a_j\|^{2}})_{1 \leq i,j \leq M}$
verifies $\frac{1}{\|\a_i\|^{2} \|\a_j\|^{2}} \prec 1$. Therefore, 
(\ref{eq:convergence-norme-colonnes-AN}) leads to 
$$
\frac{1}{\|\a_i| \|^{2}  \|\a_j \|^{2}} - \frac{1}{(1 + \sigma^{2})^{2}} = \Ocal_{\prec}\left(\frac{1}{\sqrt{M}}\right).
$$
In order to complete the proof of (\ref{eq:evaluation-eta-1}), it is sufficient 
to check that the family $\left(|\a_i^{*} \D(\nu) \a_j|^{2}\right)_{i \neq j, \nu \in [0,1]}$ verifies $|\a_i^{*} \D(\nu) \a_j|^{2} \prec \frac{1}{M}$. For this, 
we remark that for $i \neq j$, 
$$
\a_i^{*} \D(\nu) \a_j = \frac{\sigma}{\sqrt{M}} \left( s_i(\nu) \G_{i,j} +
s_j(\nu) \G_{i,j}^{*} + \frac{\sigma}{\sqrt{M}} \g_i^{*} \D(\nu) \g_j \right).
$$
It is clear that $ s_i(\nu) \G_{i,j} +
s_j(\nu) \G_{i,j}^{*} = \Ocal_{\prec}(1)$. Moreover, if $\g$ is the 
$M^{2}$--dimensional vector defined by $\g^{T} = (\g_1^{T}, \ldots, \g_M^{T})$, 
then $\g_i^{*} \D(\nu) \g_j$ can be written as 
$\g_i^{*} \D(\nu) \g_j = \g^{*} \left( \e_i \e_j^{*} \otimes \D(\nu) \right) \g$. The Hanson-Wright inequality thus implies that 
$$
\g_i^{*} \D(\nu) \g_j - \mathbb{E}(\g_i^{*} \D(\nu) \g_j) = \Ocal_{\prec}\left( \| 
 \e_i \e_j^{*} \otimes \D(\nu)\|_{F}\right) = \Ocal_{\prec}(\sqrt{M}).
$$
As $\mathbb{E}\g_i^{*} \D(\nu) \g_j = 0$ for $i \neq j$, we obtain that 
$\a_i^{*} \D(\nu) \a_j = \O_{\prec}\left( \frac{1}{\sqrt{M}}\right)$
and that $|\a_i^{*} \D(\nu) \a_j|^{2} \prec \frac{1}{M}$. 
(\ref{eq:evaluation-eta-1}) is thus a consequence of Lemma \ref{le:domination-moyenne}. 

The evaluation $\eta_{2,N}(\nu) = \Ocal_{\prec}\left( \frac{1}{\sqrt{M}}\right)$
follows from a direct application of Lemma \ref{le:conditional-concentration}
and of Remark \ref{re:evaluation-sigmaN}. This completes the proof 
of (\ref{eq:stochastic-domination-etaN}).

\section{Code and Data Availability}

To facilitate the reproducibility of the numerical experiments and simulation studies presented in this paper, all custom code developed for these analyses is publicly available. The repository contains the necessary scripts (primarily implemented in Python) for running the simulations, reproducing the figures, and executing the computational examples discussed in Section \ref{sec:simulations}. The repository can be accessed at: \url{https://github.com/alexisrosuel/lrv-test}.

\begin{acks}[Acknowledgments]
A. Rosuel thanks the authors of \cite{pan-gao-yang-jasa-2014} for their help related to the implementation of the PGY approach. 
\end{acks}

\bibliographystyle{imsart-number} 
\bibliography{main_ejs}       

\end{document}